\newtheorem{thm}{Theorem}[section]
\newtheorem{prop}[thm]{Proposition}
\newtheorem{cor}[thm]{Corollary}
\newtheorem{lem}[thm]{Lemma}
\newtheorem{constr}[thm]{Construction}
\theoremstyle{definition}
\newtheorem{defn}[thm]{Definition}
\newtheorem{rmk}[thm]{Remark}
\newtheorem{exmp}[thm]{Example}
\newcommand{\bC}{\mathbb{C}}
\newcommand{\bD}{\mathbb{D}}
\newcommand{\bG}{\mathbb{G}}
\newcommand{\D}{\mathbb{D}}
\newcommand{\bH}{\mathbb{H}}
\newcommand{\bR}{\mathbb{R}}
\newcommand{\bZ}{\mathbb{Z}}
\newcommand{\bP}{\mathbb{P}}
\newcommand{\ww}{\mathfrak{w}}
\newcommand{\wv}{\mathfrak{v}}
\renewcommand{\tt}{\mathfrak{t}}
\newcommand{\rank}{\mathrm{rank}}
\newcommand{\Gr}{\mathrm{Gr}}
\newcommand{\GL}{\mathrm{GL}}
\newcommand{\cA}{\mathcal{A}}
\newcommand{\cX}{\mathcal{X}}
\newcommand{\cP}{\mathcal{P}}
\newcommand{\cI}{\mathcal{I}}
\newcommand{\Br}{\mathrm{Br}}
\newcommand{\dM}{\mathfrak{M}^{\vee}}
\newcommand{\cL}{\mathcal{L}}
\newcommand{\cF}{\mathcal{F}}
\newcommand{\Fl}{\mathrm{Fl}}
\newcommand{\bW}{\mathbb{W}}
\newcommand{\Span}{\mathrm{Span}}
\newcommand{\bN}{\mathbb{N}}
\newcommand{\uf}{\mathrm{uf}}
\newcommand{\fr}{\mathrm{fr}}
\newcommand{\tw}{\mathrm{tw}}
\newcommand{\DT}{\mathrm{DT}}
\newcommand{\Conf}{\mathrm{Conf}}
\newcommand{\dconf}{\mathrm{Conf}^d}
\newcommand{\id}{\mathrm{Id}}
\newcommand{\doublearrow}[1]{\overset{#1}{\implies}}
\newcommand{\tNec}{\overrightarrow{\cI}}
\newcommand{\sNec}{\overleftarrow{\cI}}
\newcommand{\tI}[1]{\overrightarrow{I_{#1}}}
\newcommand{\sI}[1]{\overleftarrow{I_{#1}}}
\newcommand{\Pio}{\Pi^{\circ}}
\def\bdmn{\mathrm{Bd}(m,n)}
\newcommand{\bGshift}{\mathbb{G}^{\downarrow}}
\def\tcF{\tilde{\cF}}
\def\modsp{\mathfrak{M}}
\def\frmodsp{\mathfrak{M}}
\DeclareMathOperator{\tog}{tog}
\newcommand{\veq}{\mathrel{\rotatebox{90}{$=$}}}
\newcommand{\dd}{\partial}
\newcommand{\sse}{\subset}
\newcommand{\lr}{\longrightarrow}
\newcommand{\la}{\lambda}
\newcommand{\La}{\Lambda}
\newcommand{\stacknumber}[2]{\ \genfrac{}{}{0pt}{1}{#1}{#2}\ }
\newcommand{\MSB}[1]{{\color{blue}[MSB: #1]}}
\newcommand{\ITL}[1]{{\color{teal}[ITL: #1]}}
\newcommand{\DW}[1]{{\color{red}[DW: #1]}}
\newtheorem{mainthm}{Theorem}
\def\empver[#1]#2(#3)#4(#5,#6)%
\def\solver[#1]#2(#3)#4(#5,#6)%
\def\edge[#1]#2(#3)#4(#5)%
\newcommand{\inprod}[2]{\left\langle #1, #2\right\rangle}
\colorlet{lightblue}{blue!30!white}
\colorlet{lightred}{red!30!white}
\colorlet{lightteal}{teal!30!white}
\title[Weaves for reduced plabic graphs]{Demazure weaves for reduced plabic graphs\\\vspace{0.5cm} {\bf\footnotesize{-- with a proof that Muller-Speyer twist is Donaldson-Thomas --}}}
\author{Roger Casals}
	\address{University of California Davis, Dept. of Mathematics, CA, USA}
	\email{casals@math.ucdavis.edu}
\author{Ian Le}
\address{Australian National University, Mathematical Sciences Institute, Australia}
\email{ian.le@anu.edu.au}
\author{Melissa Sherman-Bennett}
\address{Massachusetts Institute of Technology, Dept. of Mathematics, MA, USA}
\email{msherben@mit.edu}
\author{Daping Weng}
	\address{University of California Davis, Dept. of Mathematics, CA, USA}
	\email{dweng@ucdavis.edu}
\begin{document}

\maketitle

\begin{abstract}
First, this article develops the theory of weaves and their cluster structures for the affine cones of positroid varieties. In particular, we explain how to construct a weave from a reduced plabic graph, show it is Demazure, compare their associated cluster structures, and prove that the conjugate surface of the graph is Hamiltonian isotopic to the Lagrangian filling associated to the weave. The T-duality map for plabic graphs has a surprising key role in the construction of these weaves. Second, we use the above established bridge between weaves and reduced plabic graphs to show that the Muller-Speyer twist map on positroid varieties is the Donaldson-Thomas transformation. This latter statement implies that the Muller-Speyer twist is a quasi-cluster automorphism. An additional corollary of our results is that target labeled seeds and the source labeled seeds are related by a quasi-cluster transformation.
\end{abstract}

\setcounter{tocdepth}{1}
\tableofcontents

\begin{comment}
To do after posting to arXiv but before submitting:
\begin{itemize}
    \item Make Section 8 more "Proposition, proof, Lemma, proof"
    \item Add figure of $\beta$-arrangement in Remark~\ref{rmk:beta chains in wiring diagram}
    \item Add relation between $\beta_\cP$ and the juggling braid of \cite{CGSS21}?
    \item Think about indexing for $\beta_i$ and think about adopting $f$ rather than $\pi_f$ almost everywhere.
    \item Shorten background
    \item Check carefully the Le-diagram algorithm in Appendix is correct
    \end{itemize}

\end{comment}

\section{Introduction}

The object of this article will be to develop the connection between weaves and reduced plabic graphs and show that the Muller-Speyer twist map on positroid strata is the Donaldson-Thomas transformation. First, we explain how to use $T$-duality to associate a weave to each reduced plabic graph, prove it is Demazure, and show that the cluster seeds on the corresponding positroid strata coincide. That is, the cluster structure associated to weaves via the quiver of relative Lusztig cycles and their microlocal merodromies is the same as that constructed from the reduced plabic graph quiver and its Pl\"ucker coordinates. Second, we use this relation between weaves and reduced plabic graphs to show that the Muller-Speyer twist map coincides with the cluster-theoretic Donaldson-Thomas transformation on positroid strata. In particular, we establish that the Muller-Speyer twist is a quasi-cluster automorphism. An added corollary of this result is that the target labelled seed and the source labelled seed of a reduced plabic graph are related by a quasi-cluster automorphism. For these results, we use that weaves allow for a geometric understanding of the Donaldson-Thomas transformation as induced by symmetries of Legendrian links in space.
\subsection{Scientific Context}\label{ssec:scientific_context} Positroid strata are algebraic subvarieties of Grassmannians, first appearing in the study of total positivity, see \cite{Lusztig98},\cite{Rietsch06}, \cite[Section 3]{Pos} and \cite[Section 5]{KLS}. From a combinatorial standpoint, they can be labeled via reduced plabic graphs \cite[Section 11]{Pos}. Reduced plabic graphs themselves can be used to identify coordinate rings of positroid strata with cluster algebras; see \cite{GL,SSBW}, \cite[Chapter 7]{FWZ} and references therein. This generalizes the recipe in \cite{Scott06} for the case of top-dimensional positroid strata.\\

\noindent Recently, weaves \cite{CZ} have also been shown to provide a versatile alternative for the construction of cluster algebras for certain varieties, including positroid strata; see \cite{CGGLSS,CW} and references therein. A core contribution of this article is to establish the relation between weaves and reduced plabic graphs, in particular comparing the cluster structures they respectively induce on positroid strata. A surprising piece of this relation is the appearance of the T-duality map on plabic graphs from \cite{Galashin21_Critical,PSBW}, as presented in Section \ref{sec:iterative_Tmap}, which plays a key role in the construction of the weave associated to a reduced plabic graph. The precise bridge between weaves and reduced plabic graphs is summarized in Theorem \ref{thm:mainA} below.\\

\begin{rmk} Weaves can describe seeds for positroid strata which have non-Pl\"ucker cluster variables, in contrast to reduced plabic graphs. To wit, given a weave corresponding to a plabic graph and the associated seed $\Sigma$, \emph{any} mutation of $\Sigma$ can again be described with a weave. Nevertheless, only mutations of $\Sigma$ at square faces of the plabic graph can again be described with plabic graphs. Figure \ref{fig:ConjugateSurface10} gives an instance of such a seed, described by a weave but not by a plabic graph.\hfill$\Box$
\end{rmk}

\begin{center}
	\begin{figure}
		\centering
		\includegraphics[scale=1]{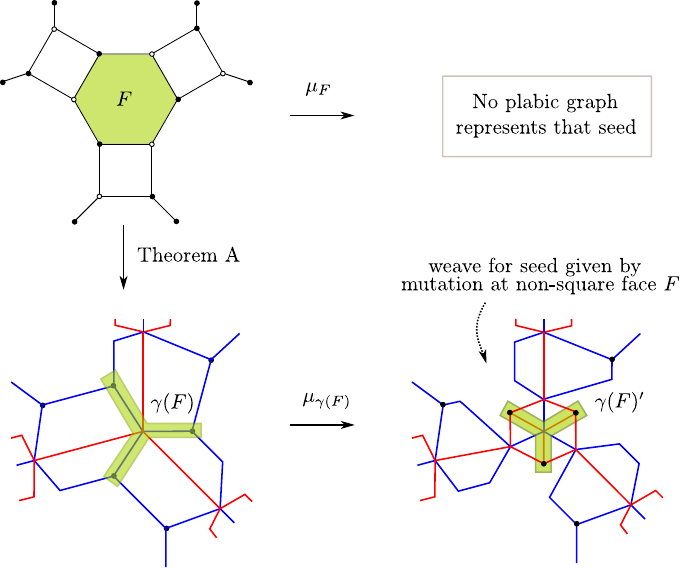}
		\caption{(Upper left) A reduced plabic graph $\bG$ for $\Gr(3,6)$. Label the boundary vertices $1, 2, \dots, 6$ clockwise, with the upper right boundary vertex labeled 1. The cluster variable for the hexagonal face $F$ is $\Delta_{135}$. (Lower left) The weave $\ww(\bG)$ corresponding to this plabic graph using Theorem \ref{thm:mainA}. The highlighted Lusztig cycle $\gamma(F)$ corresponds to the face $F$ via Theorem \ref{thm:mainA}.(ii). The seed $\Sigma(\ww(\bG))$ is equal to $\Sigma_T(\bG)$, the target seed of the plabic graph. (Lower right) The weave $\mu_{\gamma(F)}(\ww(\bG))$ obtained from mutating the weave $\ww(\bG)$ at $\gamma(F)$, which corresponds to a mutation of $\Sigma_T(\bG)$ \emph{not} represented by a plabic graph. The new cycle $\gamma(F)'$, corresponding to the vertex where we have mutated, is highlighted. The cluster variable associated to $\gamma(F)'$ is $\Delta_{136}\Delta_{245} - \Delta_{126} \Delta_{345}$, which is not a Pl\"ucker coordinate. %This weave $\gamma_{\gamma(F)}(\ww(\bG))$ represents the cluster seed given by mutation of the quiver for $\bG$ at the vertex for the face $F$. This seed is not represented by a reduced plabic graph, as indicated in the upper right of the figure.
  }
		\label{fig:ConjugateSurface10}
	\end{figure}
\end{center}

Second, once this bridge in Theorem \ref{thm:mainA} has been established, we use it to relate to two automorphisms that have appeared in the study of positroid strata. Indeed, the cluster algebra associated to a positroid stratum has two remarkable automorphisms:

\begin{itemize}
    \item[(i)] The Muller-Speyer twist map, introduced in \cite{MullerSpeyertwist} building on work of \cite{MarshScott16} for the top-dimensional case. This is a biregular automorphism of the positroid variety and thus an automorphism of its coordinate ring of functions. Prior to the present manuscript, it was {\it not} known to be a quasi-cluster automorphism. Theorem \ref{thm:mainB} below will show that it is a quasi-cluster automorphism.\\

    \item[(ii)] The Donaldson-Thomas transformation, defined using the cluster structure. In our context, this is an automorphism of the coordinate ring of the positroid variety. Combinatorially, it is equivalent to the existence of a reddening sequence for any quiver associated to a positroid. The existence of such reddening sequences for a quiver of positroids is proven in \cite{FordSerhiyenko18}. By construction, Donaldson-Thomas transformations are quasi-cluster automorphisms.\\
\end{itemize}

\noindent One of our main results, Theorem \ref{thm:mainB}, will show that these two automorphisms of the coordinate ring of (the affine cone of) a positroid stratum coincide. A significant consequence is that the Muller-Speyer twist map must be a quasi-cluster automorphism, since it equals the Donaldson-Thomas transformation.\\

In the context of weaves, \cite[Section 5]{CW} provides a neat geometric interpretation for the Donaldson-Thomas transformation: it is obtained by a series of Reidemeister moves on a link composed with a reflection. This particular weave description of the Donaldson-Thomas transformation allows for a comparison between the Muller-Speyer twist map and the Donaldson-Thomas transformation. Theorem \ref{thm:mainA} and the theory developed around it are independently valuable on their own. Then Theorem \ref{thm:mainB} and Corollary \ref{cor:quasiequivalence} provide a new application of the combinatorics of weaves to cluster algebras. Note that, even if inspired by symplectic topology, our proof of Theorem \ref{thm:mainB} is entirely within the realm of algebraic combinatorics and cluster algebras: it does not use any symplectic topology and it is developed directly using weaves and flags understood as combinatorial and Lie-theoretic objects.\\

\subsection{Main Results}\label{ssec:main_results} Let $\bG$ be a reduced plabic graph. In \cite[Section 8.2]{PSBW} an operation is defined: it inputs a plabic graph $\bG$ and it outputs a plabic graph $\bGshift$, the \emph{$T$-shift} of $\bG$, which is of smaller rank. Iterating this operation yields a sequence of plabic graphs $(\bG^{\downarrow})^{\downarrow},((\bG^{\downarrow})^{\downarrow})^{\downarrow},\ldots$, which eventually hits a trivial plabic graph. This is a finite sequence of plabic graphs, which we call the $T$-shifted reduced plabic graphs of $\bG$.\\

Following \cite[Section 2]{CZ}, a weave $\ww$ on a disk is combinatorially described as an ordered collection of (often many) trivalent graphs satisfying certain conditions on their overlaps. The intersection of $\ww$ with the boundary of the disk gives a positive braid word $\dd\ww$. Demazure weaves were introduced in \cite[Section 4]{CGSS20} and their Lusztig cycles were defined in \cite[Section 4]{CGGLSS}. The articles \cite{CGGLSS,CW} explain how to use weaves and their cycles to show that the coordinate rings of certain affine varieties associated to $\dd\ww$ are cluster algebras, with each weave giving a toric chart and each Lusztig cycle providing a cluster variable in that chart. These affine varieties are closely related to the following moduli space; if $\beta=\dd\ww= s_{i_1} s_{i_2} \cdots s_{i_{l(\beta)}}$ is a positive braid word on $m$-strands, then we define
$$\modsp(\beta):=\left\{(\cF_0,\cF_1,\dots, \cF_{l(\beta)}=\cF_0)\in (\Fl_m)^{l(\beta)} :  \cF_{j-1}\xrightarrow{s_{i_j}}\cF_j \mbox{ for all }1\leq j\leq l(\beta) \right\}/\GL_m(\bC),$$
where $\Fl_m$ is the variety of complete linear flags in $\bC^m$, $\GL_m(\bC)$ acts diagonally, and we have written $\cF\xrightarrow{s_{k}}\mathcal{G}$ if the flags $\cF,\mathcal{G}$ differ at precisely the $k$-dimensional subspace. %be in $s_k$-relative position, $s_k\in S_n$ understood as an element of the Weyl group of $\GL_m(\bC)$. 
A subset of points $\mathfrak{t}\sse\beta$ of the braid $\beta$, understood as a link diagram, leads to a similarly defined moduli $\modsp(\beta;\mathfrak{t})$: this latter moduli is an affine variety if $\mathfrak{t}$ contains at least one point per component of $\beta$; see \cite{CW,CGGLSS}. Different choices of decorations $\mathfrak{t}$ lead to different (if related) smooth affine varieties $\modsp(\beta;\mathfrak{t})$, such as the braid varieties introduced in \cite{CGSS20,CGGLSS}. In this manuscript, we explain how the affine cone of any positroid variety arises as $\modsp(\beta;\mathfrak{t})$ for a particular $\beta$ and choice of decoration $\mathfrak{t}$. The results from \cite{CW,CGGLSS} then endow such $\modsp(\beta;\mathfrak{t})$, and thus (affine cones of) positroid strata, with cluster structures. In precise terms, the results from \cite{CW,CGGLSS} show that the ring of regular functions $\bC[\modsp(\beta;\mathfrak{t})]$ is equal to a cluster algebra, with a specific initial seed constructed from the braid word $\beta$. Initial seeds for these cluster structures are given by Demazure weaves and the cluster variables are microlocal merodromies along their Lusztig cycles. The general algebro-combinatorial description of the cluster variables is given in \cite{CGGLSS} and the symplectic geometric meaning of the cluster variables is explained in \cite{CW}. See also \cite{CG23} for results towards explicitly describing all cluster seeds by these symplectic geometric methods.\\

Independently, reduced plabic graphs can also be used to endow the (affine cone of) positroid strata with cluster structures, see \cite[Chapter 7]{FWZ}. Initial seeds for these cluster structures are given by plabic graphs and cluster variables are (certain subsets of) Pl\"ucker coordinates, indexed by the faces of the plabic graph; see \cite{GL,MullerSpeyertwist}. The first main result of this article is the construction of a weave for any given reduced plabic graph such that all the combinatorics of the weave (including its quiver) match with the combinatorics of the given plabic graph, and the cluster structure constructed using the weave coincides with that constructed using reduced plabic graphs. The precise statement reads as follows:

\begin{mainthm}\label{thm:mainA}
Let $\bG$ be a reduced plabic graph. Then the union $\ww(\bG)$ of all the $T$-shifted reduced plabic graphs of $\bG$ is a weave such that

\begin{itemize}
    \item[(i)] The Lagrangian projection of the Legendrian surface associated to $\ww(\bG)$ is Hamiltonian isotopic to an exact Lagrangian representative of the conjugate surface of $\bG$.\\

    \item[(ii)] $\ww(\bG)$ is a Demazure weave and its Lusztig cycles are in natural bijection with the faces of $\bG$. In particular, the quiver of cycles associated to $\ww(\bG)$ coincides with the quiver of $\bG$.\\

    \item[(iii)] There exists a set of marked points $\mathfrak{t}$ in the positive braid $\beta=\dd\ww(\bG)$ such that the moduli $\mathfrak{M}(\beta;\mathfrak{t})$ is isomorphic to the affine cone of the positroid stratum associated to $\bG$.\\

    \item[(iv)] The cluster seed on $\mathfrak{M}(\beta;\mathfrak{t})$ associated to the weave $\ww(\bG)$ coincides with the cluster seed associated to the reduced plabic graph $\bG$ under the isomorphism of (iii). That is, the microlocal merodromies along the Lusztig relative cycles pull-back to the Pl\"ucker coordinates given by $\bG$, including frozens.\\
\end{itemize}

\noindent Furthermore, a variation on the moduli $\mathfrak{M}(\beta;\mathfrak{t})$, which is a cluster $\mathcal{A}$-scheme, leads to a Poisson $\mathcal{X}$-scheme $\dM(\beta;\mathfrak{t})$ such that the pair $(\mathfrak{M}(\beta;\mathfrak{t}),\dM(\beta;\mathfrak{t}))$ forms a cluster ensemble.
\hfill$\Box$
\end{mainthm}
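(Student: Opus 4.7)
The plan is to construct the weave inductively, layer by layer following the T-shift reduction of $\bG$, propagating each of the four statements along the induction. The base case is a trivial plabic graph, where all four parts are immediate. For the inductive step, I would assume the theorem for the smaller-rank T-shift $\bG^{\downarrow}$ and analyze the extra top layer contributed by the difference between $\bG$ and $\bG^{\downarrow}$, using the explicit description of the T-shift from \cite[Section 8.2]{PSBW}. The task is to show that this top layer glues onto $\ww(\bG^{\downarrow})$ so that (a) the resulting combinatorial object satisfies the trivalent-graph compatibility conditions of \cite[Section 2]{CZ}, making it a weave, and (b) the new boundary braid $\dd\ww(\bG)$ is the braid word predicted by the decorated permutation of $\bG$.

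For part (i), I would exhibit both the conjugate surface of $\bG$ and the Lagrangian projection associated to $\ww(\bG)$ as branched covers of the disk whose branch data is determined by the successive T-shifts: each white vertex at level $k$ contributes a branch point between sheets $k$ and $k+1$. A sheet-by-sheet matching of the branch loci identifies the two surfaces as abstract smooth surfaces, and the exactness of the two Lagrangians then upgrades this identification to a Hamiltonian isotopy via a Moser-type argument. For part (ii), once $\ww(\bG)$ is known to be a weave, the Demazure property reduces to checking that the reduced expression implicit in the T-shift sequence is genuinely reduced. Lusztig cycles are then read off from the connected components of the level sets of the weave, and their bijection with the faces of $\bG$ is essentially tautological from the layered construction; the quiver match follows from the combinatorial rule for counting weave intersections and comparing with the standard quiver of $\bG$.

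For parts (iii) and (iv), the boundary braid $\beta = \dd\ww(\bG)$ should coincide with the braid word attached to $\bG$ via its decorated permutation, so that choosing the marked points $\mathfrak{t}$ according to the framework of \cite{CGGLSS,CW} identifies $\mathfrak{M}(\beta;\mathfrak{t})$ with the affine cone of the positroid stratum. The equality of cluster seeds in (iv) is the main obstacle: one must compute the microlocal merodromy along each Lusztig cycle $\gamma(F)$ by tracing how the flags $\cF_j$ evolve on a loop encircling $F$, and show that the resulting expression matches, on the nose and with the correct normalization for frozens as well, the Pl\"ucker coordinate that the plabic graph $\bG$ assigns to $F$. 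My approach would be a Lindstr\"om-Gessel-Viennot style rewriting of both sides as weighted sums over non-intersecting paths in $\bG$, combined with the inductive control provided by the T-shift. The cluster-ensemble statement then follows by running the standard $\mathcal{A}$-to-$\mathcal{X}$ construction on $\mathfrak{M}(\beta;\mathfrak{t})$ and its companion $\dM(\beta;\mathfrak{t})$ once the $\mathcal{A}$-side is in place.
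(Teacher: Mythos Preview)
Your proposal has a concrete error in part (iii) and underestimates the work needed in parts (i), (ii), and (iv).

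For (iii), you write that choosing the marked points $\mathfrak{t}$ ``according to the framework of \cite{CGGLSS,CW}'' identifies $\mathfrak{M}(\beta;\mathfrak{t})$ with the affine cone of the positroid stratum. This fails: the paper explicitly constructs a \emph{different} set of base points (Definition~\ref{def:base-pts}), pairs $i^+,i^-$ placed on specific strands with a coupled scaling condition, and remarks after the statement of Theorem~\ref{thm:mainA} that the choices from \cite{CGSS20,CGGLSS,CW} would give the wrong frozen part of the quiver. So your plan as stated produces the wrong moduli space. Relatedly, for (iv) the paper does not use an LGV argument; it builds explicit representatives $\eta_F$ of the dual cluster cycles via gradient flows on the weave surface (Subsection~\ref{sssec:representatives_dualcycles}) and computes the merodromy directly, tracing top-sheet lifts back along zig-zag strands to identify the wedge as $\Delta_{\overrightarrow{I_F}}$. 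Your path-counting rewriting is not obviously compatible with how merodromies are defined as parallel transports of a twisted local system, and the sign issues (handled in the paper by a careful choice of sign curves) would need separate treatment.

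For (ii), the claim that ``the Demazure property reduces to checking that the reduced expression implicit in the T-shift sequence is genuinely reduced'' is not what Demazure means here: one must show the weave can be drawn in a rectangle with all weave lines going monotonically downward and with the correct valence pattern at each vertex (Definition~\ref{defn: demazure weave}). The paper's proof (Theorem~\ref{thm:Demazure weave}) uses acyclic perfect orientations on $\bG$ and its T-shifts, an upward-planar-drawing criterion from graph theory, and a nontrivial argument clearing ``blocked sinks'' by braid moves; this is substantial and not bypassed by any reducedness check. For (i), matching branch data gives only an abstract diffeomorphism of surfaces; it does not by itself yield a Hamiltonian isotopy of embedded exact Lagrangians in $T^*\bD^2$, and a Moser argument would require an isotopy through exact Lagrangians that you have not produced. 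The paper instead uses the calculus of hybrid surfaces from \cite{CL22}, applying a specific sequence of RIII--1, RII--1, and RIII--2 moves (each known to be realized by a Hamiltonian isotopy) that exactly mirrors the iterated T-shift, so the isotopy is built move by move rather than inferred from topological invariants.
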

%\DW{Can we set $\beta=\dd\ww(\bG)$ and replace $\dd\ww(\bG)$ with $\beta$ so that it looks consistent with the notation we introduced before? Also, maybe we shouldn't use $\mathfrak{M}^{fr}$ for the $\mathcal{X}$-variety. The extra data is not a framing... Maybe $\mathfrak{M}^d$?}\RC{Yes. Done.}

In addition to cluster and combinatorial statements, Theorem \ref{thm:mainA} contains statements that are symplectic geometric in nature, starting with Theorem \ref{thm:mainA}.(i) and the use of microlocal merodromies in Theorem \ref{thm:mainA}.(iv). That said, we build on our previous works \cite{CL22,CW,CZ} so as to translate any symplectic geometric aspects into accessible combinatorics. For instance, Theorem \ref{thm:mainA}.(i) is proven using the table of diagrammatic moves in Figure \ref{fig:TableMovesIntro} below, which was established in \cite[Theorem 3.1]{CL22}. In this manner, it is our hope that readers not yet acquainted with symplectic geometry can still follow all the arguments for Theorem \ref{thm:mainA} if they are willing to rely on the geometric results from \cite{CL22,CW,CZ}.\\

\noindent Theorem \ref{thm:mainA}.(i) is proved in Theorem~\ref{thm:weave_and_conjugate} using the diagrammatic moves for hybrid surfaces established in \cite{CL22}, as explained above. Theorem \ref{thm:mainA}.(ii) is proved in Theorem~\ref{thm:weave construction}, Theorem \ref{thm:Demazure weave} and Corollary~\ref{cor:Lusztig cycles}. Note that most weaves are not Demazure, most $Y$-cycles are not Lusztig cycles, nor all Lusztig cycles are typically $Y$-trees, so this result asserts that $\ww(\bG)$ is particularly well-behaved. %Yet, the weaves $\ww(\bG)$ are always Demazure, which we show using an acyclic perfect orientation on $\bG$. Similarly, the fact that faces in $\bG$ correspond to Lusztig cycles in $\ww(\bG)$, rather than just $\sf Y$-trees, is to be noted. 
Theorem \ref{thm:mainA}.(iii) is proved in Theorem~\ref{thm: frame sheaf moduli = positroid} and deals with the subtle aspect of marked points: the set of marked points $\mathfrak{t}$ featured there is {\it not} the same as the set used in \cite{CGSS20,CGGLSS,CW}, e.g. for braid varieties. (Indeed, those choices would lead to the wrong frozen part for the quivers.) Finally, Theorem \ref{thm:mainA}.(iv), proved in Proposition~\ref{prop:merodromy-is-plucker} requires finding appropriate relative cycles Poincar\'e dual to the Lusztig cycles, then computing the microlocal merodromies, in line with \cite[Section 4]{CW}, and finally showing that they indeed coincide with the corresponding Pl\"ucker coordinates. The statement about the dual space $\dM(\beta;\mathfrak{t})$ is proven in Theorem~\ref{thm:cluster-ensemble}.\\
%\MSB{This used to be in appendix, needs to move into body of paper somewhere (right after Cor 7.20 maybe)}\RC{Addressed.}\\

The correspondence in Theorem \ref{thm:mainA} can be used to study the Muller-Speyer twist as follows. For cluster structures associated to a Demazure weave $\ww$, the square of the Donaldson-Thomas transformation has a neat geometric interpretation in terms of the positive braid $\dd\ww$: it is realized by a full cyclic rotation of the braid word. The Donaldson-Thomas transformation itself is realized by a cyclic rotation moving crossings to the top composed with a reflection. We first proved this in \cite[Section 5]{CW} for grid plabic graphs and in \cite[Section 8]{CGGLSS} we established the general case.\\

\noindent The application of Theorem \ref{thm:mainA} that we present exploits this particular description, using items (ii),(iii) and (iv) in Theorem \ref{thm:mainA}. Namely, we start with a reduced plabic graph $\bG$, construct the weave $\ww(\bG)$ and compute how its associated seed in $\mathfrak{M}(\dd\ww(\bG);\mathfrak{t})$ changes as we perform the geometric transformations above, i.e.~ when implementing cyclic rotations and reflections to $\dd\ww(\bG)$. The explicit nature of the coordinates in $\mathfrak{M}(\dd\ww(\bG);\mathfrak{t})$ then allows us to write the transformation in terms of Pl\"ucker coordinates and finally show that such map coincides with the Muller-Speyer twist map. The statement we prove reads as follows:

\begin{mainthm}\label{thm:mainB}
The Donaldson-Thomas transformation on the affine cone of a positroid stratum coincides with the Muller-Speyer twist map. In particular, the twist map is a quasi-cluster transformation of the target cluster structure.\hfill$\Box$
\end{mainthm}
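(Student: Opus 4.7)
The plan is to leverage the bridge of Theorem~\ref{thm:mainA} to pull back the weave-theoretic description of the Donaldson--Thomas transformation, established in \cite[Section 5]{CW} and \cite[Section 8]{CGGLSS}, to an explicit action on Pl\"ucker coordinates of the positroid variety, and then to match that action term by term with the Muller--Speyer twist. Concretely, I would fix a reduced plabic graph $\bG$ and use Theorem~\ref{thm:mainA}.(iii)--(iv) to identify the affine cone of the positroid stratum with $\modsp(\beta;\tt)$, where $\beta=\dd\ww(\bG)$, in such a way that every Pl\"ucker face variable of $\bG$ becomes the microlocal merodromy along a Lusztig cycle of $\ww(\bG)$.

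Next, I would apply the weave description of the DT transformation: on $\modsp(\beta;\tt)$, it is realized by cyclically rotating $\beta$ one crossing at a time all the way around, composed with a reflection of the resulting weave. In flag-theoretic terms this sends a sequence $(\cF_0,\cF_1,\dots,\cF_{l(\beta)})$ to a rotated-and-reflected sequence; in coordinates coming from the iterated $T$-shifts of $\bG$, I would unwind this into closed formulas for the pullback of each microlocal merodromy, using the Lie-theoretic description of Demazure weaves developed earlier in the paper.

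The heart of the proof is then the direct comparison. The Muller--Speyer twist on $\Gr(k,n)$ admits an explicit description in terms of target-labeled Pl\"ucker coordinates attached to faces of $\bG$, and the goal is to prove that the cyclic rotation plus reflection performs precisely the face relabeling and rescaling dictated by this formula. Since Pl\"ucker coordinates separate points on the open positroid cell, it suffices to match the two maps there and extend regularly to the affine cone. Once the equality of maps is established, the last sentence of Theorem~\ref{thm:mainB} is automatic: the DT transformation is a quasi-cluster automorphism by construction, hence so is the twist.

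The main obstacle I anticipate is the boundary bookkeeping. The set of marked points $\tt$ from Theorem~\ref{thm:mainA}.(iii) is not the standard one used for braid varieties in \cite{CGSS20,CGGLSS,CW}, so one must carefully track how $\tt$, together with the frozen data coming from the boundary of $\bG$, transforms under the braid rotation and the reflection. A closely related subtlety is aligning the intrinsic ``half-twist'' combinatorics of $\beta$ with the cyclic boundary shift by one appearing in the Muller--Speyer formula: both sides involve a single-step rotation of the boundary indexing, and one must verify that these rotations agree on the nose rather than merely up to a global symmetry of $\Gr(k,n)$.
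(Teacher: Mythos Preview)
Your proposal is correct in strategy and matches the paper's approach: use the isomorphism $\Phi:\Pio_\cP\to\modsp(\Lambda_\cP;\tt)$ from Theorem~\ref{thm:mainA}, realize DT geometrically as rotation composed with reflection (as in \cite[Section 5]{CW} and \cite[Section 8]{CGGLSS}), and verify that $\Phi^{-1}\circ\DT\circ\Phi$ equals the Muller--Speyer twist.

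There is one tactical difference worth flagging. You propose to compare the two maps by computing the pullback of each face Pl\"ucker variable (microlocal merodromy) under DT and matching it against the twist formula. The paper instead works pointwise on \emph{column vectors}: for each index $i$ it chooses, via toggles, a cyclic braid representative in which the trivialization $v_i$ sits at a convenient $\begin{tikzpicture}[baseline=3pt]\draw (0,0.25) -- (0,0) -- (0.25,0);\end{tikzpicture}$ corner adjacent to a $w_0$ subword, then tracks that single vector through the rotation $r_\beta$, the isotopy $\Theta_1$, the reflection $t$ (which dualizes the flag and the trivialization), and the return of the basepoint $i^-$ to its original position. The output is $\Delta_{\tI{i}}^{-1}\star(v_{j_2}\wedge\cdots\wedge v_{j_m})$, which is exactly the $i$th column of the twist. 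This column-by-column computation is cleaner than tracking all face Pl\"uckers, because the reflection acts simply on the bottom subspace of a flag (Hodge star sends $l_i$ to the dual line of the complementary hyperplane), whereas its effect on an arbitrary intermediate Pl\"ucker is less transparent. The paper also separates the argument into an unfrozen step on $\Conf(\cP)=\Pio_\cP/(\bC^\times)^n$ first, and then handles the frozen/basepoint bookkeeping you correctly anticipated as the main obstacle.
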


\noindent We prove Theorem~\ref{thm:mainB} in Theorem~\ref{thm:DT-with-froz}, once we have established Theorem \ref{thm:mainA} and the necessary results on Donaldson-Thomas transformations in the context of flag moduli spaces.\\

Finally, given a reduced plabic graph $\bG$, there are two cluster structures that can be naturally associated to $\bG$. The first cluster structure is obtained by consider the so-called target labeling. The second cluster structure is obtained using the source labeling. We refer to \cite{MullerSpeyertwist,FSB} or Section \ref{sec:prelim} below for further details. These two different choices of labeling lead to different cluster structures, i.e.~they are not related by a cluster transformation. That said, following \cite[Remark 4.7]{MullerSpeyertwist}, it was conjectured in \cite[Conjecture 1.1]{FSB} that these two cluster structure associated to a reduced plabic graph are related by a quasi-cluster transformation. Our results prove this conjecture:

\begin{cor}\label{cor:quasiequivalence}
Let $\bG$ be a reduced plabic graph, $\Sigma_T(\bG)$ the cluster seed associated to the target labeling, and $\Sigma_S(\bG)$ the cluster seed associated to the source labeling. Then $\Sigma_T(\bG)$ and $\Sigma_S(\bG)$ are related by a quasi-cluster transformation.\hfill$\Box$
\end{cor}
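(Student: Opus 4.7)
The plan is to derive the corollary as a direct consequence of Theorem \ref{thm:mainB}, together with the known interaction between the Muller-Speyer twist map and the source/target labelings of faces of $\bG$. This matches the reasoning indicated in \cite[Remark 4.7]{MullerSpeyertwist}, where it is observed that the conjectured quasi-cluster equivalence of $\Sigma_T(\bG)$ and $\Sigma_S(\bG)$ would follow once one knows that the twist map is a quasi-cluster automorphism.

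First, I would recall from \cite{MullerSpeyertwist} that for each face $F$ of $\bG$, the twist map $\tau$ pulls back the target Pl\"ucker coordinate $\Delta_{I_T(F)}$ to the source Pl\"ucker coordinate $\Delta_{I_S(F)}$ up to a Laurent monomial in the frozen Pl\"ucker coordinates (with the exponents determined combinatorially by $\bG$ from the boundary measurements relating the two labelings). In cluster-theoretic language, this is precisely the statement that the pullback seed $\tau^{*}\Sigma_T(\bG)$ and the source seed $\Sigma_S(\bG)$ have the same mutable cluster variables after a rescaling by frozen monomials, and the same exchange quiver up to the standard frozen-rescaling effect. This is exactly what it means for $\tau^{*}\Sigma_T(\bG)$ and $\Sigma_S(\bG)$ to be related by a quasi-cluster transformation.

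Second, I would invoke Theorem \ref{thm:mainB}: the twist map $\tau$ equals the Donaldson-Thomas transformation on the affine cone of the positroid stratum, and Donaldson-Thomas transformations are, by construction, quasi-cluster automorphisms of the ambient cluster structure. Hence $\Sigma_T(\bG)$ and $\tau^{*}\Sigma_T(\bG)$ are themselves related by a quasi-cluster transformation. Composing the two quasi-cluster transformations
\[
\Sigma_T(\bG)\ \longleftrightarrow\ \tau^{*}\Sigma_T(\bG)\ \longleftrightarrow\ \Sigma_S(\bG)
\]
yields the desired quasi-cluster equivalence of $\Sigma_T(\bG)$ and $\Sigma_S(\bG)$.

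The substantive input here is Theorem \ref{thm:mainB}; everything else is bookkeeping. The main potential obstacle is to verify that Muller-Speyer's formula comparing twisted target Pl\"uckers with source Pl\"uckers is in the exact normalization required by the definition of quasi-cluster transformation in use, i.e.\ that the discrepancy is genuinely a Laurent monomial in \emph{frozen} variables of $\Sigma_S(\bG)$ (and not merely a Laurent monomial in unspecified cluster variables). Once this frozen bookkeeping is pinned down, the corollary follows immediately by composing the two quasi-cluster transformations above.
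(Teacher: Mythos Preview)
Your approach is essentially the same as the paper's. The only discrepancy is in the precise Muller--Speyer formula you invoke: the paper cites \cite[Proposition 7.13]{MullerSpeyertwist}, which says that $\Sigma_S(\bG)$ differs from $(\tw^*)^{-2}\,\Sigma_T(\bG)$ (not $\tw^*\,\Sigma_T(\bG)$) by rescaling by target necklace frozens, and then notes (citing \cite[Theorem 5.17]{FSB}) that these rescalings preserve exchange ratios. Since Theorem~\ref{thm:mainB} makes $\tw$ a quasi-cluster automorphism, so is $(\tw)^{-2}$, and the composition argument goes through exactly as you outline. Your flagged ``frozen bookkeeping'' obstacle is thus resolved by pointing to the correct power of the twist and the specific references above.
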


The appearance of quasi-cluster transformation is natural from the viewpoint of weaves, as explained in \cite{CW}. In particular, cyclic rotation of the braid $\dd\ww$ leads to quasi-cluster transformations, not necessarily cluster. Since the Donaldson-Thomas transformation is described via cyclic rotations (and a reflection), we can prove Corollary \ref{cor:quasiequivalence} even though it involves only quasi-cluster, instead of cluster, transformations.\\

\noindent {\bf Acknowledgements}. We thank Chris Fraser for many helpful conversations. R.~Casals is supported by the NSF CAREER DMS-1942363, a Sloan Research Fellowship of the Alfred P. Sloan Foundation and a UC Davis College of L\&S Dean's Fellowship. M. Sherman-Bennett is supported by the National Science Foundation under Award No. DMS-2103282. Any opinions, findings, and conclusions or recommendations expressed in this material are
those of the author(s) and do not necessarily reflect the views of the National Science
Foundation.\hfill$\Box$\\%\RC{Ian, Daping, please add any grants or acknowledgements.}

\noindent {\bf A related reference}. In the final stage of preparation of this manuscript, M.~Pressland announced in \cite{Pressland23} an alternative proof that the Muller-Speyer twist is quasi-cluster. Their proof differs significantly from ours. In particular, their argument depends critically on categorification, as explained in ibid., whereas ours occurs directly on the positroid variety and its cluster algebra. We think him for kind and helpful discussions.\hfill$\Box$\\ 

\noindent{\bf Notation}. Given $n\in\mathbb{N}$, we denote by $w_{0,n}$ the longest permutation in the symmetric group $S_n$. If $n$ is understood by context, then $w_{0,n}$ is denoted by $w_{0}$. For $n,m\in\mathbb{N}$, we use the notation $[n]:=\{1, \dots, n\}$ and $\binom{[n]}{m}:=\{I \subset [n]: |I|=m\}$.\hfill$\Box$

\section{Preliminaries}\label{sec:prelim}

This section presents the necessary ingredients and results on positroids, in Subsection \ref{ssec:preliminaries on positroids}, on reduced plabic graphs, in Subsection \ref{ssec:prelim_plabicgraphs}, and on weaves, in Subsection \ref{subsec: prelim on weaves}.

\subsection{Preliminaries on Positroids} \label{ssec:preliminaries on positroids} Positroids were first introduced by Postnikov in \cite{Pos} in a stratification of the totally non-negative Grassmannian. Knutson, Lam, and Speyer generalized Postnikov's real strata to the complex setting in \cite{KLS}, and showed they are projected Richardson varieties, which had been studied extensively by G.~Lusztig and K.~Rietsch \cite{Lusztig98,Rietsch06}. Cluster structures on positroid strata have been studied extensively in \cite{Lec, SSBW, GL,FSB}. We review the definition and some basic properties of positroid strata in this subsection.

Let $m\leq n$ be positive integers. We denote by $\Gr_{m,n}$ the Grassmannian of $m$-dimensional subspaces of $\bC^n$ and by $\Gr_{m,n}(\bR)$ the subset consisting of real subspaces. We represent a subspace $V \in \Gr_{m,n}$ by any full rank $m \times n$  matrix $A$ whose rowspan is $V$. For $A$ a full-rank $m \times n$ matrix, we often abuse notation and write $A \in \Gr_{m,n}$, identifying $A$ with its rowspan. 

We embed $\Gr_{m,n}$ into $\bP^{\binom{n}{m}-1}$ via the Pl\"ucker embedding. For $A \in \Gr_{m,n}$ and $I=\{i_1 < i_2 < \cdots<i_m\} \subset [n]$, the \emph{Pl\"ucker coordinate} $\Delta_I(A)$ is the maximal minor of $A$ located in column set $I$. Choosing a different representative matrix $A'$ for the same subspace rescales all Pl\"ucker coordinates by the same factor.

The \emph{totally nonnegative Grassmannian} \cite[Section 3]{Pos} is
\[\Gr_{m,n}^{\geq 0}:= \left\{A \in \Gr_{m,n}(\bR): \Delta_I(A) \geq 0 \text{ for all } I \in \binom{[n]}{m}\right\}.\]

\begin{defn}
    For $\cP\subset \binom{[n]}{m}$, we define the (possibly empty) stratum 
    \[S_{\cP}:=\{V \in \Gr_{m,n}^{\geq 0}: \Delta_I \neq 0 \text{ if and only if } I \in \cP\}.\]
    If $S_{\cP}$ is nonempty, it is a \emph{positroid cell} and $\cP$ is a \emph{positroid}.\hfill$\Box$
\end{defn}

Positroids are in bijection with many combinatorial objects, among them Grassmann necklaces and bounded affine permutations. We review bounded affine permutations first.

\begin{defn} A \emph{bounded affine permutation} (of \emph{type} $(m,n)$) is a bijection $f:\bZ\rightarrow \bZ$ such that:
\begin{itemize}
    \item[-] $i\leq f(i)\leq i+n$
    \item[-] $f(i+n)=f(i)+n$ for all $i \in \bZ$
    \item[-] $\frac{1}{n}\sum_{i=1}^n(f(i)-i)=m$.
\end{itemize}
Note that $f$ is uniquely determined by its action on $[n]$. We let $\bdmn$ denote the set of bounded affine permutations of type $(m,n)$. For $f \in \bdmn$, we define the \emph{associated permutation} $\pi_f \in S_n$ by reducing $f(1), \dots, f(n)$ modulo $n$; that is, $\pi_f$ is uniquely determined by the condition
\[\pi_f(i) \equiv f(i) \mod{n}.\]
\hfill$\Box$
\end{defn}

\begin{rmk}\label{rmk:pi_f not enough} Note that $f \in \bdmn$ is not uniquely determined by its associated permutation $\pi_f$. If $\pi_f(i) \neq i$, then $\pi_f(i)$ determines $f(i)$; if $\pi_f(i)=i$, it does not. So $f \in \bdmn$ is determined by $\pi_f$ together with a list of which fixed points of $\pi_f$ are also fixed points of $f$.\hfill$\Box$
\end{rmk}

\begin{defn}\label{def:perm of positroid}
    Let $\cP$ be a positroid and let $A \in S_\cP \subset \Gr_{m, n}^{\geq 0}$. Denote the columns of $A$ by $v_1, \dots, v_n$ and extend $A$ periodically to an $m \times \bZ$ matrix by setting $v_{i+n}=v_i$. For $i \in \bZ$, define $f(i)$ to be the minimal $r \geq i$ for which
    \[v_i \in \Span(v_{i+1}, v_{i+2}, \dots, v_r).\]
    The permutation $f$ is a bounded affine permutation of type $(m,n)$ which we call the \emph{bounded affine permutation of $\cP$}.\hfill$\Box$
\end{defn}
\noindent Note that $f(i)=i$ if and only if $v_i$ is the zero vector and $f(i)=i+n$ if and only if $v_i$ is not in the span of the other columns of $A$. In both cases, $i$ will be a fixed point of $\pi_f$.\\

We now review Grassmann necklaces and use them to define the positroid strata $\Pio_\cP$. For $i\in [n]$, we define the linear ordering $<_i$ on $[n]$ by
\[
i<_i i+1<_i\cdots <_i n<_i 1<_i \cdots <_i i-1.
\]
Note that $<_1$ coincides with the usual ordering of $[n]$. The linear ordering $<_i$ induces a partial order on $\binom{[n]}{m}$: given two $m$-element subsets $J=\{j_1<_ij_2<_i\cdots <_i j_m\}$ and $K=\{k_1<_ik_2<_i\cdots <_i k_m\}$, we say $J\leq_i K$ if $j_s\leq_i k_s$ for all $1\leq s\leq m$.

\begin{defn}
    Let $\cP$ be a positroid. Let $\tI{a}$ denote the unique $<_a$-minimal element of $\cP$ and $\sI{a-1}$ the unique $<_{a}$-maximal element of $\cP$.
    The \emph{target (Grassmann) necklace} of $\cP$ is the tuple 
    $\tNec=(\tI{1}, \tI{2},\dots, \tI{n})$.
    
    \noindent The \emph{source (Grassmann) necklace}\footnote{The target Grassmann necklace is also referred to as just the ``Grassmann necklace" in the literature. Similarly, the source Grassmann necklace is also called ``reverse Grassmann necklace" in the literature.} of $\cP$ is the tuple
    $\sNec=(\sI{1},\sI{2},\dots, \sI{n})$.\hfill$\Box$
\end{defn}

\noindent Using these necklaces, we define positroid strata in $\Gr_{m,n}$, a.k.a.~ {\it open positroid varieties} in \cite{KLS}.

\begin{defn}
    Let $\cP$ be a positroid, with target necklace $\tNec=(\tI{1}, \dots, \tI{n})$ and source necklace $\sNec=(\sI{1}, \dots, \sI{n})$. The \emph{positroid stratum} of $\cP$ is
    \begin{align*}
        \underline{\Pi}^\circ_{\cP}&:= \{V \in \Gr_{m,n}: \Delta_I(V)=0 \text{ for }I \notin \cP \text{ and } \Delta_{\tI{1}}(V) \Delta_{\tI{2}}(V) \cdots \Delta_{\tI{n}}(V) \neq 0\}\\
        &=\{V \in \Gr_{m,n}: \Delta_I(V)=0 \text{ for }I \notin \cP \text{ and } \Delta_{\sI{1}}(V) \Delta_{\sI{2}}(V) \cdots \Delta_{\sI{n}}(V) \neq 0\}.
    \end{align*}
    \hfill$\Box$
\end{defn}

\noindent Positroid strata do stratify the Grassmannian and are smooth, irreducible affine varieties \cite{KLS}. Please note that these differ from \emph{matroid strata}, and are sometimes called \emph{open positroid varieties} in the literature.

\begin{rmk}
    For the remainder of the paper, we will deal exclusively with the affine cone over $\underline{\Pi}^\circ_{\cP} \subset \bP^{\binom{n}{m}-1}$, which we denote by $\Pio_{\cP}$. Abusing notation, we also call $\Pio_{\cP}$ a positroid stratum.\hfill$\Box$ 
\end{rmk}

So far, we have given a map from positroids to bounded affine permutations and to target and source necklaces. We now discuss the map from target and source necklaces to bounded affine permutations. The target necklace of a positroid satisfies the following exchange property \cite[Definition 16.1, Lemma 16.3]{Pos}:
\begin{itemize}
    \item if $a\in \tI{a}$, then $\tI{a+1}=(\tI{a}\setminus \{a\})\cup \{b\}$ for some $b$ ($b$ may or may not equal $a$);
    \item if $a\notin \tI{a}$, then $\tI{a+1}=\tI{a}$.
\end{itemize}
Furthermore, all tuples $\tNec=(\tI{1}, 
\dots, \tI{n})$ with $\tI{a} \in \binom{[n]}{m}$ satisfying this exchange property are target necklaces for some positroid.

\noindent A target necklace $\tNec$ in $\binom{[n]}{m}$ corresponds bijectively to a bounded affine permutation $f \in \bdmn$, i.e.~ to a permutation $\pi_f$ and a list of fixed points of $f$, by Remark~\ref{rmk:pi_f not enough}. The correspondence is:
\begin{itemize}
    \item If $a \in \tI{a}$, then $\pi_f(a)=b$ and $f(a) \neq a$.
    \item If $a \notin \tI{a}$, then $\pi_f(a)=a$ and $f(a)=a$.
\end{itemize}

\noindent If $\tNec$ is the target necklace for positroid $\cP$, then the construction above gives the bounded affine permutation $f$ for $\cP$ \cite[Proposition 16.4]{Pos}. There is an analogous story with source necklaces $\sNec$, see \cite[Section 2.1]{MullerSpeyertwist}. In particular, the relation $\tI{a+1}=\pi_f(\sI{a})$ for all $1\leq a\leq n$, indices mod $n$, holds between the source and target necklaces of a bounded affine permutation $f$. Finally, \cite[Theorem 6]{Oh} shows that one can recover the positroid from either necklace.

\begin{comment}
\MSB{replace this with the lemma below, as a definition} The source necklace of a positroid satisfies
\begin{itemize}
    \item if $a\in \sI{a}$, then $\sI{a-1}=(\sI{a}\setminus \{a\})\cup\{b\}$;
    \item if $a\notin \sI{a}$, then $\sI{a-1}=\sI{a}$.
\end{itemize}
\noindent Furthermore, all tuples $\sNec=(\sI{1}, 
\dots, \sI{n})$ with $\sI{a} \in \binom{[n]}{m}$ satisfying the above are source necklaces for a positroid $\cP$. The bounded affine permutation $f$ of $\cP$ can be recovered from $\sNec$ via:
\begin{itemize}
    \item If $a \in \sI{a}$, then $a=\pi_f(b)$ and $f(b) \neq b$.
    \item If $a \notin \sI{a}$, then $f(a)=a$
\end{itemize}

\noindent By comparing the exchange properties of the target and source necklaces of $\cP$, one concludes the following direct relation between the two necklaces:

\begin{lem}\label{lem: relation btw target and source Grasssmann necklaces}\DW{change this to a definition} Let $\cP$ be a positroid, with bounded affine permutation $f$, target necklace $\tNec$ and source necklace $\sNec$. Then $\tI{a+1}=\pi_f(\sI{a})$ for all $1\leq a\leq n$ (indices mod $n$).
\end{lem}

\end{comment}

% \begin{center}
% \RC{Where is this theorem used?}
% \end{center}
% \begin{thm}[{\cite[Theorem 6]{Oh}}]\label{thm:grass necklace} Let $\cP$ be a positroid with target and source necklaces $\tNec$ and $\sNec$, respectively. Then 
% \[\cP=\left\{J \in \binom{[n]}{m}: \tI{a} \leq_a J \text{ for all }a=1, \dots, n\right\}= \left\{J \in \binom{[n]}{m}: J \leq_{a} \sI{a-1}\text{ for all }a=1, \dots, n\right\}.\]
% \end{thm}

In summary, any of the following four items determines the remaining three.
\[
\begin{tikzpicture}[scale=0.6]
    \node (s) at (0,0) [] {bounded affine permutation $f$};
    \node (e) at (6,3) [] {Source necklace $\sNec$};
    \node (w) at (-6,3) [] {Target necklace $\tNec$};
    \node (n) at (0,6) [] {Positroid $\cP$};
    \draw [<->] (e) -- node [above] {$\pi_f$} (w);
    \draw [<->] (s) -- node [below right] {exchange property} (e);
    \draw [<->] (s) -- node [below left] {exchange property} (w);
    \draw [<->] (n) -- node [above right] {maximal w.r.t. $<_{a+1}$} (e);
    \draw [<->] (n) -- node [above left] {minimal w.r.t. $<_a$} (w);
\end{tikzpicture}
\]

\noindent The remaining part of this subsection describes the construction of a positive braid from a given positroid. 

\begin{defn}\label{defn:positroid-braid-word}
    Let $\cP$ be a positroid of type $(m,n)$ with bounded affine permutation $f$ and target Grassmann necklace $\tNec=(\tI{1}, \dots, \tI{n})$. Denote by $b_i:=|\{a \in \tI{i}: a <_i \pi_f(i-1)\}|$. We define the positive braid word $\beta_\cP:=\beta_1 \beta_2 \cdot\ldots\cdot \beta_n$, where $\beta_i$ is the empty word if $i-1$ is a fixed point of $f$ and otherwise
    \[\beta_i :=s_{m-1}\cdot s_{m-2}\cdot \ldots\cdot s_{m-b_i}.\]
    We call $\beta_\cP$ a \emph{positroid braid word}.
    It represents a positive braid $[\beta_\cP]$ in $\Br_m$.\hfill$\Box$
\end{defn}

\begin{rmk}\label{rmk:drawing-braid}
 One way to draw the wiring diagram of $\beta_{\cP}$ is as follows: write the elements of $\tI{i}$ in a column so they increase with respect to $<_i$, reading top-to-bottom. Arrange these columns from left to right in the order $\tI{n}, \tI{1}, \dots, \tI{n-1}, \tI{n}$. Then draw a line from $a \in \tI{i-1}$ to $a \in \tI{i}$ for $a \neq i-1$, and, if $i-1$ appears in $\tI{i-1}$, draw a line connecting $i-1 \in \tI{i-1}$ with $\pi_f(i-1) \in \tI{i}$. See Figure ~\ref{fig:positroid positive braid} (left) for an example.\hfill$\Box$
\end{rmk}

We will also use a periodically repeating version of the wiring diagram of $\beta_{\cP}$. 

\begin{defn}\label{def:grid-pattern} The \emph{periodic grid pattern} of $\beta_\cP$ is defined as follows. Arrange $\bZ^2$ in the plane using matrix conventions. For each $i \in \bZ$, draw a vertical segment from $y=i$ to $y=f(i)$ at horizontal position $x=i+0.5$. Call this segment the \emph{$(i, f(i))$-chord}. Then draw a horizontal segment at $y=f(i)$ connecting the bottom of the $(i, f(i))$-chord to the top of the $(f(i), f^2(i))$-chord. Note that between $x=i-1 + an$ and $x=i + an$ we see the wiring diagram of $\beta_{i}$, so the periodic grid pattern is the wiring diagram of $\dots \beta_n \beta_1 \beta_2 \dots \beta_n \beta_1 \dots$.\hfill$\Box$
\end{defn}

\noindent Note that the vertical slice $x=i$ through the periodic grid pattern intersects horizontal segments at heights $\tI{i} \pmod n$. Also, the periodic grid pattern is a periodic covering of the cyclic braid $\beta_\cP$. 

\begin{rmk}

    One may make an analogous definition to Definition~\ref{defn:positroid-braid-word} using the source Grassmann necklace rather than the target. For $\cP$ a positroid, let $d_i:=|\{a \in \sI{i-1}: \pi_f^{-1}(i) <_{i} a \}|$ and set $\delta_i$ to be the empty word if $f(i)=i$ and otherwise
    \[\delta_i=s_{d_i}\cdot s_{d_i-1}\cdot\ldots\cdot s_1.\]
    Then we define $\delta_\cP:=\delta_1 \delta_2\cdot\ldots\cdot\delta_n$ to be the \emph{source positroid braid word}. One may also adapt the recipes in Remark~\ref{rmk:drawing-braid} to $\delta_\cP$. In particular, for the first way of drawing the wiring diagram, write the elements of $\sI{i}$ so they increase with respect to $<_{i+1}$, reading top to bottom. Draw a line from $\pi_f^{-1}(i) \in \sI{i-1}$ with $i \in \sI{i}$, and lines connecting all other $a \in \sI{i-1}$ with $a \in \sI{i}$.There is also a periodic grid pattern for the cyclic braid $\delta_\cP$, analogous to the periodic grid pattern for $\beta_\cP$, and it coincides with the pattern described in terms of rank matrices in \cite[Section 3]{STWZ}.\hfill$\Box$
\end{rmk}

\noindent In Subsection \ref{subsec: toggles and RIII moves} we prove that the $(-1)$-closure of the positive braid $[\delta_\cP]$, cf.~\cite[Section 2.2]{CasalsNg}, is Legendrian isotopic to the Legendrian link $\Lambda_\cP$, which is defined in terms of $[\beta_\cP]$. Also, despite the fact that we only use the positive braid word $\beta_\cP$ in the next few sections, the positive braid word $\delta_\cP$ will come in handy in proving Theorem \ref{thm: frame sheaf moduli = positroid}.

\begin{exmp}\label{exmp:positroid braid} Consider the positroid $\cP$ corresponding the bounded affine permutation
\[
f = \begin{pmatrix}\cdots & 1 & 2 & 3 & 4 & 5 & 6 & 7 & \cdots \\
                     \cdots & 3 & 9 & 8 & 7 & 5 & 11 & 13 &\cdots \end{pmatrix}
\]
Its target Grassmann necklace $\overrightarrow{\cI}$ is drawn in Figure \ref{fig:positroid positive braid}, together with the wiring diagram of the positive braid word $\beta_\cP$ it produces. The periodic grid braid pattern is drawn on the right. \hfill$\Box$%\DW{Rearrange the figures so that the braids are in the same figure and the grid pattern are in the same figure.}
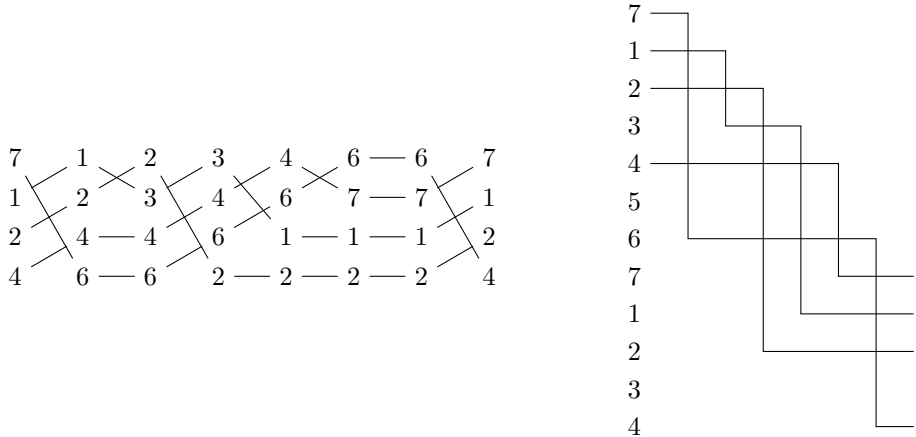
\begin{figure}[H]
    \centering
    \begin{tikzpicture}[scale=0.75,baseline=0]
    \node (01) at (-1.2,2.1) [] {$7$};
    \node (02) at (-1.2,1.4) [] {$1$};
    \node (03) at (-1.2,0.7) [] {$2$};
    \node (04) at (-1.2,0) [] {$4$};
    \node (11) at (0,2.1) [] {$1$};
    \node (12) at (0,1.4) [] {$2$};
    \node (13) at (0,0.7) [] {$4$};
    \node (14) at (0,0) [] {$6$};
    \node (21) at (1.2,2.1) [] {$2$};
    \node (22) at (1.2,1.4) [] {$3$};
    \node (23) at (1.2,0.7) [] {$4$};
    \node (24) at (1.2,0) [] {$6$};
    \node (31) at (2.4,2.1) [] {$3$};
    \node (32) at (2.4,1.4) [] {$4$};
    \node (33) at (2.4,0.7) [] {$6$};
    \node (34) at (2.4,0) [] {$2$};
    \node (41) at (3.6,2.1) [] {$4$};
    \node (42) at (3.6,1.4) [] {$6$};
    \node (43) at (3.6,0.7) [] {$1$};
    \node (44) at (3.6,0) [] {$2$};
    \node (51) at (4.8,2.1) [] {$6$};
    \node (52) at (4.8,1.4) [] {$7$};
    \node (53) at (4.8,0.7) [] {$1$};
    \node (54) at (4.8,0) [] {$2$};
    \node (61) at (6,2.1) [] {$6$};
    \node (62) at (6,1.4) [] {$7$};
    \node (63) at (6,0.7) [] {$1$};
    \node (64) at (6,0) [] {$2$};
    \node (71) at (7.2,2.1) [] {$7$};
    \node (72) at (7.2,1.4) [] {$1$};
    \node (73) at (7.2,0.7) [] {$2$};
    \node (74) at (7.2,0) [] {$4$};
    \draw (11) -- (22);
    \draw (12) -- (21);
    \draw (13) -- (23);
    \draw (14) -- (24);
    \draw (31) -- (22);
    \draw (34) -- (21);
    \draw (32) -- (23);
    \draw (33) -- (24);
    \draw (31) -- (43);
    \draw (34) -- (44);
    \draw (32) -- (41);
    \draw (33) -- (42);
    \draw (53) -- (43);
    \draw (54) -- (44);
    \draw (52) -- (41);
    \draw (51) -- (42);
    \foreach \i in {1,...,4}
    {
        \draw (5\i) -- (6\i);
    }
    \draw (61) -- (74);
    \draw (62) -- (71);
    \draw (63) -- (72);
    \draw (64) -- (73);
    \draw (13) -- (04);
    \draw (14) -- (01);
    \draw (11) -- (02);
    \draw (12) -- (03);
    \end{tikzpicture} \quad \quad \quad \quad 
    \begin{tikzpicture}[scale=0.5,baseline=0]
        \draw (0,7) node [left] {$7$} -- (1,7) -- (1,1) -- (6,1) -- (6,-4) -- (7,-4);
        \draw (0,6) node [left] {$1$} -- (2,6) -- (2,4) -- (4,4) -- (4,-1) -- (7,-1);
        \draw (0,5) node [left] {$2$} -- (3,5) -- (3,-2) -- (7,-2);
        \draw (0,3) node [left] {$4$} -- (5,3) -- (5,0) -- (7,0);
        \node at (0,4) [left] {$3$};
        \node at (0,2) [left] {$5$};
        \node at (0,1) [left] {$6$};
        \node at (0,0) [left] {$7$};
        \node at (0,-1) [left] {$1$};
        \node at (0,-2) [left] {$2$};
        \node at (0,-3) [left] {$3$};
        \node at (0,-4) [left] {$4$};
        \end{tikzpicture}
    \caption{Example of a positive braid word $\beta_\cP$ and its periodic grid pattern.}
    \label{fig:positroid positive braid}
\end{figure}

\end{exmp}

\subsection{Preliminaries on Reduced Plabic Graphs}\label{ssec:prelim_plabicgraphs}

Plabic graphs were introduced by Postnikov \cite{Pos} and are often used to describe cluster structures associated with positroids. We assume some familiarity with plabic graphs, cf.~ibid.

\begin{defn} Let $\bD$ be the unit disk with $n$ marked points on its boundary, labeled $1,2,\dots, n$ clockwise. A \emph{plabic graph} $\bG$ on $\bD$ is a planar graph embedded in $\bD$ such that each boundary marked point is a vertex (called a \emph{boundary vertex}), each boundary vertex is adjacent to a unique internal vertex (i.e.~vertex in the interior of $\bD$) and each internal vertex of $\bG$ is colored \emph{solid} or \emph{empty}.\footnote{We use solid and empty instead of the traditional black and white to avoid confusion with colors on weaves.}\\

\noindent By definition, the \emph{faces} of $\bG$ are the connected components of $\bD\setminus \bG$. A face containing a boundary vertex is said to be a \emph{boundary face}. If a boundary vertex is adjacent to a degree 1 internal vertex $v$, then $v$ is said to be a \emph{lollipop}.\hfill$\Box$
\end{defn}

\begin{rmk} For the remainder of the paper, we assume that all plabic graphs are \emph{reduced}, cf.~ \cite[Section 7.4]{FWZ}). We henceforth omit this adjective.\hfill$\Box$
\end{rmk}

\noindent We now describe how to associate a bounded affine permutation to a plabic graph. Recall that a bounded affine permutation $f \in \bdmn$ is determined by $\pi_f$ and a list of fixed points of $f$, cf.~Remark \ref{rmk:pi_f not enough}.

\begin{defn}\label{def:zig-zag-strand}
    A \emph{zig-zag strand} on $\bG$ is an oriented curve on $\bD$ that begins at a boundary vertex, obeys the ``rules of the road" in Figure~\ref{fig:rule of the road}, and ends at a boundary vertex. We denote the strand starting at boundary vertex $i$ by $\zeta_i$. We define the bounded affine permutation $f$ of $\bG$ as follows: if $\zeta_i$ ends at $j$, we set $\pi_f(i):=j$ and declare $i \in [n]$ to be a fixed point of $f$ if and only if there is a solid lollipop at $i$.\\
    
    \noindent If the bounded affine permutation $f$ of $\bG$ is of type $(m,n)$, then we say that $\bG$ has \emph{rank} $m$. By definition, a plabic graph $\bG$ is associated to a positroid $\cP$ if its bounded affine permutation $f$ corresponds to $\cP$. \hfill$\Box$
\end{defn}

\begin{figure}[H]
    \centering
    \begin{tikzpicture}
    \empver[black] (0) (0,0);
    \foreach \i in {0,...,4}{
        \draw[shorten <=0.2cm] (0) -- (90+72*\i:1.5);
        \draw[red,decoration={markings,mark=at position 0.5 with {\arrow{>}}},postaction={decorate}] (85+72*\i:1.5) to [out=-90+72*\i,in=144+72*\i] (54+72*\i:0.6) to [out=-36+72*\i,in=198+72*\i] (23+72*\i:1.5);
    }
    \end{tikzpicture}\hspace{2cm}
    \begin{tikzpicture}
    \solver[black] (0) (0,0);
    \foreach \i in {0,...,4}{
        \draw[shorten <=0.2cm] (0) -- (90+72*\i:1.5);
        \draw[red,decoration={markings,mark=at position 0.5 with {\arrow{<}}},postaction={decorate}] (95+72*\i:1.5) to [out=-90+72*\i,in=144+72*\i] (54+72*\i:0.6) to [out=-36+72*\i,in=198+72*\i] (13+72*\i:1.5);
    }
    \end{tikzpicture}\hspace{2cm}
    \begin{tikzpicture}
        \draw [very thick] (-1,0) -- (1,0);
        \draw (0,0) node [below] {$i$} -- (0,0.8);
        \empver[black] (0) (0,1);
        \draw [red,decoration={markings,mark=at position 0.5 with {\arrow{>}}},postaction={decorate}] (-0.3,0) -- (-0.3,1) arc (180:0:0.3) -- (0.3,0);
    \end{tikzpicture}\hspace{2cm}
    \begin{tikzpicture}
        \draw [very thick] (-1,0) -- (1,0);
        \draw (0,0) node [below] {$i$} -- (0,0.8);
        \solver[black] (0) (0,1);
        \draw [red,decoration={markings,mark=at position 0.5 with {\arrow{>}}},postaction={decorate}] (-0.3,0) to [in=-90] (0.3,1) arc (0:180:0.3) to [out=-90] (0.3,0);
    \end{tikzpicture}
    \caption{Rules of the road: a zig-zag strand always makes the sharpest possible at left turn at an empty vertex and makes the sharpest possible right turn at a solid vertex.
    }
    \label{fig:rule of the road}
\end{figure}
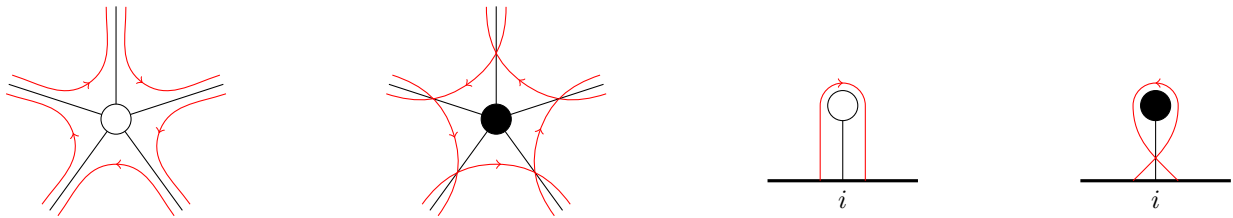

Let $\bG$ be a reduced plabic graph. We associate a subset of $\{1,2,\dots, n\}$ with each face of $\bG$ in two ways. Let us denote the \emph{source} and \emph{target} of $\zeta$ by $s(\zeta)$ and $t(\zeta)$, respectively. For $F$ a face of $\bG$, we define
\[
\overrightarrow{I_F}:=\{t(\zeta)\mid \text{$F$ is to the left of $\zeta$}\},
\]
\[
\overleftarrow{I_F}:=\{s(\zeta)\mid \text{$F$ is to the left of $\zeta$}\}.
\]
We call $\overrightarrow{I_F}$ the \emph{target label} of $F$ and  $\overleftarrow{I_F}$ the \emph{source label} of $F$. In particular, the target (resp. source) labels of the boundary faces (counted with multiplicity when lollipops are present) form the target (resp. source) Grassmann necklace corresponding to $f$.

\begin{exmp}\label{exmp:plabic graph example 1} The plabic graph $\bG$ in Figure \ref{fig:plabic graph example 1} is associated with the positroid $\cP$ in Example \ref{exmp:positroid braid}. \hfill$\Box$
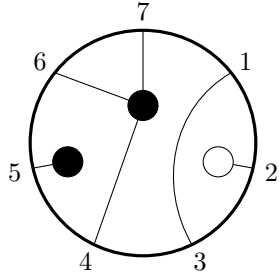
\begin{figure}[H]
    \centering
    \begin{tikzpicture}
    \draw [very thick] (0,0) circle [radius=1.5];
    \foreach \i in {1,...,7}
    {
    \coordinate (\i) at (90-\i*360/7:1.5);
    \node at (90-\i*360/7:1.75) [] {$\i$};
    }
    \empver[black] (8) (1,-0.25);
    \solver[black] (9) (-1,-0.25);
    \solver[black] (10) (0,0.5);
    \draw[shorten <=0.2cm] (8) -- (2);
    \draw[shorten <=0.2cm] (9) -- (5);
    \draw[shorten <=0.2cm] (10) -- (7);
    \draw[shorten <=0.2cm] (10) -- (6);
    \draw[shorten <=0.2cm] (10) -- (4);
    \draw (1) to [out=-150,in=120] (3);
    \end{tikzpicture}
    \caption{A plabic graph associated with the positroid in Example \ref{exmp:positroid braid}.}
    \label{fig:plabic graph example 1}
\end{figure}
\end{exmp}

Next, let us discuss some moves that can be applied to reduced plabic graphs.

\begin{itemize}
    \item \textbf{Contraction/expansion}. An edge connecting two internal vertices of the same color can be contracted. The expansion move is the reverse operation.
    \item \textbf{Bivalent vertex insertion/deletion}. We can add and remove degree-2 internal vertices.
\end{itemize}

\begin{defn}\label{defn:equivalence of reduced plabic graphs} Two plabic graphs are \emph{equivalent} if they are related by a sequence of contraction/expansion and bivalent vertex insertion/deletion moves\footnote{This is a slight departure from the literature, which usually treats square moves as a plabic graph equivalence. However, we choose to distinguish square moves since they correspond to mutation at the level of seeds.}. Note that equivalent plabic graphs have the same permutation and the same face labels (for both source and target labelings).\hfill$\Box$
\end{defn}

\begin{rmk}\label{rmk:trivalent solid vertices} Using contraction/expansion and bivalent vertex deletion, we can turn all vertices that are not lollipops, solid and empty, into trivalent vertices. The fact that all non-lollipop vertices can be made trivalent is used in our construction of weaves from reduced plabic graphs.\hfill$\Box$
\end{rmk}

\noindent In addition to the two types of equivalence moves, there is one more move that we do {\bf not} consider to be a plabic graph equivalence. It is the following move:

\begin{itemize}
    \item \textbf{Square Move}. The square move, also known as a \emph{mutation} on reduced plabic graphs, changes a reduced plabic graph locally as follows.
    \begin{figure}[H]
        \centering
        \begin{tikzpicture}[baseline=0]
        \empver[black] (1) (-0.4,-0.4);
        \solver[black] (2) (-0.4,0.4);
        \empver[black] (3) (0.4,0.4);
        \solver[black] (4) (0.4,-0.4);
        \edge[black] (1) (2);
        \edge[black] (2) (3);
        \edge[black] (3) (4);
        \edge[black] (1) (4);
        \draw[shorten <=0.2cm] (1) -- (-1,-1);
        \draw[shorten <=0.2cm] (2) -- (-1,1);
        \draw[shorten <=0.2cm] (3) -- (1,1);
        \draw[shorten <=0.2cm] (4) -- (1,-1);
        \end{tikzpicture} \hspace{1cm} $\longleftrightarrow$ \hspace{1cm}
        \begin{tikzpicture}[baseline=0]
        \solver[black] (1) (-0.4,-0.4);
        \empver[black] (2) (-0.4,0.4);
        \solver[black] (3) (0.4,0.4);
        \empver[black] (4) (0.4,-0.4);
        \edge[black] (1) (2);
        \edge[black] (2) (3);
        \edge[black] (3) (4);
        \edge[black] (1) (4);
        \draw[shorten <=0.2cm] (1) -- (-1,-1);
        \draw[shorten <=0.2cm] (2) -- (-1,1);
        \draw[shorten <=0.2cm] (3) -- (1,1);
        \draw[shorten <=0.2cm] (4) -- (1,-1);
        \end{tikzpicture}
        \caption{Square move.}
        \label{fig:square move}
    \end{figure}
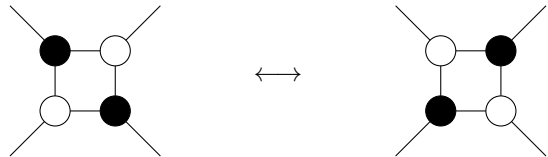
\end{itemize}
Note that under a square move, the bounded affine permutation does not change, but the source and target labels of the center face do change.

% \begin{center}
% \RC{Where is this theorem used?}\MSB{In appendix}
% \end{center}

\begin{thm}[{\cite[Theorem 12.7]{Pos}}]\label{thm: square moves} Two reduced plabic graphs have the same bounded affine permutation if and only if they are related by a sequence of equivalences and mutations.
\end{thm}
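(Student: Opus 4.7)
The plan is to handle the two directions separately, with the ``if'' direction being a direct case analysis and the ``only if'' direction requiring passage through a canonical form.

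For the forward direction, I would check that each of the three allowed moves (contraction/expansion, bivalent vertex insertion/deletion, square move) preserves the bounded affine permutation. Since the bounded affine permutation is read off from the endpoints of zig-zag strands (together with the record of solid lollipops), it suffices to show that each move leaves the combinatorial pairing of boundary vertices by strands unchanged, and likewise leaves the set of solid lollipops unchanged. The contraction/expansion and bivalent moves are local and clearly homotope strands within the disk without altering any endpoint. The square move is the only nontrivial check: drawing the four strands crossing the square in both configurations of Figure \ref{fig:square move} and applying the rules of the road in Figure~\ref{fig:rule of the road}, one verifies that the strand pairing on the boundary of the local neighborhood is the same in both configurations.

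For the reverse direction, the plan is to fix a canonical reduced plabic graph $\bG_f$ for each bounded affine permutation $f \in \bdmn$ and then prove that an arbitrary reduced plabic graph $\bG$ with bounded affine permutation $f$ can be transformed into $\bG_f$ via equivalences and square moves. A natural choice of $\bG_f$ is the ``Le-diagram'' plabic graph associated to $f$ (cf.\ \cite[Section~20]{Pos} and the Le-diagram algorithm referenced in the comment at the top of the excerpt). The reduction $\bG \leadsto \bG_f$ is carried out in three steps: (a) use contraction/expansion and bivalent vertex removal to bring $\bG$ into trivalent bipartite form, cf.\ Remark~\ref{rmk:trivalent solid vertices}; (b) interpret the zig-zag strands of the trivalent bipartite graph as a reduced strand (wiring) diagram realizing $f$; (c) transport the strand diagram to the one of $\bG_f$ through a sequence of Reidemeister-III triangle moves. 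Because both graphs are reduced, no bigon (Reidemeister II) moves are ever required, and each triangle move on the strand diagram corresponds locally to a square move on the plabic graph (possibly preceded and followed by contractions/expansions to expose the square face).

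The main obstacle is step (c), namely showing that any two reduced strand diagrams realizing the same bounded affine permutation $f$ differ by triangle moves. The standard route is through reduced words: each reduced strand diagram can be encoded by a reduced word for $f$ in the generators of the affine symmetric group (read off by sweeping a generic line across the disk and recording crossings), and by Matsumoto's theorem in its affine form, any two reduced words for $f$ differ by braid and commutation relations, which are precisely the triangle and disjoint-triangle moves on strand diagrams. A secondary technical difficulty is making sure the lollipop data is tracked correctly throughout: fixed points of $\pi_f$ with $f(i)=i+n$ and $f(i)=i$ behave differently (cf.\ Remark~\ref{rmk:pi_f not enough}), and one must verify that the moves do not create or destroy lollipops of the wrong color, which amounts to observing that lollipops correspond to strands with coincident source and target (and the rules of the road pin down the color).
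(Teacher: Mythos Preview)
The paper does not prove this theorem at all: it is stated with the citation \cite[Theorem 12.7]{Pos} and no proof is given, so there is nothing in the paper to compare your proposal against. Your outline is a reasonable sketch of the standard argument (essentially Postnikov's), though step (c) as you've phrased it glosses over the main subtlety: zig-zag strand diagrams of plabic graphs are not literally wiring diagrams for affine permutations, and the passage from ``reduced plabic graph'' to ``reduced word for $f$'' requires more care than a sweep-line argument (Postnikov goes through Le-diagrams and a direct induction rather than Matsumoto's theorem for $\widetilde{S}_n$).
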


A plabic graph $\bG$ has an associated quiver $Q_\bG$, e.g.~as constructed in \cite[Chapter 7.1]{FWZ}. In particular, the vertex set of $Q_\bG$ are the faces of $\bG$, and we freeze all quiver vertices corresponding to the boundary faces. The arrows of $Q_\bG$ can be obtained by drawing a counterclockwise oriented cycle (of arrows) around each solid vertex and then deleting a maximal collection of 2-cycles. See Figure \ref{fig: quiver example} for an example of a $\bG$ and its quiver $Q_\bG$. By definition, the \emph{target seed} $\Sigma_T(\bG)$ is the seed with cluster $\{\Delta_{\tI{}(F)}: F \text{ a face of }\bG\}$ and quiver $Q_\bG$. The \emph{source seed} $\Sigma_S(\bG)$ is the seed with cluster $\{\Delta_{\sI{}(F)}: F \text{ a face of }\bG\}$ and quiver $Q_\bG$. 

\begin{figure}[H]
    \centering
    \begin{tikzpicture}[baseline=0]
        \empver[black] (1) (-0.4,-0.4);
        \solver[black] (2) (-0.4,0.4);
        \empver[black] (3) (0.4,0.4);
        \solver[black] (4) (0.4,-0.4);
        \edge[black] (1) (2);
        \edge[black] (2) (3);
        \edge[black] (3) (4);
        \edge[black] (1) (4);
        \foreach \i in {1,...,4}
        {
        \edge[black] (\i) (-45-90*\i:1.4);
        }
        \draw [very thick] (0,0) circle [radius=1.2];
    \end{tikzpicture} \hspace{2cm}
    \begin{tikzpicture} [baseline=0]
    \node (0) at (0,0) [] {$\bullet$};
    \node (1) at (-1.2,0) [] {$\square$};
    \node (2) at (0,-1.2) [] {$\square$};
    \node (3) at (1.2,0) [] {$\square$};
    \node (4) at (0,1.2) [] {$\square$};
    \draw [->] (0) -- (4);
    \draw [->] (4) -- (1);
    \draw [->] (1) -- (0);
    \draw [->] (0) -- (2);
    \draw [->] (2) -- (3);
    \draw [->] (3) -- (0);
    \end{tikzpicture}
    \caption{Example of a reduced plabic graph (left) and its quiver (right).}
    \label{fig: quiver example}
\end{figure}
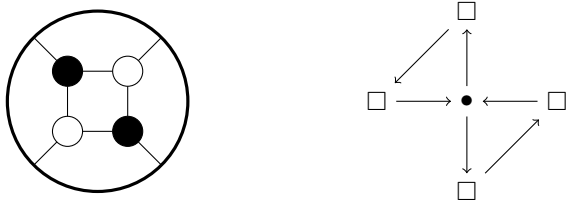

The results of \cite{GL} show that for any reduced plabic graph $\bG$ associated to $\cP$, $\bC[\Pio_\cP]$ is equal to the cluster algebra with initial seed $\Sigma_S(\bG)$. This in turn implies $\bC[\Pio_\cP]$ is equal to the cluster algebra with initial seed $\Sigma_T(\bG)$ (see e.g. \cite[Remark 2.16]{FSB}). We emphasize however that $\Sigma_S(\bG)$ and $\Sigma_T(\bG)$ are \emph{not} related by mutation, and so the cluster algebras $\cA(\Sigma_S(\bG))$ and $ \cA(\Sigma_T(\bG))$ are different (e.g.~ they have different cluster variables). In this article, we use the \emph{target cluster structure} on $\Pio_\cP$, i.e.~we view its coordinate ring as the cluster algebra $\cA(\Sigma_T(\bG))$.

\begin{defn} An orientation of the edges of $\bG$ is called a \emph{perfect orientation} if each solid vertex has a unique outgoing edge and each empty vertex has a unique incoming edge. An orientation is \emph{acyclic} if there are no oriented cycles. A boundary marked point is a \emph{source} of a perfect orientation if the adjacent edge is oriented away from the boundary point, and is a \emph{sink} otherwise.\hfill$\Box$
\end{defn}

\begin{exmp} Figure \ref{fig:perfect orientations} shows three different perfect orientations on the same reduced plabic graph. Note that the first two are acyclic and the last one is not acyclic. The source sets of the these perfect orientations are $\{1,2\}$, $\{1,3\}$, and $\{2,4\}$, respectively.\hfill$\Box$
\begin{figure}[H]
    \centering
    \begin{tikzpicture}[baseline=0]
        \foreach \i in {1,...,4}
        {
        \coordinate (\i) at (135-90*\i:1.2);
        \node at (135-90*\i:1.4) [] {$\i$};
        }
        \coordinate (5) at (0.4,0.4);
        \coordinate (6) at (0.4,-0.4);
        \coordinate (7) at (-0.4,-0.4);
        \coordinate (8) at (-0.4,0.4);
        \draw [decoration={markings,mark=at position 0.5 with {\arrow{>}}},postaction={decorate}] (1) -- (5);
        \draw [decoration={markings,mark=at position 0.5 with {\arrow{>}}},postaction={decorate}] (5) -- (6);
        \draw [decoration={markings,mark=at position 0.5 with {\arrow{>}}},postaction={decorate}] (5) -- (8);
        \draw [decoration={markings,mark=at position 0.5 with {\arrow{>}}},postaction={decorate}] (2) -- (6);
        \draw [decoration={markings,mark=at position 0.5 with {\arrow{>}}},postaction={decorate}] (6) -- (7);
        \draw [decoration={markings,mark=at position 0.5 with {\arrow{>}}},postaction={decorate}] (7) -- (8);
        \draw [decoration={markings,mark=at position 0.5 with {\arrow{>}}},postaction={decorate}] (7) -- (3);
        \draw [decoration={markings,mark=at position 0.5 with {\arrow{>}}},postaction={decorate}] (8) -- (4);
        \draw [fill=white] (5) circle [radius=0.2];
        \draw [fill=black] (6) circle [radius=0.2];
        \draw [fill=white] (7) circle [radius=0.2];
        \draw [fill=black] (8) circle [radius=0.2];
        \draw [very thick] (0,0) circle [radius=1.2];
    \end{tikzpicture}\hspace{2cm}
    \begin{tikzpicture}[baseline=0]
        \foreach \i in {1,...,4}
        {
        \coordinate (\i) at (135-90*\i:1.2);
        \node at (135-90*\i:1.4) [] {$\i$};
        }
        \coordinate (5) at (0.4,0.4);
        \coordinate (6) at (0.4,-0.4);
        \coordinate (7) at (-0.4,-0.4);
        \coordinate (8) at (-0.4,0.4);
        \draw [decoration={markings,mark=at position 0.5 with {\arrow{>}}},postaction={decorate}] (1) -- (5);
        \draw [decoration={markings,mark=at position 0.5 with {\arrow{>}}},postaction={decorate}] (5) -- (6);
        \draw [decoration={markings,mark=at position 0.5 with {\arrow{>}}},postaction={decorate}] (5) -- (8);
        \draw [decoration={markings,mark=at position 0.5 with {\arrow{<}}},postaction={decorate}] (2) -- (6);
        \draw [decoration={markings,mark=at position 0.5 with {\arrow{<}}},postaction={decorate}] (6) -- (7);
        \draw [decoration={markings,mark=at position 0.5 with {\arrow{>}}},postaction={decorate}] (7) -- (8);
        \draw [decoration={markings,mark=at position 0.5 with {\arrow{<}}},postaction={decorate}] (7) -- (3);
        \draw [decoration={markings,mark=at position 0.5 with {\arrow{>}}},postaction={decorate}] (8) -- (4);
        \draw [fill=white] (5) circle [radius=0.2];
        \draw [fill=black] (6) circle [radius=0.2];
        \draw [fill=white] (7) circle [radius=0.2];
        \draw [fill=black] (8) circle [radius=0.2];
        \draw [very thick] (0,0) circle [radius=1.2];
    \end{tikzpicture}\hspace{2cm}
    \begin{tikzpicture}[baseline=0]
        \foreach \i in {1,...,4}
        {
        \coordinate (\i) at (135-90*\i:1.2);
        \node at (135-90*\i:1.4) [] {$\i$};
        }
        \coordinate (5) at (0.4,0.4);
        \coordinate (6) at (0.4,-0.4);
        \coordinate (7) at (-0.4,-0.4);
        \coordinate (8) at (-0.4,0.4);
        \draw [decoration={markings,mark=at position 0.5 with {\arrow{<}}},postaction={decorate}] (1) -- (5);
        \draw [decoration={markings,mark=at position 0.5 with {\arrow{<}}},postaction={decorate}] (5) -- (6);
        \draw [decoration={markings,mark=at position 0.5 with {\arrow{>}}},postaction={decorate}] (5) -- (8);
        \draw [decoration={markings,mark=at position 0.5 with {\arrow{>}}},postaction={decorate}] (2) -- (6);
        \draw [decoration={markings,mark=at position 0.5 with {\arrow{<}}},postaction={decorate}] (6) -- (7);
        \draw [decoration={markings,mark=at position 0.5 with {\arrow{<}}},postaction={decorate}] (7) -- (8);
        \draw [decoration={markings,mark=at position 0.5 with {\arrow{>}}},postaction={decorate}] (7) -- (3);
        \draw [decoration={markings,mark=at position 0.5 with {\arrow{<}}},postaction={decorate}] (8) -- (4);
        \draw [fill=white] (5) circle [radius=0.2];
        \draw [fill=black] (6) circle [radius=0.2];
        \draw [fill=white] (7) circle [radius=0.2];
        \draw [fill=black] (8) circle [radius=0.2];
        \draw [very thick] (0,0) circle [radius=1.2];
    \end{tikzpicture}
    \caption{Different perfect orientations on the same plabic graph.}
    \label{fig:perfect orientations}
\end{figure}
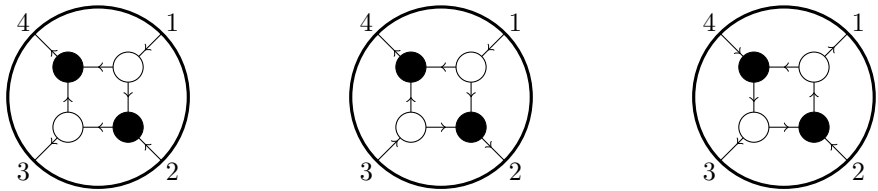
\end{exmp}

\begin{defn}[{\cite[Proposition 5.13]{MullerSpeyertwist}}]\label{defn: perfect orientation with index a} Let $\bG$ be a reduced plabic graph. For any element $\overrightarrow{I_i}$ in the target Grassmann necklace, there exists a unique acyclic perfect orientation on $\bG$ with source set $\overrightarrow{I_i}$, defined as follows\footnote{The construction of \cite[Proposition 5.13]{MullerSpeyertwist} is phrased in terms of matchings: what we write here is their construction translated to perfect orientations.}: if strands $\zeta_a, \zeta_b$ touch edge $e$ with $a <_i b$, then $e$ is oriented in the direction of $\zeta_a$. We denote this perfect orientation by $O_i$.\hfill$\Box$
\end{defn}

When $\bG$ is equipped with an acyclic perfect orientation, the boundary of any face $F$ consists of several oriented edges. We say a vertex $v$ on $\partial F$ is a \emph{source vertex} (resp. \emph{sink vertex}) if the two edges of $\partial F$ incident to $v$ are both out-going (resp. in-coming). The following property of the acyclic perfect orientation with source set $\overrightarrow{I_i}$ will be useful to us.

\begin{prop}\label{prop: unique sink} Let $\bG$ be a reduced plabic graph. In the perfect orientation $O_i$, there is at most one source vertex and at most one sink vertex along $\partial F$ for any face $F$. In particular, a non-boundary face $F$ has exactly one source vertex and one sink vertex along $\partial F$ (as $\partial F$ cannot be an oriented cycle).
\end{prop}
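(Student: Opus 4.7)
The plan is to translate the source/sink condition at vertices of $\partial F$ into a comparison of zig-zag strand labels under $<_i$, and then use a global monotonicity of these labels around $\partial F$ to bound the number of sign changes.

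Walking $\partial F$ counterclockwise, I label the boundary edges $e_1, \dots, e_k$ and vertices $v_1, \dots, v_k$ so that $v_j$ is incident to $e_{j-1}$ and $e_j$. Each $e_j$ is crossed by exactly two zig-zag strands: one, $\zeta_F^L(e_j)$, with $F$ on its left, and another, $\zeta_F^R(e_j)$, with $F$ on its right. By Definition~\ref{defn: perfect orientation with index a}, the orientation of $e_j$ is determined by which of these is $<_i$-smaller. Encode whether $e_j$ is oriented counterclockwise along $\partial F$ by a sign $\epsilon_j \in \{+,-\}$; then $v_j$ is a source exactly when $(\epsilon_{j-1},\epsilon_j) = (-,+)$, and a sink exactly when $(\epsilon_{j-1},\epsilon_j) = (+,-)$. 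The proposition thus reduces to showing that the cyclic sequence $(\epsilon_j)$ has at most one $-\!\to\!+$ and one $+\!\to\!-$ transition.

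To establish this, I would analyze how the strand labels $\zeta_F^L(e_j)$ and $\zeta_F^R(e_j)$ evolve as we walk along $\partial F$. At each vertex $v_j$, the rules of the road in Figure~\ref{fig:rule of the road} determine whether the left-bounding strand continues across $v_j$ (turning around $v_j$ while keeping $F$ on its left) or is replaced by a new strand, in which case the outgoing left-bounding strand is swapped in a controlled way with a right-bounding or exterior strand. This endows the cyclic sequence $(\zeta_F^L(e_j))_j$ with a block structure whose distinct blocks are indexed by $\overrightarrow{I_F}$, and similarly $(\zeta_F^R(e_j))_j$ is blocked by $\overleftarrow{I_F}$. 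The key structural observation I would then prove is that these blocks are arranged in a cyclic order compatible with $<_i$; this forces the sign of the comparison $\zeta_F^L(e_j) <_i \zeta_F^R(e_j)$ to change at most twice around $\partial F$, yielding at most one source and at most one sink along $\partial F$.

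For the ``in particular'' claim about non-boundary faces, $\partial F$ is a closed graph cycle; since $O_i$ is acyclic, this cycle cannot be coherently oriented, so at least one sign transition of each type must occur, forcing exactly one source and one sink. The main obstacle I anticipate is proving the cyclic $<_i$-monotonicity of the strand-label blocks around $\partial F$. This will require a careful case analysis at each $v_j$ according to its color (solid versus empty) and the local configuration of bounding strands, combined with an invariant tying the local rules of the road at $v_j$ to the global order $<_i$; an inductive argument along an acyclic path in $O_i$ from a boundary source in $\overrightarrow{I_i}$ into the interior of the disk is a plausible way to organize this bookkeeping.
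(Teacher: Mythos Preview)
Your sign-change reformulation is reasonable, but the argument has two genuine gaps. First, the ``cyclic $<_i$-monotonicity'' of the strand labels around $\partial F$ is a consequence of \emph{reducedness} of $\bG$: in a reduced plabic graph strands do not self-intersect or form bad lenses, and this forces the sources you meet while circling $\partial F$ to appear in the cyclic order on $[n]$. Your proposed ``inductive argument along an acyclic path in $O_i$'' invokes acyclicity of the orientation rather than reducedness of the strand arrangement; I do not see how a path-induction in $O_i$ would produce a global cyclic-order statement about strand sources.

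Second, even granting that both $(\zeta_F^L(e_j))_j$ and $(\zeta_F^R(e_j))_j$ are cyclically monotone, it does not follow that their $<_i$-comparison changes sign at most twice: two abstract cyclically monotone sequences can interleave to give four or more sign changes. To rule this out you would need the extra structure tying the two sequences together at each vertex (only one changes, and the three local strand labels are constrained), and you have not articulated that step.

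The paper's proof avoids both issues by first reducing to the bipartite trivalent case and then separating by vertex color. Since an empty vertex has a unique incoming edge it cannot be a sink along $\partial F$, so sinks are solid; dually, sources are empty. For sinks, one then tracks a \emph{single} strand per solid vertex $v_t$ --- the one travelling counterclockwise along $\partial F$ near $v_t$, with source $i_t$ --- and reducedness gives $i_1 < i_2 < \cdots < i_k$ cyclically. The linear order $<_i$ then has at most one wraparound $i_{t+1} <_i i_t$, and a direct check shows this $v_t$ is the unique possible sink. Tracking one strand per solid vertex, rather than the pair $(\zeta_F^L,\zeta_F^R)$ per edge, is exactly what makes the counting clean.
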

\begin{proof} By construction, each solid vertex has only one out-going edge and each empty vertex has only one in-coming edge. Therefore, the edges along $\partial f$ linking consecutive vertices of the same type cannot change orientation. Thus, by using the contraction-expansion move, we may assume without loss of generality that $\partial f$ is bipartite and all vertices along $\partial f$ are trivalent.\\

\noindent Now consider the solid vertices on $\partial f$ and those nearby zig-zag strands that travel in the counterclockwise direction along $\partial f$, see Figure \ref{fig: zig-zag strands near a face}. Since $\bG$ is reduced, the indices of these zig-zags respect the cyclic order on $\{1,2,\dots, n\}$, i.e. for the collection of zig-zags labeled as in Figure \ref{fig: zig-zag strands near a face} we must have $i_1<i_2<\cdots<i_k<i_1$ with respect to the cyclic order.
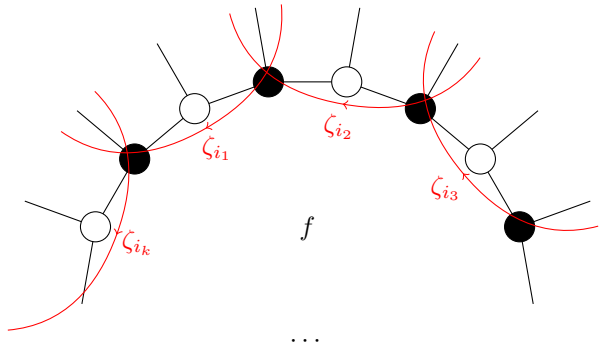
\begin{figure}[H]
    \centering
    \begin{tikzpicture}
        \foreach \i in {0,...,8}
        {
            \draw (20*\i:3) -- (20*\i+20:3);
        }
        \foreach \i in {1,...,8}
        {
            \draw (20*\i:3) -- (20*\i:4);
        }
        \foreach \i in {0,...,3}
        {
            \draw [fill=black] (20+40*\i:3) [] circle [radius=0.2];
            \draw [fill=white] (40+40*\i:3) [] circle [radius=0.2];
        }
        \foreach \i in {0,...,3}
        {
            \draw [red, decoration={markings,mark=at position 0.5 with {\arrow{>}}},postaction={decorate}] (15+40*\i:4) to [out=195+40*\i, in=310+40*\i] (40+40*\i:2.7) to [out=130+40*\i,in=245+40*\i] (65+40*\i:4); 
        }
        \node [red] at (40:2.4) [] {$\zeta_{i_3}$};
        \node [red] at (80:2.4) [] {$\zeta_{i_2}$};
        \node [red] at (120:2.4) [] {$\zeta_{i_1}$};
        \node [red] at (160:2.4) [] {$\zeta_{i_k}$};
        \node at (0,-0.5) [] {$\cdots$};
        \node at (0,1) [] {$f$};
    \end{tikzpicture}
    \caption{Nearby zig-zags traveling along $\partial f$ counterclockwise.}
    \label{fig: zig-zag strands near a face}
\end{figure}
Since $<_i$ breaks the cyclic order into a linear order, there is at most one solid vertex $v$ on $\partial f$ where $i_{t+1}<_ii_t$. Then $\zeta_{i_{t+1}}$ must be the zig-zag strand with the smallest index among the three zig-zag strands near $v$. Therefore in this case $v$ is a sink along $\partial f$. Now, any other solid vertex on $\partial f$ which is not $v$ cannot be a sink vertex for $\partial f$ because $i_s<_ii_{s+1}$, and any empty vertex cannot be a sink vertex for $\partial f$ either due to the condition that any empty vertex has only one in-coming edge attached. Therefore, $\partial f$ has at most one sink vertex as required. The argument for $\partial f$ having at most one source vertex is analogous, using empty vertices and clockwise zig-zag strands instead.
\end{proof}

\subsection{Preliminaries on weaves} \label{subsec: prelim on weaves}
Weaves were introduced in \cite{CZ}. They were originally devised as combinatorial tools to describe Legendrian surfaces in $1$-jet spaces. For the purpose of the present manuscript, they are a diagrammatic planar calculus that is well-suited to study cluster algebras on certain spaces of configurations of flags. In this subsection, we briefly review some basics about weaves. For simplicity, we only consider weaves on a disk $\bD$ in this article.

\begin{defn} Let $k\in\bN$, a \emph{$k$-weave} is a graph embedded in the 2-disk with edges labeled by elements in $[k-1]$ and such that each vertex is of one of the three types listed in Figure \ref{fig: weave vertices}.\\

\noindent Elements of $[k-1]$ are referred to as the \emph{colors} or \emph{levels} of the edges of a $k$-weave. We often refer to $k$-weaves simply as {\emph weaves} if $k$ is implicit by context or arbitrary.\hfill$\Box$
\begin{figure}[H]
    \centering
    \begin{tikzpicture}[baseline=0]
        \foreach \i in {0,1,2}
        {
        \draw[blue] (90-120*\i:1.2)  -- (0,0);
        }
        \node at (0,-1.8) [] {$\begin{array}{c} \text{A monochromatic} \\ \text{trivalent vertex}\end{array}$};
    \end{tikzpicture}\hspace{1cm}
    \begin{tikzpicture}[baseline=0]
        \foreach \i in {0,1,2} 
        {
        \draw [blue] (-120*\i:1) -- (0,0);
        \draw [red] (60-120*\i:1) -- (0,0);
        \node[blue] at (-10-120*\i:1) [] {$i$};
        \node [red] at (50-120*\i:1) [] {$j$};
        }
        \node at (0,-1.8) [] {$\begin{array}{c}
             \text{A hexavalent vertex} \\
             \text{with colors $|i-j|=1$} 
        \end{array}$};
    \end{tikzpicture}\hspace{1cm}
    \begin{tikzpicture}[baseline=0]
        \foreach \i in {0,1} 
        {
        \draw [blue] (45-180*\i:1) -- (0,0);
        \draw [teal] (-45-180*\i:1) -- (0,0);
        \node [blue] at (35-180*\i:1) [] {$i$};
        \node [teal] at (-55-180*\i:1) [] {$j$};
        }
        \node at (0,-1.8) [] {$\begin{array}{c}
        \text{A tetravalent vertex} \\
        \text{with colors $|i-j|>1$}
        \end{array}$};
    \end{tikzpicture}
    \caption{Allowable vertices in a weave. Note that conditions are imposed for the labels. For a 3-valent vertex, all labels must coincide. For a 6-valent vertex the labels must alternate between $i$ and $i+1$, $i\in[k-1]$. For a 4-valent vertex the labels must alternate between $i$ and $j$, with $i,j\in[k-1]$ and $|i-j|>1$.}
    \label{fig: weave vertices}
\end{figure}
\end{defn}

In the study of positroid varieties in the Grassmannian $\Gr(m,n)$, we shall consider $k=m$. That is, the weaves that we construct for positroids in $\Gr(m,n)$ are all $m$-weaves.

\begin{exmp}
A 2-weave is a trivalent graph. Note that the cluster algebra for the top-dimensional positroid in $\Gr(2,n)$ can be studied via triangulations of the $(n+2)$-gon, cf.~\cite[Chapter 5.3]{FWZ}. These triangulations are naturally dual to certain 2-weaves.\hfill$\Box$
\end{exmp}

\noindent To distinguish edges in a weave from edges in a plabic graph, we refer to edges in a weave as \emph{weave lines}. By definition, weave lines incident to the boundary $\partial \bD$ of the 2-disk $\bD$ are said to be \emph{external weave lines}.

\begin{defn}\label{defn: Y-cycles} Let $\ww$ be a weave and let $E_\ww$ be the set of weave lines in $\ww$. A \emph{$Y$-cycle} is a map $\gamma:E_\ww\rightarrow \mathbb{Z}_{\geq 0}$ satisfying the following three conditions:\\
\begin{itemize}
    \item[-] Among the three weave lines $a,b,c$ incident to a trivalent weave vertex, the minimum of $\gamma(a)$, $\gamma(b)$, and $\gamma(c)$ is achieved at least twice.\\
    
    \item[-] Among the four weave lines $a,b,c,d$ incident to a tetravalent weave vertex (in a cyclic order), $\gamma(a)=\gamma(c)$ and $\gamma(b)=\gamma(d)$.\\
    
    \item[-] Among the six weave lines $a,b,c,d,e,f$ incident to a hexavalent weave vertex (in a cyclic order), $\gamma(a)-\gamma(d)=\gamma(e)-\gamma(b)=\gamma(c)-\gamma(f)$. The minimum of $\gamma(a)$, $\gamma(c)$, and $\gamma(e)$ is achieved at least twice, and the minimum of $\gamma(b)$, $\gamma(d)$, and $\gamma(f)$ is achieved at least twice.\\
\end{itemize}
The \emph{support} of a $Y$-cycle is the subgraph of $\ww$ spanned by the weave lines $e$ with $\gamma(e)>0$. We denote the set of $Y$-cycles on $\ww$ by $Y_\ww$.\hfill$\Box$
\end{defn}

\begin{defn}\label{defn: Y tree} A $Y$-cycle is said to be a \emph{$Y$-tree} if it satisfies the following three conditions:
\begin{itemize}
    \item[-] $\gamma(e)\in\{0,1\}$ for all weave lines $e$;\\
    
    \item[-] $\min\{\gamma(a),\gamma(b),\gamma(c)\}=0$ for the weave lines $a,b,c$ near each trivalent weave vertex;\\
    
    \item[-] its support is homeomorphic to a connected trivalent tree.
\end{itemize}
A $Y$-tree is \emph{mutable} if $\gamma(e)=0$ for all external weave lines $e$. A $Y$-tree that is not mutable is said to be  \emph{frozen}. A mutable $Y$-tree is a \emph{short $I$-cycle} if $\gamma(e)=0$ for all but one single weave line.\hfill$\Box$
\end{defn}

In a $Y$-tree, trivalent vertices of the tree only occur at hexavalent weave vertices, 2-valent vertices occur at either hexavalent or tetravalent weave vertices, while all leaves of the tree end at trivalent weave vertices. The sugar-free hull cycles introduced in \cite{CW} are examples of $Y$-trees, and the Lusztig cycles introduced in \cite{CGGLSS} are examples of $Y$-cycles.\\

We need an intersection pairing for $Y$-cycles to construct a quiver from a collection of $Y$-cycles.

\begin{defn}\label{defn:local intersection pairing} Let $V_\ww$ denote the set of weave vertices in a weave $\ww$. We define an \emph{intersection pairing} $\inprod{\cdot}{\cdot}:Y_\ww\times Y_\ww\rightarrow \bZ$ between any two $Y$-cycles by
\[
\inprod{\gamma}{\gamma'}:=\sum_{v\in V_\ww}\inprod{\gamma}{\gamma'}_v,
\]
where
\[
\inprod{\gamma}{\gamma'}_v:=\left\{\begin{array}{ll}\vspace{1cm} \det \begin{pmatrix}
    1 & 1 & 1 \\
    \gamma(a) & \gamma(b) & \gamma(c)\\
    \gamma'(a) & \gamma'(b) & \gamma'(c)
\end{pmatrix} & \text{if \quad  $\begin{tikzpicture}[baseline=0]
    \draw[blue] (0,0) node [black, below] {$v$} --  node [right] {$a$} (90:1);
    \draw[blue] (0,0) -- node [above left] {$b$} (-150:1) ;
    \draw [blue] (0,0) -- node [above right] {$c$} (-30:1) ;
\end{tikzpicture}$} \\
\vspace{1cm}
\frac{1}{2}\left(\det\begin{pmatrix} 1 & 1 & 1\\
\gamma(a) & \gamma(c) & \gamma(e) \\
\gamma'(a) & \gamma'(c) & \gamma'(e)
    \end{pmatrix} +\det\begin{pmatrix} 1 & 1 & 1\\
    \gamma(b) & \gamma(d) & \gamma(f) \\ 
    \gamma'(b) & \gamma'(d) & \gamma'(f)
    \end{pmatrix}\right) & \text{if \quad $\begin{tikzpicture}[baseline=0]
        \draw[blue] (0,0) node [black, below] {$v$} --  node [above right] {$a$} (0:1.2);
        \draw [red] (0,0) -- node [right] {$b$} (60:1.2);
    \draw[blue] (0,0) -- node [left] {$c$} (120:1.2) ;
    \draw [red] (0,0) -- node [above left] {$d$} (180:1.2);
    \draw [blue] (0,0) -- node [left] {$e$} (-120:1.2) ;
    \draw [red] (0,0) --  node [right] {$f$} (-60:1.2);
    \end{tikzpicture}$} \\
    0 & \text{otherwise.}
\end{array}\right.
\]
The intersection pairing is skew-symmetric by construction.\hfill$\Box$
\end{defn}

There are certain moves between weaves that are considered equivalences. Figure~\ref{fig:weave equivalence} lists the allowed equivalence moves between weaves.\footnote{These six moves are not entirely independent: Move III can be deduced from Moves I and II. In practice, it is nevertheless useful to emphasize Move III.} By definition, two weaves are said to be \emph{equivalent} if they are related by a sequence of equivalence moves.
%It can be shown that weave equivalences take $Y$-cycles to $Y$-cycles. 
\begin{figure}[H]
    \centering
    \includegraphics[scale=0.9]{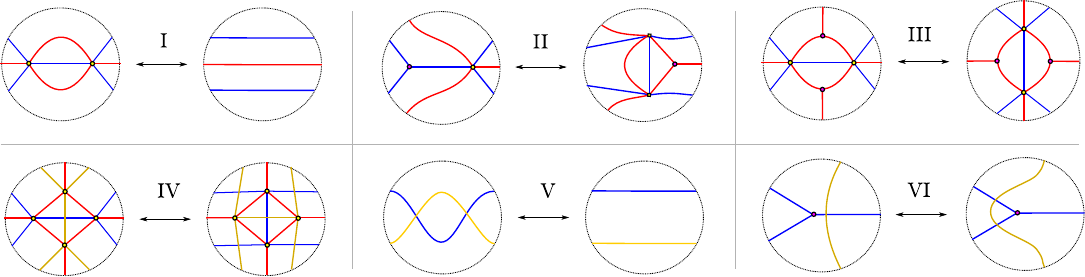}
    \caption{List of weave equivalences.}
    \label{fig:weave equivalence}
\end{figure}

\noindent For a short $I$-cycle $\gamma$ on $\ww$, one can perform a move known as \emph{weave mutation} to produce a new weave $\ww'$. By definition, weave mutation is the move depicted in Figure \ref{fig: weave mutation}. Note that two weaves that differ by a weave mutation are \emph{not} equivalent to each other.

\begin{figure}[H]
    \centering
    \begin{tikzpicture}[baseline=0]
        \draw [blue] (-0.5,0) -- node [below] {$\gamma$} (0.5,0);
        \foreach \i in {0,1}
        {
        \draw [blue] (45-90*\i:1) -- (0.5,0);
        \draw [blue] (135+90*\i:1) -- (-0.5,0);
        }
    \end{tikzpicture} \hspace{1cm} $\longleftrightarrow$ \hspace{1cm}
    \begin{tikzpicture}[baseline=0]
        \draw [blue] (0, -0.5) -- node [right] {$\gamma'$} (0, 0.5);
        \foreach \i in {0,1}
        {
        \draw [blue] (135-90*\i:1) -- (0,0.5);
        \draw [blue] (-45-90*\i:1) -- (0,-0.5);
        }
    \end{tikzpicture}
    \caption{A weave mutation.}
    \label{fig: weave mutation}
\end{figure}
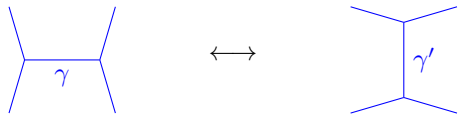

\noindent Weave mutation is defined at a short $I$-cycle. That said, the following lemma can be used to mutate at any (mutable) $Y$-tree, as introduced in Definition \ref{defn: Y tree}.

\begin{lem}[{\cite[Prop.~3.5]{CW}}]\label{lem: turning Y-trees into short I cycles} For any weave $\ww$ and a mutable $Y$-tree $\gamma$, there exists a weave equivalence $\ww'\sim \ww$ that turns $\gamma$ into a short $I$-cycle on $\ww'$.\hfill$\Box$
\end{lem}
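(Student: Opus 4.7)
The plan is to induct on the size of the support of $\gamma$, using the weave equivalence moves from Figure~\ref{fig:weave equivalence} to shrink the $Y$-tree one step at a time until only a single weave line carries nonzero value. When the support has a single weave line, $\gamma$ is already a short $I$-cycle, which serves as the base case. For the inductive step, choose any leaf of the tree-support of $\gamma$. By Definition~\ref{defn: Y tree} and Definition~\ref{defn: Y-cycles}, a leaf is a trivalent weave vertex $v_0$ at which exactly one incident weave line $e$ has $\gamma(e) = 1$, and the other two have value zero. The mutability hypothesis ensures $v_0 \notin \partial\bD$, so there is always ``room'' to perform local moves at $v_0$.

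Now walk along $e$ away from $v_0$; the next weave vertex $v_1$ on $e$ falls into one of three types. If $v_1$ is a tetravalent vertex acting as a $2$-valent tree node, the ``far commutation'' equivalence (tetravalent slide in Figure~\ref{fig:weave equivalence}) pushes the trivalent vertex $v_0$ past $v_1$, moving the leaf strictly closer to the next non-trivial tree feature. If $v_1$ is a hexavalent vertex acting as a $2$-valent tree node, then Move III (which by the footnote in the excerpt is the composite of Moves I and II) pushes $v_0$ through $v_1$ in the same way. Iterating, we may assume without loss of generality that $v_1$ is either another trivalent leaf or a hexavalent vertex which is a $3$-valent tree node.

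If $v_1$ is a trivalent leaf then the support is a single weave line and we are already done. Otherwise $v_1$ is a hexavalent vertex with three alternating incident weave lines in the support of $\gamma$. At this $v_1$, apply Move II (together with Move I on the resulting trivalent-trivalent edge) to resolve the local $Y$-configuration: the pair of weave lines consisting of $e$ and a neighboring branch gets absorbed into the third branch emerging from $v_1$, eliminating the leaf $v_0$, the adjacent leaf, and the $3$-valent node $v_1$ in one stroke. The result is a new $Y$-tree on an equivalent weave whose support has strictly fewer weave lines, so the induction closes.

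The main obstacle is verifying that Move II (or its combination with Move I) is actually applicable at $v_1$: one must track the colors of the three ``complementary'' weave lines around the hexavalent vertex, which are not in the support of $\gamma$ but which mediate the hexavalent flip, and confirm that the six incident edges are arranged in the alternating color pattern required by Figure~\ref{fig: weave vertices}. A second subtlety is ensuring that the sequence of tetravalent and Move~III slides used to bring $v_0$ adjacent to a $3$-valent tree node is actually realizable; here the mutability condition, which keeps the support away from $\partial\bD$, is what guarantees that the path from leaf to nearest branching node always lies in a region where the indicated local moves are defined.
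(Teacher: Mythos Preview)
The paper does not prove this lemma; it is stated with a citation to \cite[Prop.~3.5]{CW} and closed with a $\Box$, so there is no in-paper proof to compare against. Your inductive strategy---pick a leaf of the tree support, slide the corresponding trivalent weave vertex along the branch past any intervening tetravalent or hexavalent pass-through vertices, then use a local move at the first genuine branching (hexavalent) node to shrink the support---is the standard argument and is essentially what \cite{CW} does.

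There is one inaccuracy in your description of the reduction step. At the hexavalent branching node $v_1$, you write that the move ``eliminates the leaf $v_0$, the adjacent leaf, and the $3$-valent node $v_1$ in one stroke.'' There is in general no adjacent leaf: the other two tree-branches leaving $v_1$ can be long. What actually happens when you push the trivalent leaf $v_0$ through the hexavalent vertex $v_1$ via Move~II is that the trivalent vertex re-emerges on one of the other two branches, while $v_1$ ceases to be a $3$-valent tree node and becomes a $2$-valent pass-through. The net effect is that the leaf edge $e$ is deleted from the support and the branching node is demoted, so the support has strictly fewer edges and the induction proceeds. Your honest acknowledgment of the two obstacles (verifying the color pattern at $v_1$ permits Move~II, and checking that the slides bringing $v_0$ to $v_1$ are always available) is well placed: those are exactly the points where the formal proof in \cite{CW} does the work, and they are not automatic.
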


\noindent Indeed, given a mutable $Y$-tree in a weave $\ww$, we can use Lemma \ref{lem: turning Y-trees into short I cycles} to perform weave equivalences on $\ww$ until the $Y$-tree becomes a short $I$-cycle. Then we can implement the mutation in Figure \ref{fig: weave mutation} so as to achieve the desired mutation at the $Y$-tree.

\begin{rmk} There is contact and symplectic geometry behind weaves. In a nutshell, each weave $\ww$ is describing the singular locus of a front projection of some Legendrian surface $\Lambda_\ww$ embedded in the standard contact $\mathbb{R}^5$. A $Y$-cycle is topologically a $1$-cycle on $\Lambda_\ww$ relative to the boundary. A weave equivalence induces a contact isotopy between these Legendrian surfaces. A weave mutation encodes a Lagrangian disk surgery on the exact Lagrangian projections of the corresponding Legendrian surfaces. See \cite[Section 3]{CZ} for more details.\hfill$\Box$
\end{rmk}

\begin{rmk} In Section \ref{sec:iterative_Tmap} we describe how to produce a weave from a plabic graph such that each bounded face of the plabic graph corresponds to a $Y$-tree on the weave. By combining Lemma \ref{lem: turning Y-trees into short I cycles} and weave mutations, we are able to diagramatically describe mutations at all mutable vertices of the initial quiver using weaves. In this aspect, weaves are more versatile than plabic graphs, which can only describe mutations that correspond to square moves on bounded square faces. In fact, there are known cases in which infinitely many seeds can be described with weaves, cf.~\cite[Section 4]{CG22} or \cite[Section 7.3]{CZ}.\hfill$\Box$
\end{rmk}

In \cite[Section 4.3]{CGSS20} and \cite[Section 4.1]{CGGLSS}, we considered a special family of weaves called Demazure weaves. We used them in \cite[Section 5]{CGGLSS} to construct and study cluster structures on braid varieties. We will prove in Section \ref{sec: Demazure} that weaves from plabic graphs are equivalent to Demazure weaves. We review some basics aspects of Demazure weaves here.

\begin{defn}\label{defn: demazure weave} A \emph{Demazure weave} is a weave drawn on the rectangle $R=[0,1]\times[0,1]$ such that:
\begin{enumerate}
    \item All external weave lines are incident to either the top or the bottom boundary of $R$.
    \item No point along any weave line admits a horizontal tangent line.
    \item Each trivalent weave vertex is incident to two weave lines above it and one below it.
    \item Each tetravalent weave vertex is incident to two weave lines above it and two below it.
    \item Each hexavalent weave vertex is incident to three weave lines above it and three below it.
\end{enumerate}

It is useful to picture the weave lines in a Demazure weave as being oriented from top to bottom. For a Demazure $k$-weave $\ww$, let $\beta_\text{top}$ (resp. $\beta_\text{bottom}$) be the positive braid word formed by the external weave lines along the top boundary $[0,1]\times\{1\}$, resp. bottom boundary $[0,1]\times\{0\}$, of $R$. A Demazure weave $\ww$ is \emph{complete} if $\beta_\text{bottom}$ is a reduced word for the longest permutation $w_0$ in $S_k$. In that case, the Demazure product of $[\beta_\text{top}]$ is $w_0$.\hfill$\Box$
\end{defn}

Demazure weaves have an important property: there is a particularly well-behaved collection of weave cycles associated to them, which we named {\it Lusztig} cycles in \cite[Section 4.4]{CGGLSS}. In many cases, such as the inductive Demazure weaves from \cite[Section 4.3]{CGGLSS}, these cycles are all $Y$-trees. These Lusztig cycles are defined as follows:

\begin{defn}\label{defn:Lusztig cycles} Let $\ww$ be a Demazure weave and let $v$ be a trivalent weave vertex in $\ww$. The \emph{Lusztig cycle} associated with $v$ is constructed as we scan $\ww$ from top to bottom as follows.
\begin{enumerate}
    \item Any weave line $e$ that begins above $v$ is assigned with $\gamma(e)=0$.
    \item The unique weave line that begins at $v$ is assigned with $\gamma(e)=1$.
    \item For any trivalent vertex $w$ below $v$, the assignments satisfy $\gamma(c)=\min\{\gamma(a),\gamma(b)\}$, where $c$ is the unique weave line that begins at $w$.
    \item For any tetravalent vertex $w$, the assignments satisfy $\gamma(a)=\gamma(c)$ and $\gamma(b)=\gamma(d)$, where $a,b,c,d$ are weave lines incident to $w$ in a cyclic order, with $a,b$ above $w$ and $c,d$ below $w$.
    \item For any hexavalent vertex $w$, the assignments satisfy $\gamma(d)=\gamma(a)+\gamma(b)-\min\{\gamma(a),\gamma(c)\}$, $\gamma(e)=\min\{\gamma(a),\gamma(c)\}$, and $\gamma(f)=\gamma(b)+\gamma(c)-\min\{\gamma(a),\gamma(c)\}$, where $a,b,c,d,e,f$ are weave lines incident to $w$ in a cyclic order, with $a,b,c$ above $w$ and $d,e,f$ below $w$.
\end{enumerate}
\hfill$\Box$
\end{defn}

\section{Weaves from Reduced Plabic Graphs}\label{sec:iterative_Tmap} 

In this section, we describe how to construct a weave from a reduced plabic graph, in Prop.~\ref{prop:weaves from plabic graphs}, and prove that equivalent reduced plabic graphs produce equivalent weaves, in Theorem~\ref{thm:weave construction}. The key ingredient is the T-shift, a recursive procedure for constructing a new reduced plabic graph from a previous one, cf.~Definition~\ref{def:Tmap}. If $\bG$ has no solid lollipops, this procedure is exactly ``T-duality" as defined in \cite[Definition 8.7]{PSBW} to study the $m=2$ amplituhedron.\\

\noindent In previous sections, we used $[n]$ as the ground set for a positroid, the columns of matrices, and the boundary vertices of plabic graphs. We may instead use any subset $I \subset \bZ_{>0}$ as the ground set, ordered in the standard way, and define positroids, plabic graphs, Grassmann necklaces and other concepts with ground set $I$.

\subsection{T-shifts of plabic graphs}\label{ssec:Tshift} We build weaves from plabic graphs using the following operation.

\begin{defn}\label{def:Tmap}
    Let $\bG$ be a reduced plabic graph whose solid vertices are either lollipops or trivalent. The \emph{T-shift} of $\bG$ is the plabic graph obtained from the following steps:
    \begin{enumerate}
        \item Delete solid lollipops and its incident marked points on $\partial \bD$.
        \item For each marked point $i$ not incident to a solid lollipop, add a new marked point $i'$ to $\partial\bD$ slightly counterclockwise of $i$.
        \item Place a new empty vertex $v_s'$ on top of each trivalent solid vertex $s$ and place a new solid vertex $v_F'$ in each face $F$ of $\bG$. 
        \item For each solid vertex $s$ of $\bG$ on the boundary of a face $F$, add an edge between $v_s'$ and $v_F'$. Also add an edge between $v_F'$ and any new boundary marked points $i'$ in $F$.
        \item Delete degree 2 solid vertices and replace each degree $d>2$ solid vertex with a trivalent tree of solid vertices with $d$ leaves. Call the collection of these trivalent trees $\tau$.
    \end{enumerate}
    We denote the T-shift of $\bG$ by $\bGshift(\tau)$. Different choices of $\tau$ produce graphs which differ by equivalences (involving only solid vertices), so we frequently abuse notation and write $\bGshift$ instead of $\bGshift(\tau)$.\hfill$\Box$
\end{defn}

\begin{rmk}\label{rmk:inverse-T-shift}
    One may define the operation ``inverse T-shift" by switching ``empty" and ``solid" in Definition~\ref{def:Tmap}. If $\bG$ has no solid lollipops, then applying inverse T-shift to $\bGshift$ gives a plabic graph equivalent to $\bG$ using moves only involving empty vertices, cf.~\cite[Remark 8.9]{PSBW}.\hfill$\Box$
\end{rmk}
%\MSB{Think about where to incorporate $\tau$ below}

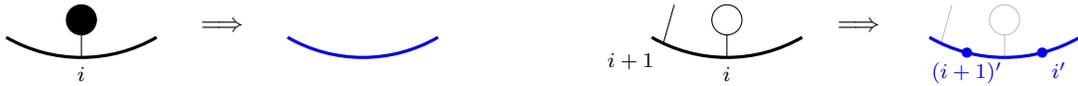
\begin{figure}[H]
    \centering
    \begin{tikzpicture}[baseline=10]
        \draw[very thick] (0,0) arc (-90:-60:2);
        \draw [very thick] (0,0) arc (-90:-120:2);
        \solver[black](0) (0,0.5);
        \draw (0,0.5) -- (0,0) node [below] {\footnotesize{$i$}};
    \end{tikzpicture} \quad $\implies$ \quad \begin{tikzpicture}[baseline=10]
        \draw[very thick,blue] (0,0) arc (-90:-60:2);
        \draw [very thick,blue] (0,0) arc (-90:-120:2);
    \end{tikzpicture}\hspace{2cm}
    \begin{tikzpicture}[baseline=10]
        \draw[very thick] (0,0) arc (-90:-60:2);
        \draw [very thick] (0,0) arc (-90:-120:2);
        \draw (0,0) arc (-90:-115:2) node [below left] {\footnotesize{$i+1$}} -- (-0.7,0.7);
        \draw (0,0) node [below] {\footnotesize{$i$}}--(0,0.3);
        \empver[black](0) (0,0.5);
    \end{tikzpicture}\quad $\implies $ \quad 
    \begin{tikzpicture}[baseline=10]
        \draw [lightgray] (0,0) arc (-90:-115:2)  -- (-0.7,0.7);
        \draw [lightgray] (0,0) --(0,0.3);
        \empver[lightgray](0) (0,0.5);
        \draw[blue, very thick] (0,0) arc (-90:-60:2);
        \draw [blue, very thick] (0,0) arc (-90:-120:2);
        \node [blue] at (0.5,0.06) [] {$\bullet$};
        \node [blue] at (0.5,0.06) [below right] {\footnotesize{$i'$}};
       \node [blue] at (-0.5,0.06) [] {$\bullet$};
        \node [blue] at (-0.5,0.06) [below] {\footnotesize{$(i+1)'$}};
    \end{tikzpicture}
    \caption{Steps (1) and (2) in the T-shift procedure.}
\end{figure}

\begin{figure}[H]
    \centering
    \begin{tikzpicture}[baseline=0]
        \foreach \i in {0,...,4}
        {
            \draw [shorten <=0.2cm, shorten >=0.2cm] (90+\i*72:1) -- (18+\i*72:1);
            \draw [shorten <=0.2cm] (90+\i*72:1) -- (90+\i*72:1.5);
        }
        \foreach \i in {0,2,3}
        {
            \draw [fill=black] (90+\i*72:1) circle [radius=0.2];
        }
        \foreach \i in {1,4}
        {
            \draw  (90+\i*72:1) circle [radius=0.2];
        }
        \end{tikzpicture}
    \quad $\implies$ \quad \begin{tikzpicture}[baseline=0]
        \foreach \i in {0,...,4}
        {
            \draw [lightgray, shorten <=0.2cm, shorten >=0.2cm] (90+\i*72:1) -- (18+\i*72:1);
            \draw [lightgray, shorten <=0.2cm] (90+\i*72:1) -- (90+\i*72:1.5);
        }
        \foreach \i in {0,2,3}
        {
            \draw [lightgray, fill=lightgray] (90+\i*72:1) circle [radius=0.2];
            \draw [blue] (90+\i*72:1) circle [radius=0.2];
        }
        \foreach \i in {1,4}
        {
            \draw [lightgray] (90+\i*72:1) circle [radius=0.2];
        }
        \solver[blue] (0) (0,0);
        \end{tikzpicture}\quad $\implies$ \quad \begin{tikzpicture}[baseline=0]
        \foreach \i in {0,...,4}
        {
            \draw [lightgray, shorten <=0.2cm, shorten >=0.2cm] (90+\i*72:1) -- (18+\i*72:1);
            \draw [lightgray, shorten <=0.2cm] (90+\i*72:1) -- (90+\i*72:1.5);
        }
        \foreach \i in {0,2,3}
        {
            \draw [lightgray, fill=lightgray] (90+\i*72:1) circle [radius=0.2];
            \draw [blue] (90+\i*72:1) circle [radius=0.2];
            \draw [blue, shorten <=0.2cm] (90+\i*72:1) -- (75+\i*72:1.5);
            \draw [blue, shorten <=0.2cm] (90+\i*72:1) -- (105+\i*72:1.5);
            \draw [blue, shorten <=0.2cm] (90+\i*72:1) -- (0,0);
        }
        \foreach \i in {1,4}
        {
            \draw [lightgray] (90+\i*72:1) circle [radius=0.2];
        }
        \solver[blue] (0) (0,0);
        \end{tikzpicture}
    \caption{Steps (3) and (4) in the T-shift procedure.}
\end{figure}
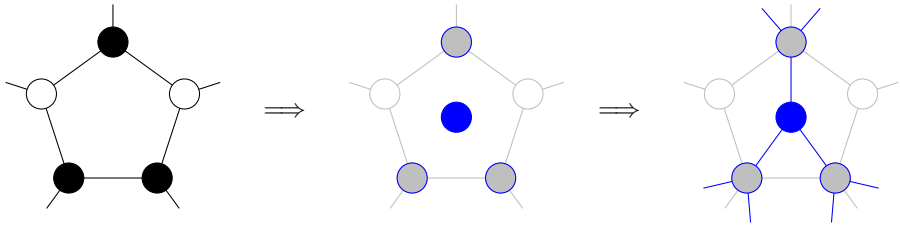

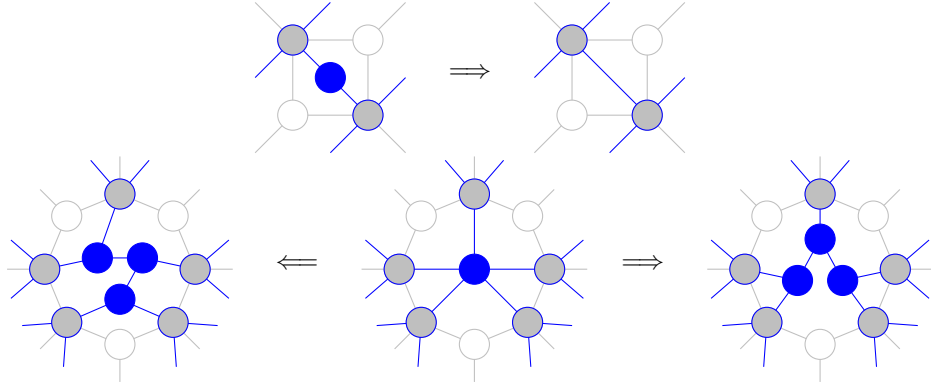
\begin{figure}[H]
    \centering
    \begin{tikzpicture}[baseline=0]
        \empver[lightgray] (1) (-0.5,-0.5);
        \solver[lightgray] (2) (-0.5,0.5);
        \empver[lightgray] (3) (0.5,0.5);
        \solver[lightgray] (4) (0.5,-0.5);
        \edge[lightgray] (1) (2);
        \edge[lightgray] (2) (3);
        \edge[lightgray] (3) (4);
        \edge[lightgray] (1) (4);
        \draw[lightgray, shorten <=0.2cm] (1) -- (-1,-1);
        \draw[lightgray, shorten <=0.2cm] (2) -- (-1,1);
        \draw[lightgray, shorten <=0.2cm] (3) -- (1,1);
        \draw[lightgray, shorten <=0.2cm] (4) -- (1,-1);
        \empver[blue] (5) (-0.5,0.5);
        \empver[blue] (6) (0.5,-0.5);
        \solver[blue] (0) (0,0);
        \edge[blue] (5) (6);
        \draw[blue, shorten <=0.2cm] (5) -- (0,1);
        \draw[blue, shorten <=0.2cm] (5) -- (-1,0);
        \draw[blue, shorten <=0.2cm] (6) -- (0,-1);
        \draw[blue, shorten <=0.2cm] (6) -- (1,0);
        \end{tikzpicture} \quad $\implies $ \quad 
        \begin{tikzpicture}[baseline=0]
        \empver[lightgray] (1) (-0.5,-0.5);
        \solver[lightgray] (2) (-0.5,0.5);
        \empver[lightgray] (3) (0.5,0.5);
        \solver[lightgray] (4) (0.5,-0.5);
        \edge[lightgray] (1) (2);
        \edge[lightgray] (2) (3);
        \edge[lightgray] (3) (4);
        \edge[lightgray] (1) (4);
        \draw[lightgray, shorten <=0.2cm] (1) -- (-1,-1);
        \draw[lightgray, shorten <=0.2cm] (2) -- (-1,1);
        \draw[lightgray, shorten <=0.2cm] (3) -- (1,1);
        \draw[lightgray, shorten <=0.2cm] (4) -- (1,-1);
        \empver[blue] (5) (-0.5,0.5);
        \empver[blue] (6) (0.5,-0.5);
        \edge[blue] (5) (6);
        \draw[blue, shorten <=0.2cm] (5) -- (0,1);
        \draw[blue, shorten <=0.2cm] (5) -- (-1,0);
        \draw[blue, shorten <=0.2cm] (6) -- (0,-1);
        \draw[blue, shorten <=0.2cm] (6) -- (1,0);
        \end{tikzpicture}\hspace{2cm} \\
    \begin{tikzpicture}[baseline=0]
        \foreach \i in {0,...,7}
        {
            \draw [lightgray, shorten <=0.2cm, shorten >=0.2cm] (90+\i*45:1) -- (45+\i*45:1);
            \draw [lightgray, shorten <=0.2cm] (90+\i*45:1) -- (90+\i*45:1.5);
        }
        \foreach \i in {0,2,3,5,6}
        {
            \draw [lightgray, fill=lightgray] (90+\i*45:1) circle [radius=0.2];
            \draw [blue] (90+\i*45:1) circle [radius=0.2];
            \draw [blue, shorten <=0.2cm] (90+\i*45:1) -- (75+\i*45:1.5);
            \draw [blue, shorten <=0.2cm] (90+\i*45:1) -- (105+\i*45:1.5);
        }
        \foreach \i in {1,4,7}
        {
            \draw [lightgray] (90+\i*45:1) circle [radius=0.2];
        }
        \draw [blue, shorten <=0.2cm, shorten >=0.2cm] (180:1) -- (-0.3,0.15) -- (90:1);
        \draw [blue, shorten <=0.2cm, shorten >=0.2cm] (0:1) -- (0.3,0.15);
        \draw [blue, shorten <=0.2cm, shorten >=0.2cm] (-45:1) -- (0,-0.4) -- (-135:1);
        \solver[blue] (0) (-0.3,0.15);
        \solver[blue] (1) (0.3,0.15);
        \solver[blue] (2) (0,-0.4);
        \edge[blue] (0) (1);
        \edge[blue] (1) (2);
        \end{tikzpicture} \quad $\impliedby$ \quad     
     \begin{tikzpicture}[baseline=0]
        \foreach \i in {0,...,7}
        {
            \draw [lightgray, shorten <=0.2cm, shorten >=0.2cm] (90+\i*45:1) -- (45+\i*45:1);
            \draw [lightgray, shorten <=0.2cm] (90+\i*45:1) -- (90+\i*45:1.5);
        }
        \foreach \i in {0,2,3,5,6}
        {
            \draw [lightgray, fill=lightgray] (90+\i*45:1) circle [radius=0.2];
            \draw [blue] (90+\i*45:1) circle [radius=0.2];
            \draw [blue, shorten <=0.2cm] (90+\i*45:1) -- (75+\i*45:1.5);
            \draw [blue, shorten <=0.2cm] (90+\i*45:1) -- (105+\i*45:1.5);
        }
        \foreach \i in {1,4,7}
        {
            \draw [lightgray] (90+\i*45:1) circle [radius=0.2];
        }
        \draw [blue, shorten <=0.2cm, shorten >=0.2cm] (180:1) -- (0,0) -- (-135:1);
        \draw [blue, shorten <=0.2cm, shorten >=0.2cm] (0:1) -- (0,0) -- (-45:1);
        \draw [blue, shorten <=0.2cm, shorten >=0.2cm] (90:1) -- (0,0);
        \solver[blue] (0) (0,0);
        \end{tikzpicture}\quad $\implies$ \quad
    \begin{tikzpicture}[baseline=0]
        \foreach \i in {0,...,7}
        {
            \draw [lightgray, shorten <=0.2cm, shorten >=0.2cm] (90+\i*45:1) -- (45+\i*45:1);
            \draw [lightgray, shorten <=0.2cm] (90+\i*45:1) -- (90+\i*45:1.5);
        }
        \foreach \i in {0,2,3,5,6}
        {
            \draw [lightgray, fill=lightgray] (90+\i*45:1) circle [radius=0.2];
            \draw [blue] (90+\i*45:1) circle [radius=0.2];
            \draw [blue, shorten <=0.2cm] (90+\i*45:1) -- (75+\i*45:1.5);
            \draw [blue, shorten <=0.2cm] (90+\i*45:1) -- (105+\i*45:1.5);
        }
        \foreach \i in {1,4,7}
        {
            \draw [lightgray] (90+\i*45:1) circle [radius=0.2];
        }
        \draw [blue, shorten <=0.2cm, shorten >=0.2cm] (180:1) -- (-0.3,-0.15) -- (-135:1);
        \draw [blue, shorten <=0.2cm, shorten >=0.2cm] (0:1) -- (0.3,-0.15) -- (-45:1);
        \draw [blue, shorten <=0.2cm, shorten >=0.2cm] (90:1) -- (0,0.4);
        \solver[blue] (0) (-0.3,-0.15);
        \solver[blue] (1) (0.3,-0.15);
        \solver[blue] (2) (0,0.4);
        \edge[blue] (0) (2);
        \edge[blue] (1) (2);
        \end{tikzpicture}
    \caption{Two instances of performing Step (5) in the T-shift procedure. In the second row, we show two possible resolutions of the $5$-valent solid vertex using trivalent trees. There are other resolutions as well.
    }
    \label{fig:my_label}
\end{figure}

\noindent Note that a boundary face of $\bG$ which contains only empty vertices will produce a solid lollipop in $\bGshift$. We call such faces \emph{exceptional boundary faces}, and they will require special care in some of our arguments.

    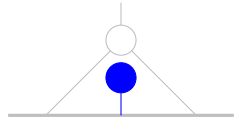
\begin{figure}[H]
        \centering
        \begin{tikzpicture}[baseline=0]
        \draw [very thick, lightgray] (-1.5,0) -- (1.5,0);
        \empver[lightgray] (0) (0,1);
        \draw [lightgray, shorten <=0.2cm] (0) -- (-1,0);
        \draw [lightgray, shorten <=0.2cm] (0) -- (1,0);
        \draw [lightgray, shorten <=0.2cm] (0) -- (0,1.5);
        \draw [blue, fill=blue] (0,0.5) circle [radius=0.2];
        \draw [blue] (0,0) -- (0,0.3);
        \end{tikzpicture}
        \caption{An exceptional boundary face where T-shift produces a solid lollipop.}
        \label{fig:exceptional boundary face}
    \end{figure}

\begin{comment}
Observe that after step 3 above, each trivalent solid vertex $s$ of $\bG$ has a ``dual" trivalent empty vertex of $\bGshift$ on top of it. 

    \begin{figure}[H]
        \centering
        \begin{tikzpicture}[baseline=0]
        \solver[black] (0) (0,0);
        \foreach \i in {0,1,2}{
            \draw[shorten <=0.2cm] (0) -- (90+120*\i:1);
        }
        \end{tikzpicture}\hspace{2cm}
        \begin{tikzpicture}[baseline=0]
        \solver[lightgray] (0) (0,0);
        \foreach \i in {0,1,2}{
            \draw[gray, shorten <=0.2cm] (0) -- (90+120*\i:1);
            \draw[blue, shorten <=0.2cm] (0) -- (30+120*\i:1);
        }
        \draw [blue] (0,0) circle [radius=0.2];
        \end{tikzpicture}\hspace{2cm}
        \begin{tikzpicture}[baseline=0]
        \empver[blue] (0) (0,0);
        \foreach \i in {0,1,2}{
            \draw[blue, shorten <=0.2cm] (0) -- (30+120*\i:1);
        
        }
        \end{tikzpicture}
        \caption{A trivalent solid vertex of $\bG$ (left), drawing a new empty vertex ``on top" and new solid vertices in surrounding faces (middle), and adding edges between new vertices (right). \MSB{Figure needs updating}}
        \label{fig:dual tripod}
    \end{figure}

\end{comment}
%\RC{Rank undefined.}

\noindent Here are properties of T-shifts, allowing us to iterate. See Definition~\ref{def:zig-zag-strand} for rank of a plabic graph.

% \begin{center}
% \RC{Where is this proposition used?}
% \end{center}

 \begin{prop} \label{prop:single T-shift}
     Let $\bG$ be a reduced plabic graph whose solid vertices are lollipops or trivalent. Then $\bGshift$ is again reduced, with solid vertices that are lollipops or trivalent, and has rank 1 less than the rank of $\bG$. %\MSB{Make sure rank of plabic graph defined earlier}
     Also, $\bGshift$ and the boundary vertex labels of $\bG$ determine $\bG$, up to equivalences involving empty vertices. %\ITL{what kind of equivalence are we considering here? are we including square moves?} \MSB{No. We are using equivalence as defined in Definition 2.16}
 \end{prop}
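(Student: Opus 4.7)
The plan is to verify each claim separately. The structural property about solid vertices follows directly from the construction, while the rank and reducedness claims require a careful analysis of how zig-zag strands transform under T-shift, and recoverability follows from the inverse T-shift.

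\textbf{Solid vertices of $\bGshift$.} By Step (3) of Definition~\ref{def:Tmap}, each new solid vertex $v_F'$ placed in a face $F$ of $\bG$ has degree equal to the number of solid vertices of $\bG$ on $\partial F$ plus the number of new boundary vertices $i'$ lying in $F$. Step (5) explicitly replaces each such vertex of degree $d>2$ with a trivalent tree having $d$ leaves and deletes degree-$2$ vertices. The only case producing a lollipop is when $F$ is an exceptional boundary face containing only empty vertices of $\bG$, as in Figure~\ref{fig:exceptional boundary face}. Thus the solid vertices of $\bGshift$ are lollipops or trivalent.

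\textbf{Rank of $\bGshift$.} I would relate the bounded affine permutation $f'$ of $\bGshift$ to the bounded affine permutation $f$ of $\bG$ by tracing zig-zag strands. A strand $\zeta_{i'}$ in $\bGshift$ starting at $i'$ first enters the new solid vertex $v_F'$ in the boundary face $F$ containing $i$, then alternates between new empty vertices $v_s'$ (sharp left turns) and new solid vertices $v_{F'}'$ (sharp right turns). Because $v_s'$ sits ``on top'' of a trivalent solid vertex $s$ of $\bG$ and $v_F'$ sits inside a face $F$ of $\bG$, each such left/right turn in $\bGshift$ corresponds to an adjacent pair of turns in $\bG$. I expect to show that $\zeta_{i'}'$ shadows $\zeta_i$ one position ``ahead'', so that $\pi_{f'}(i')$ corresponds to $\pi_f(i)$ shifted by one boundary position in a precise way. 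Combined with the rank formula $m=\frac{1}{n}\sum_i(f(i)-i)$, the cumulative shift will give $\rank(\bGshift)=\rank(\bG)-1$. This step will need special care at boundary faces (where the shift from $i$ to $i'$ happens) and at exceptional boundary faces (which contribute solid lollipops).

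\textbf{Reducedness of $\bGshift$.} I would invoke the standard characterization that a plabic graph is reduced if and only if no zig-zag strand self-intersects and no two distinct strands cross twice with the same relative orientation (no ``bad features''). The shadowing correspondence established above sets up a natural bijection between zig-zag strands of $\bGshift$ and those of $\bG$. Any bad feature among strands of $\bGshift$ would then transfer to a bad feature among corresponding strands of $\bG$, contradicting the hypothesis that $\bG$ is reduced.

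\textbf{Recoverability.} By Remark~\ref{rmk:inverse-T-shift}, applying the inverse T-shift to $\bGshift$ produces a plabic graph equivalent to $\bG$ minus its solid lollipops, via equivalences involving only empty vertices. The positions of the solid lollipops of $\bG$ are exactly encoded by the boundary marked points of $\bG$ that are absent from $\bGshift$. Restoring those lollipops then reconstructs $\bG$ up to the claimed class of equivalences.

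The main obstacle is the rank computation: it requires a careful case analysis of how each zig-zag strand of $\bGshift$ threads through the new vertices $v_s'$ and $v_F'$ near the boundary, and how the shift $i \mapsto i'$ interacts with fixed points of $\pi_f$ and with exceptional boundary faces. Once that bookkeeping is in place, the reducedness and recoverability claims follow relatively formally.
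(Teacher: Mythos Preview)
Your approach is essentially the same as the paper's: both use the zig-zag strand correspondence between $\bG$ and $\bGshift$ to pin down the bounded affine permutation $f^\downarrow$, then read off the rank drop, and both use the inverse T-shift for recoverability. The main difference is that the paper outsources the strand correspondence to \cite[Proposition~8.8]{PSBW} (where T-shift is called T-duality), whereas you propose to redo that analysis from scratch.

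Two small corrections are worth flagging. First, the indexing in your rank step is off by one: since $i'$ is placed slightly counterclockwise of $i$ (hence in the boundary face between $i-1$ and $i$), the strand $\zeta'_{i'}$ in $\bGshift$ shadows $\zeta_{i-1}$ in $\bG$, not $\zeta_i$. The paper makes this precise as the formula $f^\downarrow(i)=f(i-1)$; once you have that, the rank computation $\frac{1}{n}\sum_i (f^\downarrow(i)-i)=\frac{1}{n}\sum_i (f(i-1)-(i-1))-1=m-1$ is immediate, with no case analysis needed. Second, the paper handles solid lollipops by first deleting them (passing to a graph $\bG'$ with the same rank and no solid lollipops) and then invoking the clean formula; this is tidier than threading lollipops through the strand argument directly. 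Your recoverability argument matches the paper's exactly.
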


 \begin{proof} %\MSB{Clarify exactly where ``up to equivalence" is. In $\bGshift$, can do black contract/uncontract moves but not white. Opposite in $\bG$.}
     Let $\bG'$ be the graph obtained from $\bG$ by deleting solid lollipops and the adjacent boundary vertices, and relabeling the $r$ remaining boundary vertices with $[r]$. Note that $\bG'$ and $\bG$ have the same rank. Applying the T-shift to $\bG'$ results in a relabeling of $\bGshift$, cf.~\cite[Definition 8.7]{PSBW}, where the T-shift is called T-duality. As proved in \cite[Proposition 8.8]{PSBW}, the zig-zag strands in $\bGshift$ are pushed to the left, see Figure \ref{fig:corresponence between zig-zags}. As a result, the bounded affine permutation $f^{\downarrow}$ of $\bGshift$ relates to $f$ of $\bG'$ by $f^{\downarrow}(i)=f(i-1)$, cf.~Figure \ref{fig:T-shift from zig-zags}. This implies that $\bGshift$ is of rank 1 less than $\bG'$. 
     
     For the second statement, applying inverse T-shift to the T-shift of $\bG'$ gives $\bG'$ up to empty-empty edge contraction/expansion and bivalent empty vertex insertion/removal, cf.~ Remark~\ref{rmk:inverse-T-shift}. This implies that $\bGshift$ determines $\bG$ up to equivalences involving empty vertices and up to solid lollipops. Solid lollipops of $\bG$ exactly correspond to boundary vertex labels of $\bG$ which are not boundary vertex labels of $\bGshift$. So altogether, $\bGshift$ and the boundary vertex labels of $\bG$ determine $\bG$ up to empty equivalence.
 \end{proof}
%  \DW{I edited the proof a little bit. I basically restated the key arguments in Proposition 8.8 of PSBW. I think it would make the proof easier to understand this way. I also included two figures for demonstration.} \MSB{Thank you!}
 \begin{figure}[H]
    \centering
    \begin{tikzpicture}
    \solver[black] (0) (0,0.5);
    \empver[black] (1) (1,1);
    \empver[black] (2) (2,1);
    \solver[black] (3) (3,0.5);
    \edge[black] (0) (1);
    \edge[black] (1) (2);
    \edge[black] (2) (3);
    \edge[black] (0) (0,-0.75);
    \edge[black] (3) (3,-0.75);
    \edge[black] (0) (-1,1);
    \edge[black] (3) (4,1);
    \edge[black] (1) (0.5,2);
    \edge[black] (2) (2.5,2);
    \empver[lightblue] (4) (0,0.5);
    \empver[lightblue] (5) (3,0.5);
    \solver[lightblue] (6) (0.75,0);
    \solver[lightblue] (7) (1.5,0);
    \solver[lightblue] (8) (2.25,0);
    \edge[lightblue] (4) (0,1.5);
    \edge[lightblue] (4) (-1,0);
    \edge[lightblue] (5) (3,1.5);
    \edge[lightblue] (5) (4,0);
    \edge[lightblue] (4) (6);
    \edge[lightblue] (6) (7);
    \edge[lightblue] (8) (7);
    \edge[lightblue] (5) (8);
    \edge[lightblue] (6) (0.75,-0.75);
    \edge[lightblue] (7) (1.5,-0.75);
    \edge[lightblue] (8) (2.25,-0.75);
    \draw[red,decoration={markings,mark=at position 0.5 with {\arrow{>}}},postaction={decorate}] (3.5,1) to [out=-150,in=0] (2,0.7) -- node [below] {$\zeta$} (1,0.7) to [out=180,in=-30] (-0.5,1);
    \end{tikzpicture}\hspace{3cm}
    \begin{tikzpicture}
    \solver[lightgray] (0) (0,0.5);
    \empver[lightgray] (1) (1,1);
    \empver[lightgray] (2) (2,1);
    \solver[lightgray] (3) (3,0.5);
    \edge[lightgray] (0) (1);
    \edge[lightgray] (1) (2);
    \edge[lightgray] (2) (3);
    \edge[lightgray] (0) (0,-0.75);
    \edge[lightgray] (3) (3,-0.75);
    \edge[lightgray] (0) (-1,1);
    \edge[lightgray] (3) (4,1);
    \edge[lightgray] (1) (0.5,2);
    \edge[lightgray] (2) (2.5,2);
    \empver[blue] (4) (0,0.5);
    \empver[blue] (5) (3,0.5);
    \solver[blue] (6) (0.75,0);
    \solver[blue] (7) (1.5,0);
    \solver[blue] (8) (2.25,0);
    \edge[blue] (4) (0,1.5);
    \edge[blue] (4) (-1,0);
    \edge[blue] (5) (3,1.5);
    \edge[blue] (5) (4,0);
    \edge[blue] (4) (6);
    \edge[blue] (6) (7);
    \edge[blue] (8) (7);
    \edge[blue] (5) (8);
    \edge[blue] (6) (0.75,-0.75);
    \edge[blue] (7) (1.5,-0.75);
    \edge[blue] (8) (2.25,-0.75);
    \draw[red,decoration={markings,mark=at position 0.5 with {\arrow{>}}},postaction={decorate}] (3.5,0) to [out=150,in=0] (2,0.3) -- node [above]{ $\zeta'$} (1,0.3) to [out=180,in=30] (-0.5,0);
    \end{tikzpicture}
    \caption{Bijection between zig-zag strands in $\bG$ (left) and those in $\bGshift$ (right).}
    \label{fig:corresponence between zig-zags}
\end{figure}

\begin{figure}[H]
    \centering
    \begin{tikzpicture}
    \foreach \i in {-30,-60,-120,-150}
    {
    \draw (\i:1.5) -- (\i:1);
    }
    \foreach \i in {-45,-135}
    {
    \draw [lightblue] (\i:1.5) -- (\i:1);
    }
    \draw[red,decoration={markings,mark=at position 0.5 with {\arrow{>}}},postaction={decorate}] (-35:1.5) to [out=150,in=0] (0,-0.25) node [above] {$\zeta$} to [out=180,in=30] (-145:1.5);
    \draw [very thick] (0,0) circle [radius=1.5];
    \end{tikzpicture} \hspace{2cm}
    \begin{tikzpicture}
    \foreach \i in {-30,-60,-120,-150}
    {
    \draw [lightgray] (\i:1.5) -- (\i:1);
    }
    \foreach \i in {-45,-135}
    {
    \draw [blue] (\i:1.5) -- (\i:1);
    }
    \draw[red,decoration={markings,mark=at position 0.5 with {\arrow{>}}},postaction={decorate}] (-50:1.5) to [out=135,in=0] (0,-0.45) node [above] {$\zeta'$} to [out=180,in=45] (-130:1.5);
    \draw [very thick, blue] (0,0) circle [radius=1.5];
    \end{tikzpicture}
    \caption{End points of corresponding zig-zag strands in $\bG$ (left) and $\bGshift$ (right).}
    \label{fig:T-shift from zig-zags}
\end{figure}

\noindent By Definition~\ref{defn:positroid-braid-word}, there is a positive braid word $\beta_\cP$ associated with a positroid $\cP$. %This positive braid has a distinguished braid word associated to the target necklace. 
The next proposition shows how T-shift changes this positive braid word, which we use later in Theorem~\ref{thm:weave construction}. Let $\beta=s_{i_1}s_{i_2}\cdots s_{i_l}$ be a positive braid word for an $m$-strand positive braid. We define $\beta^{\downarrow}$ to be the positive braid word obtained from $\beta$ by removing all occurrences of $m-1$. Then $\beta^{\downarrow}$ defines an $(m-1)$-strand positive braid.

\begin{rmk} Note that the operation $\beta \rightarrow \beta^{\downarrow}$ is an operation on positive braid words, but it is not a well-defined operation on positive braids.\hfill$\Box$
\end{rmk}

\begin{prop}\label{prop:T-shift of positive braids} Let $\bG$ be a plabic graph associated to a positroid $\cP$ and $\cP^\downarrow$ be the positroid associated with $\bGshift$. Then $(\beta_{\cP})^\downarrow$ is equal to $\beta_{\cP^{\downarrow}}$, the braid word for $\cP^\downarrow$.
\end{prop}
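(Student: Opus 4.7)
The plan is to compare the two positive braid words factor by factor, using the formula $f^{\downarrow}(i)=f(i-1)$ for the bounded affine permutation of $\bGshift$ established in the proof of Proposition~\ref{prop:single T-shift}. First I would reduce to the case where $\bG$ has no solid lollipops. A solid lollipop of $\bG$ at boundary vertex $j$ forces $\pi_f(j)=j$ and $j\in\tI{j}$, and a direct computation gives $b_{j+1}=m-1$, so that $\beta_{j+1}=s_{m-1}s_{m-2}\cdots s_1$ is a full descending factor. Stripping $s_{m-1}$ leaves $s_{m-2}\cdots s_1$, which one verifies matches the contribution expected in $\beta_{\cP^{\downarrow}}$ after the boundary vertex $j$ has been deleted in $\bGshift$ and the remaining labels shifted accordingly. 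This handles the lollipop case.

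In the lollipop-free situation, both $\bG$ and $\bGshift$ have $n$ boundary vertices, so $\beta_\cP=\beta_1\cdots\beta_n$ and $\beta_{\cP^{\downarrow}}=\beta_1^{\downarrow}\cdots\beta_n^{\downarrow}$ have matching numbers of factors. Since $\beta_i=s_{m-1}s_{m-2}\cdots s_{m-b_i}$ and $\beta_i^{\downarrow}=s_{m-2}s_{m-3}\cdots s_{(m-1)-b_i^{\downarrow}}$, stripping $s_{m-1}$ from $\beta_i$ reduces the statement to the identity
\[
b_i^{\downarrow} = \max\{b_i - 1,\, 0\}
\]
for all $i\in[n]$. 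Geometrically, this identity is transparent from the periodic grid pattern of Definition~\ref{def:grid-pattern}: since $f^{\downarrow}(i)=f(i-1)$, the grid pattern for $\cP^{\downarrow}$ is obtained from that of $\cP$ by shifting every chord one column to the right while keeping the lower endpoint fixed, which simply erases the bottom-most strand at every column. The $s_{m-1}$-crossings are exactly those swapping the two bottom strands, and removing the bottom strand removes exactly these crossings.

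To prove the identity combinatorially, I would compute $b_i^{\downarrow}=|\{a\in\tI{i}^{\downarrow}: a<_i \pi_{f^{\downarrow}}(i-1)\}|$ using $\pi_{f^{\downarrow}}(i-1)=\pi_f(i-2)$ and the exchange relations for the target necklace. The key intermediate lemma is to describe $\tI{i}^{\downarrow}$ explicitly: the natural guess, justified by the shift of chords, is that $\tI{i}^{\downarrow}$ equals $\tI{i-1}$ with its $<_{i-1}$-smallest element removed, i.e., the strand at the bottom of column $i-1$ in the periodic grid. Once this description is in place, the identity $b_i^{\downarrow}=b_i-1$ follows from a short counting argument comparing elements of $\tI{i-1}$ to elements of $\tI{i}$ in the appropriate cyclic order. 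The main obstacle I anticipate is the careful verification of this necklace identity, particularly at the indices where $i\in\tI{i}$ versus $i\notin\tI{i}$ changes the $<_i$-smallest element of $\tI{i}$, together with the index bookkeeping needed to splice together the lollipop and non-lollipop cases into a single word.
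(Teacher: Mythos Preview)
Your overall strategy---identify $\tI{i}^{\downarrow}$ via the shift $f^{\downarrow}(i)=f(i-1)$ and then read off the braid word---is exactly the paper's, and your key lemma $\tI{i}^{\downarrow}=\tI{i-1}\setminus\{i-1\}$ is correct. But several of the details you propose to verify are off.

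First, the lollipop case is backwards. A \emph{solid} lollipop at $j$ means $f(j)=j$, hence $j\notin\tI{j}$ (not $j\in\tI{j}$), and Definition~\ref{defn:positroid-braid-word} then declares $\beta_{j+1}$ to be the empty word. Your computation with $b_{j+1}=m-1$ and a full descending factor is the one for an \emph{empty} lollipop $f(j)=j+n$, which T-shift does not delete. Second, the orientation in your grid picture is inverted: in the conventions of Remark~\ref{rmk:drawing-braid} the $<_i$-minimal element of $\tI{i}$ is at the \emph{top}, and $s_{m-1}$ swaps the top two strands, so T-shift removes the top row (as in Figure~\ref{fig:slicing the top row}), not the bottom.

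Most substantively, the identity $b_i^{\downarrow}=b_i-1$ will not drop out of your counting. Plugging your own ingredients $\tI{i}^{\downarrow}=\tI{i-1}\setminus\{i-1\}$ and $\pi_{f^{\downarrow}}(i-1)=\pi_f(i-2)$ into the definition gives
\[
b_i^{\downarrow}=\bigl|\{a\in\tI{i-1}\setminus\{i-1\}:a<_i\pi_f(i-2)\}\bigr|=b_{i-1}-1,
\]
so that $\beta_{i}^{\downarrow}=(\beta_{i-1})^{\downarrow}$ rather than $(\beta_i)^{\downarrow}$. Consequently $(\beta_{\cP})^{\downarrow}$ and $\beta_{\cP^{\downarrow}}$ match only up to a one-block cyclic rotation. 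The paper's terse argument (``just remove the top row'') does not track this shift either; it is harmless because the sole application, Theorem~\ref{thm:weave construction}(2), is itself stated up to cyclic rotation.
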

\begin{proof} Let us assume for simplicity that $\bG$ has no solid lollipops. The necklace $I_i$ consists of those elements $j$ such that $f^{-1}(j) >_i j$. We have that $f^{\downarrow}(i)=f(i-1)$, or in other words, $(f^{\downarrow})^{-1}(j) = f^{-1}(j) + 1$. Thus $I^{\downarrow}_i$ is $I_{i-1} \setminus \{i-1\}$. In other words, to obtain the wiring diagram for $\cP^\downarrow$, we just remove the top row from the wiring diagram for $\cP$ as in Figure \ref{fig:slicing the top row}. It is a check that this remains true in the presence of solid lollipops.
\end{proof}

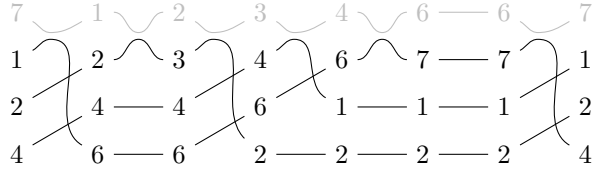
\begin{figure}[H]
    \centering
    \begin{tikzpicture}[scale=0.9,baseline=0]
    \node [lightgray] (01) at (-1.2,2.1) [] {$7$};
    \node (02) at (-1.2,1.4) [] {$1$};
    \node (03) at (-1.2,0.7) [] {$2$};
    \node (04) at (-1.2,0) [] {$4$};
    \node [lightgray](11) at (0,2.1) [] {$1$};
    \node (12) at (0,1.4) [] {$2$};
    \node (13) at (0,0.7) [] {$4$};
    \node (14) at (0,0) [] {$6$};
    \node [lightgray](21) at (1.2,2.1) [] {$2$};
    \node (22) at (1.2,1.4) [] {$3$};
    \node (23) at (1.2,0.7) [] {$4$};
    \node (24) at (1.2,0) [] {$6$};
    \node [lightgray](31) at (2.4,2.1) [] {$3$};
    \node (32) at (2.4,1.4) [] {$4$};
    \node (33) at (2.4,0.7) [] {$6$};
    \node (34) at (2.4,0) [] {$2$};
    \node [lightgray](41) at (3.6,2.1) [] {$4$};
    \node (42) at (3.6,1.4) [] {$6$};
    \node (43) at (3.6,0.7) [] {$1$};
    \node (44) at (3.6,0) [] {$2$};
    \node [lightgray](51) at (4.8,2.1) [] {$6$};
    \node (52) at (4.8,1.4) [] {$7$};
    \node (53) at (4.8,0.7) [] {$1$};
    \node (54) at (4.8,0) [] {$2$};
    \node [lightgray](61) at (6,2.1) [] {$6$};
    \node (62) at (6,1.4) [] {$7$};
    \node (63) at (6,0.7) [] {$1$};
    \node (64) at (6,0) [] {$2$};
    \node [lightgray](71) at (7.2,2.1) [] {$7$};
    \node (72) at (7.2,1.4) [] {$1$};
    \node (73) at (7.2,0.7) [] {$2$};
    \node (74) at (7.2,0) [] {$4$};
    \draw [lightgray] (11) to [out=0,in=180] (0.6,1.8) to [out=0,in=180] (21);
    \draw (12) to [out=0,in=180] (0.6,1.7) to [out=0,in=180] (22);
    \draw (13) -- (23);
    \draw (14) -- (24);
    \draw [lightgray] (21) to [out=-30,in=180] (1.7,1.8) to [out=0,in=-150] (31);
    \draw (22) to [out=30,in=180] (1.7,1.7) to [out=0] (34);
    \draw (32) -- (23);
    \draw (33) -- (24);
    \draw [lightgray] (31) to [out=-30,in=180] (2.9,1.8) to [out=0,in=-150] (41);
    \draw (34) -- (44);
    \draw (32) to [out=30,in=180] (2.9,1.7) to [out=0,in=150] (43);
    \draw (33) -- (42);
    \draw (53) -- (43);
    \draw (54) -- (44);
    \draw [lightgray] (41) to [out=0,in=180] (4.2,1.8) to [out=0,in=180] (51);
    \draw (42) to [out=0,in=180] (4.2,1.7) to [out=0,in=180] (52);
    \foreach \i in {2,...,4}
    {
        \draw (5\i) -- (6\i);
    }
    \draw [lightgray] (51) -- (61);
    \draw [lightgray] (61) to [out=-30,in=180] (6.5,1.8) to [out=0,in=-150] (71);
    \draw (62) to [out=30,in=180] (6.5,1.7) to [out=0,in=150] (74);
    \draw (63) -- (72);
    \draw (64) -- (73);
    \draw (13) -- (04);
    \draw [lightgray] (01) to [out=-30,in=180] (-0.7,1.8) to [out=0,in=-150] (11);
    \draw (02) to [out=30,in=180] (-0.7,1.7) to [out=0,in=150](14);
    \draw (12) -- (03);
    \end{tikzpicture}
    \caption{The effect of T-shift on the cyclic positive braid in Example \ref{exmp:positroid braid}. This is the first T-shift done in Example \ref{exmp: positroid weave}.}
    \label{fig:slicing the top row}
\end{figure}

\noindent As an aside, we note that we may recover the face labels of $\bG$ from the face labels of $\bGshift$ as follows. 

%  \begin{center}
% \RC{Apart from Remark 3.13, where is this proposition used?} \MSB{Daping says also used in the sign curves section in the appendix?}
% \end{center}

 \begin{prop}\label{prop:face label after a single T-shift} Let $\bG$ be a reduced plabic graph whose solid vertices are lollipops or trivalent. Let $\bGshift$ be the T-shift of $\bG$. Then for any face $F$ of $\bG$,
 \[
    \overrightarrow{I_F}= \left\{\begin{array}{c}\text{indices of solid lollipops}  \\ \text{of $\bGshift$ contained in $F$}\end{array}\right\} \cup \bigcup_{\substack{\text{faces $F'$ of $\bGshift$}\\ F'\cap F\neq \emptyset}}\overrightarrow{I_{F'}}  .
 \]
 \end{prop}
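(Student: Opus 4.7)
The equality will be verified label by label, using the bijection between zig-zag strands of $\bG$ and those of $\bGshift$ established in the proof of Proposition~\ref{prop:single T-shift} (see Figure~\ref{fig:corresponence between zig-zags}). The key geometric observation, which can be verified case-by-case on the local pieces of the T-shift (trivalent solid vertex, empty vertex of arbitrary valence, boundary arc), is that a non-trivial zig-zag $\zeta$ of $\bG$ and its partner $\zeta'$ in $\bGshift$ can be drawn as small parallel pushoffs of one another, with $\zeta'$ consistently on a single side of $\zeta$.

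\medskip

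\noindent After reducing to the case where $\bG$ has no solid lollipops (a solid lollipop at $i\in\bG$ deletes $i$ from both sides of the equation), the solid lollipops of $\bGshift$ are in bijection with the exceptional boundary faces of $\bG$: a solid lollipop at $j'\in\bGshift$ sits inside the boundary face $F$ of $\bG$ between $j-1$ and $j$, and corresponds under the zig-zag bijection to the length-one zig-zag of $\bG$ running from $j-1$ to $j$ through the unique empty vertex of $F$. Since $F$ is on the left of this zig-zag near its endpoint $j$, the index $j$ lies in $\overrightarrow{I_F}$, matching the first set on the right-hand side. For every other $j\in \overrightarrow{I_F}$, pick a point $p$ on the zig-zag $\zeta$ of $\bG$ ending at $j$ where $F$ is locally on the left of $\zeta$; the corresponding point $p'$ on $\zeta'$ sits near $p$, and the face $F'$ of $\bGshift$ on the left of $\zeta'$ at $p'$ either lies inside $F$ or extends back into $F$ across an old edge of $\bG$ (which is no longer an edge of $\bGshift$). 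In either case $F'\cap F\neq\emptyset$ and $j\in \overrightarrow{I_{F'}}$; the reverse inclusion is argued symmetrically, with the pushoff performed in the opposite direction.

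\medskip

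\noindent The main obstacle is justifying the parallel-pushoff geometry and the porous-old-edge claim rigorously. These reduce to direct local checks at each type of vertex of $\bG$ within the T-shift construction, with the subtlest cases being boundary arcs and exceptional boundary faces, where one must also track the counterclockwise rotation $i\mapsto i'$ of the boundary marked points to confirm that the label $j$ transported from $\zeta$ to $\zeta'$ is indeed the one counted by $\overrightarrow{I_{F'}}$.
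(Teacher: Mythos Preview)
The paper states this proposition without proof (it is followed only by a one-line remark and then the next subsection begins), so there is no argument to compare against. Your approach via the zig-zag correspondence of Proposition~\ref{prop:single T-shift} and Figure~\ref{fig:corresponence between zig-zags} is exactly the natural one, and the overall strategy is sound.

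Two points are worth sharpening. First, your ``symmetric'' treatment of the reverse inclusion is not quite right as stated: pushing off in the opposite direction shows only that \emph{some} face of $\bG$ meeting $F'$ lies to the left of $\zeta$, not that the specific $F$ you started with does. The clean way to run this direction is to observe that the pushoff property gives a containment $L'\subseteq L$ of left regions (where $L,L'$ are the left sides of $\zeta,\zeta'$ in the disk). Then $F'\subseteq L'\subseteq L$, and since $F\cap F'\neq\emptyset$ the face $F$ of $\bG$ meets $L$; because $\zeta$ runs along edges of $\bG$, this forces $F\subseteq L$, i.e.\ $j\in\overrightarrow{I_F}$.

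Second, for the forward inclusion your argument picks a point $p$ on $\zeta$ where $F$ is \emph{locally} on the left, but ``$F$ is to the left of $\zeta$'' means $F$ lies in the left region $L$, which does not require $F$ to touch $\zeta$. What you actually need is that a face $F\subseteq L$ cannot lie entirely in the thin strip $L\setminus L'$ between $\zeta$ and $\zeta'$; once that is checked, $F$ meets $L'$, and then any face $F'$ of $\bGshift$ covering a point of $F\cap L'$ satisfies $F'\cap F\neq\emptyset$ and $F'\subseteq L'$. The strip claim is exactly the sort of thing your proposed local verification handles (e.g.\ each face $F$ of $\bG$ contains the solid vertex $v_F'$ of $\bGshift$, and one checks from the local pictures that this vertex never sits in the strip), so fold it explicitly into the list of local checks.
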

\noindent Note that by construction, a face $F$ in $\bG$ may contain a solid lollipop of $\bGshift$ only if $F$ is a boundary face, in which case it contains at most one solid lollipop.

%  \begin{proof} 
% Let $\zeta$ be the zig-zag strand ending at $i$. Suppose it does not begin at $i-1$. As described in the proof of Proposition \ref{prop:single T-shift}, the T-shift moves each zig-zag strand slightly to the left. A face $F$ of $\bG$ is to the left of $\zeta$ if and only if at least a part of $F$ is to the left of the T-shifted zig-zag strand $\zeta'$ (Figure \ref{fig:corresponence between zig-zags}). The part of $\bGshift$ to the left of $\zeta'$ is a union of faces of $\bGshift$, so $F$ must have nonempty intersection with some face $F'$ of $\bGshift$ lying to the left of $\zeta'$.

% If $\zeta$ begins at $i-1$, then in $\bGshift$, there is a solid lollipop with index $i$. Running inverse T-shift on $\bGshift$, we see that this solid lollipop is enclosed in an exceptional boundary face $F$ of $\bG$. The strand $\zeta$ starts at $i-1$, turns left at the single white vertex in the boundary of $F$ and ends at $i$, so $F$ is the only face to the left of $\zeta$.

%  \end{proof}

\subsection{Positroid weaves}\label{ssec:positroid_weaves} We now present a central construction of this manuscript, which associates a weave to every reduced plabic graph. The construction relies on the fact that T-shift may be iterated and it reduces rank by 1, as proven in Proposition~\ref{prop:single T-shift} above.

\begin{prop}[Weave from plabic graph] \label{prop:weaves from plabic graphs} Let $\bG$ be a reduced plabic graph of rank $m$ whose solid vertices are lollipops or trivalent. Set $\bG_m:=\bG$ and recursively define $\bG_{k} := \bGshift_{k+1} (\tau_{k+1})$ for $k\in[m-1]$ for some choice of trivalent trees $\tau_{k+1}$. Consider the union $\bW:=\bG_{m-1}\cup\bG_{m-2}\cup\dots\cup \bG_1$, with $\bG_{k}$ drawn in color $k$.\\

\noindent Let $\ww(\bG, \mathbf{\tau})$ be the graph obtained from $\bW$ by identifying the solid vertex $s$ in $\bG_{k+1}$ with the empty vertex $v_s'$ in $\bG_{k}$, erasing the empty and solid dots at the vertices, and deleting all lollipops. Then $\ww(\bG, \mathbf{\tau})$ is a weave. %\MSB{And its equivalence class does not depend on $\tau$}.
\end{prop}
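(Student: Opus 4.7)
The construction is recursive, so I would begin by checking that the iteration is well-defined. By Proposition~\ref{prop:single T-shift}, the T-shift of a reduced plabic graph whose solid vertices are lollipops or trivalent is again such a graph, of rank one less. Starting from $\bG_m=\bG$ of rank $m$, this produces $\bG_{m-1},\bG_{m-2},\ldots,\bG_1$ with $\bG_k$ of rank $k$, all enjoying the lollipop-or-trivalent property on their solid vertices. Assigning color $k$ to the edges of $\bG_k$, all labels lie in $[m-1]$, as required for an $m$-weave.

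Next I would record two structural facts about each $\bG_k$. By step~(5) of Definition~\ref{def:Tmap}, every non-lollipop solid vertex of $\bG_k$ is trivalent; these are either trivalent-tree vertices of $\tau_k$ or already-trivalent $v_F'$. By step~(3), every empty vertex of $\bG_k$ is some $v_s'$ placed on top of a trivalent solid $s$ of $\bG_{k+1}$ (or of $\bG$ when $k=m-1$), with one edge to the $v_F'$ in each of the three faces $F$ of $\bG_{k+1}$ around $s$; so empty vertices of $\bG_k$ are trivalent too.

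Then I would classify the vertices of $\ww(\bG,\tau)$ after identification and lollipop deletion, and show each falls into one of the three weave-vertex types:
\begin{itemize}
\item[(I)] Trivalent solid vertices of $\bG_1$: since there is no $\bG_0$, these are not identified and become monochromatic trivalent vertices of color $1$.
\item[(II)] Empty vertices of $\bG_{m-1}$: these are $v_s'$ for trivalent solid $s$ in $\bG_m=\bG$, which is \emph{not} part of $\bW$, so they remain unidentified and become monochromatic trivalent vertices of color $m-1$.
\item[(III)] For each $k\in\{1,\dots,m-2\}$ and each trivalent solid $s$ of $\bG_{k+1}$, the identification $s\leftrightarrow v_s'\in\bG_k$ merges the three color-$(k+1)$ edges at $s$ with the three color-$k$ edges at $v_s'$, producing a hexavalent vertex.
\end{itemize}

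The main check is verifying the cyclic color alternation at the hexavalent vertices of type (III). This rests on the planar duality implicit in the T-shift: because $v_s'$ is placed on top of $s$ and its three $\bG_k$-edges go into the three face-centers $v_F'$ in the wedges between consecutive $\bG_{k+1}$-edges at $s$, the six edges interleave exactly, giving the alternating pattern $k+1, k, k+1, k, k+1, k$ in cyclic order, with $|k-(k+1)|=1$ as required. Finally, since no vertex of $\ww(\bG,\tau)$ ever involves edges of colors differing by more than one (identifications only couple consecutive levels), no tetravalent vertices arise and none are needed. Concluding that every vertex of $\ww(\bG,\tau)$ is one of the three allowed types then establishes that $\ww(\bG,\tau)$ is a weave. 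The only subtlety in the bookkeeping is that internal vertices of a trivalent tree $\tau_{k+1}$ are themselves trivalent solid vertices of $\bG_{k+1}$ and so participate in the next level's identifications just like the original $v_F'$ did; tracking this cleanly is the single detail where care is needed, but it introduces no new vertex types.
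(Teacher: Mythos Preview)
Your argument tracks the paper's: classify the vertices of $\ww(\bG,\tau)$ and verify each is of an allowed weave type. Two points need attention.

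Your case (I) is vacuous, and the paper's proof uses this. By Proposition~\ref{prop:single T-shift}, $\bG_1$ has rank $1$, and a rank-$1$ reduced plabic graph has no trivalent solid vertices. The paper therefore concludes that every non-lollipop solid vertex of $\bW$ carries an empty vertex of the next level down, so the only monochromatic trivalent weave vertices are the empty vertices of $\bG_{m-1}$, all of top color $m-1$. This is not merely a simplification: Theorem~\ref{thm:weave construction}(1) and the proof of Proposition~\ref{prop: construction of dual cluster cycles} rely on all trivalent vertices living on the top level.

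Your assertion that ``no tetravalent vertices arise'' fails for $m\geq 4$. Identifications couple only consecutive levels, but the planar drawings of $\bG_k$ and $\bG_{k'}$ with $|k-k'|\geq 2$ can cross generically, and those crossings must be vertices of the embedded graph $\ww(\bG,\tau)$; the proof of Theorem~\ref{thm:Demazure weave} explicitly notes that ``all tetravalent vertices arise from non-consecutive T-shifts''. The fix is easy: such crossings automatically have colors at distance $\geq 2$, hence are valid tetravalent weave vertices. The substantive check---which your interleaving discussion essentially supplies---is that edges of \emph{adjacent} colors $k,k{+}1$ meet only at the identified hexavalent points (each $\bG_k$-edge lies in the closure of a face of $\bG_{k+1}$, meeting $\bG_{k+1}$ only at its solid vertices) and that same-color edges never cross (each $\bG_k$ is planar). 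The paper's own proof of Proposition~\ref{prop:weaves from plabic graphs} is terse on this point; your planar-duality observation is the right ingredient, just aimed at the wrong conclusion.
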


\begin{proof}
    By construction, the empty vertices of $\bG_{m-1}$ are trivalent in both $\bG_{m-1}$ and in $\bW$. Thus they are monochromatic trivalent vertices in $\ww(\bG, \mathbf{\tau})$. By construction, all remaining vertices in $\ww(\bG, \mathbf{\tau})$ correspond to a solid vertex of $\bW$ which has an empty vertex on top of it, i.e.~identified with it. Indeed, this holds by construction for all solid (non-lollipop) vertices which are not in $\bG_1$. Proposition~\ref{prop:single T-shift} implies that $\bG_1$ is a rank 1 reduced plabic graph and thus there are no trivalent solid vertices in $\bG_1$. This implies that all remaining vertices in $\ww(\bG, \mathbf{\tau})$ are hexavalent and edges are colored as in Figure~\ref{fig: weave vertices}.
\end{proof}

\begin{defn}[Positroid weaves]\label{def:positroid_weaves} A weaves $\ww(\bG, \mathbf{\tau})$ obtained via Proposition~\ref{prop:weaves from plabic graphs} from a reduced plabic graph $\bG$, and any choice of $\tau$, is said to be a \emph{positroid weaves}. By convention, color $(m-1)$ is blue, color $(m-2)$ is red, and color $(m-3)$ is green, where $m$ is the rank of $\bG$. (Thus blue is the top color.)\hfill$\Box$
\end{defn}

\begin{exmp}\label{exmp: positroid weave} Figure \ref{fig:positroid weave example 1} depicts the iterative process and a weave from the plabic graph $\bG$ in Example \ref{exmp:plabic graph example 1}. Since $\rank(\bG)=4$, we start with blue (color 3), and then red (color 2), and then green (color 1). 
Note that the positive braid word at the boundary of the weave, read clockwise starting from the $0$ degree angle on the unit circle, is $(3,3,2,1,3,2,3,3,2,1,3,2,1)$, which is a cyclic rotation of the positive braid word from Example \ref{exmp:positroid braid}. This is not a coincidence and is proven in general in Theorem \ref{thm:weave construction}.\hfill$\Box$

\begin{figure}[H]
    \centering
    \begin{tikzpicture}
    \draw [very thick, lightgray] (0,0) circle [radius=1.5];
    \foreach \i in {1,...,7}
    {
    \coordinate (\i) at (90-\i*360/7:1.5);
    \node [lightgray] at (90-\i*360/7:1.75) [] {$\i$};
    }
    \empver[lightgray, lightgray] (8) (1,-0.25);
    \solver[lightgray] (9) (-1,-0.25);
    \solver[lightgray] (10) (0,0.5);
    \draw[lightgray, shorten <=0.2cm] (8) -- (2);
    \draw[lightgray, shorten <=0.2cm] (9) -- (5);
    \draw[lightgray, shorten <=0.2cm] (10) -- (7);
    \draw[lightgray, shorten <=0.2cm] (10) -- (6);
    \draw[lightgray, shorten <=0.2cm] (10) -- (4);
    \draw [lightgray] (1) to [out=-150,in=120] (3);
    \empver[blue] (11) (0,0.5);
    \solver[blue] (12) (0.15,0);
    \draw[blue, shorten <=0.2cm] (11) -- (-190:1.5);
    \draw[blue, shorten <=0.2cm] (11) -- (120:1.5);
    \edge[blue] (11) -- (12);
    \draw [blue](12) to  (-90:1.5);
    \draw [blue](12) to  (60:1.5);
    \draw [blue] (15:1.5) --(15:1) to [out=-165,in=135] (-45:1) -- (-45:1.5);
    \node [blue] at (15:1.75) [] {$2$};
    \node [blue] at (-45:1.75) [] {$3$};
    \node [blue] at (-90:1.75) [] {$4$};
    \node [blue] at (-190:1.75) [] {$6$};
    \node [blue] at (120:1.75) [] {$7$};
    \node [blue] at (60:1.75) [] {$1$};
    \end{tikzpicture}\hspace{1cm}
    \begin{tikzpicture}
    \draw [very thick, lightblue] (0,0) circle [radius=1.5];
    \empver[lightblue] (11) (-0.3,0.5);
    \solver[lightblue] (12) (0,0);
    \draw[lightblue, shorten <=0.2cm] (11) -- (-135:1.5);
    \draw[lightblue, shorten <=0.2cm] (11) -- (120:1.5);
    \edge[lightblue] (11) -- (12);
    \draw [lightblue](12) to  (-90:1.5);
    \draw [lightblue](12) to  (60:1.5);
    \draw [lightblue] (15:1.5) to [out=-165,in=135] (-45:1.5);
    \node [blue] at (15:1.75) [] {$2$};
    \node [blue] at (-45:1.75) [] {$3$};
    \node [blue] at (-90:1.75) [] {$4$};
    \node [blue] at (-135:1.75) [] {$6$};
    \node [blue] at (120:1.75) [] {$7$};
    \node [blue] at (60:1.75) [] {$1$};
    \empver[red] (1) at (0,0);
    \solver[red] (2) at (0.6,0);
    \solver[red] (13) at (-1,0);
    \solver[red] (14) at (1.2,-0.3);
    \draw [red] (1.2,-0.3) -- (-15:1.5);
    \draw [red] (180:1.5) -- (13);
    \edge[red] (1) (2);
    \draw [red, shorten <=0.2cm] (1) -- (-115:1.5);
    \draw [red, shorten <=0.2cm] (1) -- (90:1.5);
    \draw [red] (2) -- (-70:1.5);
    \draw [red] (2) -- (40:1.5);
    \node [red] at (-70:1.75) [] {$4$};
    \node [red] at (-115:1.75) [] {$6$};
    \node [red] at (180:1.75) [] {$7$};
    \node [red] at (90:1.75) [] {$1$};
    \node [red] at (40:1.75) [] {$2$};
    \node [red] at (-15:1.75) [] {$3$};
    \end{tikzpicture}\hspace{1cm}
    \begin{tikzpicture}
    \draw [very thick, lightred] (0,0) circle [radius=1.5];
    \empver[lightred] (1) at (0,0);
    \solver[lightred] (2) at (0.6,0);
    \solver[lightred] (14) at (1.1,0);
    \edge[lightred] (1) (2);
    \draw [lightred, shorten <=0.2cm] (1) -- (-115:1.5);
    \draw [lightred, shorten <=0.2cm] (1) -- (90:1.5);
    \draw [lightred] (2) -- (-70:1.5);
    \draw [lightred] (2) -- (40:1.5);
    \draw [lightred] (14) -- (0:1.5);
    \node [red] at (-70:1.75) [] {$4$};
    \node [red] at (-115:1.75) [] {$6$};
    \node [red] at (90:1.75) [] {$1$};
    \node [red] at (40:1.75) [] {$2$};
    \node [red] at (0:1.75) [] {$3$};
    \node [teal] at (150:1.75) [] {$1$};
    \solver[teal] (13) at (-0.86,0.5);
    \draw [teal] (150:1.5) -- (13);
    \empver[teal] (0) (0.6,0);
    \solver[lightred] (3) (-1,0);
    \draw [lightred] (3) -- (-1.5,0);
    \draw [teal, shorten <=0.2cm] (0) -- (-30:1.5);
    \draw [teal, shorten <=0.2cm] (0) -- (-90:1.5);
    \draw [teal, shorten <=0.2cm] (0) -- (60:1.5);
    \node [teal] at (-90:1.75) [] {$6$};
    \node [red] at (-1.75,0) [] {$7$};
    \node [teal] at (60:1.75) [] {$2$};
    \node [teal] at (-30:1.75) [] {$4$};
    \end{tikzpicture}\hspace{1cm}
    \begin{tikzpicture}
    \draw [very thick] (0,0) circle [radius=1.5];
    \coordinate (11) at (-0.3,0.5);
    \coordinate (12) at (0,0);
    \draw[blue] (11) -- (-135:1.5);
    \draw[blue] (11) -- (120:1.5);
    \draw [blue] (11) -- (12);
    \draw [blue] (12) to  (-90:1.5);
    \draw [blue](12) to  (60:1.5);
    \draw [blue] (0:1.5) to [out=180,in=135] (-45:1.5);
    \coordinate (1) at (0,0);
    \coordinate (2) at (0.6,0);
    \draw [red] (1) -- (2);
    \draw [red] (1) -- (-115:1.5);
    \draw [red] (1) -- (90:1.5);
    \draw [red] (2) -- (-60:1.5);
    \draw [red] (2) -- (40:1.5);
    \draw [teal] (2) -- (15:1.5);
    \draw [teal] (2) -- (-75:1.5);
    \draw [teal] (2) -- (70:1.5);
    \end{tikzpicture}
    \caption{The iterative process in Proposition \ref{prop:weaves from plabic graphs}: the first row depicts the third steps in this case. The weave, obtained by overlapping blue, red and green, is on the second row.} 
    \label{fig:positroid weave example 1}
\end{figure}
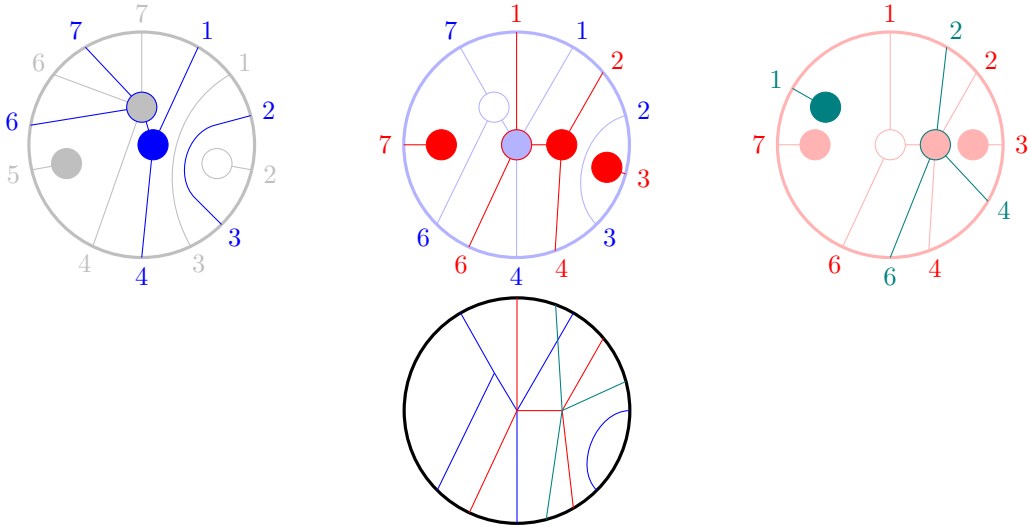
\end{exmp}

We now prove that the weave $\ww(\bG)$ encodes the quiver of $\bG$, the positroid braid word $\beta_\cP$, and that equivalences of plabic graphs give equivalences of positroid weaves. This proves part of Theorem~\ref{thm:mainA}(ii).

\begin{thm}\label{thm:weave construction} Let $\bG$ be a reduced plabic graph associated for a positroid $\cP$ whose solid vertices are lollipops or trivalent. Let $\ww:= \ww(\bG, \mathbf{\tau})$ be a weave constructed from $\bG$ via iterative T-shifts. Then the following holds:\\ 
\begin{enumerate}
    \item Let $F$ be a face of $\bG$ which is not an exceptional boundary face.\footnote{Recall that Figure \ref{fig:exceptional boundary face} depicts an instance of an exceptional boundary face.} Then:\\
    
    \begin{itemize}
        \item[-] The blue top level weave lines of $\ww$ contained inside of the face $F$ form a $Y$-tree in $\ww$.\\

       \item[-] The intersection pairing between these $Y$-trees produces the quiver $Q_\bG$, with isolated frozen vertices (corresponding to the exceptional boundary faces) removed.\\
    \end{itemize}
    
    \item The boundary positive braid word $\dd\ww$ of the weave $\ww$ is the braid word $\beta_\cP$, up to a cyclic shift.\\
    \item For any reduced plabic graph $\bH$ equivalent to $\bG$ with all non-lollipop solid vertices trivalent, and any choice of trivalent trees $\tau'$, the positroids weaves $\ww(\bH, \tau')$ and $\ww$ are weave equivalent.\\
    
    \noindent In particular, any choice of $\tau$ for the T-shift steps of $\bG$ gives rise to an equivalent weave.\\
\end{enumerate}
\end{thm}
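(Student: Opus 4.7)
The plan is to exploit the layered construction $\ww = \bG_{m-1}\cup\bG_{m-2}\cup\cdots\cup\bG_1$ throughout, tackling the three statements in order and inducting on rank for (2) and (3).

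For part (1), I would define $\gamma_F$ to be the indicator function on blue weave lines (i.e.\ edges of $\bG_{m-1}$) lying inside the face $F$. Concretely its support is the trivalent tree $\tau_m(v_F')$ inside $F$, together with the ``stem'' edges going out to the empty vertex $v_s'$ for each trivalent solid $s\in\dd F$. Then I would check the three $Y$-cycle conditions of Definition \ref{defn: Y-cycles} vertex by vertex: at a monochromatic trivalent $v_s'$ exactly one of the three blue edges points to $v_F'$, so $\gamma_F$ takes values $(1,0,0)$ and the minimum is attained twice; at a hexavalent vertex inside $\tau_m(v_F')$ all three blue edges carry $\gamma_F=1$, while its red partners carry $0$, which is compatible with the alternating-triple relation from Definition \ref{defn: Y-cycles}; at hexavalent vertices contained in other faces $F'\neq F$ all relevant values vanish. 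The support is a connected trivalent tree essentially by the construction of $\tau_m$. For the quiver, observe that the local intersection pairing of Definition \ref{defn:local intersection pairing} is nonzero only at the monochromatic trivalent vertices $v_s'$, because at any hexavalent vertex all three blue $\gamma_F$-values coincide, making the two $3\times 3$ determinants vanish. At each $v_s'$ the three $Y$-trees $\gamma_{F_1},\gamma_{F_2},\gamma_{F_3}$ for the three faces around $s$ become the standard basis of $\bZ^3$, and the pairwise determinants trace out the counterclockwise $3$-cycle of arrows around $s$ in $Q_\bG$. Summing contributions reproduces $Q_\bG$; exceptional boundary faces produce solid lollipops in $\bGshift$ that are deleted, yielding no $Y$-tree and hence accounting for the ``isolated frozen vertices removed'' clause.

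For part (2), I would induct on $m$. The base case $m=1$ is vacuous. For the inductive step, the sub-weave $\ww(\bGshift)$ (colors $1,\ldots,m-2$) has boundary braid $\beta_{\cP^\downarrow}$ by induction, and Proposition \ref{prop:T-shift of positive braids} shows $(\beta_\cP)^\downarrow=\beta_{\cP^\downarrow}$. Near each original boundary point $i$ of $\bG$, the successive T-shifts place shifted boundary points $i^{(1)},i^{(2)},\ldots$ slightly counterclockwise, and each contributes one external weave line of color $m-1,m-2,\ldots$ respectively, continuing until the chain encounters a solid lollipop (equivalently, an exceptional boundary face appears at some stage). Reading counterclockwise, this descending chain of colors is precisely the subword $\beta_i=s_{m-1}s_{m-2}\cdots s_{m-b_i}$ of Definition \ref{defn:positroid-braid-word}; combining these cyclically recovers $\beta_\cP$ up to cyclic rotation. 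The identification $b_i=|\{a\in\tI{i}:a<_i\pi_f(i-1)\}|$ with the length of the chain can be read off from Definition \ref{def:grid-pattern}, where the vertical slice through the periodic grid pattern records exactly $\tI{i}$.

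For part (3), I would again induct on rank. The first step is to verify that different resolutions $\tau$ at a fixed T-shift stage yield weave-equivalent results: two distinct trivalent-tree resolutions of the same high-valent solid vertex $v_F'$ are related by a sequence of monochromatic ``associator'' moves, and the hexavalent vertices produced by the next layer are then updated by standard hexavalent weave moves from Figure \ref{fig:weave equivalence}. The second step is to translate each basic plabic equivalence on $\bG$ into a change of $\bGshift$: bivalent vertex insertion/deletion on $\bG$ corresponds to an equivalent choice of $\tau$ (or a trivial move); same-colored edge contraction/expansion in $\bG$ locally recombines two trivalent trees into a larger one (or splits it), which is again within the $\tau$-freedom. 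Applying these local reductions and then the inductive hypothesis to $\bGshift\sim\bHshift$ propagates weave equivalence through all lower color layers.

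The main obstacle is in part (3): although each individual plabic move has a local description, the careful bookkeeping that matches each move with an explicit finite sequence of weave moves from Figure \ref{fig:weave equivalence}, and the verification that these local changes commute with further T-shifts, is combinatorially the most delicate portion of the argument. Parts (1) and (2) reduce to direct verification and induction once the layered structure is set up.
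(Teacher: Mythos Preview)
Your treatment of parts (1) and (2) is essentially the paper's argument: the $Y$-tree verification is a direct check at each vertex type, the quiver comes from the local $3$-cycle at each trivalent weave vertex, and the boundary braid is handled by induction on $m$ using Proposition~\ref{prop:T-shift of positive braids}. The paper's version of (2) is somewhat cleaner---it simply observes that an external edge of $\bGshift$ appears between boundary points $i$ and $i+1$ of $\bG$ precisely when $\beta_\cP$ has an $s_{m-1}$ there, and leaves the rest to induction---but your ``descending chain of colors'' description unpacks the same induction.

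There is a genuine gap in your part (3). Your second step asserts that contraction/expansion on $\bG$ ``locally recombines two trivalent trees into a larger one \ldots\ which is again within the $\tau$-freedom.'' This is not correct for the crucial case of a \emph{solid} contraction/expansion on $\bG$. The $\tau$-freedom governs how the solid vertices $v'_F$ of $\bGshift$ are resolved into trivalent trees; but a solid move on $\bG$ changes the \emph{empty} vertices of $\bGshift$ (since solid vertices of $\bG$ become empty vertices of $\bGshift$) and hence the adjacency pattern between the $v'_s$ and the $v'_F$. The resulting $\bGshift$'s are genuinely different plabic graphs, not merely different $\tau$-resolutions of the same one. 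Concretely, the two $\bGshift$'s now differ by a solid contraction/expansion of their own, so you cannot simply invoke $\tau$-freedom.

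The paper resolves this by making a specific local choice of $\bGshift$ on each side so that, after drawing in the next (red) layer, the two local weave pictures differ by exactly one application of weave equivalence~III (the ``flop'' move). Induction then handles the lower layers. In other words, the single elementary solid move on $\bG$ is matched with a single explicit weave move, not absorbed into tree-resolution ambiguity. Your ``first step'' actually contains the right mechanism (associator moves on trees become flop moves after one more layer), but your ``second step'' misdiagnoses where to apply it: you need to apply that same flop analysis directly to the solid move on $\bG$, not reduce it to $\tau$-freedom. The paper also notes that moves involving only \emph{empty} vertices of $\bG$ do not affect $\bGshift$ at all, which disposes of those cases immediately---another simplification your outline misses.
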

\begin{proof} Let us start with (1). In the construction of $\ww$, the only trivalent weave vertices are on the top weave level. These trivalent vertices are end points of a $3$-regular tree in $\bGshift$ that spans a face of $\bG$, and the internal vertices of such a $3$-regular tree are all hexavalent weave vertices after the next stage. Thus, each such $3$-regular tree is a Y-tree in $\ww$. Moreover, these $3$-regular trees only intersect each other at the solid vertices of $\bG$, which are trivalent weave vertices in $\ww$. Computing their local intersection pairing at such a vertex, using Definition \ref{defn:local intersection pairing}, gives that the three Y-trees incident to a trivalent weave vertex form an oriented $3$-cycle quiver. This coincides with the local rule used to construct the quiver $Q_\bG$ (cf.~Figure \ref{fig: quiver example}). Hence, the intersection pairing between these Y-trees agrees with the arrows in the quiver $Q_\bG$.\\

\noindent For (2) and (3) we induct on $m=\rank(\bG)$. For the base case $m=1$, there are no solid vertices in $\bG$ and hence the weave $\ww$ is empty and the statement holds. Inductively, let us suppose both statements are true for reduced plabic graphs of rank smaller than $m$. For (2), we observe that in the construction of $\bGshift$ from $\bG$, there is an external edge of $\bGshift$ appearing in between the boundary marked points $i$ and $i+1$ of $\bG$ if and only if there is a crossing at level $m-1$ (the top level) between $I_i$ and $I_{i+1}$. Thus, the external edges of $\bGshift$ record all occurrences of $m-1$ in the positive braid word $\beta_\cP$. It also follows from Proposition \ref{prop:T-shift of positive braids} that the remaining part is equal to $\beta_{\cP^\downarrow}$, which is the boundary of the positroid weave $\ww^\downarrow$ constructed from $\bGshift$. By construction, $\ww$ is precisely the union $\bGshift\cup \ww^\downarrow$, and hence the boundary of $\ww$ is equal to $\beta_\cP$, which is equal to the stacking of the crossings on level $m-1$ on top of $\beta_{\cP^\downarrow}$, with a slight shift to the left, as required.\\

\noindent For (3), recall from Definition \ref{defn:equivalence of reduced plabic graphs} that two reduced plabic graphs are equivalent if they are related by a sequence of contraction-expansion moves and bivalent-vertex moves. Since we have assumed that all non-lollipop solid vertices in $\bG,\bH$ are trivalent, we may choose a sequence which does not use solid bivalent-vertex moves. Also, observe that moves involving empty vertices of $\bG$ do not affect $\bGshift$. Thus, it suffices to just study the contraction-expansion move on solid vertices of $\bG$, as drawn in Figure \ref{fig:elementary move}.

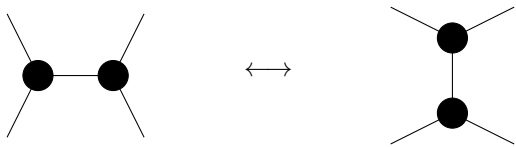
\begin{figure}[H]
    \centering
    \begin{tikzpicture}[baseline=0]
    \solver[black] (0) (-0.5,0);
    \solver[black] (1) (0.5,0);
    \edge[black] (0) (1);
    \edge[black] (0) (-1,1);
    \edge[black] (0) (-1,-1);
    \edge[black] (1) (1,1);
    \edge[black] (1) (1,-1);
    \end{tikzpicture} \hspace{1cm} $\longleftrightarrow$\hspace{1cm}
    \begin{tikzpicture}[baseline=0]
    \solver[black] (0) (0,-0.5);
    \solver[black] (1) (0,0.5);
    \edge[black] (0) (1);
    \edge[black] (0) (1,-1);
    \edge[black] (0) (-1,-1);
    \edge[black] (1) (1,1);
    \edge[black] (1) (-1,1);
    \end{tikzpicture}
    \caption{An elementary contraction-expansion move.}
    \label{fig:elementary move}
\end{figure}
Let us suppose we have two plabic graphs that differ by this move. We have some choices as to how to construct the next level, given by the blue plabic graph. These choices differ by expansion-contraction moves on the blue plabic graph and by induction, the resulting weaves will be equivalent. Hence, we may choose the blue plabic graphs so that they have the local behavior as in Figure \ref{fig:flop induced by elementary move}. By construction, the next levels in the respective weaves must then look like the red weave lines in Figure \ref{fig:flop induced by elementary move}. Therefore, the two weaves associated to two graphs differing by one elementary contraction-expansion move locally differ as the two weaves in Figure \ref{fig:flop induced by elementary move}. This difference in the weaves is the flop move on weaves: the weave equivalence in Figure \ref{fig:weave equivalence}.(III). Thus, equivalent reduced plabic graphs induce equivalent weaves.
\end{proof}

\begin{figure}[H]
    \centering
    \begin{tikzpicture}[baseline=0]
    \solver[lightgray] (0) (-0.5,0);
    \solver[lightgray] (1) (0.5,0);
    \edge[lightgray] (0) (1);
    \edge[lightgray] (0) (-1,1);
    \edge[lightgray] (0) (-1,-1);
    \edge[lightgray] (1) (1,1);
    \edge[lightgray] (1) (1,-1);
    \empver[blue] (2) (-0.5,0);
    \empver[blue] (3) (0.5,0);
    \solver[lightblue] (4) (0,0.5);
    \solver[lightblue] (5) (0,-0.5);
    \edge[blue] (2) (4);
    \edge[blue] (2) (5);
    \edge[blue] (3) (4);
    \edge[blue] (3) (5);
    \edge[blue] (2) (-1.5,0);
    \edge[blue] (3) (1.5,0);
    \edge[blue] (4) (0,1.5);
    \edge[blue] (5) (0,-1.5);
    \empver[red] (6) (0,0.5);
    \empver[red] (7) (0,-0.5);
    \edge[red] (6) (7);
    \edge[red] (6) (-0.75,1.5);
    \edge[red] (6) (0.75,1.5);
    \edge[red] (7) (-0.75,-1.5);
    \edge[red] (7) (0.75,-1.5);
    \end{tikzpicture} \hspace{1cm} $\longleftrightarrow$\hspace{1cm}
    \begin{tikzpicture}[baseline=0]
    \solver[lightgray] (0) (0,-0.5);
    \solver[lightgray] (1) (0,0.5);
    \edge[lightgray] (0) (1);
    \edge[lightgray] (0) (1,-1);
    \edge[lightgray] (0) (-1,-1);
    \edge[lightgray] (1) (1,1);
    \edge[lightgray] (1) (-1,1);
    \empver[blue] (2) (0,-0.5);
    \empver[blue] (3) (0,0.5);
    \solver[lightblue] (4) (0.5,0);
    \solver[lightblue] (5) (-0.5,0);
    \edge[blue] (2) (4);
    \edge[blue] (2) (5);
    \edge[blue] (3) (4);
    \edge[blue] (3) (5);
    \edge[blue] (2) (0,-1.5);
    \edge[blue] (3) (0,1.5);
    \edge[blue] (4) (1.5,0);
    \edge[blue] (5) (-1.5,0);
    \empver[red] (6) (0.5,0);
    \empver[red] (7) (-0.5,0);
    \edge[red] (6) (7);
    \edge[red] (6) (1.5,-0.75);
    \edge[red] (6) (1.5,0.75);
    \edge[red] (7) (-1.5,-0.75);
    \edge[red] (7) (-1.5,0.75);
    \end{tikzpicture}
    \caption{A weave equivalence induced by the contraction-expansion plabic graph move.}
    \label{fig:flop induced by elementary move}
\end{figure}
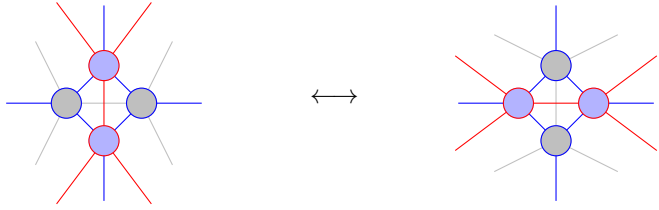

\begin{exmp}
Figure \ref{fig:ConjugateSurface4}.(i) depicts a plabic graph and its associated positroid weave, overlapped. Figure \ref{fig:ConjugateSurface4}.(ii) depicts an instance of Theorem \ref{thm:weave construction}.(1) in this case: we have drawn two of the Y-cycles in the positroid weave, each corresponding to one of the two faces $F_1,F_2$ in the plabic graph $\bG$.\hfill$\Box$
\end{exmp}

\begin{center}
	\begin{figure}[H]
		\centering
		\includegraphics[scale=1.2]{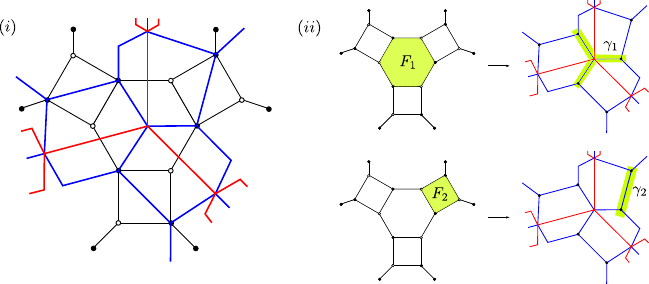}
		\caption{(Left) An example of a positroid weave $\ww$ for the top positroid stratum in $\Gr_{3,6}$. (Right) Faces in $\bG$ in green and their corresponding $Y$-trees in $\ww$, also highlighted in green. This illustrates the correspondence between faces and Y-cycles in Theorem \ref{thm:weave construction}.(1).}
		\label{fig:ConjugateSurface4}
	\end{figure}
\end{center}

\begin{rmk} By construction, square faces in $\bG$ correspond to short $I$-cycles on $\ww$. Thus, a square move on a reduced plabic graph corresponds to a mutation along a short $I$-cycle on its positroid weave. That said, note that a mutation at \emph{any} $Y$-tree on a positroid weave can be combinatorially described using weaves. Below is an example of a mutation taking place along the $Y$-tree $\gamma_1$ at the center in Figure \ref{fig:ConjugateSurface4}. Note that the resulting mutated seed cannot be described using plabic graphs.\hfill$\Box$
\end{rmk}

\begin{center}
	\begin{figure}[H]
		\centering
		\includegraphics[scale=1.2]{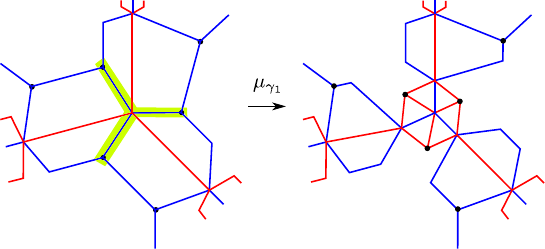}
		\caption{Mutation along the $Y$-tree $\gamma_1$ corresponding to the hexagon face of the reduced plabic graph in Figure \ref{fig:ConjugateSurface4}. The $Y$-tree $\gamma_1$ being mutated is highlighted on the left. The weave resulting from the mutation at $\gamma_1$ is drawn on the right.}
		\label{fig:ConjugateSurface5}
	\end{figure}
\end{center}

\begin{rmk}\label{rmk:combinatorial description of Pluckers} Theorem~\ref{thm:weave construction} shows that the quiver $Q_\bG$ can be recovered from the weave $\ww(\bG, \tau)$. Repeatedly applying Proposition \ref{prop:face label after a single T-shift} gives a way to recover the target face labels of $\bG$ from $\ww(\bG, \tau)$ together with the list of boundary indices for each iterated T-shift of $\bG$. In other words, the data of $\Sigma_T(\bG)$ can be recovered combinatorially from $\ww(\bG, \tau)$. See Figure~\ref{fig:ConjugateSurface8} for an example.\hfill$\Box$
\end{rmk}

\begin{center}
	\begin{figure}[H]
		\centering
		\includegraphics[width=\textwidth]{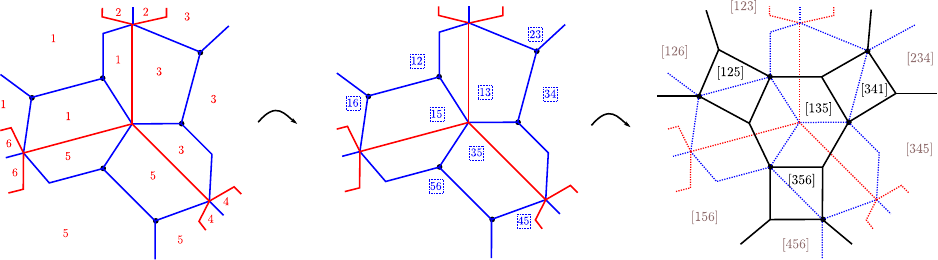}
		\caption{How to recover the target face labels of $\bG$ (right) from the weave $\bG$ (left) by repeatedly applying Proposition~\ref{prop:face label after a single T-shift}. Since all layers of the weave have 6 boundary legs, the list of boundary labels for each layer is not necessary.}
		\label{fig:ConjugateSurface8}
	\end{figure}
\end{center}

\section{Positroid Weaves are Complete Demazure Weaves} \label{sec: Demazure}

In this section we prove that positroid weaves are complete Demazure weaves, up to weave equivalences. This is established in Theorem~\ref{thm:Demazure weave}. For reference, positroid weaves have been defined in Definition \ref{def:positroid_weaves} above, Demazure weaves in Definition \ref{defn: demazure weave} above, and weave equivalences are depicted in Figure \ref{fig:weave equivalence}.\\

\subsection{Braid words and orientations under T-shifts}\label{ssec:prelim1_demazure} By Theorem \ref{thm:weave construction}.(2), the boundary positive braid word of a positroid weave is cyclically equivalent to $\beta_\cP$. Therefore, a first necessary condition for positroid weaves to be complete Demazure weaves is that after cyclically rotating and applying braid moves to $\beta_\cP$, we can obtain a braid word with $w_0$ as a consecutive subword. Let us first prove this.

\begin{prop}\label{prop: containing w_0} Let $\cP$ be a positroid. Then the braid word $\beta_\cP$ is cyclically equivalent to a positive braid word that contains a reduced word for $w_0$ as a consecutive subword.
\end{prop}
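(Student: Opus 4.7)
The plan is to analyze the periodic grid pattern of $\beta_\cP$ (Definition~\ref{def:grid-pattern}) and locate a cyclic rotation of $\beta_\cP$ that contains a reduced word for $w_0 \in S_m$ as a consecutive subword of length $\binom{m}{2}$. Since cyclically rotating $\beta_\cP$ corresponds to choosing a different starting column in the periodic grid pattern, the task reduces to identifying a suitable column $i_0$ after which the next $\binom{m}{2}$ letters of $\beta_\cP$ encode a reduced expression for $w_0$.

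For the starting column, I aim to pick $i_0$ just before a ``maximal" chord $(j_0, f(j_0))$, chosen so that $f(j_0) - j_0$ is maximal among $j \in [n]$. Along this chord the strand at the top descends, crossing each of the other $m-1$ currently active strands in order and contributing the descending subword $s_{m-1} s_{m-2} \cdots s_1$ to $\beta_\cP$. Once this strand reaches the bottom, the new topmost strand similarly descends through its own chord and contributes $s_{m-1} s_{m-2} \cdots s_2$, and so on. Concatenating these successive descending runs yields the standard reduced word
\[
(s_{m-1} s_{m-2} \cdots s_1)(s_{m-1} s_{m-2} \cdots s_2) \cdots (s_{m-1})
\]
for $w_0 \in S_m$. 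As illustrated by the cyclic window analysis in Example~\ref{exmp:positroid braid}, such a subword can be read across the boundaries of consecutive $\beta_i$'s while still forming a single consecutive window in the cyclic word.

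The main obstacle is to verify, for an arbitrary positroid $\cP$, that this cascading descent really does occupy a single consecutive window in $\beta_\cP$ rather than being interrupted by intervening crossings. My plan is to handle this by induction on the rank $m$ using the T-shift operation: Proposition~\ref{prop:single T-shift} reduces the rank by $1$, and Proposition~\ref{prop:T-shift of positive braids} identifies the positive braid word of the T-shifted positroid as $(\beta_\cP)^{\downarrow} = \beta_{\cP^{\downarrow}}$, obtained from $\beta_\cP$ by deleting every occurrence of $s_{m-1}$. By the inductive hypothesis, some cyclic rotation of $\beta_{\cP^{\downarrow}}$ contains a reduced word for the longest element of $S_{m-1}$; the $s_{m-1}$-letters in $\beta_\cP$ (those that were removed by the T-shift) can then be reinstated in the appropriate positions to extend this expression to a reduced word for $w_0 \in S_m$, provided the cyclic rotations at the two rank levels are chosen compatibly. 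The base case $m=1$ is trivial since $w_0$ is the identity in $S_1$. The delicate point to be checked in the induction step is precisely this compatibility between the cyclic shifts at adjacent levels, which is where the combinatorics of the periodic grid pattern enters.
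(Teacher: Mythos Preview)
Your proposal sketches two approaches but neither one closes; both have concrete obstructions.

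\textbf{On the cascading descent.} Your claim that after the maximal chord $(j_0,f(j_0))$ the subsequent blocks give exactly $b_{j_0+1}=m-1,\ b_{j_0+2}=m-2,\ldots$ is false in general. For the uniform positroid on $\Gr(4,8)$ (so $f(i)=i+4$) every chord is maximal and every $b_i=3$, so $\beta_\cP=(s_3s_2s_1)^8$. Starting right after any chord gives the window $s_3s_2s_1s_3s_2s_1$, which is \emph{not} reduced (its Demazure product has length $4$). A consecutive $w_0$-window does exist here (e.g.\ $s_2s_1s_3s_2s_1s_3$), but it does not sit where your heuristic predicts, and showing it always exists would require a different argument.

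\textbf{On the T-shift induction.} The ``delicate point'' you flag is a genuine obstruction, not a detail to be filled in. The proposition allows braid moves (this is what ``cyclically equivalent'' means here), and the paper's proof uses them essentially. But the T-shift operation $\beta\mapsto\beta^{\downarrow}$ (deleting all $s_{m-1}$'s) is defined on words, not on braid equivalence classes: from $s_{m-1}s_{m-2}s_{m-1}\sim s_{m-2}s_{m-1}s_{m-2}$ one gets $s_{m-2}$ versus $s_{m-2}^2$ after deletion. So a braid-equivalence used in the inductive hypothesis for $\beta_{\cP^{\downarrow}}$ has no lift to $\beta_\cP$. If instead you strengthen the hypothesis to ``cyclic rotation only'', the induction still does not go through cleanly: reinserting the $s_{m-1}$'s into a $w_{0,m-1}$-window produces a word whose length and structure depend on how many block boundaries the window crosses, and (as the $\Gr(4,8)$ example shows) the result need not be a $w_{0,m}$-window at the same location.

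\textbf{What the paper does instead.} The paper fixes an arbitrary column $i$ and tracks how each of the $m-1$ strands $i_2,\ldots,i_m\in\tI{i}$ rises to the top; the crossings along the trajectory of $i_k$ spell $\tau_k=s_{m-k+1}\cdots s_{m-1}$, and together the $\tau_k$'s give a (scattered) reduced $w_0$-subword. The key technical observation is that any letter appearing between $\tau_{k-1}$ and $\tau_k$ in the cyclic word is some $s_l$ with $l>m-k+1$, and such letters can be pushed past $\tau_k$ by a single braid move ($s_l\tau_k\sim\tau_k s_{l-1}$). Iterating collects the $\tau_k$'s into the consecutive block $\tau_2\tau_3\cdots\tau_m$. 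This argument uses braid moves essentially and does not proceed by induction on $m$.
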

\begin{proof} Recall that $\beta_\cP$ is constructed from the target Grassmann necklace $\overrightarrow{\cI}=(\overrightarrow{I_1},\overrightarrow{I_2}, \dots, \overrightarrow{I_n})$ by writing down entries of each entry $\overrightarrow{I_i}$ in a column and then drawing a wiring diagram between consecutive columns. %Since the iterative construction of a positroid weave always terminates at $\bG_1$ and $\bG_1$ has at least two external edges, there must be at least two columns $\overrightarrow{I_i}$ for which the crossings between $\overrightarrow{I_i}$ and $\overrightarrow{I_{i+1}}$ are $s_{m-1}s_{m-2}\cdots s_1$. 
Let us fix an entry $\overrightarrow{I_i}=(i<_i i_2<_i i_3<_i \cdots <_i i_m)$ of $\overrightarrow{\cI}$, where $i$ is not a solid lollipop for $\cP$. By construction, each of the elements $i_2, i_3,\dots, i_m$ gradually moves up as we scan the columns to the right (cyclically) of $\overrightarrow{I_i}$. Follow the upward trajectory of each $i_k$ (with $2\leq k\leq m$) and mark all the crossings the trajectory passes through before $i_k$ reaches the top row. The crossings for which the trajectory of $i_k$ passes through are necessarily $s_{m-k+1}s_{m-k+2}\cdots s_{m-1}$ and thus the subword consisting of all the marked crossings spells out a reduced word for $w_0$. Let us now to use braid equivalences to move these letters together into a cyclically consecutive subword, as follows.

By the construction of $\beta_\cP$, we can already change $\beta_\cP$ into a positive braid word where the $k-1$ marked crossings of $i_k$ form a consecutive subword before using any braid moves (Reidemeister III moves). Call this consecutive subword $\tau_k:=s_{m-k+1}s_{m-k+2}\cdots s_{m-1}$. Thus, without loss of generality, we may assume that, up to a cyclic shift, $\beta_\cP$ is of the form
\[
\cdots\tau_2 \cdots \tau_3 \cdots \underbrace{\cdots}_\text{more $\tau_k$'s} \cdots \tau_m \cdots ,
\]
Moreover, from the construction of $\beta_\cP$, the crossings between $\tau_{k-1}$ and $\tau_k$ are among the $s_l$'s with $l>m-k+1$. Now, using a single braid move we have $s_l\tau_k \sim \tau_k s_{l+1}$ for $l>m-k+1$. Thus, by using braid moves, we can move the unmarked crossings between the $\tau_k$'s all the way to the right and get a consecutive subword $\tau_2\tau_3\cdots \tau_m$, which is a reduced word of $w_0$ as required. 
\end{proof}

\noindent To show that a positroid weave $\ww(\bG)$ is equivalent to a complete Demazure weave, we must draw $\ww$ so that no weave lines have horizontal tangents. For this, we use acyclic perfect orientations, cf.~Definition \ref{defn: perfect orientation with index a}.

\begin{defn}\label{defn: induced perfect orientation} Let $\bG$ be a reduced plabic graph and $O_i$ the unique acyclic perfect orientation on $\bG$ with source set $\tI{i}=(i<_ij<_i\cdots)$. By definition, $O^\downarrow_i$ is the unique acyclic perfect orientation $O_j$ on $\bGshift$.\hfill$\Box$
\end{defn}

We first determine the relationship between $O_i$ and $O^\downarrow_i$.

\begin{prop}\label{prop: compatible perfect orientations} Let $\bG$ be a reduced plabic graph equipped with the acyclic perfect orientation $O_i=O_{i_1}$ and suppose  this perfect orientation has source set $\{i_1, i_2, \dots, i_m\}$. Consider $\bGshift$ equipped with the acyclic perfect orientation $O^\downarrow_{i}$, which will have source set $\{i_2, \dots, i_m\}$.\\

\noindent Then, for any shared vertex between $\bG$ and $\bGshift$, the outgoing (resp.~incoming) edges in $\bG$ must be opposite to the incoming (resp.~outgoing) edges in $\bGshift$.
\end{prop}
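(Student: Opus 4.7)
The strategy is to reduce the claim to a statement about the unique sinks of faces of $\bG$. Label the three edges of $\bG$ at $s$ as $e_1, e_2, e_3$ in counterclockwise order and let $F_k$ denote the unique face of $\bG$ adjacent to $s$ whose boundary does not contain $e_k$. By the construction of the T-shift, the three edges of $\bGshift$ at $v'_s$ are of the form $f_k := v'_s v'_{F_k}$, and the angular position of $f_k$ at $v'_s$ lies between $e_{k-1}$ and $e_{k+1}$ at $s$, hence is opposite to $e_k$. Since $s$ is a trivalent solid vertex of $\bG$, $O_i$ has a unique out-edge at $s$; dually, $O^\downarrow_i$ has a unique in-edge at the trivalent empty vertex $v'_s$ in $\bGshift$. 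Proving the proposition is therefore equivalent to the statement: for each $k \in \{1,2,3\}$, $e_k$ is the out-edge of $s$ in $O_i$ if and only if $f_k$ is the in-edge of $v'_s$ in $O^\downarrow_i$.

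To reduce this to a statement about face sinks, I would first apply Proposition~\ref{prop: unique sink} at $s$: $e_k$ is the out-edge of $s$ in $O_i$ if and only if the two remaining edges $e_{k-1}, e_{k+1}$ are both oriented into $s$, which happens if and only if $s$ is the unique sink of $F_k$ in $O_i$. Secondly, $f_k$ is the in-edge of $v'_s$ in $O^\downarrow_i$ if and only if $v'_{F_k} v'_s$ is the unique out-edge of the solid vertex $v'_{F_k}$ in $O^\downarrow_i$; when Step~(5) of the T-shift replaces $v'_{F_k}$ by a trivalent tree of solid vertices, the same conclusion holds via the unique out-direction of that tree. The proposition then reduces to the face-symmetric assertion: for every face $F$ of $\bG$ adjacent to $s$, the unique sink of $F$ in $O_i$ on $\bG$ is the solid vertex $s_* \in \partial F$ such that the unique out-edge of $v'_F$ in $O^\downarrow_i$ on $\bGshift$ points toward $v'_{s_*}$.

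To establish this face-symmetric assertion, I plan to revisit the argument of Proposition~\ref{prop: unique sink}. That argument identifies the sink of $F$ in $O_i$ as the unique solid vertex on $\partial F$ where the zigzag strands traveling counterclockwise along $\partial F$ exhibit a $<_i$-wrap-around in their indices. An analogous wrap-around analysis at the solid vertex $v'_F$ in $\bGshift$ under the ordering $<_{i_2}$ determines the out-target of $v'_F$ by locating the unique $<_{i_2}$-wrap-around among the zigzag strands passing through $v'_F$. The bijection between zigzag strands in $\bG$ and $\bGshift$ established in the proof of Proposition~\ref{prop:single T-shift}, via the correspondence $\zeta_a \leftrightarrow \zeta'_{a+1}$, matches the strands along $\partial F$ in $\bG$ to the strands meeting $v'_F$ in $\bGshift$ and converts the $<_i$-wrap-around into the $<_{i_2}$-wrap-around, identifying the two distinguished vertices. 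The main technical obstacle here is making this conversion precise: verifying that the strand bijection $\zeta_a \leftrightarrow \zeta'_{a+1}$, restricted to the local window around $F$, preserves the cyclic order of the strand indices needed to translate $<_i$ to $<_{i_2}$, which will require a careful case analysis using the rules of the road at both $\partial F$ in $\bG$ and at $v'_F$ in $\bGshift$.
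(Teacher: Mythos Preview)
Your reduction through face sinks is a legitimate reformulation, but it is considerably more indirect than necessary, and the ``main technical obstacle'' you flag at the end is self-inflicted by that detour.

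The paper's argument never leaves the shared vertex $v$. At a trivalent solid vertex of $\bG$ there are exactly three zig-zag strands passing near $v$; by the rule of Definition~\ref{defn: perfect orientation with index a} (orient each edge in the direction of the strand with $<_i$-smaller source), the in/out pattern of the three $\bG$-edges at $v$ is read off directly from the relative $<_i$-order of these three strand indices. Under T-shift, the strand bijection of Proposition~\ref{prop:single T-shift} carries the same three strands to three strands at the same point in $\bGshift$, now an empty vertex. Because the rules of the road at an empty vertex are the mirror of those at a solid vertex, each new strand runs along the pair of $\bGshift$-edges \emph{opposite} to the pair of $\bG$-edges its counterpart ran along. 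Applying the orientation rule once more, now for $O^\downarrow_i$, to the three shifted indices yields the dual in/out pattern. This is a single local picture (the paper's Figure~\ref{fig: compatible perfect orientations}), with no faces, no wrap-around, and no case analysis.

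Your route replaces this three-strand comparison at $v$ with a comparison of the full cyclic sequence of strands along $\partial F$ in $\bG$ against all the strands through $v'_F$ in $\bGshift$, and then asks you to match the wrap-around positions under two different linear orders. That can in principle be made to work, but it reproduces the same local check once per face with extra bookkeeping, and the order-preservation step you worry about (translating $<_i$ on $\partial F$ to $<_{i_2}$ at $v'_F$) is genuinely delicate in that global formulation. Staying at $v$ avoids all of it: the three strands at $v$ are literally the same three strands on both sides, up to the index shift, and the solid/empty mirror symmetry of the rules of the road does the work.
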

\begin{proof} Let $v$ be a solid vertex in $\bG$ with incident edges oriented as in Figure \ref{fig: compatible perfect orientations} (left). By construction, we must have $i_1<_ii_2<_ii_3$ for the three zig-zag strands in Figure \ref{fig: compatible perfect orientations} (left). After constructing $\bGshift$ from $\bG$, the counterparts of these three zig-zag strands are drawn in Figure \ref{fig: compatible perfect orientations} (right). Therefore, the acyclic perfect orientation $O_i^\downarrow$ on $\bGshift$ must assign orientations to the new edges adjacent to $v$ in $\bGshift$, which is now an empty vertex, as in see Figure \ref{fig: compatible perfect orientations} (right), and the statement follows.
\end{proof}

\begin{figure}[H]
    \centering
    \begin{tikzpicture}
    \draw [decoration={markings,mark=at position 0.5 with {\arrow{>}}},postaction={decorate}] (-1,0) -- (0,0.5);
    \draw [decoration={markings,mark=at position 0.5 with {\arrow{>}}},postaction={decorate}] (1,0) -- (0,0.5);
    \draw [decoration={markings,mark=at position 0.5 with {\arrow{>}}},postaction={decorate}] (0,0.5) -- (0,1.5);
    \draw [fill=black] (0,0.5) circle [radius=0.2];
    \draw [red, ->] (1,-0.2) node [below right] {$\zeta_{i_1}$}to [out=150,in=-90] (-0.2,1.5);
    \draw [red, ->] (0.2,1.5) node [right] {$\zeta_{i_3}$} to [out=-90,in=30] (-1,-0.2);
    \draw [red,->] (-1,0.2) node [above left] {$\zeta_{i_2}$} to [out=30,in=150] (1,0.2);
    \end{tikzpicture} \hspace{3cm}
    \begin{tikzpicture}
    \draw [lightgray,decoration={markings,mark=at position 0.5 with {\arrow{>}}},postaction={decorate}] (-1,0) -- (0,0.5);
    \draw [lightgray,decoration={markings,mark=at position 0.5 with {\arrow{>}}},postaction={decorate}] (1,0) -- (0,0.5);
    \draw [lightgray,decoration={markings,mark=at position 0.5 with {\arrow{>}}},postaction={decorate}] (0,0.5) -- (0,1.5);
    \draw [blue, decoration={markings,mark=at position 0.5 with {\arrow{>}}},postaction={decorate}] (0,-0.5) -- (0,0.5);
    \draw [blue, decoration={markings,mark=at position 0.5 with {\arrow{>}}},postaction={decorate}] (0,0.5) -- (1,1);
    \draw [blue, decoration={markings,mark=at position 0.5 with {\arrow{>}}},postaction={decorate}] (0,0.5) -- (-1,1);
    \draw [fill=lightgray] (0,0.5) circle [radius=0.2];
    \draw [blue] (0,0.5) circle [radius=0.2];
    \draw [red,->] (-1,0.8) node [below left] {$\zeta'_{i_2+1}$} to [out=-30,in=-150] (1,0.8);
    \draw [red,->] (1,1.2) node [above right] {$\zeta'_{i_3+1}$} to [out=-150,in=90] (-0.2,-0.5);
    \draw [red,->] (0.2,-0.5) node [below right] {$\zeta'_{i_1+1}$} to [out=90,in=-30] (-1,1.2);
    \end{tikzpicture}
    \caption{Corresponding zig-zag strands in $\bG$ (left) and $\bGshift$ (right).}
    \label{fig: compatible perfect orientations}
\end{figure}

\subsection{An ingredient from graph theory}\label{ssec:prelim2_demazure}  By definition, a directed planar graph $G$  that can be isotoped so that all edges are  oriented upward  is said to be an \emph{upward planar drawing}. We use the following result from  graph theory, cf.~\cite [Section 3.2]{garg1995upward}:

\begin{thm}[\cite{garg1995upward}] \label{thm:upward} Let $G$ be a directed planar graph. Suppose that $G$ can be embedded in the plane and the angles between any two adjacent incoming edges, or any two adjacent outgoing edges  can be labelled {\it small} or {\it large}  in a manner such that:
\begin{enumerate}
\item At every vertex, the incoming edges (and therefore the outgoing edges) are cyclically consecutive;
\item Every source or sink has exactly one large angle;
\item Every interior face has two more small angles than large angles; and
\item The exterior face has two more large angles than small angles.
\end{enumerate}
Then $G$ can be isotoped to an upward planar drawing such that all small angles measure less than $\pi$ and all large angles measure more than $\pi$.
\end{thm}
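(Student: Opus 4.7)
The plan is to leverage the classical correspondence between upward planar drawings and planar \emph{st-graphs}, i.e.~acyclic planar digraphs with a unique source $s$ and a unique sink $t$ both on the outer face. The small/large angle labeling furnished by hypothesis will be used to augment $G$ into such an st-graph in a canonical way, and then the classical realizability theorem for planar st-graphs will produce the desired upward drawing.

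First, I would verify that the combinatorial conditions (1)--(4) behave correctly under Euler's formula. Condition (1) says the embedding is \emph{bimodal}, so at every non-source/non-sink vertex there are exactly two transitional angles: one between the last incoming and first outgoing edge (going one way) and one between the last outgoing and first incoming edge (going the other way). Conditions (2)--(4) then amount to a discrete Gauss--Bonnet statement: summing ``small $-$ large'' contributions of angles over all faces and all vertices gives a constant determined by the Euler characteristic of $S^2$, and this constant is exactly right to be compatible with a planar upward embedding. This consistency check is the combinatorial heart of the proof.

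Next, I would augment $G$ to an st-graph $\hat G$ as follows. Introduce a super-source $s$ joined by an edge to every source of $G$, inserted into the unique large angle at that source provided by (2); symmetrically, introduce a super-sink $t$ for the sinks. Placing $s$ and $t$ on the outer face, condition (4) guarantees that these insertions can be carried out consistently, and condition (3) guarantees that no interior face becomes ill-formed. One then checks that $\hat G$ is acyclic (because all added edges point away from $s$ or toward $t$, and $G$ itself is already acyclic by virtue of admitting a bimodal orientation) and planar with the prescribed rotation system. Hence $\hat G$ is a planar st-graph.

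Finally, I would invoke the theorem of Di~Battista--Tamassia and Kelly that every planar st-graph admits an upward planar drawing realizing any prescribed planar embedding, with the further property that the angle at a non-source/non-sink vertex is smaller than $\pi$ on the ``transitional'' sides and greater than $\pi$ on the other, and symmetrically the unique large angle at each source/sink is greater than $\pi$. Deleting $s$, $t$, and the added edges from this drawing yields an upward planar drawing of $G$, and by construction the small/large designation of each angle in the original labeling coincides with whether the realized angle is less than or greater than $\pi$. The main obstacle I expect is the bookkeeping in the augmentation step: one must show that inserting edges at the designated large angles preserves planarity and acyclicity globally, not just locally, and here the face count in condition (3) is indispensable, because it is precisely the condition preventing a ``twisted'' face from arising when the st-graph is drawn upward.
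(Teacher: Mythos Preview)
The paper does not prove this theorem; it is quoted from \cite{garg1995upward} (see the sentence immediately preceding the statement: ``We use the following result from graph theory, cf.~\cite[Section 3.2]{garg1995upward}''), so there is no proof in the paper to compare your proposal against.

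That said, your outline follows the standard route in the graph-drawing literature, and it is essentially the approach of Bertolazzi--Di~Battista--Liotta--Mannino and Garg--Tamassia: reduce to a planar $st$-graph and invoke the Di~Battista--Tamassia/Kelly realizability theorem. Two points deserve care. First, your claim that ``$G$ itself is already acyclic by virtue of admitting a bimodal orientation'' is false as stated: a directed cycle is bimodal. Acyclicity is a consequence of the full hypotheses (1)--(4), not of (1) alone; one argues that a directed cycle would bound a region whose interior faces violate the count in (3). Second, the augmentation to an $st$-graph is not just ``add a super-source and super-sink'': the standard construction proceeds face by face, saturating each face by inserting edges so that its number of local sources/sinks drops, and it is precisely condition~(3) that guarantees each internal face can be saturated to a single source and single sink. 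Your last paragraph gestures at this, but the actual mechanism is the face-by-face saturation rather than a one-shot insertion at the large angles of global sources and sinks.
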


\noindent If $G$ has an upward planar drawing, then $G$ must admit such a small-large edge labelling, cf.~\cite [Lemma 2]{garg1995upward}. Theorem \ref{thm:upward} states that this also sufficient.

\subsection{Proof that positroid weaves are complete Demazure} \label{ssec: Demazure_proof} Let us prove the first part of Theorem~\ref{thm:mainA}(ii). We use the ingredients in Subsections \ref{ssec:prelim1_demazure} and \ref{ssec:prelim2_demazure}. 

\begin{thm}\label{thm:Demazure weave} Let $\bG$ be a reduced plabic graph and $\ww(\bG)$ its positroid weave. Then $\ww(\bG)$ is equivalent to a complete Demazure weave.
\end{thm}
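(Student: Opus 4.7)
The plan is to combine three ingredients already assembled in Sections~\ref{ssec:prelim1_demazure}--\ref{ssec:prelim2_demazure}: (a) the positive braid word $\beta_\cP$ contains a reduced expression for $w_0$ after cyclic rotation (Proposition~\ref{prop: containing w_0}); (b) the iterated perfect orientations $O_i, O_i^{\downarrow}, \ldots$ fit together consistently along the shared vertices of the different T-shift levels (Proposition~\ref{prop: compatible perfect orientations}); and (c) the upward planar drawing criterion of Theorem~\ref{thm:upward}. The aim is to orient $\ww(\bG)$ globally and then isotope it to sit in a rectangle without horizontal tangents so that the trivalent, tetravalent, and hexavalent vertices acquire the $2/1$, $2/2$, $3/3$ above/below splitting that the Demazure condition requires.

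First I would use Theorem~\ref{thm:weave construction}(2) to identify $\dd\ww(\bG)$ with a cyclic shift of $\beta_\cP$, and apply Proposition~\ref{prop: containing w_0} to obtain a cyclically equivalent braid word $\beta'$ containing a reduced word $\mathbf{w}_0$ for $w_0$ as a consecutive subword. The cyclic rotation and the Reidemeister III moves used in the proof of Proposition~\ref{prop: containing w_0} lift to weave equivalences of the type recorded in Figure~\ref{fig:weave equivalence}, so I obtain an equivalent weave $\ww'$ whose boundary braid word contains $\mathbf{w}_0$ as a cyclic segment. Cutting the disk at a point just past the final letter of $\mathbf{w}_0$ turns $\ww'$ into a weave on the rectangle with $\mathbf{w}_0$ situated along the bottom edge and the remaining letters of $\beta'$ along the top.

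Next I would orient the weave. Choose $i$ compatible with the cut point and equip $\bG=\bG_m$ with the acyclic perfect orientation $O_i$ from Definition~\ref{defn: perfect orientation with index a}. Iterating Proposition~\ref{prop: compatible perfect orientations} gives compatible perfect orientations on every $\bG_k$, $1\le k\le m-1$, which therefore assemble into a coherent orientation of every weave line of $\ww'$. The local structure of this orientation is what we need at each vertex type. At a trivalent weave vertex (an empty vertex $v_s'$ of $\bG_{m-1}$ not identified with anything above), the perfect-orientation rule forces $2$ outgoing and $1$ incoming edge. At a hexavalent weave vertex (shared between two adjacent levels), the solid side contributes $2$ in and $1$ out, the empty side contributes $2$ out and $1$ in, and the ``opposite'' condition of Proposition~\ref{prop: compatible perfect orientations} puts the three incoming edges and the three outgoing edges into cyclically consecutive positions. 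At a tetravalent vertex (a transverse crossing of weave lines of non-adjacent colors) the two colors inherit their orientations from the respective levels independently and again split $2$ versus $2$ into cyclically consecutive arcs.

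Finally I would apply Theorem~\ref{thm:upward} to this oriented planar graph, declaring an angle to be \emph{small} precisely when the two edges bounding it are one incoming and one outgoing (so the angle is traversed by the orientation), and \emph{large} otherwise. The cyclic consecutiveness verified above guarantees condition (1) of Theorem~\ref{thm:upward}; the only sources and sinks in $\ww'$ are the endpoints of the external weave lines on the top and bottom boundary of the rectangle, so condition (2) is immediate; conditions (3) and (4) for interior and exterior faces are then a counting argument using the global orientation (the net number of small angles minus large angles around any oriented planar face of a graph with only non-source/non-sink interior vertices equals $\pm 2$ depending on whether the face is interior or exterior). Theorem~\ref{thm:upward} then isotopes $\ww'$ to an upward planar drawing with no horizontal tangents, and the small/large specification forces exactly the Demazure above/below splitting at every vertex. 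Together with the $\mathbf{w}_0$ bottom boundary from the first step, this yields a complete Demazure weave equivalent to $\ww(\bG)$. The main obstacle is the bookkeeping for the angle conditions in Theorem~\ref{thm:upward}, especially around exceptional boundary faces of $\bG$ (cf.~Figure~\ref{fig:exceptional boundary face}), which produce solid lollipops in $\bGshift$ and therefore require a separate check that the resulting local angle counts still satisfy conditions (3) and (4).
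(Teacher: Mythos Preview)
Your proposal reverses the order of the paper's two main steps, and that reversal opens a real gap. The paper first produces an upward drawing of the unmodified positroid weave---by applying Theorem~\ref{thm:upward} to the plabic graph $\bG$ itself (where Proposition~\ref{prop: unique sink} supplies the face conditions) and then arguing layer by layer that each subsequent T-shift can be inserted upward inside the existing picture. Only afterwards does it perform the boundary braid moves needed to make the $w_0$-sources consecutive, implementing them as weave equivalences \emph{inside} an already upward picture via the ``blocked sinks'' analysis (showing such sinks are necessarily leftward-pointing so they can be cleared by extending weave lines through hexavalent/tetravalent vertices). You instead do the boundary braid moves first, obtaining a weave $\ww'$, and then try to orient $\ww'$ via the iterated perfect orientations. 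But Proposition~\ref{prop: compatible perfect orientations} orients only the edges of the original positroid weave; the hexavalent and tetravalent vertices you introduced near the boundary when lifting the braid moves of Proposition~\ref{prop: containing w_0} to weave equivalences are not part of any $\bG_k$ and hence are not touched by that argument, so you have not actually oriented all of $\ww'$. The paper's blocked-sink analysis is precisely the device that handles this interaction, and nothing in your outline replaces it.

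Two further points. First, you want to apply Theorem~\ref{thm:upward} directly to the full weave, but the faces of $\ww(\bG)$ are bounded by weave lines of several colors and are not faces of any single plabic graph, so Proposition~\ref{prop: unique sink} does not apply to them; the assertion that conditions~(3) and~(4) reduce to ``a counting argument'' is not justified. The paper avoids this entirely by applying Theorem~\ref{thm:upward} only to $\bG$ and then building up. Second, your small/large convention (``small'' between one incoming and one outgoing edge) is not the one in Theorem~\ref{thm:upward} as stated: there, labels are assigned only to angles between two adjacent edges of the \emph{same} direction. The paper labels those same-direction angles at interior vertices ``small'' and the unique angle at each univalent boundary vertex ``large''; your inversion would not interface with the hypotheses of the theorem as written.
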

\begin{proof} Let us fix $i$. Proposition \ref{prop: compatible perfect orientations} gives orientations $O_i$ and $O^\downarrow_i$ on the edges of $\bG$ and $\bGshift$, respectively. Iteratively  applying Proposition \ref{prop: compatible perfect orientations} yields orientations on all the T-shifts. Therefore,  by the constructive definition of $\ww(\bG)$, we obtain an orientation on all the edges of $\ww(\bG)$. Note that the weave $\ww(\bG)$ naturally sits within the same disk in which the graph $\bG$ is drawn. The proof has the following two steps:

\begin{enumerate}

\item In the first step of the proof, we show that the positroid weave and the disk it sits inside can be isotoped so that all the oriented edges point upwards. This happens at a cost: the disk becomes a non-convex subset of the plane, and the sources along the boundary, which are a subword of $\beta_\cP$ spelling out a reduced word for $w_0$, are not consecutive. See Figure \ref{fig: demazure weave example}(left) for an illustration of the type of modification  that the disk must undergo.\\

\item The second step is to use braid equivalences along the boundary to move the sources into a consecutive subword, as in the proof of Proposition \ref{prop: containing w_0}. The key step is to show that this can be done in a way such that all edges remain oriented upward. Once we have performed these braid equivalences, the source (and therefore also the target) boundary vertices  become consecutive  and so the disk can be isotoped to be convex.\footnote{Once it is convex, the disk can further be isotoped to a rectangle, as technically required for a Demazure weave.}

\end{enumerate}

For step (1), we use Theorem~\ref{thm:upward}. To  apply it, we need  the following facts. First, Proposition \ref{prop: unique sink}  implies that each interior face has exactly one source and one sink. Second, each interior vertex has either a unique incoming edge or a unique outgoing edge. Let us label the angles between two adjacent incoming edges at an interior vertex as {\it small}, and do the same with angles between two adjacent outgoing edges at interior vertices. At the external vertices, on the boundary of the disk,  we label the unique angle as {\it large}.

For a reduced plabic graph $\bG$, the first two  hypotheses of Theorem~\ref{thm:upward}  hold by construction. For the third condition in the hypothesis, all the interior faces have exactly two small angles and no large angles. For the remaining condition  in the hypothesis, Proposition \ref{prop: unique sink} implies that the external face has two more large angles than small angles. Thus all the conditions are satisfied, Theorem~\ref{thm:upward} can be applied to $\bG$, and  therefore $\bG$ has an upward planar drawing,  as desired.\\

Let us now show that the positroid weave can be drawn with its edges oriented upward, working one layer of the weave at a time. We start with $\bG$ drawn so that all edges are oriented upward  and consider $\bGshift$. 
By Proposition \ref{prop: compatible perfect orientations}, near the solid vertices of $\bG$ where the edges of $\bGshift$ start, there is no obstruction to orienting these edges upwards: we therefore start by drawing these as half-edges. Within each face of $\bG$, all but one of these half edges is incoming and one half-edge is outgoing.  This, within  each face, the edges of $\bGshift$ form a tree, so we may arrange all the edges of this tree to be oriented upwards. Hence, once $\bG$ is oriented upward, we can simultaneously orient all the edges of $\bGshift$ upward.

Iteratively, we continue this process with each T-shift of $\bG$ and orient all the edges in these layers upwards. Finally, we remove all the lollipops as well as the graph $\bG$, as it is not a layer in our weave. The end result is an embedding of the positroid weave in the plane so that all directed edges are oriented upwards. Moreover, all hexavalent weave vertices arise as in Figure \ref{fig: compatible perfect orientations}, all tetravalent vertices arise from non-consecutive T-shifts, and all trivalent weave vertices are originally empty vertices of $\bGshift$. Therefore, we conclude that conditions (2)-(5) of Definition \ref{defn: demazure weave} are  satisfied. The only remaining condition to be satisfied is Definition \ref{defn: demazure weave}.(1): all external weave lines must be incident to either the top or the bottom boundary. If the source set $\overrightarrow{I_i}$ consists of cyclically consecutive indices, then the bottom of the weave consists of boundary sources that give a reduced word for $w_0$. Then the positroid weave satisfies condition (1) as well and is therefore a complete Demazure weave.\\

For the general case we must proceed with step (2). Indeed, if there are sinks $j$ with $j <_i i_m$, then we need to use braid moves to move these strands higher in the ordering $<_i$, so we can extend them to the top. Let us call these sinks \emph{blocked}. First, we claim that none of the blocked sinks can be rightward pointing, i.e.~of the forms depicted in Figure \ref{fig: forbidden sinks}. This is because in either case, the indices must satisfy $j<_ik$ from the cyclic ordering of $i, j, k$ along the boundary, while the perfect orientation agrees with the zig-zag strand $\zeta_k$ and is opposite to $\zeta_j$, implying $k <_i j$, a contradiction. Note however, leftward pointing sinks can occur  as illustrated in Figure \ref{fig: allowed sinks}): both the cyclic ordering and the perfect orientation give $k <_i j$.
\begin{figure}[H]
    \centering
    \begin{tikzpicture}[scale=0.7]
    \draw [decoration={markings,mark=at position 0.5 with {\arrow{>}}},postaction={decorate}] (-135:1) -- (0,0);
    \draw [decoration={markings,mark=at position 0.5 with {\arrow{>}}},postaction={decorate}] (-45:1) -- (0,0);
    \draw [->] (0,0) -- (0,0.5);
    \draw [dashed] (0,0.5) -- (0,1.25) node [above] {$j$};
    \draw [red, dashed] (0.2,0.5) -- (0.2,1.25);
    \draw [fill=black] (0,0) circle [radius=0.2];
    \draw [dashed] (-135:1) to [out=-135, in=-90] (-1.5,-0.5) -- (-1.5,1.5) to [out=90,in=90] (2.5,1.5) -- (2.5,-1.5);
    \draw [dashed] (-45:1) to [out=-45,in=90] (1,-1.5) node [below] {$k$};
    \draw [red,->] (0.2,0.5) node [right] {$\zeta_j$} to [out=-90,in=45] (-125:1);
    \draw [red,->] (-55:1) node [below] {$\zeta_k$} to [out=135,in=-90] (-0.2,0.8);
    \draw [red,dashed] (-55:1) to [out=-55,in=90] (0.85,-1.5);
    \draw [very thick] (3.5,-1.5) -- (2,-1.5) -- (2,1.25) to [out=90,in=90] (-0.5,1.25)-- (0.5,1.25) to [out=0,in=90] (1.5,0.5) -- (1.5,-1.5) -- (-2,-1.5);
    \node at (3,-1.5) [] {$\bullet$};
    \node at (3,-1.5) [below] {$i$};
    \end{tikzpicture}\hspace{3cm}
    \begin{tikzpicture}[scale=0.7]
    \draw [decoration={markings,mark=at position 0.5 with {\arrow{>}}},postaction={decorate}] (0,-0.8) -- (0,0);
    \draw [dashed] (0,-1.5) -- (0,-0.8);
    \draw [decoration={markings,mark=at position 0.5 with {\arrow{>}}},postaction={decorate}] (0,0) -- (45:1);
    \draw [dashed] (45:1) -- (1.2,1) node [above right] {$j$};
    \draw [decoration={markings,mark=at position 0.5 with {\arrow{>}}},postaction={decorate}] (0,0) -- (135:1);
    \draw [fill=white] (0,0) circle [radius=0.2];
    \draw [dashed] (135:1) to [out=135,in=-90] (-1,1) -- (-1,1.5) to [out=90,in=90] (2.5,1.5) -- (2.5,-1.5);
    \draw [red,->]  (35:1)node [below] {$\zeta_j$} to [out=-135,in=90] (0.2,-0.8);
    \draw [red,dashed] (35:1) to [out=35,in=-150] (1.3,0.9);
    \draw [red,->] (-0.2,-0.8) node [left] {$\zeta_k$} to [out=90,in=-45] (145:1);
    \draw [red,dashed] (-0.2,-0.8) -- (-0.2,-1.5);
    \draw [very thick] (3.5,-1.5) -- (2,-1.5) -- (2,1.25) to [out=90,in=90] (-0.5,1.25) -- (0.5,1.25) to[out=0,in=90] (1.5,0.5) -- (1.5,-1.5) -- (-2,-1.5);
    \node at (0,-1.5) [below] {$k$};
    \node at (3,-1.5) [] {$\bullet$};
    \node at (3,-1.5) [below] {$i$};
    \end{tikzpicture}
    \caption{Forbidden rightward pointing blocked sinks of perfect orientations. The thick solid curve indicates the boundary of the disk after the isotopy.}
    \label{fig: forbidden sinks}
\end{figure}
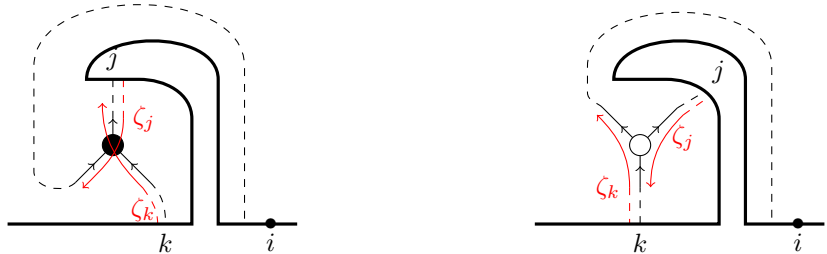
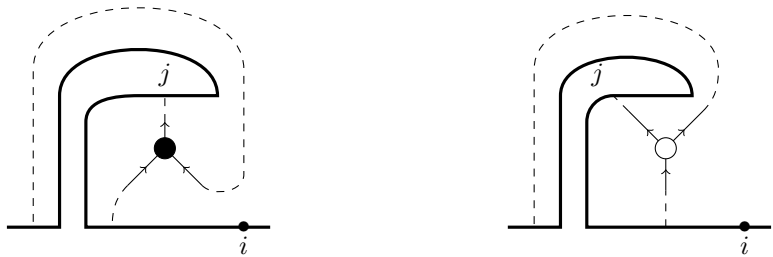
\begin{figure}[H]
    \centering
    \begin{tikzpicture}[scale=0.7]
    \draw [decoration={markings,mark=at position 0.5 with {\arrow{>}}},postaction={decorate}] (-135:1) -- (0,0);
    \draw [decoration={markings,mark=at position 0.5 with {\arrow{>}}},postaction={decorate}] (-45:1) -- (0,0);
    \draw [->] (0,0) -- (0,0.5);
    \draw [dashed] (0,0.5) -- (0,1) node [above] {$j$};
    \draw [fill=black] (0,0) circle [radius=0.2];
    \draw [dashed] (-45:1) to [out=-45, in=-90] (1.5,-0.5) -- (1.5,1.5) to [out=90,in=90] (-2.5,1.5) -- (-2.5,-1.5);
    \draw [dashed] (-135:1) to [out=-135,in=90] (-1,-1.5);
    \draw [very thick] (2,-1.5) -- (-1.5,-1.5) -- (-1.5,0.5) to [out=90,in=180] (-0.5,1) --  (1,1) to [out=90,in=90] (-2,1)-- (-2,-1.5) -- (-3,-1.5);
    \node at (1.5,-1.5) [] {$\bullet$};
    \node at (1.5,-1.5) [below] {$i$};
    \end{tikzpicture}\hspace{3cm}
    \begin{tikzpicture}[scale=0.7]
    \draw [decoration={markings,mark=at position 0.5 with {\arrow{>}}},postaction={decorate}] (0,-0.8) -- (0,0);
    \draw [dashed] (0,-1.5) -- (0,-0.8);
    \draw [decoration={markings,mark=at position 0.5 with {\arrow{>}}},postaction={decorate}] (0,0) -- (45:1) ;
    \draw [decoration={markings,mark=at position 0.5 with {\arrow{>}}},postaction={decorate}] (0,0) -- (135:1);
    \draw [fill=white] (0,0) circle [radius=0.2];
    \draw [dashed] (45:1) to [out=45,in=-90] (1,1.5) to [out=90,in=90] (-2.5,1.5) -- (-2.5,-1.5);
    \draw [very thick] (2,-1.5) -- (-1.5,-1.5) -- (-1.5,0.5) to [out=90,in=180] (-1,1) -- (0.5,1) to [out=90,in=90] (-2,1) -- (-2,-1.5) -- (-3,-1.5);
    \draw [dashed] (135:1) -- (-1,1) node [above left] {$j$}; 
    \node at (1.5,-1.5) [] {$\bullet$};
    \node at (1.5,-1.5) [below] {$i$};
    \end{tikzpicture}
    \caption{Allowable leftward pointing blocked sinks of perfect orientations.}
    \label{fig: allowed sinks}
\end{figure}

In order to make the positroid weave into a Demazure weave, we need to follow the boundary of the positroid weave in the counterclockwise direction and inductively clear these blocked sinks. Note that the counterclockwise direction along the weave boundary is the same as from left to right along the braid word $\beta_\cP$. The way we clear sinks is then analogous to how we move the crossings in between the $\tau_k$'s to the right of $\tau_m$ in  the proof of Proposition \ref{prop: containing w_0}.

Indeed, suppose we have cleared out all the blocked sinks in the clockwise direction of a blocked sink which is labelled $j$ (not necessarily a part of $\bGshift$ but possibly part of later T-shift). Then the external weave lines to the left of $j$ necessarily spell out the reduced word $(\tau_k\tau_{k+1}\cdots \tau_m)$, where $\tau_t=s_{m-t+1}s_{m-t+2}\cdots s_{m-1}$. Independently, the weave line of the blocked sink $j$ is $s_l$ for some $l>m-k+1$. Because $j$ is a leftward blocked sink, we can stretch the positroid weave vertically so that the sink $j$ is lower than the external weave lines in $(\tau_k\tau_{k+1}\cdots \tau_m)^{-1}$. Then the weave line of the blocked sink $j$ can now be extended leftward and upward through $(\tau_k\tau_{k+1}\cdots \tau_m)^{-1}$ using hexavalent and tetravalent weave vertices, analogous to the braid moves in the proof of Proposition \ref{prop: containing w_0}. Eventually this weave line comes out on the left, clockwise from the subword $w_0$, so that it can be extended upward while preserving its cyclic position. Since this extension only creates hexavalent and tetravalent weave vertices among external weave lines, it is a weave equivalence. In the end, after clearing out all the blocked sinks, we obtain a complete Demazure weave that is equivalent to the positroid weave $\ww(\bG)$, as required.
\end{proof}

Theorem~\ref{thm:Demazure weave} implies that positroid weaves are free, since all the Demazure weaves are free. In addition, Theorem~\ref{thm:Demazure weave} relates the mutable $Y$-trees of Theorem~\ref{thm:weave construction} to Lusztig cycles (Definition \ref{defn:Lusztig cycles}), which were used extensively in \cite{CGGLSS}. This corollary completes the proof of Theorem~\ref{thm:mainA}(ii).
\begin{cor}\label{cor:Lusztig cycles}  Let $\bG$ be a reduced plabic graph and $\ww(\bG)$ its positroid weave. Consider  the sequence of weave equivalences  in the proof of Theorem~\ref{thm:Demazure weave} that turn $\ww(\bG)$  into a complete Demazure weave. Then,  this sequence of weave equivalences turns the mutable $Y$-trees,  which correspond to internal faces of $\bG$, into mutable Lusztig cycles.
\end{cor}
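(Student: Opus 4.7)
The plan is to verify, for each mutable $Y$-tree $\gamma_F$ produced by Theorem~\ref{thm:weave construction}, that the Lusztig scanning rules of Definition~\ref{defn:Lusztig cycles} reproduce $\gamma_F$ when started from a specific trivalent vertex of its support. The first observation is that the six weave equivalences in Figure~\ref{fig:weave equivalence} preserve the $Y$-cycle relations of Definition~\ref{defn: Y-cycles} locally (a direct case check on each move), so the mutable $Y$-trees persist under the deformation of $\ww(\bG)$ into a complete Demazure weave, and their mutability is preserved because the external boundary of the weave is untouched.

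To locate the root, I would use the acyclic perfect orientation $O_i$ of $\bG$ employed to upward-isotope the weave in the proof of Theorem~\ref{thm:Demazure weave}. By Proposition~\ref{prop: unique sink}, the boundary of the internal face $F$ has a unique sink vertex $s^\ast$ under $O_i$. Combining this with the local edge swap between $\bG$ and $\bGshift$ in Proposition~\ref{prop: compatible perfect orientations}, the unique $\bGshift$-edge at $v_{s^\ast}'$ lying in the corner of $F$ is incoming at $v_{s^\ast}'$, while at every other trivalent leaf $v_s'$ in the support of $\gamma_F$ the analogous edge is outgoing. In the Demazure arrangement this translates to $v_{s^\ast}'$ being the unique topmost vertex of the support of $\gamma_F$ with its $\gamma_F$-edge pointing downward, which is the natural candidate for the root.

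To verify the Lusztig scanning rules starting from $v_{s^\ast}'$, one proceeds vertex by vertex. At $v_{s^\ast}'$ itself, the $Y$-tree assigns $1$ to the downward edge and $0$ to the two upward edges, matching the initialization in Definition~\ref{defn:Lusztig cycles}(1)--(2). At each hexavalent vertex of the support of $\gamma_F$, which arises from a node of the trivalent resolution of $v_F'$, the top-color edges all take value $1$ and the next-color edges all take value $0$; a direct substitution into the hexavalent rule of Definition~\ref{defn:Lusztig cycles}(5) gives consistent propagation in both configurations of top-color edges, $\{a,c,e\}$ or $\{b,d,f\}$. At each other trivalent leaf $v_s'$ of the support, the $Y$-tree edge is an up edge with value $1$, so the trivalent min-rule yields $0$ on the down edge, matching $\gamma_F$. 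Tetravalent vertices and vertices outside the support of $\gamma_F$ are trivially satisfied by the $0$ values propagated by the rules. The main subtlety is the correct identification of the root: once the orientation swap of Proposition~\ref{prop: compatible perfect orientations} is combined with uniqueness of the sink on $\partial F$, the remainder is a direct case check.
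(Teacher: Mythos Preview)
Your proof is correct and follows essentially the same approach as the paper: both use the perfect orientation from the proof of Theorem~\ref{thm:Demazure weave} to arrange the $Y$-tree with a unique topmost vertex, from which the Lusztig scanning rules recover $\gamma_F$. The paper's argument is terser---it observes that the interior tree vertices in $\bGshift$ are all solid, hence each has a unique upward edge under the induced orientation, forcing a single sink at the top---whereas you identify this root more explicitly as $v_{s^\ast}'$ via Propositions~\ref{prop: unique sink} and~\ref{prop: compatible perfect orientations} and then verify each Lusztig rule case by case.
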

\begin{proof} By Theorem \ref{thm:weave construction}, the mutable $Y$-trees are exactly the parts of $\bGshift$ contained inside non-boundary faces of $\bG$, and all vertices in this tree subgraph of $\bGshift$ are solid. Therefore, under the perfect orientation $O_i$, all edges of the $Y$-tree will be pointing upward, with a single sink at the top. This implies that the corresponding $Y$-tree is a Lusztig cycle, cf.~Definition \ref{defn:Lusztig cycles}.
\end{proof}

\begin{exmp} Let us once again use the positroid example in Examples \ref{exmp:positroid braid} and \ref{exmp: positroid weave}. In the first iteration of constructing the weave, the permutation associated with the green plabic graph is $\begin{pmatrix}2 & 3 & 4 & 6 & 7 & 1\\
3 & 2 & 1 & 7 & 4 & 6 \end{pmatrix}$. Based on the acyclic perfect orientation with source set $\overrightarrow{I_1}=(1,3,4)$, we rearrange the blue plabic graph into the configuration shown in Figure \ref{fig: demazure weave example}(left). Note that the sink $3$ is blocked because it would violate the cyclic ordering on $\{1,2,\dots, 6\}$ if we extend it all the way to the top.
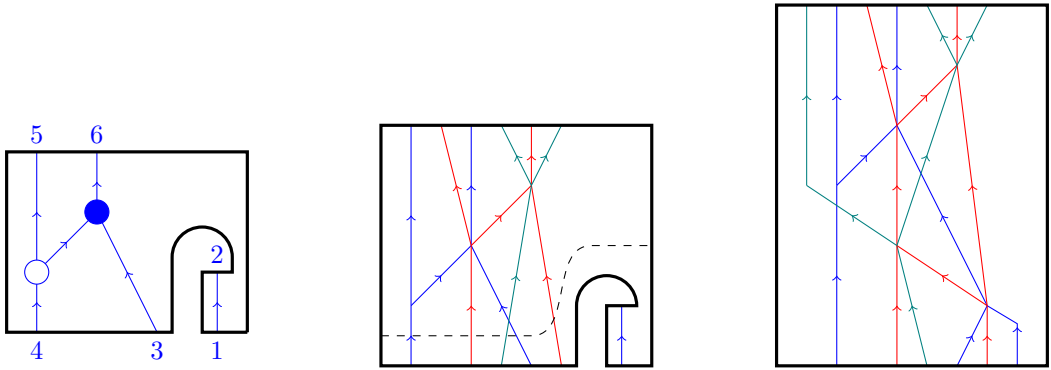
\begin{figure}[H]
    \centering
    \begin{tikzpicture}[scale=0.8]
        \draw [blue, decoration={markings,mark=at position 0.5 with {\arrow{>}}},postaction={decorate}] (0,0) node [below] {$4$} -- (0,1);
        \draw [blue, decoration={markings,mark=at position 0.5 with {\arrow{>}}},postaction={decorate}] (0,1) -- (0,3) node [above] {$5$};
        \draw [blue, decoration={markings,mark=at position 0.5 with {\arrow{>}}},postaction={decorate}] (0,1) -- (1,2);
        \draw [blue, decoration={markings,mark=at position 0.5 with {\arrow{>}}},postaction={decorate}] (2,0) node [below] {$3$} -- (1,2);
        \draw [blue, decoration={markings,mark=at position 0.5 with {\arrow{>}}},postaction={decorate}] (3,0) node [below] {$1$} -- (3,1) node [above] {$2$};
        \draw [blue, decoration={markings,mark=at position 0.5 with {\arrow{>}}},postaction={decorate}] (1,2) -- (1,3) node [above] {$6$};
        \draw [blue,fill=white] (0,1) circle [radius=0.2];
        \draw [blue, fill=blue] (1,2) circle [radius=0.2];
        \draw [very thick] (3.5,0) -- (2.75,0) -- (2.75,1) -- (3.25,1) -- (3.25,1.25) arc (0:180:0.5) -- (2.25,0) -- (-0.5,0) -- (-0.5,3) -- (3.5,3) -- (3.5,0);
    \end{tikzpicture} \hspace{1.5cm}
    \begin{tikzpicture}[scale=0.8]
        \draw [blue, decoration={markings,mark=at position 0.5 with {\arrow{>}}},postaction={decorate}] (0,0) -- (0,1);
        \draw [blue, decoration={markings,mark=at position 0.5 with {\arrow{>}}},postaction={decorate}] (0,1) -- (0,4);
        \draw [blue, decoration={markings,mark=at position 0.5 with {\arrow{>}}},postaction={decorate}] (0,1) -- (1,2);
        \draw [blue, decoration={markings,mark=at position 0.5 with {\arrow{>}}},postaction={decorate}] (2,0) -- (1,2);
        \draw [blue, decoration={markings,mark=at position 0.5 with {\arrow{>}}},postaction={decorate}] (3.5,0) -- (3.5,1);
        \draw [blue, decoration={markings,mark=at position 0.5 with {\arrow{>}}},postaction={decorate}] (1,2) -- (1,4);
        \draw [red, decoration={markings,mark=at position 0.5 with {\arrow{>}}},postaction={decorate}] (1,0) -- (1,2);
        \draw [red, decoration={markings,mark=at position 0.5 with {\arrow{>}}},postaction={decorate}] (1,2) -- (0.5,4);
        \draw [red, decoration={markings,mark=at position 0.5 with {\arrow{>}}},postaction={decorate}] (1,2) -- (2,3);
        \draw [red, decoration={markings,mark=at position 0.5 with {\arrow{>}}},postaction={decorate}] (2.5,0) -- (2,3);
        \draw [red, decoration={markings,mark=at position 0.5 with {\arrow{>}}},postaction={decorate}] (2,3) -- (2,4);
        \draw [teal, decoration={markings,mark=at position 0.5 with {\arrow{>}}},postaction={decorate}] (1.5,0) -- (2,3);
        \draw [teal, decoration={markings,mark=at position 0.5 with {\arrow{>}}},postaction={decorate}] (2,3) -- (1.5,4);
        \draw [teal, decoration={markings,mark=at position 0.5 with {\arrow{>}}},postaction={decorate}] (2,3) -- (2.5,4);
        \draw [very thick] (4,0) -- (3.25,0) -- (3.25,1)--(3.75,1) arc (0:180:0.5) -- (2.75,0) -- (-0.5,0) -- (-0.5,4) -- (4,4) -- cycle;
        \draw [dashed] (-0.5,0.5) -- (2,0.5) to [out=0,in=180] (3,2) -- (4,2);
    \end{tikzpicture}\hspace{1.5cm}
    \begin{tikzpicture}[scale=0.8]
        \draw [blue, decoration={markings,mark=at position 0.5 with {\arrow{>}}},postaction={decorate}] (0,-2) -- (0,1);
        \draw [blue, decoration={markings,mark=at position 0.5 with {\arrow{>}}},postaction={decorate}] (0,1) -- (0,4);
        \draw [blue, decoration={markings,mark=at position 0.5 with {\arrow{>}}},postaction={decorate}] (0,1) -- (1,2);
        \draw [blue, decoration={markings,mark=at position 0.5 with {\arrow{>}}},postaction={decorate}] (2.5,-1) -- (1,2);
        \draw [blue, decoration={markings,mark=at position 0.5 with {\arrow{>}}},postaction={decorate}] (1,2) -- (1,4);
        \draw [red, decoration={markings,mark=at position 0.5 with {\arrow{>}}},postaction={decorate}] (1,0) -- (1,2);
        \draw [red, decoration={markings,mark=at position 0.5 with {\arrow{>}}},postaction={decorate}] (1,2) -- (0.5,4);
        \draw [red, decoration={markings,mark=at position 0.5 with {\arrow{>}}},postaction={decorate}] (1,2) -- (2,3);
        \draw [red, decoration={markings,mark=at position 0.5 with {\arrow{>}}},postaction={decorate}] (2.5,-1) -- (2,3);
        \draw [red, decoration={markings,mark=at position 0.5 with {\arrow{>}}},postaction={decorate}] (2,3) -- (2,4);
        \draw [teal, decoration={markings,mark=at position 0.5 with {\arrow{>}}},postaction={decorate}] (1,0) -- (2,3);
        \draw [teal, decoration={markings,mark=at position 0.5 with {\arrow{>}}},postaction={decorate}] (2,3) -- (1.5,4);
        \draw [teal, decoration={markings,mark=at position 0.5 with {\arrow{>}}},postaction={decorate}] (2,3) -- (2.5,4);
        \draw [blue, decoration={markings,mark=at position 0.5 with {\arrow{>}}},postaction={decorate}] (3,-2) -- (3,-1.3) -- (2.5,-1);
        \draw [blue, decoration={markings,mark=at position 0.5 with {\arrow{>}}},postaction={decorate}] (2,-2) -- (2.5,-1);
        \draw [red, decoration={markings,mark=at position 0.5 with {\arrow{>}}},postaction={decorate}] (2.5,-2) -- (2.5,-1);
        \draw [red, decoration={markings,mark=at position 0.5 with {\arrow{>}}},postaction={decorate}] (2.5,-1) -- (1,0);
        \draw [teal, decoration={markings,mark=at position 0.5 with {\arrow{>}}},postaction={decorate}] (1.5,-2) -- (1,0);
        \draw [red, decoration={markings,mark=at position 0.5 with {\arrow{>}}},postaction={decorate}] (1,-2) -- (1,0);
        \draw [teal, decoration={markings,mark=at position 0.5 with {\arrow{>}}},postaction={decorate}] (1,0) -- (-0.5,1);
        \draw [teal, decoration={markings,mark=at position 0.5 with {\arrow{>}}},postaction={decorate}] (-0.5,1) -- (-0.5,4);
        \draw [very thick] (3.5,-2) -- (-1,-2) -- (-1,4) -- (3.5,4) -- cycle;
    \end{tikzpicture}
    \caption{A Demazure weave equivalent to the positroid weave in Example \ref{exmp: positroid weave}. The dashed line in the middle picture shows where to cut and stretch the lower part of the picture.}
    \label{fig: demazure weave example}
\end{figure}
\noindent By continuing the iterative construction with compatible perfect orientations, we  obtain the configuration in Figure \ref{fig: demazure weave example}(center). Lastly, we need to deal with the blocked sinks: in this example there is only one, the sink $3$ of the blue layer. By following the recipe, we need to stretch the weave vertically and then extend the outgoing external weave line towards the left. When crossing the incoming external weave lines, we add hexavalent weave vertices accordingly. The resulting final configuration is shown in Figure \ref{fig: demazure weave example}(right).\hfill$\Box$
\end{exmp}

\section{Weaves and Conjugate Surfaces}

This section proves Theorem \ref{thm:mainA}.(i). The core result is Theorem \ref{thm:weave_and_conjugate}, relating the Lagrangian projection of the Legendrian lift of the weave $\ww(\bG)$ to the conjugate surface of $\bG$. Both these surfaces are naturally embedded in the 4-dimensional space given by the cotangent bundle of the 2-disk and we show they are Hamiltonian isotopic and, in particular, smoothly isotopic. We also include Section \ref{ssec:weave_recap} as a succinct recollection of part of the topology underlying the theory of weaves. Section \ref{ssec:weave_recap} contains no new mathematical results but might be helpful to readers less familiar with weaves. Section \ref{ssec:Hamiltonian_comparison} contains the statement and proof of Theorem \ref{thm:weave_and_conjugate}, which is certainly new, both from the combinatorial and symplectic geometric viewpoints. This latter section makes particular use of part of our previous work, especially \cite[Section 3]{CW} and \cite[Section 3]{CL22}.

\subsection{Legendrian and Exact Lagrangian Surfaces from Weaves}\label{ssec:weave_recap}

In this short subsection, we follow \cite{CZ, CW, CGGLSS} and give a brief review on the constructions in Subsection \ref{subsec: prelim on weaves} from a geometric perspective.\\

A $k$-weave $\ww$ in a disk $\bD$ encodes a $k$-sheeted branched cover $\Sigma_\ww\lr\bD$ of the disk, cf.~\cite[Section 2]{CZ}. By construction, this surface $\Sigma_\ww$ is naturally included in $\bD^2_{x_1,x_2} \times \bR_z$, as a singular surface, and it is a wavefront, cf.~\cite[Section 2.2]{CZ}. Generically, the $k$ sheets are at distinct heights; the weave line of color $i$ is the locus where the $i$th and $(i+1)$th sheets cross. They cross transversely, creating an interval worth of immersed points along the corresponding weave line. By setting $y_i:=\partial_{x_i}z$, this singular surface in $\bD^2_{x_1,x_2} \times \bR_z$ lifts to an embedded Legendrian $\Lambda_\ww$ in the 5-dimensional $1$-jet space $J^1\bD\cong  T^*\bD^2 \times \bR_z$, where $(x_1,x_2,y_1,y_2)\in T^*\bD^2$ and $(y_1,y_2)$ are Cartesian coordinates for the cotangent fibers. If $\Lambda_\ww$ does not have any Reeb chords, its Lagrangian projection $L_\ww :=\pi_L(\Lambda_\ww)\subset T^*\bD^2$ is an embedded exact Lagrangian surface, where $\pi_L:T^*\bD^2 \times \bR\longrightarrow T^*\bD^2$ is the projection onto the first factor. If $\ww$ is a Demazure weave, then $\Lambda_\ww$ has no Reeb chords and thus the surface $L_\ww$ is embedded in the 4-dimensional space $T^*\bD^2$. By following \cite[Subsection 7.4]{CGGLSS} or \cite[Section 2.4]{CZ}, one can canonically interpret $Y$-cycles as topological $1$-cycles on $L_\ww$, cf.~Figure \ref{fig: perturbation}.

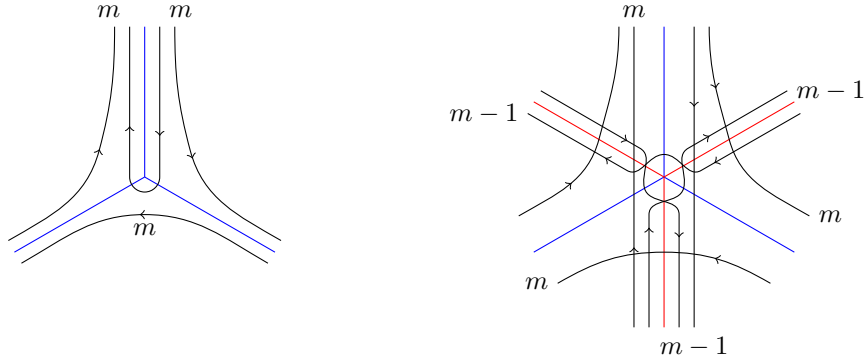
\begin{figure}[H]
    \centering
    \begin{tikzpicture}[baseline=0]
    \foreach \i in {0,1,2}
    {
    \draw [blue] (0,0) -- (90+\i*120:2);
    }
    %\draw [blue, fill=white] (0,0) circle[radius=0.1cm];
    \draw [decoration={markings, mark=at position 0.3 with {\arrow{<}}, mark=at position 0.7 with {\arrow{<}}}, postaction=decorate] (-0.2,2) node [above left] {$m$} -- (-0.2,0) arc (-180:0:0.2) -- (0.2,2) node [above right] {$m$};
    \draw [decoration={markings, mark=at position 0.5 with {\arrow{<}}}, postaction=decorate] (-0.4,2) to [out=-90,in=75] (150:0.7) to [out=-105,in=30] (-155:2);
    \draw [decoration={markings, mark=at position 0.5 with {\arrow{<}}}, postaction=decorate] (-145:2) to [out=30,in=-180] (0,-0.5)  node [below] {$m$} to [out=0,in=150] (-35:2);
    \draw [decoration={markings, mark=at position 0.5 with {\arrow{<}}}, postaction=decorate] (-25:2) to [out=150,in=-75] (30:0.7) to [out=105,in=-90] (0.4,2);
    \end{tikzpicture}\hspace{2cm}
    \begin{tikzpicture}[baseline=0]
    \foreach \i in {0,1,2}
    {
    \draw [blue] (0,0) -- (90+\i*120:2);
    \draw [red] (0,0) -- (30+\i*120:2);
    }
    %\draw [blue, fill=red] (0,0) circle[radius=0.1cm];
    \draw [decoration={markings, mark=at position 0.75 with {\arrow{<}}}, postaction=decorate] (-0.4,2) node [above] {$m$} -- (-0.4,-2);
    \draw [decoration={markings, mark=at position 0.75 with {\arrow{<}}}, postaction=decorate] (0.4,-2)  node [below] {$m-1$} -- (0.4,2);
    \draw [decoration={markings, mark=at position 0.75 with {\arrow{<}}}, postaction=decorate] (-0.6,2) to [out=-90,in=75] (150:1) to [out=-105,in=30] (-165:2);
    \draw [decoration={markings, mark=at position 0.75 with {\arrow{<}}}, postaction=decorate] (-135:2) node [left] {$m$} to [out=30,in=-180] (0,-1) to [out=0,in=150] (-45:2);
    \draw [decoration={markings, mark=at position 0.75 with {\arrow{<}}}, postaction=decorate] (-15:2) node [right] {$m$} to [out=150,in=-75] (30:1) to [out=105,in=-90] (0.6,2);
    \draw [decoration={markings, mark=at position 0.3 with {\arrow{<}}, mark=at position 0.7 with {\arrow{<}}}, postaction=decorate] (0.2,-2) to [out=90,in=-90] (0.2,-0.5) to [out=90,in=-75] (-150:0.3) to [out=105,in=-30] (130:0.5) -- (145:2);
    \draw [decoration={markings, mark=at position 0.3 with {\arrow{<}}, mark=at position 0.7 with {\arrow{<}}}, postaction=decorate] (155:2)  node [left] {$m-1$} -- (170:0.5) to [out=-30,in=180] (0,0.3) to [out=0,in=-150] (10:0.5) -- (25:2);
    \draw [decoration={markings, mark=at position 0.3 with {\arrow{<}}, mark=at position 0.7 with {\arrow{<}}}, postaction=decorate] (35:2) node [right] {$m-1$} -- (50:0.5) to [out=-150,in=75] (-30:0.3) to [out=-105,in=90] (-0.2,-0.5) -- (-0.2,-2);
    \end{tikzpicture}
    \caption{Local pictures of $Y$-cycle representatives as curves: near a trivalent weave vertex (left) and a hexavalent weave vertex (right).}
    \label{fig: perturbation}
\end{figure}

\noindent For context, the following lemma shows that the intersection pairing of $Y$-cycles in Definition \ref{defn:local intersection pairing} has a natural topological interpretation.
\begin{lem}[\protect{\cite[Section 7.4]{CGGLSS}}] The intersection pairing of $Y$-cycles defined in Definition \ref{defn:local intersection pairing} agrees with the homological intersection pairing of the corresponding topological $1$-cycles.
\end{lem}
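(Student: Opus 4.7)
The plan is to represent each $Y$-cycle $\gamma$ as a concrete smooth $1$-cycle $C_\gamma$ on the Lagrangian surface $L_\ww$ and then show that the homological intersection pairing $[C_\gamma]\cdot [C_{\gamma'}]$ decomposes into a sum of local contributions that match Definition~\ref{defn:local intersection pairing}. Explicitly, along each weave edge $e$, the cycle $C_\gamma$ consists of $\gamma(e)$ parallel oriented strands running on the two sheets of $L_\ww$ that meet along $e$. Near each weave vertex, these strand bundles are assembled according to the local perturbation models depicted in Figure~\ref{fig: perturbation}; the $Y$-cycle conditions of Definition~\ref{defn: Y-cycles} are precisely what is needed for the local pieces to glue into a closed $1$-cycle on $L_\ww$.

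Second, I would reduce the pairing to a sum of vertex contributions. Along the interior of any weave edge, the strands of $C_\gamma$ and $C_{\gamma'}$ lie on the same pair of sheets of $L_\ww$ and are isotopic to parallel pushoffs of each other, so they can be made disjoint away from the vertices. Therefore every transverse intersection of $C_\gamma$ with $C_{\gamma'}$ lies in a small neighborhood of some weave vertex $v$, which yields the localization
\[
[C_\gamma]\cdot[C_{\gamma'}] \;=\; \sum_{v\in V_\ww} \langle\gamma,\gamma'\rangle^{\mathrm{top}}_v,
\]
where $\langle\gamma,\gamma'\rangle^{\mathrm{top}}_v$ denotes the signed count of transverse crossings contributed near $v$.

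Third, I would verify vertex-by-vertex that $\langle\gamma,\gamma'\rangle^{\mathrm{top}}_v$ equals the combinatorial contribution $\langle\gamma,\gamma'\rangle_v$. The tetravalent case vanishes on both sides because the two alternating colors there involve disjoint pairs of sheets. For a trivalent vertex, the three strand bundles live on the same pair of adjacent sheets near the branch locus, and counting signed crossings between the $\gamma$-bundle and the $\gamma'$-bundle after the perturbation of Figure~\ref{fig: perturbation} (left), using that the minimum of $\gamma(a),\gamma(b),\gamma(c)$ is attained at least twice, recovers exactly the stated $3\times 3$ determinant. The hexavalent vertex splits into two triples of alternating colors, each contributing a determinant of the same shape as in the trivalent case, and the prefactor $1/2$ compensates for the double-count that arises when symmetrizing over the two colors.

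The main obstacle will be the hexavalent case: one must carefully track the cyclic ordering of the six sheets involved, maintain consistent orientations on all strand bundles through the perturbation, and verify that the resulting transverse double points are exactly captured by the two $3\times 3$ determinants. Once the three local verifications are in hand, summing over $v \in V_\ww$ yields the equality of the combinatorial intersection pairing with the homological intersection pairing, as claimed in \cite[Section 7.4]{CGGLSS}.
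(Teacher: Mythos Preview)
Your proposal is correct and follows essentially the same approach as the paper: localize intersections to neighborhoods of weave vertices using the perturbed representatives of Figure~\ref{fig: perturbation}, then match the local topological intersection count with the combinatorial formula at each vertex type. The paper's proof is briefer only because it delegates the vertex-by-vertex verification to \cite[Lemma~7.17]{CGGLSS}, whereas you sketch that computation explicitly.
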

\begin{proof} Note that according to the representatives chosen in Figure \ref{fig: perturbation}, intersections between these $1$-cycles can only occur near a weave vertex. Thus, the agreement of these two intersection pairings can be verified locally at each weave vertex, cf.~\cite[Lemma 7.17]{CGGLSS}.
\end{proof}

Let $\Lambda_0\sse(\bR^3,\xi_{st})$ be the unique max-tb Legendrian unknot and $L_0\sse(\bR^4,\lambda_{st})$ its unique embedded exact Lagrangian filling in the symplectization of $(\bR^3,\xi_{st})$. See \cite[Section 2.2]{CasalsNg} for more details on the following definition.

\begin{defn}[\cite{CasalsNg}]\label{defn: -1 closure} Let $\ww$ be a weave. Consider the satellite of $L_\ww$ along $L_0$, so that the boundary $\partial L_\ww$ is a Legendrian satellite along $\La_0$ of the Legendrian link in $(J^1\La_0,\xi_{st})$ associated to the cyclic positive braid word $\beta:=\partial \ww$. By definition, this Legendrian satellite link is denoted by $\Lambda_\beta\sse(\bR^3,\xi_{st})$ and said to be the \emph{$(-1)$-closure} of the positive braid $\beta$. A front projection of $\Lambda_\beta$ is drawn in Figure \ref{fig:-1 closure of a positive braid} (left).\hfill$\Box$
\end{defn}

\noindent Since $\partial L_\ww=\Lambda_\beta$, the embedded Lagrangian surface $L_\ww$ is an exact Lagrangian filling of $\Lambda_\beta$.

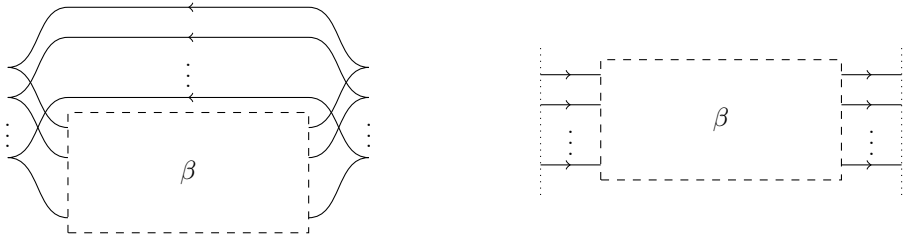
\begin{figure}[H]
    \centering
    \begin{tikzpicture}[baseline=0,scale=0.8]
    \draw [dashed] (0,0.25) rectangle node [] {$\beta$} (4,2.25);
    \foreach \i in {1,3,4}
    {
    \draw [decoration={markings,mark=at position 0.5 with {\arrow{>}}},postaction={decorate}] (4,0.5*\i) to [out=0,in=180] (5,1+0.5*\i) to [out=180,in=0] (4,2+0.5*\i) -- (0,2+0.5*\i) to [out=180,in=0] (-1,1+0.5*\i) to [out=0,in=180] (0,0.5*\i);
    }
    \node at (2,3) [] {\footnotesize{$\vdots$}};
    \node at (-1,2) [] {\footnotesize{$\vdots$}};
    \node at (5,2) [] {\footnotesize{$\vdots$}};
    \end{tikzpicture}\hspace{2cm}
    \begin{tikzpicture}[baseline=-20,scale=0.8]
        \draw [dashed] (0,0.25) rectangle node [] {$\beta$} (4,2.25);
        \foreach \i in {1,3,4}
    {
    \draw [decoration={markings,mark=at position 0.5 with {\arrow{>}}},postaction={decorate}](-1,0.5*\i) -- (0,0.5*\i);
    \draw [decoration={markings,mark=at position 0.5 with {\arrow{>}}},postaction={decorate}](4,0.5*\i) -- (5,0.5*\i);
    }
    \draw [dotted] (-1,0) -- (-1,2.5);
    \draw [dotted] (5,0) -- (5,2.5);
    \node at (-0.5,1) [] {\footnotesize{$\vdots$}};
        \node at (4.5,1) [] {\footnotesize{$\vdots$}};
    \end{tikzpicture}
    \caption{Left: $(-1)$-closure of a positive braid word $\beta$. Right: a drawing of $\hat{\beta}$, where the left dotted line is identified with the right dotted line so that the whole picture should be viewed as a subset of $S^1\times \mathbb{R}$.}
    \label{fig:-1 closure of a positive braid}
\end{figure}

\begin{rmk} It can be useful to understand the $(-1)$-closure $\Lambda_\beta$ as a Legendrian link in a tubular neighborhood $(J^1\La_0,\xi_{st})$ of $\La_0$. We denote by $\hat{\beta}$ the image of $\Lambda_\beta$ under the standard front projection $J^1\La_0\rightarrow \La_0\times \mathbb{R}$, which is a cyclic braid front with crossings as dictated by $\beta$. See \cite[Section 2]{CasalsNg} for more details.\hfill$\Box$
\end{rmk}

\subsection{Comparison of Hamiltonian isotopy classes}\label{ssec:Hamiltonian_comparison} Given a plabic graph $\bG$, consider its conjugate surface $\Sigma(\bG)$ as defined in \cite[Section 1.1.1]{GonKen}. See also \cite[Section 2]{Goncharov_IdealWebs}, \cite[Section 4]{STWZ} or \cite[Section 2.1]{CL22} for additional details on conjugate surfaces. For the purposes of this manuscript, this is a (ribbon) surface obtained from a plabic graph by using the four local models in Figure \ref{fig:ConjugateSurface_Models_Alt}. The boundary of the surface is given by the alternating strand diagram of $\bG$, see \cite{Pos,Goncharov_IdealWebs}. It retracts to $\bG$ and thus $H_1(\Sigma(\bG),\bZ)\cong H_1(\bG,\bZ)$, the latter being a free $\bZ$-module of rank equal to the number of faces of $\bG$.

\begin{center}
	\begin{figure}[H]
		\centering
		\includegraphics[scale=0.65]{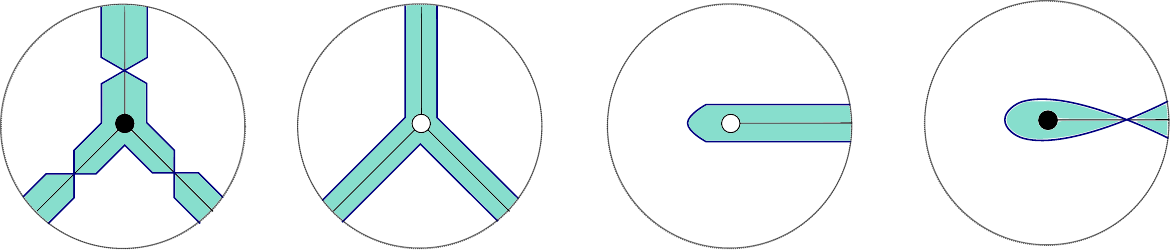}
		\caption{The four local models needed to draw a (projection of a) conjugate surface associated to a plabic graph $\bG$. The boundary of the surface is in dark blue and the surface itself in light blue.}\label{fig:ConjugateSurface_Models_Alt}
	\end{figure}
\end{center}

It is proven in \cite[Section 4.2]{STWZ} that $\Sigma(\bG)$ is the projection of an embedded exact Lagrangian surface $L(\bG)\sse(T^*\D^2,\la_{st})$ in the standard cotangent bundle of $\bD^2$. It is shown in \cite[Section 2.1]{CL22} that the Hamiltonian isotopy class of $L(\bG)$ is unique. In particular, $L(\bG)$ is a Lagrangian filling of the Legendrian lift of the alternating strand diagram. By the construction in Section \ref{sec:iterative_Tmap}, we can also associate a free weave $\ww(\bG)$ in $\D^2$ to $\bG$, whose Lagrangian projection defines another embedded exact Lagrangian surface in $(T^*\D^2,\la_{st})$. We now show that these two embedded exact Lagrangian surfaces are Hamiltonian isotopic.\\

\begin{thm}\label{thm:weave_and_conjugate}
Let $\bG\sse\D^2$ be a reduced plabic graph, $L(\bG)\sse(T^*\D^2,\la_{st})$ its conjugate Lagrangian surface, $\tau$ a sequence of $3$-regular trees for $\bG$, and $\ww(\bG,\tau)$ the associated positroid weave. Then the Lagrangian projection of the Legendrian lift of $\ww(\bG,\tau)$ is Hamiltonian isotopic to $L(\bG)$.
\end{thm}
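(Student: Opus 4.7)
The plan is to realize the iterative T-shift construction of Section \ref{sec:iterative_Tmap} as a finite sequence of local hybrid-surface moves, each of which lifts to a Hamiltonian isotopy in $T^*\bD^2$. A \emph{hybrid surface} interpolates between the purely combinatorial ``plabic-graph'' local models (governing the conjugate surface $L(\bG)$ via Figure \ref{fig:ConjugateSurface_Models_Alt}) and the weave-type local models (governing $L_{\ww(\bG,\tau)}$). The key input is the table of hybrid moves in \cite[Theorem 3.1]{CL22}, which shows that each such local modification corresponds to an ambient Hamiltonian isotopy of the resulting exact Lagrangian.

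I would argue by induction on the rank $m$ of $\bG$. The base case $m=1$ is immediate: such a plabic graph has no trivalent solid vertices, the iterative T-shift produces no weave levels, and both $L(\bG)$ and $L_{\ww(\bG,\tau)}$ reduce to the standard disk fillings of the Legendrian link of the alternating strand diagram. For the inductive step, I introduce an intermediate hybrid surface $L^{\mathrm{hyb}}$ whose ``top'' sheets encode the new empty vertices above trivalent solid vertices of $\bG$ and the new solid vertices placed inside each face of $\bG$ (i.e.~the output of one T-shift, resolved by the trivalent trees specified by $\tau$), while its ``bottom'' sheets carry the conjugate surface $L(\bG^\downarrow)$. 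Applying the local hybrid move of \cite[Theorem 3.1]{CL22} at each trivalent solid vertex of $\bG$ yields a Hamiltonian isotopy $L(\bG)\simeq L^{\mathrm{hyb}}$. Then the inductive hypothesis applied to $\bG^\downarrow$, which has rank $m-1$ by Proposition \ref{prop:single T-shift}, replaces the bottom portion $L(\bG^\downarrow)$ by its weave presentation $L_{\ww(\bG^\downarrow,\tau')}$, and stacking with the top portion gives $L_{\ww(\bG,\tau)}$ by the very definition of the iterative construction in Proposition \ref{prop:weaves from plabic graphs}.

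The main obstacle will be verifying the precise matching between the local replacement in the hybrid move of \cite[Theorem 3.1]{CL22} and the combinatorics of Definition \ref{def:Tmap}: namely, that the local move simultaneously produces the correct empty vertex above each trivalent solid vertex, the correct new solid vertex in each adjacent face, and the correct edges between them, all consistently across face boundaries and compatibly with the trivalent-tree resolutions $\tau$. Independence of the final Hamiltonian isotopy class on the choice of $\tau$ and on the choices of plabic representatives at intermediate levels follows from combining Theorem \ref{thm:weave construction}.(3) with the fact that the weave equivalences of Figure \ref{fig:weave equivalence} lift to Legendrian (and hence Lagrangian) isotopies in $T^*\bD^2$, cf.~\cite[Section 3]{CZ}. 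The treatment of boundary pieces of the alternating strand diagram and of exceptional boundary faces (which produce solid lollipops under T-shift) requires applying the corresponding boundary entries of the table in \cite[Theorem 3.1]{CL22}, but is structurally identical to the interior argument.
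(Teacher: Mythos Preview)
Your approach is essentially the paper's: both use the hybrid-surface calculus of \cite[Theorem 3.1]{CL22} and proceed layer by layer, matching the iterative T-shift. The paper phrases it as an explicit algorithm rather than a formal induction on rank, but the structure is the same.

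One point of imprecision worth flagging: you write that ``applying the local hybrid move of \cite[Theorem~3.1]{CL22} at each trivalent solid vertex of $\bG$'' yields $L(\bG)\simeq L^{\mathrm{hyb}}$, where $L^{\mathrm{hyb}}$ carries the top weave layer together with $L(\bG^\downarrow)$ on the bottom. In fact a single move type at solid vertices does not suffice. The paper uses three distinct families of moves per layer: first RIII--1 moves at each solid vertex (creating the top-color trivalent weave vertices), then RII--1 moves along diagonals \emph{inside each face} (breaking a $2n$-gon into $n-2$ triangles, guided by the tree $\tau$), and finally RIII--2 moves at each resulting triangle (introducing the next color and producing the hybrid whose non-weave part is the conjugate surface of $\bG^\downarrow$). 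The RII--1 step is what actually encodes the choice of $\tau$ and is needed whenever a face is not already a hexagon. You correctly identify this matching as the ``main obstacle'', but your description of the transition as happening only at solid vertices undersells it; the face-interior moves are where the combinatorics of Definition~\ref{def:Tmap} (new solid vertex in each face, resolved by $\tau$) is realized geometrically.
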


\begin{figure}[H]
  \centering
  \includegraphics[scale=0.7]{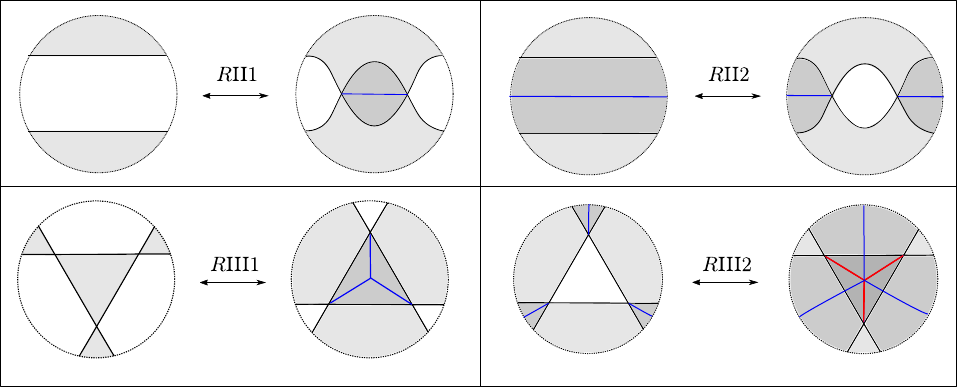}\\
  \caption{Reidemeister-type moves used in order to transition from a conjugate surface towards a weave. These are proven in \cite[Section 3]{CL22}.}
  \label{fig:TableMovesIntro}
\end{figure}

\begin{proof}
We use the framework of hybrid Lagrangians developed in \cite[Section 3]{CL22}. Specifically, we use the Reidemeister-type moves for hybrid surfaces shown in Figure \ref{fig:TableMovesIntro}. This allows us to translate the sympletic geometric aspects into diagrammatic combinatorics. We start with the conjugate Lagrangian surface $L(\bG)\sse(T^*\D^2,\la_{st})$, which we encode via its projection as a ribbon surface in $\D^2$, bounded by the alternating strand diagram of $\bG$. Figure \ref{fig:ConjugateSurface}.(i) draws in gray the conjugate surface for a reduced plabic graph of type $(3,6)$. We proceed algorithmically, as follows.

\begin{enumerate}
    \item First, perform an RIII--1 move at each of the (gray) triangles of the (projection of the) conjugate surface containing a black vertex of the plabic graph $\bG$. Figure \ref{fig:ConjugateSurface}.(ii) illustrates one instance of such an RIII-1 move being performed.
    \begin{center}
	\begin{figure}[H]
		\centering
		\includegraphics[scale=1.1]{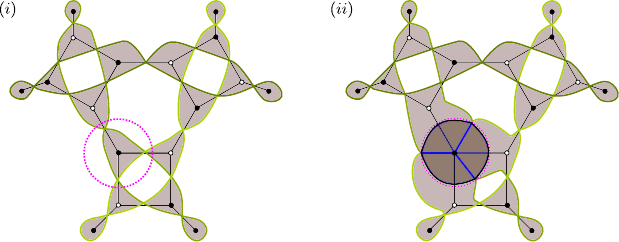}
		\caption{(Left) The conjugate surface associated to a reduced plabic graph of type $(3,6)$. (Right) The hybrid surface obtained by performing an RIII-1 move at the portion of the conjugate surface on the left surrounded by a pink circle.}
		\label{fig:ConjugateSurface}
	\end{figure}
\end{center}
Each face of $\bG$ has $2n$ sides for some $n\in\mathbb{N}$. For each face of $\bG$, these RIII-1 moves create $n$ trivalent vertices, of weave type and of top color, in the resulting hybrid surface. The projection of this hybrid surface to $\D^2$ is such that it bounds a $2n$-gon {\it inside} each $2n$-gonal face; $n$ of the vertices of this $2n$-gon have incoming weave edges with the top color. Figure \ref{fig:ConjugateSurface2}.(i) and (ii) illustrate what happens at a hexagonal and octogonal face after the RIII-1 moves above are performed.
\begin{center}
	\begin{figure}[H]
		\centering
		\includegraphics[scale=1.1]{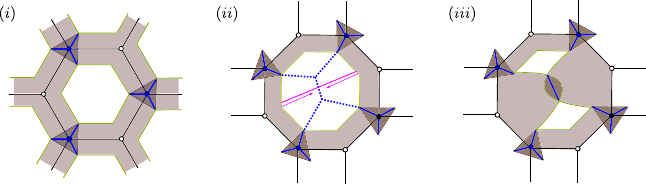}
		\caption{(Left) The hybrid surface near a hexagonal face of the reduced plabic graph after performing the sequence of RIII-1 moves in the first step of the proof of Theorem \ref{thm:weave_and_conjugate}. (Center) The hybrid surface near an octogonal face after the RIII-1 moves, a choice of $3$-regular tree connecting the weave lines pointing inward to the face (in dashed blue lines) and a choice of diagonal for the dual triangulation, in pink. (Right) The result of applying an RII-1 move to the hybrid surface in $(ii)$, performed along the pink diagonal. }
		\label{fig:ConjugateSurface2}
	\end{figure}
\end{center}
This is the only step in the process  using RIII-1 moves and these will be all trivalent vertices of the resulting weave: those created in the first step. Thus they all have the same (top) color.

    \item The construction of the weaves from the reduced plabic graphs in Section \ref{sec:iterative_Tmap} has the following choices: at each step in the iteration, we choose a 3-regular tree with $(n-2)$ vertices for each $2n$-gonal face of the plabic graph at that step. In this second step, we consider the hybrid surface near each $2n$-gonal face of the plabic graph, as depicted in Figure \ref{fig:ConjugateSurface2}.(i) and (ii) and Figure \ref{fig:ConjugateSurface3}. We choose a 3-regular tree inside of the face which connects the weave lines pointing inward into the face, as drawn in Figures \ref{fig:ConjugateSurface2}.(ii) and \ref{fig:ConjugateSurface3}. We then select a collection of $(n-3)$ disjoint embedded intervals in the face such that each interval is a (topological) diagonal for the triangulation of the face dual to the 3-regular tree. These intervals are depicted in pink in Figures \ref{fig:ConjugateSurface2}.(ii) and \ref{fig:ConjugateSurface3}, which respectively correspond to the cases $n=4$ and $5$.

    \begin{center}
	\begin{figure}[H]
		\centering
		\includegraphics[scale=1.3]{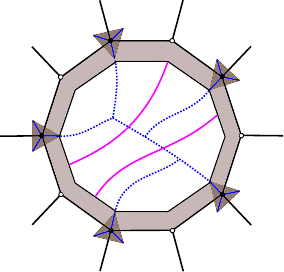}
		\caption{The hybrid surface near a decagonal face after the RIII-1 moves, with a choice of $3$-regular tree (in dashed blue lines) and diagonals for the dual triangulation (in pink).}
		\label{fig:ConjugateSurface3}
	\end{figure}
\end{center}

    Now, for each such interval in the collection, we perform an RII-1 move along that interval. That is, we pull together the two regions of the hybrid surface at the ends of each interval and create a weave line in the shape of an interval. This is illustrated in Figure \ref{fig:ConjugateSurface2}.(iii), which is the result of Figure \ref{fig:ConjugateSurface2}.(ii) after performing the (only) RII-1 along the (pink) interval; the arrows in Figure \ref{fig:ConjugateSurface2}.(ii) indicate the direction in which we pull. This creates a total of $(n-3)$ weave line intervals inside of the face of the reduced plabic graph.\\

    \item The sequence of RII-1 moves performed in the second step results in a hybrid surface such that, at each given $2n$-gonal face, the complement of (the projection of) the hybrid surface consists of $(n-2)$ topological triangles. In each such triangle, there are three incoming weave lines of the top color. The third step consists of performing a sequence of RIII-2 moves, one for each such triangle, and for each face. This introduces weave lines of a new color.\\

    \item Consider the plabic graph $\bGshift$ obtained by applying the construction in Section \ref{sec:iterative_Tmap} to the initial plabic graph $\bG$ and the hybrid surface at this stage. Then the weave lines of a new color of the hybrid surface, introduced by the RIII-2 moves of Step 3, combinatorially coincide with the weave part of the hybrid surface that we obtain by performing Step 1 of this algorithm to $\bGshift$. That is, if we ignore the top layer in the hybrid surface (and thus the top color weave lines disappear), the resulting hybrid surface is the hybrid surface that is obtained by performing Step 1 of this algorithm to $\bGshift$.\\
    
    Now, Steps (2) and (3) above can be performed to the weave lines of the new color in the hybrid surface at this stage without interfering with the top layer. Therefore, we now iteratively perform Steps (2) and (3) to the hybrid surface but we apply them to the weave lines of the new color, i.e. fixing the top layer and applying Steps (2) and (3) as if we had started with the plabic graph $\bGshift$.\\

    \item Finally, we iterate Step (4). That is, we repeatedly apply Steps (2) and (3), each time to the part of the hybrid surface which consists of the weave lines of the newest color introduced by the (previous) Step 3. After $(\mbox{rk}(\bG)-2)$ iterations of Steps (2) and (3), we obtain a hybrid surface which is a weave, consisting of weave lines of $(\mbox{rk}(\bG)-1)$ colors.\\
\end{enumerate}

The Steps (2) and (3) in the algorithm above reproduce exactly the weave construction of Section \ref{sec:iterative_Tmap}. Therefore, we conclude that the resulting weave is indeed of type $\ww(\bG)$. In fact, if we perform the same choices of $3$-regular trees as we would if we followed in the construction in Proposition~\ref{prop:weaves from plabic graphs}, the resulting weave would be identical.
\end{proof}

\noindent By Theorem \ref{thm:weave construction}, different choices of sequences $\tau$ and $\tau'$ lead to equivalent weaves $\ww(\bG,\tau)$ and $\ww(\bG,\tau')$, differing by the flop moves in Figure \ref{fig:weave equivalence}.(III). By \cite[Theorem 4.2]{CZ}, the Lagrangian projections of $\ww(\bG,\tau)$ and $\ww(\bG,\tau')$ are Hamiltonian isotopic. Thus Theorem \ref{thm:weave_and_conjugate} implies Theorem \ref{thm:mainA}.(i), where $\ww(\bG)$ in the latter denotes $\ww(\bG,\tau)$ for any choice of $\tau$.

\section{Flag Moduli Spaces}

In this section, we introduce flag moduli spaces associated to a positroid $\cP$  of type $(m,n)$. One of these spaces will be isomorphic to $\Pi^\circ_\cP$, cf.~ Proposition~\ref{thm: frame sheaf moduli = positroid} below. These flag moduli spaces were originally considered in the context of the microlocal theory of sheaves, cf.~\cite[Section 2.8]{CW} and references therein. That said, we follow \cite[Section 4.1]{CW} and describe them in terms of tuples of flags, cf.~Section~\ref{ssec: definition of sheaf moduli spaces} below, and also in terms of certain rank 1 local systems associated to $\Lambda_\cP$, cf.~ Section~\ref{subsec:local systems}. The first perspective is helpful when studying certain aspects of their associated cluster algebras, as in \cite{CW,CGGLSS,CG23}. The second perspective is particularly useful in proving that the Muller-Speyer twist map is DT, which we achieve in Section~\ref{sec:twist} below.

%%%%%%%%%%%%%%%%%%%%%%%%%%%%%%%%%%%%
%%%%%%%%%%%%%%%%%%%%%%%%%%%%%%%%%%%%

\subsection{Preliminaries on flags} Let $\Fl_m=\GL_m(\bC)/B$ denote the complete flag variety of $\bC^m$, where $m\in\mathbb{N}$ and $B\subset \GL_m(\bC)$ is the Borel subgroup of upper triangular matrices. We denote the $k$-dimensional subspace of a flag $\cF$ by $(\cF)_k$. The relative positions of two flags can be indexed by elements in the Weyl group $S_n$. Indeed, using the Bruhat decomposition $\GL_n=\bigsqcup_{w\in S_n}BwB$, we define  two flags $xB,yB$  to be in relative position $w \in S_n$, denoted $xB\xrightarrow{w}yB$, if $Bx^{-1}yB=BwB$.  This is independent of the representative matrix chosen for $w \in S_n$. For $\cF, \cF' \in \Fl_m$ and $i \in [m-1]$, we write $\cF \xrightarrow{i} \cF'$ if $\cF$ and $\cF'$ are in relative position $s_i \in S_n$, i.e.~ differ precisely at the $i$-dimensional subspace. Let $\beta= s_{i_1} \cdots s_{i_l}$ be a positive braid word. By definition, a \emph{$\beta$-chain of flags} is a tuple of flags $(\cF_1, \dots, \cF_l)$ satisfying
	\[\cF_1 \xrightarrow{i_1} \cF_2 \xrightarrow{i_2} \cdots \xrightarrow{i_l} \cF_l.\]
If $\beta$ is reduced word for $w \in S_{m}$, we write $\cF \xrightarrow{\beta} \cF'$ to indicate there exists a $\beta$-chain of flags which begins at $\cF$ and ends at $\cF'$. In that case, the pair of flags $\cF, \cF'$ is said to be in \emph{relative position} $w$ or $\beta$. If $\cF \xrightarrow{\beta} \cF'$, the $\beta$-chain $\cF \xrightarrow{i_1} \cF_2 \xrightarrow{i_2} \cdots \xrightarrow{i_l} \cF'$ can be uniquely recovered from the flags $\cF, \cF'$, cf.~ \cite[Lemma 2.5]{ShenWeng}. By definition, $\cF \xrightarrow{e} \cF'$ means $\cF=\cF'$.

\begin{comment}
\begin{rmk}\label{rmk:beta chains in wiring diagram} %\MSB{Need a figure}
Let $\beta$ be an $m-1$-stranded positive braid word. We may equivalently represent a $\beta$-chain of flags by labeling the wiring diagram of $\beta$ with subspaces. Consider the wiring diagram drawn in a rectangle $[0, l] \times \bR$. A \emph{chamber} of the wiring diagram is a connected component of its complement in the rectangle. A \emph{segment} of a wire is a connected component of the complement of the crossings in the wiring diagram.
A \emph{$\beta$-arrangement} of subspaces is a labeling of the chambers $C$ of the wiring diagram of $\beta$ with subspaces $V_C$ so that
	\begin{itemize}
 \item[(i)] The unbounded chamber at the top is labeled $\bC^m$;
		\item[(ii)] for each segment, if $C$ is the chamber below the segment and $C'$ the chamber above, then $V_{C} \subsetneq V_{C'}$.% \MSB{The last condition is implied by the others since there are $m+1$ chambers in any vertical slice.};
		\item[(iii)] for each crossing $i_a$, if $C$ is the chamber to the left of $i_a$ and $C'$ is the chamber to the right, then $V_C \neq V_{C'}$. %\MSB{Condition (i) means everything is a subspace of the same vector space, so I think we don't need the ``as subspaces of..." part.}
	\end{itemize}
These three conditions imply that $V_{C'}/V_{C}$ is 1-dimensional if $C,C'$ are in $(ii)$ above. There is a bijection between $\beta$-arrangements and $\beta$-chains $\cF_0 \xrightarrow{i_1} \cF_1 \xrightarrow{i_2} \cdots \xrightarrow{i_l} \cF_l$. The flag $\cF_{a}$ consists of the subspaces in the slice of the wiring diagram between crossings $i_{a}$ and $i_{a+1}$. 
 \hfill$\Box$
\end{rmk}
\end{comment}

%%%%%%%%%%%%%%%%%%%%%%%%%%%%%%%%%%%%
%%%%%%%%%%%%%%%%%%%%%%%%%%%%%%%%%%%%

\subsection{Definition of the Flag Moduli Spaces}\label{ssec: definition of sheaf moduli spaces} % working on\MSB{We spend a lot of time in this section defining various notions for flags, but then in the definition of $\modsp(\Lambda_\cP; \tt)$ we don't actually use much about decorated flags. We should modify so that either we do less with flags, or flags are more apparent in Definition~\ref{defn: framed sheaf moduli space (algebraic version)}.}

% In this section, we define a Legendrian link for each positroid $\cP$.  We also introduce the flag moduli space and decorated flag moduli space, two geometric invariants for positroid Legendrian links.

%Recall from Definition \ref{defn:positroid-braid-word} the positroid braid word $\beta_\cP$ of a positroid $\cP$. By Theorem \ref{thm:weave construction}, $\beta_\cP$ is the cyclic positive braid word at the boundary of a positroid weave $\ww$ associated with $\cP$.

Let $\cP$ be a positroid and $\beta_\cP$  its associated positroid braid word, cf.~Definition \ref{defn:positroid-braid-word}. By Theorem \ref{thm:weave construction}, $\beta_\cP$ is the cyclic positive braid word at the boundary of a positroid weave $\ww$ associated with $\cP$.

%\begin{defn}\label{defn:positroid Legendrian link} Let $\cP$ be a positroid. The \emph{positroid Legendrian link} $\Lambda_\cP\sse(\bR^3,\xi_{st})$ is the $(-1)$-closure of the positive braid word $\beta_\cP$.\hfill$\Box$
%\end{defn}

\begin{defn}[Flag moduli]\label{defn:undecorated sheaf moduli space} Let $\cP$ be a positroid, $\beta_\cP=s_{i_1}s_{i_2} \cdots s_{i_l}= \beta_1 \beta_2 \cdots \beta_n$ its associated positive braid word. The \emph{flag moduli space} of $\beta_\cP$ is the configuration space
\[
\modsp(\beta_\cP):=\left.\left\{(\cF_0,\cF_1,\dots, \cF_l=\cF_0)\in (\Fl_m)^l \ \middle| \  \cF_{j-1}\xrightarrow{{i_j}}\cF_j \text{ for all $1\leq j\leq l$} \right\}\right/\GL_m.
\]
\noindent Since each piece $\beta_i$ is reduced, $\modsp(\beta_\cP)$ is isomorphic to 
\begin{equation}\label{eq:alternative def of sheaf moduli space}
	\modsp(\beta_\cP):=\left.\left\{(\cF_0,\cF_1,\dots, \cF_n=\cF_0)\in (\Fl_m)^n \ \middle| \  \cF_{i-1}\xrightarrow{\beta_i}\cF_i \text{ for all $1\leq i\leq n$} \right\}\right/\GL_m.
\end{equation}
\noindent We consider $\modsp(\beta_\cP)$ as an algebraic quotient stack, given by the data of the affine variety parameterizing  tuples of flags as above modulo the  algebraic $\GL_m$-action.
\hfill$\Box$
\end{defn}

\begin{comment}
\begin{rmk}\label{config space pts in link diagram}
	By remark~\ref{rmk:beta chains in wiring diagram}, a choice of representative $\beta_\cP$-chain for a point in $\modsp(\beta_\cP)$ we may interpreted as a $\beta_\cP$-arrangement. By Definition \ref{defn:undecorated sheaf moduli space}, the subspace in the rightmost chamber at level $j$ equals the subspace in the leftmost chamber at level $j$. Thus this $\beta_\cP$-arrangement can equivalently be interpreted as a labeling of the chambers of the link diagram $\widehat{\beta_\cP} \subset S_1 \times \bR$ with subspaces which obey the rules in Remark~\ref{rmk:beta chains in wiring diagram}. By definition, this labeling of chambers is said to be a \emph{$\widehat{\beta_\cP}$-arrangement} of subspaces.\hfill$\Box$
\end{rmk}

\end{comment}

\noindent A decorated version of $\modsp(\beta_\cP)$ is relevant in the study of positroids, it is defined as follows. 

\begin{defn}[Decorated flags] Let $\cF\in\GL_m(\bC)/B$ be a  complete flag in $\bC^m$. By definition, a \emph{decoration} of $\cF$ is a tuple $v_\cF:=((v)_1, \dots, (v)_m)$  of vectors, where $(v)_k$ is a non-zero element of the 1-dimensional quotient $(\cF)_k/(\cF)_{k-1}$. By definition, a \emph{decorated flag} $\tilde{\cF}:=(\cF,v_\cF)$ in $\bC^m$ is a pair consisting of a flag $\cF$ in $\bC^m$ together with a decoration $v_\cF$. Each decorated flag $\tilde{\cF}$ uniquely determines a volume form $\det(\tilde{\cF})$ on $\bC^m$, also denoted $\det(v)$, by the formula
\[
\det(\tilde{\cF})=\det(v):=(v)_1\wedge (v)_{2}\wedge \cdots \wedge (v)_m.
\]
Alternatively, if $U\sse B$ is the maximal unipotent subgroup, a decoration on a flag is a point in its fiber for the projection $p:\GL_m/U\rightarrow \GL_m/B$, and decorated flags are the elements of $\GL_m/U$.\hfill$\Box$
\end{defn}

%\begin{rmk}
%Let $U$ be the maximal unipotent subgroup of $B$, choosing a decoration on a flag is choosing a point in the fiber of $\GL_m/U\rightarrow \GL_m/B$. In particular, a decorated flag is the same as an element of $\GL_m/U$.\hfill$\Box$
%\end{rmk}

Decorations are also referred to as trivializations, as they provide a basis for the 1-dimensional quotients $(\cF)_k/(\cF)_{k-1}$. The relative position of decorated flags is defined as follows. Let $\widetilde{\Fl}_m:=\GL_m/U$ be the space of decorated flags in $\bC^m$ and $p:\widetilde{\Fl}_m\rightarrow \Fl_m$ the forgetful map, forgetting the decoration.  Suppose $\tilde{\cF}, \tilde{\cF'}$ are decorated flags satisfying $p(\tilde{\cF})\xrightarrow{i}p(\tilde{\cF}')$. Then for any $1\leq k\leq m$ with $k\neq i,i+1$, we have
\begin{equation}\label{eq:equal quotients}
\frac{(\cF)_k}{(\cF)_{k-1}}=\frac{(\cF')_k}{(\cF')_{k-1}}.
\end{equation}
Since $(\cF)_i\neq (\cF')_i$, the following compositions are isomorphisms:\begin{equation}\label{eq:rho}
\rho:\frac{(\cF)_i}{(\cF)_{i-1}}\hookrightarrow\frac{(\cF)_{i+1}}{(\cF)_{i-1}}=\frac{(\cF')_{i+1}}{(\cF')_{i-1}}\twoheadrightarrow \frac{(\cF')_{i+1}}{(\cF')_i}
\end{equation}
\begin{equation}\label{eq:lambda}
\lambda:\frac{(\cF')_{i}}{(\cF')_{i-1}}\hookrightarrow \frac{(\cF')_{i+1}}{(\cF')_{i-1}}=\frac{(\cF)_{i+1}}{(\cF)_{i-1}}\twoheadrightarrow \frac{(\cF)_{i+1}}{(\cF)_i}.
\end{equation}
%The isomorphism $\rho$ is so named because it ``goes to the right", while $\lambda$ ``goes to the left".

\begin{defn}\label{defn:compatible decoration} Two decorated flags $\tilde{\cF},\tilde{\cF}'\in\widetilde{\Fl}_m$ are said to be in $i$th relative position, denoted $\tilde{\cF}\doublearrow{i} \tilde{\cF}'$, if the following three conditions are satisfied:
\begin{itemize}
    \item[-] $p(\tilde{\cF})\xrightarrow{i}p(\tilde{\cF}')$,
    \item[-] $(v)_k=(v')_k$ for all $k\neq i,i+1$,
    \item[-] $\rho((v)_i)=(v')_{i+1}$ and $\lambda((v')_i)=(v)_{i+1}$.
\end{itemize}
Let $\beta=s_{i_1}s_{i_2}\cdots s_{i_l}$ be a positive braid word. A \emph{decorated $\beta$-chain} is a tuple of decorated flags satisfying $$\tcF\doublearrow{i_1} \tcF_1 \doublearrow{i_2} \cdots \doublearrow{i_l} \tcF'.$$
If $\beta$ is reduced, we write $\tilde{\cF}\doublearrow{\beta}\tilde{\cF}'$ if there exists a decorated $\beta$-chain between $\tilde{\cF}$ and $\tilde{\cF}'$.\hfill$\Box$
\end{defn}

\noindent Note that for two decorated flags in $i$th relative position, the decorations of one flag determine the decorations of the other, as $\rho$ and $\lambda$ are isomorphisms.
%\begin{rmk} Relative position between (undecorated) flags is given by the Bruhat decomposition $\GL_n=\bigsqcup_{w\in S_n}BwB$: we have $xB\xrightarrow{w}yB$ if $Bx^{-1}yB=BwB$, where the representative matrix chosen for $w \in S_n$ does not matter. By systematically choosing representatives $\overline{w} \in \GL_n$ for each $w \in S_n$, one can analogously define relative position of decorated flags: $xU \doublearrow{w} yU$ if $Ux^{-1}yU=U\overline{w}U$. In Definition \ref{defn:compatible decoration}, we have chosen $\overline{w} \in \GL_n$ to be the permutation matrix associated with $w$ because it is more compatible with how the parallel transports are defined in Equations \eqref{eq:rho} and \eqref{eq:lambda}. This differs from the usual choice of $\overline{w} \in \SL_n$ in more representation theoretic contexts (cf. \cite{Steinberg}).\hfill$\Box$
%\end{rmk}
It is also possible to change the decoration of a flag without changing the underlying flag:  this allows us to introduce the appropriate decorated version of $\modsp(\beta_\cP)$, which will be isomorphic to a positroid stratum. First, we introduce \emph{base points} on $\beta_\cP$ in order to discuss exactly what type of decoration changes we allow, as follows.

\begin{defn}[Base points]\label{def:base-pts}
	Let $\cP$ be a positroid with bounded affine permutation $f$ and $\beta_\cP= \beta_1\beta_2\dots \beta_n$ the corresponding braid word. For each $i \in [n]$ which is not a fixed point of $f$, we add two base points $i^+$ and $i^-$ to the braid diagram  of $\beta_\cP$, inserted as follows:
	\begin{itemize}
		\item[(a)] If $l(\beta_i)>0$, then we add $i^+$, resp. $i^-$, to the segment at height $m$, resp. height $m-1$, just to the left of the first crossing of $\beta_i$. See Figure~\ref{fig:base-pts-flags}(right); we draw $i^+$ slightly to the left of $i_-$.\\
  
		\item[(b)] If $f(i-1)=i$ so $l(\beta_i)=0$, then we add $i^+$ and $i^{-}$ to the segment at height $m$ between $\beta_{i-1}$ and $\beta_{i+1}$, with $i^+$ on the left. See Figure~\ref{fig:base-pts-flags} (left).
	\end{itemize}
We denote this particular set of points in the braid diagram of $\beta_\cP$ by $\mathfrak{t}_\cP$, or by $\mathfrak{t}$ if $\cP$ is understood.\hfill$\Box$
\end{defn}

\begin{figure}[H]
    \centering
    \begin{tikzpicture}[scale=1.5]
    \draw (0,0) -- (1,0);
    \draw (0,0.5) -- (1,0.5);
    \draw (0,1) -- (1,1);
    \draw (0,1.5) -- (1,1.5);
    \node at (0.25,1.5) [] {$\bullet$};
    \node at (0.25,1.5) [below] {$i^+$};
    \node at (0.75,1.5) [] {$\bullet$};
    \node at (0.75,1.5) [below] {$i^-$};
   %\draw [decoration={markings,mark=at position 0.5 with {\arrow{>}}},postaction={decorate},blue] (0.6,1.5) to [out=60,in=180] (0.75,1.65) node [above] {$\eta^-$} to [out=0,in=120](0.9,1.5);
    %\draw [decoration={markings,mark=at position 0.5 with {\arrow{>}}},postaction={decorate},blue] (0.4,1.5) to [out=120,in=0] (0.25,1.65) node [above] {$\eta^+$} to [out=180,in=60] (0.1,1.5);
    \end{tikzpicture}\hspace{2cm}
    % \begin{tikzpicture}[scale=1.5]
    % \draw (0,0) -- (1,0);
    % \draw (0,0.5) -- (1,0.5);=i
    % \draw (0,1) -- (1,1.5);
    % \draw (0,1.5) -- (1,1);
    % \node at (0.25,1.125) [] {$\bullet$};
    % \node at (0.25,1.125) [below] {$i^-$};
    % \node at (0.25,1.375) [] {$\circ$};
    % \node at (0.25,1.375) [above] {$i^+$};
    %\draw [decoration={markings,mark=at position 0.5 with {\arrow{>}}},postaction={decorate},blue] (0.1,1.05) to [out=-45,in=-90] node [below] {$\eta^-$} (0.4,1.2);
    %\draw [decoration={markings,mark=at position 0.5 with {\arrow{<}}},postaction={decorate},blue] (0.1,1.45) to [out=45,in=90] node [above] {$\eta^+$} (0.4,1.3);
    %\end{tikzpicture}\hspace{2cm}
    % \begin{tikzpicture}[scale=1.5]
    % \draw (0,0) -- (1,0);
    % \draw (0,0.5) -- (1,1);
    % \draw (0,1) -- (1,1.5);
    % \draw (0,1.5) -- (1,0.5);
    % \node at (0.15,1.07) [] {$\bullet$};
    % \node at (0.15,1.07) [below] {$i^-$};
    % \node at (0.15,1.35) [] {$\bullet$};
    % \node at (0.15,1.35) [above] {$i^+$};
    % %\draw [decoration={markings,mark=at position 0.5 with {\arrow{>}}},postaction={decorate},blue] (0,1) to [out=-45,in=-90] node [below] {$\eta^-$} (0.27,1.13);
    % %\draw [decoration={markings,mark=at position 0.5 with {\arrow{<}}},postaction={decorate},blue] (0,1.5) to [out=10,in=90] node [above] {$\eta^+$} (0.27,1.23);
    % \end{tikzpicture}\hspace{2cm}
    \begin{tikzpicture}[scale=1.5]
    \draw (0,0) -- (1,0.5);
    \draw (0,0.5) -- (1,1);
    \draw (0,1) -- (1,1.5);
    \draw (0,1.5) -- (1,0);
    \node at (0.1,1.05) [] {$\bullet$};
    \node at (0.1,1.05) [below] {$i^-$};
    \node at (0.1,1.33) [] {$\bullet$};
    \node at (0.1,1.33) [above] {$i^+$};
    %\draw [decoration={markings,mark=at position 0.5 with {\arrow{>}}},postaction={decorate},blue] (0,1) to [out=-60,in=-180] (0.1,0.95) node [below] {$\eta^-$} to [out=0,in=-90](0.2,1.1);
    %\draw [decoration={markings,mark=at position 0.5 with {\arrow{<}}},postaction={decorate},blue] (0,1.5) to [out=10,in=75] node [above] {$\eta^+$} (0.2,1.21);
    \end{tikzpicture}
    \caption{On the left, the placement of the base points $i^+,i^-$ when $f(i-1)=i$. On the right, the placement of basepoints for $l(\beta_i)>0$.}
    \label{fig:base-pts-flags}
\end{figure}
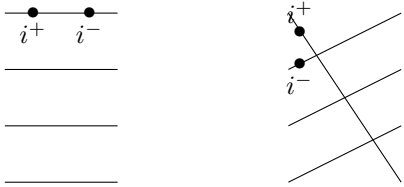

\begin{defn}
By definition, two decorated flags $\tilde{\cF}, \tilde{\cF'}$ with $p(\tilde{\cF})=p(\tilde{\cF}')$  are said to be $\tau_i$-scaled, $i\in[m]$, if all their decorations coincide except for $(v_{\tilde{\cF}})_i$ and $(v_{\tilde{\cF'}})_i$, which are different. That is, they are $\tau_i$-scaled if $(v_{\tilde{\cF}})_k=(v_{\tilde{\cF'}})_k$ for all $k\in[m]\setminus\{i\}$ and $(v_{\tilde{\cF}})_i\neq (v_{\tilde{\cF'}})_i$.\hfill$\Box$
\end{defn}
Intuitively, the decorated version of $\modsp(\beta_\cP)$ has changes in decoration when passing through base points in $\tt$ or crossings and the two changes in decoration passing through each pair $i^+$ and $i^-$ are themselves paired. The description reads as follows:

\begin{defn}[Decorated moduli space]\label{defn:framed-arrangement}
Let $\cP$ be a positroid of type $(m,n)$, $\beta_\cP$ the positroid braid and $\mathfrak{t}_\cP$ its set of base points. The decorated flag moduli space for $(\beta_\cP,\mathfrak{t}_\cP)$ is
$$\modsp(\beta_\cP;\mathfrak{t}_\cP):=\left.\left\{(F_0,F_1,\dots, F_n)\mbox{ where } F_i=(\cF_i^{(1)},\cF_i^{(2)},\cF_i^{(3)})\in (\widetilde{\Fl}_m)^3 \mbox{ s.t. } F_0=F_n\mbox{ and }(*)\right\}\right/\GL_m,$$
where $(*)$ is the following set of three conditions, which must hold for all $i\in[n]$:
\begin{enumerate}
    \item $p(\cF_i^{(1)})=p(\cF_i^{(2)})=p(\cF_i^{(3)})$ and $\cF^{(3)}_{i-1}\doublearrow{\beta_i}\cF^{(1)}_i$,\\

    \item If $l(\beta_i)>0$, then $$\displaystyle\frac{\left(v_{\cF^{(2)}_i}\right)_m}{\left(v_{\cF^{(1)}_i}\right)_m}=(-1)^{l(\beta_i)}\cdot\frac{\left(v_{\cF^{(2)}_i}\right)_{m-1}}{\left(v_{\cF^{(3)}_i}\right)_{m-1}},$$
    $\cF_i^{(1)}$ and $\cF_i^{(2)}$ are $\tau_m$-scaled, and $\cF_i^{(2)}$ and $\cF_i^{(3)}$ are $\tau_{m-1}$-scaled.\\

    \item If $l(\beta_i)=0$, then $\cF^{(1)}_i=\cF^{(3)}_i$ and $\cF_i^{(1)}$ and $\cF_i^{(2)}$ are $\tau_m$-scaled.\\
\end{enumerate}
As above, $\modsp(\beta_\cP;\mathfrak{t}_\cP)$ is considered as an algebraic stack.\footnote{In fact, it follows from Subsection \ref{ssec:iso_flagspositroid} below that it is smooth affine variety.}
\hfill$\Box$
\end{defn}

\noindent Theorem \ref{thm: frame sheaf moduli = positroid} below proves that $\modsp(\beta_\cP;\mathfrak{t}_\cP)$  is isomorphic to the positroid stratum for $\cP$, and it satisfies particularly good properties with regards to cluster structures. By construction, there is a forgetful map $p:\mathfrak{M}(\beta_\cP;\mathfrak{t})\longrightarrow\modsp(\beta_\cP)$. For context, this map coincides with the cluster-theoretic $p$-map $p:\cA\rightarrow  \cX^\uf$, once the appropriate cluster structures are introduced, cf.~\cite{CW}.\\

\begin{rmk}\label{rmk:visual_flags}
Finally, here is a visual description of the points in the moduli $\modsp(\beta_\cP)$,  following \cite[Section 4]{CW} that we use in some of the upcoming arguments. Consider the braid diagram for the positive braid word $\beta_\cP$,  drawn in the plane. Let $R_{j+1}\sse\bR^2$ be the open vertical strip between the $j$th and $(j+1)$st crossings, bounded to the right and to the left by a vertical line through the corresponding crossing. By definition, $R_0$ and $R_{\ell(\beta_\cP)+1}$ are the leftmost and rightmost regions of the braid diagram, before the first and after the last crossing, respectively. A point in $\modsp(\beta_\cP)$ can be described by assigning a complete flag $\cF_j$ in $\bC^m$ to each region $R_j$ such that:

\begin{itemize}
    \item[(i)] Two adjacent flags $\cF_j,\cF_{j+1}$ are in position $s_{k}$ if the $j$th crossing of $\beta_\cP$ is $s_{k}$, i.e.~ if the crossing between the regions $R_j$ and $R_{j+1}$ is $s_k$, then $\cF_j,\cF_{j+1}$ are in position $s_{k}$.\\

    \item[(ii)] The  flag in the leftmost region coincides with the flag in the rightmost region.
\end{itemize}

\noindent Points in the moduli $\modsp(\beta_\cP;\mathfrak{t}_\cP)$  can be similarly described, where now we consider vertical strip regions bounded by vertical lines which go through either a crossing or a base point. The $j$th triple of flags in Definition \ref{defn:framed-arrangement} encodes precisely the three flags in the three  following regions:  just to the left of the base point $i^+$,  between the base points $i^+$ and $i^-$,  and between $i^-$  and the first crossing of $\beta_{i_j}$.\hfill$\Box$
\end{rmk}

\begin{rmk}
Confer \cite[Section 4]{CW} for a geometric justification of condition (*) in Definition \ref{defn:framed-arrangement}. The first arXiv version of this manuscript also contains a description of these moduli spaces in terms of non-reduced wiring diagrams and chambers. See Remarks 6.1, 6.4 and Definitions 6.11 and 6.13 therein.\hfill$\Box$
\end{rmk}

%%%%%%%%%%%%%%%%%%%%%%%%%%%%%%%%%%%%
%%%%%%%%%%%%%%%%%%%%%%%%%%%%%%%%%%%%

\subsection{Invariance of flag moduli spaces}\label{subsec:invariance} Consider the positive braid word $\beta_\cP$. In \cite[Section 2.2]{CasalsNg} it is explained how to associate a Legendrian link to a positive braid word. This  Legendrian link is referred to as the $(-1)$-closure of the braid.

\begin{defn}\label{defn:positroid Legendrian link} Let $\cP$ be a positroid. By definition, the positroid Legendrian link $\Lambda_\cP\sse(\bR^3,\xi_{st})$ is the $(-1)$-closure of the positive braid word $\beta_\cP$. By definition, the positroid decoration of $\Lambda_\cP$ is the set of base points in $\Lambda_\cP$ given by  the decoration $\mathfrak{t}_\cP$ of $\beta_\cP$ from  Definition \ref{def:base-pts}.\hfill$\Box$
\end{defn}

\noindent Then the flag moduli space $\modsp(\beta_\cP)$ is isomorphic to the Toen-Vaquie moduli of pseudo-perfect objects of a certain decorated version of the dg-category of constructible sheaves on $\bR^2$ singularly supported on $\Lambda_\cP$. This category is of finite type and, in this case, this moduli derived stack is isomorphic to an affine variety modulo a $\GL_m$-action. These matters are discussed in \cite[Section 2.8]{CW}. In particular, since \cite[Section 3.3]{Sheaves1} shows that this category is invariant under Legendrian isotopies, the $D^-$-stack $\modsp(\beta_\cP)$ is also a Legendrian invariant. In fact, a Legendrian isotopy induces a specific functor that gives the required equivalence.

It thus follows that the algebraic isomorphism type of $\modsp(\beta_\cP)$ is invariant under braid moves in $\beta_\cP$, which can also be verified directly, cf.~\cite{CGSS20}. Similarly, $\modsp(\beta_\cP;\mathfrak{t}_\cP)$ is a contact isotropy invariant of the Legendrian link $\Lambda_\cP$ endowed with the set of marked points $\mathfrak{t}_\cP$.  Because of that, in a slight abuse of notation, we often denote $\modsp(\beta_\cP)$ and $\modsp(\beta_\cP;\mathfrak{t}_\cP)$  by $\modsp(\Lambda_\cP)$ and $\modsp(\Lambda_\cP;\mathfrak{t})$  respectively.

%%%%%%%%%%%%%%%%%%%%%%%%%%%%%%%%%%%%
%%%%%%%%%%%%%%%%%%%%%%%%%%%%%%%%%%%%

\subsection{\texorpdfstring{$\mathfrak{M}(\Lambda_\cP;\mathfrak{t})$ in terms of local systems}{}}\label{subsec:local systems}  Let us provide an alternative description of $\mathfrak{M}(\Lambda_\cP;\mathfrak{t})$  which is useful to us in Sections \ref{subsec:phi map} and \ref{sec:twist}. This viewpoint is closer to the treatment in \cite[Section 2.8]{CW}. It serves as a midpoint between the microlocal sheaf-theoretical perspective, from \cite{CW,CZ}, and the Lie-theoretic flag descriptions, from \cite{CG22,CGSS20,CGGLSS}, presented above.\\

First, we claim that each point $\cF_\bullet$ in these moduli defines a $\GL_1(\bC)$-local system $\mathcal{L}(\cF_\bullet)$ on the Legendrian link $\Lambda_\cP$. In the decorated case that local system comes with certain trivializations. The statement is:

\begin{constr}\label{lem:locsys}
Let $\cP$ be a positroid. Then:
\begin{itemize}
    \item[(i)] Every tuple of flag $\cF_\bullet\in\modsp(\beta_\cP)$ defines a $\GL_1(\bC)$-local system $\mathcal{L}(\cF_\bullet)$ on the Legendrian link $\Lambda_\cP$.\\

    \item[(ii)] Every tuple of decorated flags $\cF_\bullet\in\modsp(\beta_\cP;\mathfrak{t}_\cP)$ defines a $\GL_1(\bC)$-local system $\mathcal{L}(\cF_\bullet)$ on the Legendrian link $\Lambda_\cP$  endowed with a trivialization of $\mathcal{L}(\cF_\bullet)$  along each of the segments that constitute $\Lambda_\cP\setminus\mathfrak{t}_\cP$.
\end{itemize}

\end{constr}

\begin{proof}[Procedure] 
Let us start with Part $(i)$. This construction is done through the braid diagram of $\beta_\cP$ and  we use the description of points $\cF_\bullet\in\modsp(\beta_\cP)$ in Remark \ref{rmk:visual_flags}. Consider a point in a strand of the braid diagram for $\beta_\cP$ which is not a crossing, and so lies in a vertical strip labeled by a flag $\cF$.
%\MSB{this letter is overused here}\RC{ I think not anymore if we use the bullet.}. 
Denote by $(\cF)_k$ and $(\cF)_{k-1}$ the vector spaces in the flag $\cF$ right above and below this point, respectively. These are $k$-dimensional  and $(k-1)$-dimensional respectively for some $k\in[m]$.

We define the fiber of the local system $\mathcal{L}(\cF_\bullet)$ to be the group linear automorphisms of the 1-dimensional quotient space $(\cF)_k/(\cF)_{k-1}$. By the transversality conditions in Definition \ref{defn:undecorated sheaf moduli space}, the isomorphisms $\rho$ and $\lambda$ in Equations \eqref{eq:rho} and \eqref{eq:lambda} can then be used to glue these fibers, uniquely up to homotopy. The $(-1)$-closure  simply identifies the rightmost and leftmost pieces of the braid diagram for $\beta_\cP$ via the identity. The result is a local system $\mathcal{L}(\cF_\bullet)$ on $\Lambda_\cP$: this follows from \cite[Section 2.8.1]{CW} and \cite[Section B.2]{CL22}, as the  construction  coincides with the microlocal monodromy functor, cf.~\cite[Example 5.3]{CL22}.\\

For Part $(ii)$ it suffices to note  that a decoration on a flag $\cF$ is precisely the data of a set of vector generators, one for each of its  associated quotient spaces $\cF_k/\cF_{k-1}$. Along each segment in $\Lambda_\cP\setminus\mathfrak{t}_\cP$, and using the  decorations of the flags in $\cF_\bullet$, this is equivalent to the data of a trivialization  of $\mathcal{L}(\cF_\bullet)$.\end{proof}

\noindent From this viewpoint, to enhance a point in $\mathfrak{M}(\Lambda_\cP)$ to a point in $\mathfrak{M}(\Lambda_\cP;\mathfrak{t})$, one chooses a particular trivialization of the local system $\mathcal{L}(\cF_\bullet)$. One may choose a trivialization for each segment of the braid diagram $\beta_\cP$ with the crossings removed. %The trivializations are given along each of the segments of the braid diagram for $\beta_\cP$ that  constitute the complement of the crossings. 
Condition (*) in Definition \ref{defn:framed-arrangement} then requires that these trivializations are compatible over crossings and base points.\footnote{The construction of $\rho$ and $\lambda$ in \cite[Section 4]{CW} was as parallel transport across crossings, leading to Condition (*.1).} For base points, ``compatibility"  means the following.\\ 

Recall that the base points in $\mathfrak{t}=\mathfrak{t}_\cP$ come in pairs of the form $i^\pm$. Let $L^-$ and $R^-$, resp.~$L^+$ and $R^+$, be the segments of the braid diagram for $\beta_\cP$ to the left and right of $i^-$, resp.~$i^+$. Let $\psi_{i^-}$, resp.~ $\psi_{i^+}$, be the parallel transport of one such local system $\mathcal{L}(\cF_\bullet)$ along a path from a point in $L^-$, resp.~$R^+$, to a point in $R^-$, resp.~$L^+$, through the base point $i^-$, resp.~$i^+$. Given  a decorated flag $\widetilde\cF$, we have trivializations $v_{L^-}$ and $v_{R^-}$, resp.~$v_{L^+}$ and $v_{R^+}$, along the corresponding segments. We  then define the following two functions:
\begin{equation}\label{eq: A_i}
A_{i^-}(\widetilde\cF):=\frac{\psi_-(v_{L^-})}{v_{R^-}} \quad \quad \text{and} \quad \quad A_{i^+}(\widetilde\cF):=\frac{\psi_+(v_{R^+})}{v_{L^+}}.
\end{equation}
Here we used the shorthand $\frac{v}{w}$ to denote the constant of proportionality between parallel vectors $v,w$. Note that $A_{i^-}$ and $A_{i^+}$ are $\bC^\times$-valued regular functions on $\mathfrak{M}(\Lambda_\cP;\mathfrak{t})$. Conditions (*.2) and (*.3) in Definition \ref{defn:framed-arrangement} are equivalent to setting $A_{i^-}=A_{i^+}$. Thus, the moduli space $\mathfrak{M}(\Lambda;\mathfrak{t})$ can be alternatively defined as: 

\begin{defn}\label{defn: framed sheaf moduli space in terms of local systems} The decorated flag moduli space $\mathfrak{M}(\Lambda_\cP;\mathfrak{t})$ is the moduli space of pairs consisting of a point $\cF_\bullet\in \mathfrak{M}(\Lambda_\cP)$ and a trivialization of the local system $\mathcal{L}(\cF_\bullet)$ along $\Lambda_\cP\setminus \mathfrak{t}$ so that $A_{i^-}=A_{i^+}$ for all $i$.\hfill$\Box$
\end{defn}

\noindent Due to the last condition in Definition \ref{defn: framed sheaf moduli space in terms of local systems}, we often omit the $\pm$ sign and denote both $A_{i^-}$ and $A_{i^+}$ by $A_i$.\\

Second, this description  in terms of local systems  leads to one more version of a flag moduli space, denoted by $\dM(\Lambda_\cP;\mathfrak{t})$. It has a less significant role than $\mathfrak{M}(\Lambda_\cP;\tt)$  in our manuscript, but it can nevertheless helpful to consider it. Its definition is as follows:

\begin{defn}\label{def:X-with-frozens} The \emph{clipped flag moduli space} $\dM(\Lambda_\cP;\mathfrak{t})$ is defined to be the moduli space  of pairs consisting of a point $\cF_\bullet\in\modsp(\Lambda_\cP)$ and an isomorphism $\phi_i:\mathcal{L}_{i^+}(\cF_\bullet)\rightarrow \mathcal{L}_{i^-}(\cF_\bullet)$ between the fibers of the local system $\mathcal{L}(\cF_\bullet)$ for each pair of base points $i^\pm$.\hfill$\Box$
\end{defn}

\noindent For readers that are familiar with cluster ensembles, $\dM(\Lambda_\cP;\mathfrak{t})$ corresponds to the full cluster Poisson $\mathcal{X}$-scheme, with frozens.\footnote{The adjectives {\it decorated}, {\it framed} and {\it pinned} have already been used for different variations of these flag moduli spaces. Thus our choice of {\it clipped} to refer to this particular variation.} In particular, $\dM(\Lambda_\cP;\mathfrak{t})$ is dual to the cluster $\mathcal{A}$-scheme $\mathfrak{M}(\Lambda_\cP;\mathfrak{t})$. The cluster Poisson $\mathcal{X}$-scheme without frozens is the (undecorated) flag moduli $\mathfrak{M}(\Lambda_\cP)$. See \cite[Section 2.8]{CW} or \cite[Section 2]{FockGoncharov_ensemble} for more details on such spaces. The cluster $\mathcal{A}$-scheme without frozens is not discussed in this article and we assign no particular terminology to it.

%%%%%%%%%%%%%%%%%%%%%%%%%%%%%%%%%%%%
%%%%%%%%%%%%%%%%%%%%%%%%%%%%%%%%%%%%
%%%%%%%%%%%%%%%%%%%%%%%%%%%%%%%%%%%%
%%%%%%%%%%%%%%%%%%%%%%%%%%%%%%%%%%%%

\subsection{An isomorphism \texorpdfstring{$\Pio_\cP \to \modsp(\Lambda_\cP;\mathfrak{t})$}{}}\label{ssec:iso_flagspositroid} This section presents an explicit isomorphism between the decorated flag moduli space $\frmodsp(\Lambda_\cP;\mathfrak{t})$ and the positroid stratum $\Pi^\circ_\cP$. We represent a point in $\Pio_\cP$ by an $m \times n$ matrix with columns $v_1, \dots, v_n$ and write $[i_1 i_2 \dots i_k]$ for the span of $v_{i_1}, v_{i_2}, \dots, v_{i_k}$. If the $i$th entry of the Grassmann necklace is $\tI{i}=\{i_1 <_i i_2 <_i \dots <_i i_m\}$, we denote $D_i:= v_{i_1} \wedge v_{i_2} \wedge \cdots \wedge v_{i_m}.$ Note that $D_i= \pm \Delta_{\tI{i}}$ is a Pl\"ucker function and it is non-vanishing on  the positroid stratum $\Pio_\cP$. 

\begin{rmk}
    We assume that none of $v_1, \dots, v_n$ are the zero vector, that is, the bounded affine permutation $f$ of $\cP$ has no fixed points. This is without loss of generality. Indeed, deleting a zero column gives a natural isomorphism $\Pio_{\cP}\xrightarrow{\sim}\Pio_{\cP'}$ and $(\Lambda_\cP; \tt) = (\Lambda_{\cP'}; \tt)$, so the flag moduli spaces are also equal. Furthermore, the isomorphism $\Pio_{\cP}\xrightarrow{\sim}\Pio_{\cP'}$ commutes with the twist and its pullback takes target seeds $\Sigma_T(\bG')$ to target seeds $\Sigma_T(\bG)$. Thus, we can and do assume $f$ has no fixed points in the definitions and proofs for the remainder of the paper, but all results hold for arbitrary $f$ and positroid $\cP$.\hfill$\Box$
\end{rmk}

Consider a point $V \in \Pio_\cP$ and the braid diagram for $\beta_\cP$, where we implicitly identify the leftmost and rightmost  ends of the strands via the identity. Let us now construct a tuple of flags $\cF_\bullet(V)\in \modsp(\beta_\cP)$ and  the necessary set of trivializations $\tau(V)$ for the local system $\mathcal{L}(\cF_\bullet(V))$:

\begin{enumerate}
    \item First, label the arc between $i^+$ and $\pi_f(i-1)^-$ in the braid diagram $\beta_\cP \setminus \tt$ with the vector $v_{\pi_f(i-1)}$ from $V \in \Pio_\cP$. Similarly, we label the arc between $i^-$ and $(i+1)^+$ with $D_i^{-1}v_{i}$. See Figure~\ref{fig:trivialization}, where such labels are depicted. This procedure labels each arc of $\beta_\cP$ with a vector that depends on the point $V \in \Pio_\cP$.\\
    
    \noindent These labels define a point in $\cF_\bullet(V)\in \modsp(\beta_\cP)$ as follows. For each region between two consecutive horizontal arcs, at levels $k$ and $(k+1)$ for some $k\in[m]$, we declare  $\cF(V)^k$ to be the subspace of $\bC^m$ spanned by the vectors labeling  all the strands passing exactly below the region. Here $\cF(V)^k$ labels the restriction of  the tuple $\cF_\bullet(V)$ to this region, which is itself a  subspace for one of its flags. Lemma \ref{lem: boundary frozen A coord}  will momentarily prove that this is well-defined, i.e.~that this gives a chain of flags satisfying the necessary transversality conditions.\\

    \item Second, following the description of $\modsp(\beta_\cP)$ in Subsection \ref{subsec:local systems}, the required trivialization of the local system $\mathcal{L}(\cF_\bullet(V))$ is given by projecting the vector labeling each arc, which lives in $\cF_k\sse\bC^m$ for some $k$, into the quotient line $\cF_k/\cF_{k-1}$ associated to the arc.
\end{enumerate}

\begin{defn}\label{def:phi} Let $\cP$ be a positroid and $V \in \Pio_\cP$. We define the map
    $$\Phi: \Pio_\cP \longrightarrow \modsp(\Lambda_\cP;\tt),\quad V\mapsto\Phi(V):=(\cF_\bullet(V),\tau(V)),$$
    where $\cF_\bullet(V)$ and $\tau(V)$ are constructed as above.\hfill$\Box$
\end{defn}

\begin{figure}[H]
    \centering
    \begin{tikzpicture}
    \draw [lime,line width=2ex] (-1,3) -- (0,3);
    \draw [lime, line width=2ex] (0,2.5) -- (2,3);
    \draw [yellow, line width=2ex] (-1,2.5) -- (0,2.5);
    \draw [yellow, line width=2ex] (0,3) -- (2,1);
    \draw [yellow, line width=2ex] (-1,2) -- (0,2) -- (2,2.5);
    \draw [yellow, line width=2ex] (-1,1) -- (0,1) -- (2,1.5);
    \draw [yellow, line width=2ex] (-1,0.5) -- (2,0.5);
    \draw [yellow, line width=2ex] (-1,-0.5) -- (2,-0.5);
    \draw (-1,-0.5) -- (2,-0.5);
    \draw (-1,0.5) -- (2,0.5);
    \draw (-1,3) -- (0,3) -- (2,1);
    \draw (-1,2.5) -- (0,2.5) -- (2,3);
    \draw (-1,2) -- (0,2) -- (2,2.5);
    \draw (-1,1) -- (0,1) -- (2,1.5);
    \node at (0,1.5) [] {$\vdots$};
    \node at (0,0) [] {$\vdots$};
    \node at (0,3) [] {$\bullet$};
    \node at (0,2.5) [] {$\bullet$};
    \node at (0,3) [above] {\footnotesize{$i^+$}};
    \node at (0,2.5) [below] {\footnotesize{$i^-$}};
    \node [teal] at (-1,3) [above left] {$D_{i-1}^{-1}v_{i-1}$};
    \node [teal] at (2,3) [above right] {$D_{i}^{-1}v_{i}$};
    \node [orange] at (-1,2.5) [left] {$v_i$};
    \node [orange] at (2,1) [right] {$v_{\pi_f(i-1)}$};
    \node [orange] at (-1,2) [left] {$v_{i_3}$};
    \node [orange] at (-1,-0.5) [left] {$v_{i_m}$};
    \node [orange] at (-1,1.75) [left] {$\vdots$};
    \node [orange] at (-1,0.25) [left] {$\vdots$};
    \end{tikzpicture}\hspace{2cm}
    \begin{tikzpicture}
    \foreach \i in {0,1,2,5,6}
    {
    \draw [yellow, line width=2ex] (-1,-0.5+\i*0.5) -- (2,-0.5+\i*0.5);
    \draw (-1,-0.5+\i*0.5) -- (2,-0.5+\i*0.5);
    }
    \draw [yellow, line width=2ex] (0,3) -- (1,3);
    \draw [lime, line width=2ex] (-1,3) -- (0,3);
    \draw [lime, line width=2ex] (1,3) -- (2,3);
    \draw (-1,3) -- (2,3);
    \node at (0,3) [] {$\bullet$};
    \node at (1,3) [] {$\bullet$};
    \node at (0,3) [above] {$i^+$};
    \node at (1,3) [above] {$i^-$};
    \node [teal] at (-1,3)[left] {$D_{i-1}^{-1}v_{i-1}$};
    \node [teal] at (2,3) [right] {$D_i^{-1}v_i$};
    \node [orange] at (0.5,3) [above] {$v_i$};
    \foreach \i in {2,3}
    {
    \node [orange] at (-1,3.5-\i*0.5) [left] {$v_{i_\i}$};
    }
    \foreach \i in {1,2}
    {
    \node [orange] at (-1,\i*0.5-0.5) [left] {$v_{i_{m-\i}}$};
    }
    \node [orange] at (-1,-0.5) [left] {$v_{i_m}$};
    \node [orange] at (-1,1.25) [left] {$\vdots$};
    \node at (0.5,1.25) [] {$\vdots$};
    \end{tikzpicture}
    \caption{Trivializations defined by a point $[v_1,v_2,\dots, v_n]$ in the positroid cell. 
    }
    \label{fig:trivialization}
\end{figure}
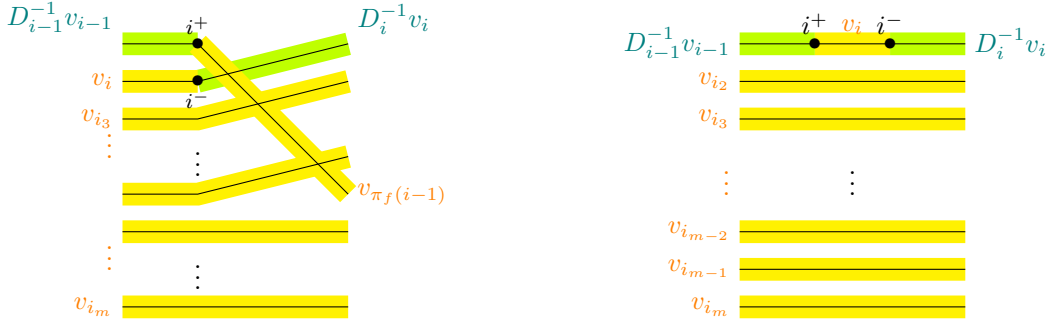

\begin{lem} \label{lem: boundary frozen A coord} 
    Let $\cP$ be a positroid and $V \in \Pio_\cP$. The map $\Phi(V)=(\cF_\bullet(V),\tau(V))$ is well-defined. In particular, we have the following two properties:
    
    \begin{itemize}
        \item[(i)] The  vertical strip in the  braid diagram for $\beta_{\cP}$ between $\beta_{i-1}$ and $\beta_i$ corresponds to the flag     
    \[
\cF[\tI{i-1}]:=0\subset [{i_m}] \subset [{i_{m-1}}{i_m}]\subset \cdots \subset [{i_2}{i_3}\cdots {i_m}]\subset [{i_1}{i_2}\cdots {i_m}]= \mathbb{C}^m,
\]
where $\tI{i-1}=\{i_1 <_{i-1} i_2 <_{i-1} \cdots <_{i-1} i_m\}$.\\

\item[(ii)] The trivializations satisfy $\Phi^*(A_{i^-})=\Phi^*(A_{i^+})=D_i$ for all $i$.
    \end{itemize} 
\end{lem}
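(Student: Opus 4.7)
The plan is to compute directly the vectors that the map $\Phi$ assigns to each arc of the link diagram of $\beta_\cP\setminus\tt$, identify the resulting complete flag in each chamber, and evaluate the scaling ratios at the pairs of base points $(i^+,i^-)$. Parts (i) and (ii) will be handled in tandem since both follow from the same explicit description of the arc labels produced by Definition~\ref{def:phi}.

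For part (i), I would proceed by induction along the cyclic direction of the braid diagram with the inductive claim that the chamber immediately to the right of $i^+$ and $i^-$ (and to the left of the first crossing of $\beta_i$) realizes the flag $\cF[\tI{i-1}]$. Reading off labels from the rules: the top-strand label is $v_{\pi_f(i-1)}$ on the arc emanating from $i^+$, and the height-$(m-1)$ label is a scalar multiple of $v_i$ on the arc ending at $i^-$ (which by the rules is of the form $v_{\pi_f(j-1)}$ with $j=\pi_f^{-1}(i)+1$, possibly rescaled by $D_j^{-1}$). Non-vanishing of $D_{i-1}$ on $\Pio_\cP$ guarantees that the $m$ labels in the chamber form a basis of $\bC^m$, so the span-of-bottom-$k$ prescription genuinely defines a complete flag. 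The inductive step then tracks labels through the $b_i$ crossings $s_{m-1}s_{m-2}\cdots s_{m-b_i}$ of $\beta_i$: each crossing swaps two adjacent labels, pushing $v_{\pi_f(i-1)}$ from height $m$ down to height $m-b_i$ and bringing the sequence of labels into exactly the configuration that spans $\cF[\tI{i}]$. Constancy of flags across $i^+,i^-$ (since these sit at the top two heights and only rescale a label within its own line) closes the step, and the three-flag compatibility $\cF^{(3)}_{i-1}\doublearrow{\beta_i}\cF^{(1)}_i$ of Definition~\ref{defn:framed-arrangement} is automatic from this construction; the base case is provided by cyclic closure.

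For part (ii), the computation is local at each base point. At $i^-$, the one-dimensional quotient $\bC^m/(\cF[\tI{i-1}])_{m-1}$ carries the class $[v_i]$ on the left and $D_i^{-1}[v_i]$ on the right, giving $\Phi^*(A_{i^-}) = D_i$ immediately. At $i^+$, the left label is $D_{i-1}^{-1}[v_{i-1}]$ and the right label is $[v_{\pi_f(i-1)}]$, both in the quotient $\bC^m/[\tI{i-1}\setminus\{i-1\}]$. Writing $v_{\pi_f(i-1)}\equiv c\,v_{i-1}$ modulo this subspace and wedging with $v_{i_2}\wedge\cdots\wedge v_{i_m}$ gives
\[
v_{\pi_f(i-1)}\wedge v_{i_2}\wedge\cdots\wedge v_{i_m} \;=\; c\cdot D_{i-1},
\]
and rearranging the left-hand wedge so that the columns appear in the $<_i$-order on $\tI{i}$ (which amounts to a cyclic shift of $\pi_f(i-1)$ into position $b_i+1$) produces $(-1)^{l(\beta_i)} D_i$. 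Combining yields $\Phi^*(A_{i^+}) = (-1)^{l(\beta_i)} D_i$, which is exactly what condition $(*.2)$ of Definition~\ref{defn:framed-arrangement} demands so that the local-system description of $\modsp(\Lambda_\cP;\tt)$ sees both $\Phi^*(A_{i^\pm})$ equal to $D_i$ once the sign $(-1)^{l(\beta_i)}$ is absorbed into the convention for $A_{i^+}$.

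The main obstacle is the label-identification step in part (i): the raw labels on individual arcs are generally not the canonical basis vectors $v_{i_k}$ of $\tI{i-1}$, and the index $\pi_f(i-1)$ need not equal any particular $i_k$, so the flag cannot be read off height-by-height. The crucial observation that makes the induction close is that only the cumulative spans of labels below each height matter, and these spans are combinatorially prescribed by the periodic grid pattern of $\beta_\cP$ to coincide with the initial segments of $\tI{i-1}$ in $<_{i-1}$-order. The degenerate case $l(\beta_i)=0$ of Definition~\ref{def:base-pts}(b) requires a separate but analogous bookkeeping, since both $i^+$ and $i^-$ then sit at the top height $m$ and the top label simply rescales without any intervening crossings.
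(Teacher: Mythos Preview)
Your proposal is correct, and the Part~(ii) computation matches the paper's almost verbatim: both reduce $\Phi^*(A_{i^\pm})$ to a ratio of wedges, with the $(-1)^{l(\beta_i)}$ appearing from reordering $v_{\pi_f(i-1)}$ into its $<_i$-position inside $\tI{i}$ and then being absorbed by the sign convention built into condition~$(*.2)$. One small imprecision: at $i^+$ the vector label changes from $D_{i-1}^{-1}v_{i-1}$ to $v_{\pi_f(i-1)}$, which is \emph{not} a rescaling within the same line of $\bC^m$; what is preserved is the underlying flag, since both vectors span $\bC^m$ together with $[i_2\cdots i_m]$ by nonvanishing of $\Delta_{\tI{i}}$.

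For Part~(i) you are working harder than the paper does. The ``main obstacle'' you flag---that arc labels are not the canonical $v_{i_k}$---is not really present. Tracing the arc-labeling rules of Definition~\ref{def:phi}, the strand at height $m-k$ for $k\geq 2$ in the slice between $\beta_{i-1}$ and $\beta_i$ lies on the arc from $(\pi_f^{-1}(i_{k+1})+1)^+$ to $(i_{k+1})^-$, whose label is precisely $v_{i_{k+1}}$ with no rescaling. Only the top two heights carry basepoints and hence possibly rescaled labels. The paper therefore skips the induction entirely: it just cites nonvanishing of $\Delta_{\tI{i-1}}$ to see that the labels form a basis (so $\cF[\tI{i-1}]$ is a complete flag), then checks that passing the basepoints $i^\pm$ only alters the top label and leaves the flag unchanged. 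Your inductive tracking through the crossings of $\beta_i$ is valid and arrives at the same place, but once you recognize that the labels below height $m-1$ are already the $v_{i_k}$'s, the induction becomes redundant.
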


\begin{proof}
%Consider the braid diagram of $\beta_\cP$ and let us refer to the planar regions  in the complement of the diagram as chambers. By construction, the chambers in a vertical slice between $\beta_{i-1}$ and the basepoints $i^+, i^-$ can be labeled by the subspaces in $\cF[\tI{i-1}]$. We can interchangeably discuss flags or chambers with such labels.\\

First, since $\Delta_{\tI{i-1}}$ is non-vanishing on $\Pio_\cP$, the vectors $v_{i_1}, \dots, v_{i_m}$ are linearly independent.  Therefore, the chain of subspaces that these vectors span,  which by construction constitute the flags, have the expected dimension. Thus, the corresponding flag $\cF[\tI{i-1}]$ is a complete flag.\\

\noindent Second, we must now verify that the undecorated flag  remains invariant to the left and to the right of a base point. Note that $v_{i_1}=D_{i-1}^{-1}v_{i-1}$  and that passing through the basepoint $i^+$ changes $v_{i_1}$ to $v_{\pi_f(i-1)}$; sliding over $i^-$ rescales a single vector. Thus, all  the subspaces of the flag are preserved except for possibly the top one. Once we slide past $i^+$, Definition~\ref{def:phi} assigns the top-dimensional space spanned by $[{\pi_f(i-1)}{i_2}\cdots{i_m}]$. Since $\tI{i}= \tI{i-1} \setminus \{i-1\} \cup \{\pi_f(i-1)\}$ and $\Delta_{\tI{i}}$ is non-vanishing on $\Pio_\cP$, we have $[{\pi_f(i-1)}{i_2}\cdots {i_m}]=[{i_1}{i_2},\cdots ,{i_m}]=\bC^m$ as desired. This also implies that  flags separated by a crossing are labeled by different subspaces at that level,  with the right transversality condition, and implies Part $(i)$.\\

\noindent For Part $(ii)$, \cite[Section 4]{CW}  implies that the pull-back of the function $A_{i^-}$ can be computed as:
\[
\Phi^*(A_{i^-})=\frac{v_i\wedge v_{i_3} \wedge \cdots \wedge v_{i_m}}{D_i^{-1}v_i\wedge v_{i_3} \wedge \cdots \wedge v_{i_m}}=D_i.
\]
Similarly, the pull-back of the function $A_{i^+}$ is:
\[
\Phi^*(A_{i^+})=\frac{(-1)^{l(\beta_i)}v_{\pi_f(i-1)}\wedge v_i\wedge v_{i_3}\wedge \cdots \wedge v_{i_m}}{D_{i-1}^{-1}v_{i-1}\wedge v_i\wedge v_{i_3}\wedge \cdots \wedge v_{i_m}}=D_i.
\]
This concludes Part $(ii)$ and proves that $\Phi(V)$ is indeed well-defined.
\end{proof}

In Section \ref{subsec:phi map}, we define a seed $\Sigma(\ww(\bG);\mathfrak{t})$ for each positroid weave $\ww(\bG)$, in line with \cite{CGGLSS,CW}. The following theorem, which we prove at the end of Subsection \ref{subsec: toggles and RIII moves}, shows that the map $\Phi$ in Definition \ref{def:phi} above is an isomorphism and gives the relationship between the two seeds $\Sigma(\ww(\bG);\mathfrak{t})$ and $\Sigma_{T}(\bG)$.

\begin{thm}\label{thm: frame sheaf moduli = positroid}
    The map $\Phi:\Pi_\cP^\circ\longrightarrow \modsp(\Lambda_\cP;\tt)$
is an isomorphism and the pullback $\Phi^*$ sends the seed $\Sigma(\ww(\bG);\mathfrak{t})$ to $\Sigma_{T}(\bG)$ for any reduced plabic graph $\bG$.
\end{thm}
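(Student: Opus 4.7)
The strategy is to prove the two claims in tandem: first construct an explicit inverse to $\Phi$ to establish the isomorphism, then match the seeds by verifying the frozens via Lemma~\ref{lem: boundary frozen A coord} and computing microlocal merodromies of the mutable Lusztig cycles.

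First, I would construct the inverse $\Psi:\modsp(\Lambda_\cP;\tt)\to \Pi_\cP^\circ$ as follows. Given a representative $(\cF_\bullet,\tau)$, use the $\GL_m$-action to normalize the trivializations and then read off the column vectors directly from the arcs of $\beta_\cP$: set $v_{\pi_f(i-1)}$ equal to the trivialization on the top arc between $i^+$ and $\pi_f(i-1)^-$, as depicted in Figure~\ref{fig:trivialization}. By Lemma~\ref{lem: boundary frozen A coord}(i), the flag in each vertical strip between $\beta_{i-1}$ and $\beta_i$ is $\cF[\tI{i-1}]$, so the resulting matrix $V$ satisfies $\Delta_{\tI{i}}(V)\ne 0$ for every $i$, and by \cite[Theorem 6]{Oh} together with the non-vanishing of all Grassmann necklace Pl\"uckers, $V$ lies in $\Pi_\cP^\circ$. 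The checks $\Psi\circ\Phi=\id$ and $\Phi\circ\Psi=\id$ are direct from the constructions: $\Phi$ puts column $v_j$ on a specific arc, and $\Psi$ reads it off, while the subspaces spanned by $v_{i_1},\dots,v_{i_k}$ recover $\cF_\bullet(V)$.

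Next, for the seed identification, Theorem~\ref{thm:weave construction}(1) and Corollary~\ref{cor:Lusztig cycles} already match the quivers: the intersection-pairing quiver of Lusztig cycles in $\ww(\bG)$ equals $Q_\bG$, with the cycles $\gamma_F$ indexed by the faces $F$ of $\bG$. Thus it remains to match cluster variables face-by-face. For a boundary face $F_i$ associated to the index $i$ in the target Grassmann necklace, the corresponding frozen microlocal merodromy on the weave side is exactly $A_i$ (the $\mathcal{A}$-coordinate attached to the pair of base points $i^\pm$), and Lemma~\ref{lem: boundary frozen A coord}(ii) gives $\Phi^*A_i=D_i=\pm\Delta_{\tI{i}}$, which is the frozen cluster variable at $F_i$ in $\Sigma_T(\bG)$.

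For a non-boundary face $F$ of $\bG$, the Lusztig cycle $\gamma_F$ is the $Y$-tree formed by the blue top-level weave lines inside $F$ (Theorem~\ref{thm:weave construction}(1)), and the microlocal merodromy along $\gamma_F$ is computed, following \cite[Section 4]{CW}, as a ratio of volumes obtained by pairing $\gamma_F$ against a Poincar\'e dual relative cycle in the Lagrangian. The key computation is that this dual cycle can be chosen to hug the boundary of $F$ and cross precisely those top-level arcs whose trivializations (by Lemma~\ref{lem: boundary frozen A coord}(i) and Figure~\ref{fig:trivialization}) carry the column vectors indexed by the target label $\tI{F}$; the determinant expressing the merodromy then equals $\Delta_{\tI{F}}$ up to a unit that can be absorbed into the sign convention for $D_i$. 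I would carry this out first in a local model (a single non-boundary $2k$-gonal face of $\bG$ surrounded by boundary strips) and then globalize by induction on the number of T-shifts, using Proposition~\ref{prop:face label after a single T-shift} to identify how target labels of faces of $\bG$ decompose in terms of target labels and lollipop indices in $\bGshift$: this is exactly the recursion obeyed by the merodromies when peeling off the top color of $\ww(\bG)$.

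The main obstacle is the explicit merodromy calculation in the third step. The challenge is that the dual cycle traverses several hexavalent and tetravalent weave vertices, and at each such vertex one must use the transversality/compatibility relations $\rho,\lambda$ of Equations~\eqref{eq:rho}--\eqref{eq:lambda} to rewrite parallel transports in terms of the fixed trivializations $v_i$. The inductive step on T-shift rank is what will make this tractable, since after removing the top layer the remaining weave is $\ww(\bGshift)$ and the induced trivializations on its top boundary are exactly those assigned by $\Phi$ to a point of $\Pi^\circ_{\cP^\downarrow}$ obtained from $V$ by a Gauss-elimination-style projection; this reduces the determinantal identity at face $F$ of $\bG$ to a shorter determinantal identity inside $\bGshift$, which is the inductive hypothesis.
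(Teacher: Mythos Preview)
Your construction of the inverse $\Psi$ has a genuine gap. The trivialization on the top arc between $i^+$ and $\pi_f(i-1)^-$ is not a vector in $\bC^m$: it is a vector in the one-dimensional quotient $(\cF)_m/(\cF)_{m-1}=\bC^m/(\cF)_{m-1}$, and there is no canonical lift back to $\bC^m$. In Definition~\ref{def:phi} the arc is indeed \emph{labelled} by the column vector $v_{\pi_f(i-1)}$, but step~(2) of that definition then projects this label to the quotient line; a point of $\modsp(\Lambda_\cP;\tt)$ only remembers the projection. So you cannot ``read off'' the column $v_{\pi_f(i-1)}$ from the decorated flag data alone. The paper overcomes this by a different mechanism: it applies a sequence of leftward toggles (Definitions~\ref{def:toggle}--\ref{def:toggle-with-basepoints}) taking the target-necklace grid pattern for $\beta_\cP$ to the source-necklace grid pattern for $\delta_\cP$. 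In the source pattern the arc carrying the trivialization for $v_i$ sits at the \emph{bottom} level of the wiring diagram, where the relevant quotient $(\cF)_1/(\cF)_0=(\cF)_1$ is literally a line in $\bC^m$; only there is the trivialization an honest vector $x_i\in\bC^m$, and these $x_i$ assemble into the matrix $\Psi(\tcF_\bullet)$. Proposition~\ref{prop:tog-circ-phi} shows that $\tog\circ\Phi$ still obeys the recipe of Definition~\ref{def:phi} relative to the toggled braid, which is what makes $\Psi\circ\Phi=\id$ transparent.

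For the seed identification your outline is broadly sound but the paper takes a different route and does not induct on T-shift rank. It builds explicit representatives of the dual cycles $\eta_F$ via gradient flows (Section~\ref{sssec:representatives_dualcycles}): choose a generic point $p\in F$, lift to the $m$ preimages $p_1,\dots,p_m$ in $L_\ww$, and follow each under the Hamiltonian isotopy of Theorem~\ref{thm:weave_and_conjugate} back to the boundary. Proposition~\ref{prop: construction of dual cluster cycles} verifies this relative cycle is Poincar\'e dual to $\gamma_F$, and then Proposition~\ref{prop:merodromy-is-plucker} computes the merodromy directly: each gradient flow parallel-transports a boundary trivialization $v_j$ with $j\in\tI{F}$ to the fiber over $p$, and their wedge against the standard volume form is $\pm\Delta_{\tI{F}}$. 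Your inductive scheme could perhaps be made to work, but the crux---that the trivializations induced on the top boundary of $\ww(\bGshift)$ coincide with those that $\Phi$ assigns to some Gauss-eliminated point of $\Pi^\circ_{\cP^\downarrow}$---is not obvious and would itself require tracking parallel transport through all hexavalent vertices of the top layer via the maps $\rho,\lambda$ of \eqref{eq:rho}--\eqref{eq:lambda}.
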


\noindent The fact that $\Phi$ is an isomorphism is interesting, but the main contribution of Theorem \ref{thm: frame sheaf moduli = positroid} is that $\Phi^*$ pulls-back the seed $\Sigma(\ww(\bG);\mathfrak{t})$ to $\Sigma_{T}(\bG)$. Theorem \ref{thm: frame sheaf moduli = positroid} implies that $\modsp(\Lambda_\cP;\tt)$ is a cluster $\mathcal{A}$-scheme with an initial seed given by any positroid weave. Finally, Theorem \ref{thm: frame sheaf moduli = positroid} proves Theorem~\ref{thm:mainA}.(iii) and (iv). Thus, once proven in Subsection \ref{subsec: toggles and RIII moves}, it concludes the proof of Theorem \ref{thm:mainA}.

\begin{rmk} In \cite[Section 3.2]{STWZ}, an isomorphism is given between the postroid stratum $\Pi_\cP^\circ$ and a variant of $\modsp(\Lambda_\cP; \tt)$, in a similar spirit as Definition \ref{def:phi}. Nevertheless, there is no construction of cluster varieties from Legendrian knots in \cite{STWZ}. Rather, for some specific Legendrian links, certain moduli spaces are shown to be isomorphic (just as affine varieties) to positroids, cf.~\cite[Theorem 3.9]{STWZ}. Therefore, the comparison of seeds via the pullback cannot occur in their context, as there is no additional cluster structure and seeds being constructed to which one can compare. Note that \cite{STWZ} predates the symplectic construction of cluster $\mathcal{A}$-structures on $\Pio_\cP$, independent of \cite{GL}, and the weave description of cluster seeds on $\modsp(\Lambda_\cP; \tt)$, which is only established after \cite{CGGLSS,CW}.\hfill$\Box$
\end{rmk}

\section{Cluster seeds from positroid weaves and toggles}\label{subsec:phi map}

The central goal of this section is the proof of Theorem \ref{thm: frame sheaf moduli = positroid}, which concludes the proof of Theorem \ref{thm:mainA}. The method of proof, including the use of toggles in Subsection \ref{subsec: toggles and RIII moves}, might be of independent interest. Indeed, the argument also serves as a partial transition between the geometric techniques developed in \cite{CW}, based on the microlocal theory of sheaves, and the combinatorial results on Grassmann necklaces presented in \cite{FSB}. It also provides an explicit description of the cluster seeds $\Sigma(\ww;\tt)$ associated to the positroid weaves $\ww=\ww(\bG)$ built in Section \ref{sec:iterative_Tmap} above.

%%%%%%%%%%%%%%%%%%%%%%%%%%%%%%%%%%%%
%%%%%%%%%%%%%%%%%%%%%%%%%%%%%%%%%%%%

\subsection{Construction of the Seed \texorpdfstring{$\Sigma(\ww;\mathfrak{t})$}{}}\label{subsec:cluster cycles} 

In this subsection, we construct the cluster seed $\Sigma(\ww;\tt)$ for the coordinate ring of functions of the affine variety $\modsp(\Lambda_\cP;\tt)$ associated with a positroid weave $\ww:=\ww(\bG)$, following the strategy in \cite[Section 4]{CW}. This is achieved in the following three steps:\\

\begin{enumerate}
    \item Obtaining a description of the particular open {\it toric chart} $(\bC^\times)^{d}\sse\modsp(\Lambda_\cP;\tt)$ underlying the seed $\Sigma(\ww;\tt)$, where $d=\dim_\bC\modsp(\Lambda_\cP;\tt)$. This is achieved in Subsection \ref{sssec:step1}, without describing the quiver or cluster variables at this stage.\\

    \item Constructing the {\it quiver} for the seed $\Sigma(\ww;\tt)$, in Subsection \ref{sssec:step2}, without discussing cluster variables.\\

    \item Constructing the {\it cluster variables} for the seed $\Sigma(\ww;\tt)$, which are irreducible regular functions in $\bC[\modsp(\Lambda_\cP;\tt)]$. This is achieved in Subsection \ref{sssec:step3}.\\
\end{enumerate}
Once these three steps are established, we then prove Proposition~\ref{prop:merodromy-is-plucker}, which shows that the map $\Phi:\Pi_\cP^\circ\to \modsp(\Lambda_\cP;\tt)$ maps the seed $\Sigma_T(\bG)$ to the seed $\Sigma(\ww;\tt)$. This proves part of Theorem~\ref{thm: frame sheaf moduli = positroid}.

%%%%%%%%%%%%%%%%%%%%%%%%%%%%%%%%%%%%
%%%%%%%%%%%%%%%%%%%%%%%%%%%%%%%%%%%%

\subsubsection{Toric chart for $\Sigma(\ww;\tt)$}\label{sssec:step1}  Consider a positroid weave $\ww$ in the disk $\D^2$. Its intersection with the boundary $\partial\D^2$ cyclically spells a positive braid word for $\beta_\cP$.  The data of the boundary alone, namely $\beta_\cP$, determines the space $\modsp(\beta_\cP;\tt)$. Let $\mathcal{O}p(\partial\D^2)$ be an arbitrarily small but fixed  open neighborhood of the boundary $\partial\D^2$.\\

\noindent Points in the space $\modsp(\beta_\cP;\tt)$ can be seen as the data of  one flag assigned to each region in the complement $\mathcal{O}p(\partial\D^2)\setminus (\mathcal{O}p(\partial\D^2)\cap\ww)$ of the weave in that neighborhood of the boundary, such that two flags in adjacent regions separated by an $i$-colored weave line are $s_i$-transverse  and with the appropriate decorations.  Given the weave $\ww$  in interior of $\D^2$, we can consider the following extension  locus.

\begin{defn}[Toric chart]\label{def:toric_chart}
By definition, $T_\ww\sse\modsp(\beta_\cP;\tt)$ is set of points whose associated collection of flags in $\mathcal{O}p(\partial\D^2)\setminus (\mathcal{O}p(\partial\D^2)\cap\ww)$  extends to a collection of flags in the interior  complement $\D^2\setminus\ww$ while obeying the  following  transversality rule: two flags in adjacent regions of $\D^2\setminus\ww$ separated by an $i$-colored weave line of $\ww$  must be $s_i$-transverse.\hfill$\Box$
\end{defn}

\noindent Importantly, \cite[Section 5]{CZ}  implies the following two facts:
\begin{enumerate}
    \item Such an extension, when it exists, is unique.
    
    \item The locus $T_\ww\sse\modsp(\beta_\cP;\tt)$ is an open toric chart.
\end{enumerate}

\noindent  In particular,  there is an algebraic isomorphism $T_\ww\cong(\bC^*)^d$ where $d:=\mbox{dim}_\bC\modsp(\beta_\cP;\tt)$. This can also be seen by direct computation, cf.~\cite[Section 5]{CGSS20}. Both these facts above use that positroid weaves are Demazure, cf.~Theorem \ref{thm:Demazure weave} above. Furthermore, the restriction map, given by restricting the flag data in $\D^2\setminus\ww$ to the boundary $\mathcal{O}p(\partial\D^2)\setminus (\mathcal{O}p(\partial\D^2)\cap\ww)$, defines an open embedding $T_\ww\longrightarrow \modsp(\beta_\cP;\tt)$.\\

\noindent By definition, the image $T_\ww\sse \modsp(\beta_\cP;\mathfrak{t})$ of this open embedding in $\modsp(\beta_\cP;\mathfrak{t})$ is the toric chart we associate with the cluster seed $\Sigma(\ww;\tt)$. For each such toric locus we will now associate a quiver and cluster variables.

\begin{rmk}
From the viewpoint of symplectic geometry, $T_\ww$ is naturally identified with a certain relative cohomology group of $\La_\ww$  with $\bC^*$-coefficients, where $\La_\ww$  is the surface discussed in Section \ref{ssec:weave_recap}. In line with  Subsection \ref{subsec:invariance}, $T_\ww$ can be intrinsically named is a certain moduli space of sheaves in $\bR^3$ with singular support on a front for $\La_\ww$, cf.~\cite[Section 5.3]{CZ}. This implies  that $T_\ww$  and the open embedding $T_\ww\longrightarrow \modsp(\beta_\cP;\tt)$  are invariant under weave equivalences.\hfill$\Box$
\end{rmk}

\begin{rmk}
For the undecorated version there is an analogous result: by forgetting decorations, the extension locus as in Definition \ref{def:toric_chart} gives an open toric  chart in $\modsp(\beta_\cP)$.  In this case, $T_\ww$ is identified with $H^1(\La_\ww;\bC^*)$.  These facts and the necessary invariance properties also follow from \cite[Section 5]{CZ}.
\hfill$\Box$
\end{rmk}

%%%%%%%%%%%%%%%%%%%%%%%%%%%%%%%%%%%%
%%%%%%%%%%%%%%%%%%%%%%%%%%%%%%%%%%%%

\subsubsection{Quiver for $\Sigma(\ww;\tt)$}\label{sssec:step2} The quiver $Q(\ww;\tt)$  for the cluster seed associated to $\Sigma(\ww;\tt)$ is built as follows. Given a positroid weave $\ww$, consider the smooth surface $L:=\La_\ww$ introduced in Section \ref{ssec:weave_recap}. Following \cite{FockGoncharov_ensemble}, we construct the quiver associated with $\Sigma(\ww;\tt)$ by choosing  the following data:
\begin{itemize}
    \item[-] A lattice $N$ endowed with a skew-symmetric form $\{\cdot, \cdot\}$.

    \item[-] A basis $B=\{\gamma_i\}$ of the lattice $N$.
\end{itemize}
By definition, the index set of the basis gives the quiver vertices and the exchange matrix of the quiver is given by $\epsilon_{ij}:=\{\gamma_i,\gamma_j\}$. In the case of $Q(\ww;\tt)$, the lattice $N$ will be a sub-lattice $N_\ww$ of the relative homology group $H_1(L,\tt)$. The skew-symmetric form will be the restriction of the intersection form on $H_1(L,\tt)$ to $N$. In order to describe the  lattice and the basis, we introduce the following definition.
%Our chosen basis $B$ is introduced in the next definition and we verify that $B$ is a basis in Lemma \ref{lem:B-is-basis}.

\begin{defn}\label{def:cluster-cycles} Let $\bG$ be a plabic graph and $\ww=\ww(\bG)$ its positroid weave. By definition, the cluster cycles on $L=\La_\ww$ are the  following two collections of cycles:
\begin{itemize}
    \item[(i)] The $Y$-trees in Theorem \ref{thm:weave construction}.(1), which correspond to non-exceptional faces of $\bG$.

    \item[(ii)] The relative cycles in $H_1(L,\tt)$ depicted in Figure \ref{fig: exceptional base points}, one for each exceptional boundary face.
\end{itemize}
\noindent We denote the cluster cycle of a face $F$ by $\gamma_F$ and the set of all cluster cycles in $L$ by $B=B_\ww$.\hfill$\Box$\\
\end{defn}

\begin{figure}[H]
    \centering
    \begin{tikzpicture}
        \draw (-1,-0.5) -- (1,-0.5);
        \draw (-1,1) -- (1,1);
        \node at (-0.5,1) [] {$\bullet$};
        \node at (-0.5,1) [above] {$i^+$};
        \node at (0.5,1) [] {$\bullet$};
        \node at (0.5,1) [above] {$i^-$};
       % \node at (-1.25,0) [] {$\overrightarrow{I_{i-1}}$};
       % \node at (1.25,0) [] {$\overrightarrow{I_i}$};
    \end{tikzpicture}\hspace{2cm}
    \begin{tikzpicture}
        \draw (-3,0) -- (-1,1) -- (1,0.5) -- (-1.25,-0.75);
        \draw [dashed] (-3,-1) -- (-1,0) -- (0,-0.25);
        \draw (0,-0.25) -- (1,-0.5) -- (0,-1);
        \node at (-0.5,0.875) [] {$\bullet$};
        \node at (-0.5,0.875) [above] {$i^+$};
        \node at (0.5,0.625) [] {$\bullet$};
        \node at (0.5,0.625) [above] {$i^-$};
        \draw [purple, decoration={markings,mark=at position 0.5 with {\arrow{<}}},postaction={decorate}] (-0.5,0.875) to [bend right] (0.5,0.625); 
    \end{tikzpicture}
    \caption{(Left) Base points for an exceptional boundary face. (Right) The corresponding frozen cluster cycle:  the arc depicted in purple is a relative cycle in $H_1(L,\tt)$.}
    \label{fig: exceptional base points}
\end{figure}
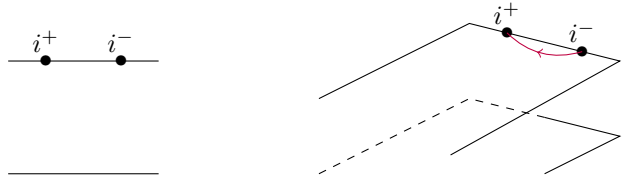

\noindent The endpoints of the $Y$-trees for a non-exceptional boundary face are depicted in Figure \ref{fig: base points}.

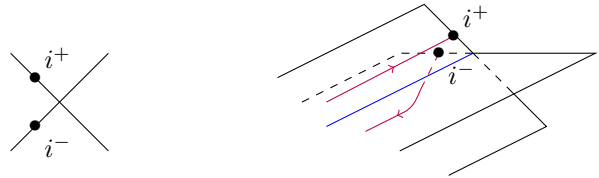
\begin{figure}[H]
    \centering
    \begin{tikzpicture}[baseline=0,scale=0.65]
    \draw (-1,-1) -- (1,1);
    \draw (-1,1) -- (1,-1);
    \node at (-0.5,-0.5) [] {$\bullet$};
    \node at (-0.5,-0.5) [below right] {$i^-$};
    \node at (-0.5,0.5) [above right] {$i^+$};
    \node at (-0.5,0.5) [] {$\bullet$};
    %\node at (-1.25,0) [] {$\overrightarrow{I_{i-1}}$};
    %    \node at (1.25,0) [] {$\overrightarrow{I_i}$};
    \end{tikzpicture} \hspace{2cm}
    \begin{tikzpicture}[baseline=0,scale=0.65]
    \draw [dashed] (-2.5,0) -- (-0.5,1) -- (1,1);
    \draw (1,1) -- (3.5,1) -- (-0.5,-1);
    \draw (-3,0.5) -- (0,2) -- (1,1);
    \draw [dashed] (1,1) -- (1.8,0.2);
    \draw (1.8,0.2)--(2.5,-0.5) -- (0.5,-1.5);
    \draw [blue] (1,1) -- (-2,-0.5);
    \draw [decoration={markings,mark=at position 0.5 with {\arrow{<}}},postaction={decorate},purple] (0.6,1.35) -- (-2,0);
    \draw [decoration={markings,mark=at position 0.5 with {\arrow{<}}},postaction={decorate},purple] (-1.2,-0.6) -- (-0.4,-0.2) to [out=30,in=-120] (0,0.4);
    \draw [dashed, purple] (0.3,1) -- (0,0.4);
    \node at (0.6, 1.35) [] {$\bullet$};
    \node at (0.6,1.35) [above right] {$i^+$};
    \node at (0.3,1) [] {$\bullet$};
    \node at (0.3,1) [below right] {$i^-$};
    \end{tikzpicture}
    \caption{(Left) Base points next to each crossing in the top row of $\beta_\cP$. (Right) Endpoints of the corresponding frozen cluster cycle,  depicted in purple, corresponding to a non-exceptional boundary face; these are not weave lines. The only weave line is colored blue.}
    \label{fig: base points}
\end{figure}

\begin{defn}[Quiver]\label{defn:quiver}
Let $\bG$ be a plabic graph and $\ww=\ww(\bG)$ its positroid weave. By definition, the lattice $N:=N_\ww$ is the $\mathbb{Z}$-span of the cluster cycles $B$ inside  the lattice $H_1(L,\tt)$. Cluster cycles in $B$ corresponding to non-boundary faces are said to be \emph{mutable}, and those corresponding to boundary faces are said to be \emph{frozen}. By definition, the resulting quiver $Q(\ww;\tt)$ is the quiver for the cluster seed $\Sigma(\ww;\tt)$.\hfill$\Box$
\end{defn}

\noindent Note that the cluster cycles corresponding to interior faces belong to the sub-lattice $H_1(L)\sse H_1(L,\tt)$. Those corresponding to boundary faces belong to $H_1(L,\tt)$ but are not absolute cycles, i.e.~they are not elements in $H_1(L)$. We still need to verify that the cluster cycles are linearly independent,  so that Definition \ref{defn:quiver}  gives the well-defined quiver. This is the content of the following result:

\begin{lem}\label{lem:B-is-basis} Let $\bG$ be a plabic graph and $\ww=\ww(\bG)$ its positroid weave. The mutable cluster cycles in $\ww(\bG)$ associated to the interior faces of $\bG$ form a basis for $H_1(L)$ and $B$ is a linearly independent subset of $H_1(L,\mathfrak{t})$. In particular, $B$ is a basis of $N$.
\end{lem}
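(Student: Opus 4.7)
The strategy is to use Theorem~\ref{thm:weave_and_conjugate}: $L=L_{\ww(\bG)}$ is Hamiltonian isotopic to the conjugate Lagrangian $L(\bG)$, which ribbon-retracts onto $\bG$, so $L\simeq\bG$ as topological spaces and $H_1(L)\cong H_1(\bG)$. The proof then splits into two steps, matching the two claims.

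For the first claim, that the mutable cluster cycles form a $\mathbb{Z}$-basis of $H_1(L)$, I would begin with an Euler characteristic count. For $\bG\subset\bD$ connected, the induced CW-decomposition of the disk gives $V-(E+n)+F=1$ where $n$ is the number of boundary vertices, so $F=b_1(\bG)+n$. Since we assume $f$ has no fixed points (as in the remark at the start of Subsection~\ref{ssec:iso_flagspositroid}), $\bG$ has no lollipops and the $n$ entries of the target Grassmann necklace label distinct boundary faces; consequently there are exactly $n$ boundary faces, leaving $b_1(\bG)$ interior faces. I would then identify each $\gamma_F$ with the geometric face boundary $[\partial F]\in H_1(\bG)$: by Theorem~\ref{thm:weave construction}(1) the $Y$-tree $\gamma_F$ is supported inside the face $F$, and under the ribbon retraction $L(\bG)\to\bG$ it collapses to a loop encircling $F$. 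Since $\{[\partial F]:F\text{ interior}\}$ is the standard $\mathbb{Z}$-basis of $H_1(\bG)$ for a planar graph embedded in a disk, this completes the step.

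For the second claim, that the full set $B$ is linearly independent in $H_1(L,\mathfrak{t})$, I would use the long exact sequence of the pair $(L,\mathfrak{t})$; since $\mathfrak{t}$ is $0$-dimensional it begins with
\[
0\to H_1(L)\to H_1(L,\mathfrak{t}) \xrightarrow{\partial} H_0(\mathfrak{t}).
\]
Suppose $\sum_{F\text{ int.}} a_F\gamma_F+\sum_{F\text{ bdry.}} b_F\gamma_F=0$. Apply $\partial$: by Definition~\ref{def:cluster-cycles} and Figures~\ref{fig: exceptional base points} and \ref{fig: base points}, each frozen cluster cycle $\gamma_F$ is a relative $1$-chain with $\partial\gamma_F=[i_F^-]-[i_F^+]\in H_0(\mathfrak{t})$, where $i_F^\pm$ is the pair of base points placed in the chunk $\beta_{i_F}$ of $\beta_\cP$ associated with the boundary region $F$. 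Different boundary faces correspond to different chunks, so the boundaries $\{\partial\gamma_F\}$ have pairwise disjoint supports and are linearly independent in $H_0(\mathfrak{t})$; hence all $b_F=0$. The surviving relation $\sum a_F\gamma_F=0$ then lies in $H_1(L)$ and is killed by the first claim. Finally, $B$ is a basis of $N$ by the very definition of $N$ as the $\mathbb{Z}$-span of $B$.

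The main obstacle is the identification in the first step of $\gamma_F$ with $[\partial F]$ under $L\simeq\bG$. Although both objects ``live inside'' the face $F$, verifying they represent the same class in $H_1(L)$ requires tracing the hybrid-surface retraction from the proof of Theorem~\ref{thm:weave_and_conjugate}. A shorter alternative is to invoke Theorem~\ref{thm:weave construction}(1): the pairwise intersection form on $\{\gamma_F\}$ realises the quiver $Q_\bG$, which coincides with the intersection form on $\{[\partial F]\}$ in $H_1(\bG)$; combined with the dimension match, this pins down $\gamma_F=\pm[\partial F]$ in $H_1(L)$, which is all that is needed.
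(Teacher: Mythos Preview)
Your proposal follows essentially the same two-step approach as the paper: Theorem~\ref{thm:weave_and_conjugate} to get $H_1(L)\cong H_1(\bG)$ and identify the mutable $\gamma_F$ with the face boundaries $[\partial F]$, then the long exact sequence of the pair $(L,\mathfrak{t})$ to handle the frozen cycles. The paper is terser (it simply asserts that the Hamiltonian isotopy of Theorem~\ref{thm:weave_and_conjugate} carries $[\partial F]$ to $\gamma_F$, and that each point of $\mathfrak{t}$ touches only one frozen cycle), while you supply the Euler-characteristic count and spell out the boundary-map argument in $H_0(\mathfrak{t})$ more explicitly; these additions are correct and do no harm.

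One genuine gap: your ``shorter alternative'' at the end does not work as stated. Equality of intersection matrices, even together with the dimension match, does \emph{not} pin down $\gamma_F=\pm[\partial F]$. The intersection form on $H_1(L)$ for a surface with nonempty boundary is typically degenerate, and two $\mathbb{Z}$-bases with the same Gram matrix under a degenerate form need not differ by signs (e.g.\ in $\mathbb{Z}^2$ with the zero form, $\{e_1,e_2\}$ and $\{e_1,e_1+e_2\}$). So if you want to avoid tracing the hybrid-surface retraction, you would need a different argument; but in fact your first approach---following the isotopy of Theorem~\ref{thm:weave_and_conjugate}---is exactly what the paper does, and it is sufficient. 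Drop the alternative and you are done.
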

\begin{proof} For the first claim, Theorem \ref{thm:weave_and_conjugate} shows that the exact Lagrangian filling $L$ is Hamiltonian isotopic (and thus diffeomorphic) to the conjugate surface of $\bG$, which deformation retracts to $\bG$. Therefore we have $H_1(L)\cong H_1(\bG)$, the latter of which admits a basis given by the boundary of each internal face of $\bG$. Under the Hamiltonian isotopy in the proof of Theorem \ref{thm:weave_and_conjugate}, these face boundaries are homotoped to the corresponding mutable $Y$-trees. Thus, the mutable cluster cycles form a basis of $H_1(L)$.\\

\noindent For the second claim, since $H_1(\mathfrak{t})=0$, the long exact sequence $$\cdots \rightarrow H_1(\mathfrak{t})\rightarrow H_1(L)\rightarrow H_1(L,\mathfrak{t})\rightarrow \cdots$$ implies that $H_1(L)$ injects into $H_1(L,\mathfrak{t})$. This shows that all mutable cluster cycles are linearly independent. Since there is only one frozen cluster cycle attached to any point in $\mathfrak{t}$, it follows that even after including the frozen cluster cycles, the set $B$ consists of a linearly independent collection of cycles.
\end{proof}

%%%%%%%%%%%%%%%%%%%%%%%%%%%%%%%%%%%%
%%%%%%%%%%%%%%%%%%%%%%%%%%%%%%%%%%%%

\subsubsection{Cluster variables  for $\Sigma(\ww;\tt)$}\label{sssec:step3} In Subsection \ref{sssec:step2},  we described the quiver $Q(\ww;\tt)$ for the cluster seed $\Sigma(\ww;\tt)$ in terms of a basis of a lattice $(N,\{\cdot, \cdot\})$ endowed with a skew-symmetric form. In line with \cite{FockGoncharov_ensemble, GHKK}, we now describe the cluster variables as functions associated with elements in the dual basis $B^\vee$ of the dual lattice $M:=N^*$. For the seed $\Sigma(\ww;\tt)$, we follow  the same strategy as in \cite[Section 3]{CW} and the dual basis is described as follows.\\

Let $\bG$ be a plabic graph, $\ww=\ww(\bG)$ its positroid weave, $L=L_\ww$ the associated surface, and $\Lambda:=\partial L$ the positroid Legendrian link. The lattice $N\sse H_1(L,\tt)$ is the $\mathbb{Z}$-span of the cluster cycles $B$. By Poincar\'{e} duality,  we have an isomorphism
$$H_1(L,\mathfrak{t})^*\cong H_1(L\setminus \mathfrak{t},\Lambda\setminus \mathfrak{t}).$$
This allows us to describe the dual lattice $M=N^*$ as  a certain quotient of $H_1(L\setminus \mathfrak{t},\Lambda\setminus \mathfrak{t})$, and Poincar\'{e} duality descends to a duality between $N$ and $M$.

\begin{defn}[Dual lattice]\label{def:dual lattices} By definition, the lattice $M$ is the quotient of $H_1(L\setminus \mathfrak{t}, \Lambda\setminus \mathfrak{t})$ by $N^\perp$. We denote by $B^\vee$ the basis of $M$ Poincar\'e dual to the basis $B$ of $N$. Elements of $B^\vee$ are referred to as dual cluster cycles. The dual of $\gamma_F\in B$ is denoted by $\eta_F$; we use the same notation for representatives of $\eta_F$.\hfill$\Box$
\end{defn}

By \cite[Section 4]{CW}, the expectation is that cluster variables in $\Sigma(\ww;\tt)$ should be given by microlocal merodromies along dual cluster cycles. In order to define microlocal merodromies, we observe that by equipping $L$ with a compatible collection of sign curves, we obtain a $\GL_1(\bC)$-local system $\mathcal{L}(p)$ on $L$ for each point in the toric chart $p\in T_\ww$, cf.~\cite[Section 4.3]{CW}. (The construction is analogous to Subsection \ref{subsec:local systems}  but applied to surface fronts instead of braid diagrams, see also \cite[Section 7.2.1]{CZ}.) Using these local systems, we define microlocal merodromies as follows.

\begin{defn}[Microlocal merodromies]\label{defn: merodromy} Let $\eta$ be an oriented path from a point of $\Lambda\setminus \mathfrak{t}$ to another point of $\Lambda\setminus \mathfrak{t}$. By definition, the microlocal merodromy $A_\eta:T_\ww\rightarrow \bC^\times$  along $\eta$ is the function
\[
A_\eta(p):=\frac{\psi_\eta(v)}{w},
\]
where $v$ is the trivialization at the start of $\eta$, $w$ is the trivialization at the end of $\eta$, and $\psi_\eta:=\psi_\eta(p)$ denotes the parallel transport in the local system $\mathcal{L}(p)$ along the path $\eta$. For a relative $1$-cycle $\eta\in H_1(L\setminus \mathfrak{t}, \Lambda\setminus \mathfrak{t})$, we represent $\eta$ as a sum of oriented paths and define its microlocal merodromy $A_\eta$ to be the product of all the microlocal merodromies along these oriented paths.\hfill$\Box$
\end{defn}

\noindent Note that $A_\eta=A_{\eta'}$ as functions on $T_\ww$ if and only if $[\eta]=[\eta']$ in the quotient lattice $M$, making $A_\eta$ well-defined on homology cycles.

\begin{defn}[Cluster variables]\label{def:cluster-var-for-weave}
Let $\bG$ be a plabic graph and $\ww(\bG)$ its positroid weave. By definition, the cluster variables of the seed $\Sigma(\ww(\bG);\tt)$ are the microlocal merodromies
    \[A(\ww(\bG);\tt):=\{A_{\eta_F}: F \text{ a face of }\bG\}\]
along the dual cluster cycles $\eta_F\in B^\vee$ associated to the faces $F$ of $\bG$.\hfill$\Box$
\end{defn}

\noindent At this stage, we have constructed a cluster seed in $\modsp(\Lambda_\cP;\mathfrak{t})$ for each positroid weave $\ww(\bG)$:

\begin{defn}[Initial seed]
Let $\bG$ be a plabic graph and $\ww(\bG)$ its positroid weave. By definition, the initial seed $\Sigma(\ww(\bG);\tt)$ in $\bC[\modsp(\Lambda_\cP;\mathfrak{t})]$ is given  by the quiver $Q(\ww(\bG);\tt)$  with cluster variables $A(\ww(\bG);\tt)$.\hfill$\Box$
\end{defn}

\noindent The underlying toric chart for this initial seed is $T_\ww$, i.e.~ the common non-vanishing locus in $\modsp(\Lambda_\cP;\mathfrak{t})$ of all the functions in $A(\ww(\bG);\tt)$ is $T_\ww$. This follows  similarly to the results in \cite[Section 4]{CW}.

\begin{rmk} By Theorem \ref{thm:Demazure weave}, a positroid weave $\ww(\bG)$ is equivalent to a complete Demazure weave. Therefore, one may employ the techniques we developed in \cite{CW,CGGLSS} to show that $\mathcal{O}(\modsp(\Lambda_\cP;\mathfrak{t}))$ is a cluster algebra with initial seed $\Sigma(\ww(\bG);\mathfrak{t})$. The construction above and Proposition~\ref{thm: frame sheaf moduli = positroid} give an alternative explicit proof via the isomorphism $\Phi$, better suited for our proof of Theorem \ref{thm:mainB}. %This also justifies calling $\Sigma(\ww;\tt)$ a seed for $\modsp(\Lambda_\cP;\tt)$.
\hfill$\Box$
\end{rmk}

%%%%%%%%%%%%%%%%%%%%%%%%%%%%%%%%%%%%
%%%%%%%%%%%%%%%%%%%%%%%%%%%%%%%%%%%%

\subsection{Dual cluster cycles and their merodromies}\label{ssec:dualcycles} The goal of this subsection is to prove Proposition \ref{prop:merodromy-is-plucker}, showing that $\Phi$ pulls-back microlocal merodromies in $A(\ww(\bG);\tt)$ to the appropriate Pl\"ucker functions.

%%%%%%%%%%%%%%%%%%%%%%%%%%%%%%%%%%%%
%%%%%%%%%%%%%%%%%%%%%%%%%%%%%%%%%%%%

\subsubsection{Representatives of dual cluster cycles}\label{sssec:representatives_dualcycles}

Let us describe explicit representatives of dual cluster cycles in $B^\vee$. We use these representatives in Proposition~\ref{prop:merodromy-is-plucker} to show that the map $\Phi^*$ sends cluster variables of $\Sigma(\ww(\bG);\tt)$ to cluster variables of $\Sigma_T(\bG)$.\\

\begin{enumerate}
    \item \textbf{Representative for $\eta_F$ for $F$ boundary face:} If $F$ is a boundary face, choose one of the base points at which $\gamma_F$ ends, and draw a little arc near the boundary of $L$ that jumps across the base point. We orient this arc so that it intersects $\gamma_F$ with index $+1$. This oriented arc defines a relative cycle $\eta_F$ and $\eta_F$ is dual to $\gamma_F$.\\

\item \textbf{Representative for $\eta_F$ for $F$ internal face:} If $\gamma_F\in B$ is a mutable cluster cycle, i.e., when $F$ is a non-boundary face, the construction is as follows.\footnote{This construction of $\eta_F$ was proposed to us by L. Shen in a private conversation.}\\

To start, fix a generic point $p$ in the interior of $F$ so that it is not on any weave line of the corresponding positroid weave $\ww$. Since the surface $L$ is an $m:1$ branched cover $\pi:L\rightarrow \bD$ of the disk $\bD$ and $p$ is not in the ramification locus, there are $m$ distinct lifts $p_1, p_2,\dots, p_m$ of $p$ to $L$. By the proof of Theorem \ref{thm:weave_and_conjugate}, there is an isotopy $H_t$ of $(T^*\mathbb{D}^2,\la_{st})$ from the conjugate Lagrangian surface associated to $\bG$, at time $t=0$, to the surface $L$, at time $t=1$. For each point $p_i\in L$, there is an initial time $0<t_i<1$ when $p_i$ appears in the image of $H_{t_i}$ for the first time. At this time $t=t_i$, $p_i$ is necessarily a point at the boundary of the exact Lagrangian surface $L$. Let $\eta_{p_i}$ be the trajectory of this boundary point under the isotopy $H_t$ from $t=t_i$ to $t=1$, understood as a subset of $L$.\\

\begin{figure}[H]
    \centering
    \begin{tikzpicture}[baseline=30]
    \foreach \i in {0,1,2}
    {
    \draw (0,\i) -- (2,\i) -- (3,\i+0.5) -- (1,\i+0.5) -- cycle;
    \node at (1.5,\i+0.25) [] {$\bullet$};
    }
    \draw [decoration={markings,mark=at position 0.5 with {\arrow{>}}},postaction={decorate},purple] (1.5,0.25) -- (0,0);
    \draw [decoration={markings,mark=at position 0.5 with {\arrow{>}}},postaction={decorate},purple] (1.5,1.25) -- (2,1);
    \draw [decoration={markings,mark=at position 0.5 with {\arrow{>}}},postaction={decorate},purple] (1.5,2.25) -- (3,2.5);
    \end{tikzpicture} \hspace{1cm} $\rightsquigarrow$ \hspace{1cm}
    \begin{tikzpicture}[baseline=30]
    \foreach \i in {0,1,2}
    {
    \draw (0,\i) -- (2,\i) -- (3,\i+0.5) -- (1,\i+0.5) -- cycle;
    \node at (1.5,\i+0.25) [] {$\bullet$};
    }
    \draw [decoration={markings,mark=at position 0.5 with {\arrow{<}}},postaction={decorate},purple] (0,1) -- (1.5,1.25);
    \draw [decoration={markings,mark=at position 0.5 with {\arrow{<}}},postaction={decorate},purple] (0,2) -- (1.5,2.25);
    \draw [decoration={markings,mark=at position 0.5 with {\arrow{<}}},postaction={decorate},purple] (1.5,1.25) -- (2,1);
    \draw [decoration={markings,mark=at position 0.5 with {\arrow{<}}},postaction={decorate},purple] (1.5,2.25) -- (3,2.5);
    \end{tikzpicture}
    \caption{Construction of the dual relative cycle $\eta_F$: in this example, we reverse the top two gradient flows and take the complementary lifts of the bottom gradient flow.}
    \label{fig: dual relative cycle}
\end{figure}
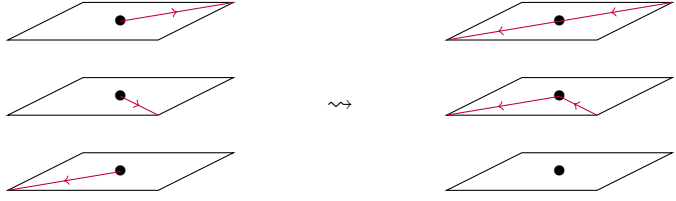

By definition, for each $i\in[m]$, $\eta_{p_i}$ is said to be the gradient flow of $p_i$.
%\begin{defn}\label{defn: gradient flow}  For each $i\in[m]$, $\eta_{p_i}$ is said to be the \emph{gradient flow} of $p_i$.\hfill$\Box$
%\end{defn}
This nomenclature is due to the fact that in the front projection the height function is decreasing along each $\eta_{p_i}$. See Figure \ref{fig: dual relative cycle} (left) for a depiction of some such $\eta_{p_i}$. In order to construct a relative 1-cycle from these gradiant flows we proceed as follows. First, reverse all but one of the $m$ gradient flows $\eta_{p_1}, \eta_{p_2},\dots, \eta_{p_m}$; for the remaining one, say $\eta_{p_j}$, consider the lifts $\pi^{-1}(\pi(\eta_{p_j}))\setminus \eta_{p_j}$. Second, concatenate these lifts with the $m-1$ reversed gradient flows to form a relative $1$-cycle $\eta_F$ on $L$. This is illustrated in Figure \ref{fig: dual relative cycle} (right). Note that such a relative $1$-cycle can be constructed so that it misses the set of base points $\mathfrak{t}$, thus defining a relative $1$-cycle in $H_1(L\setminus \mathfrak{t}, \Lambda\setminus \mathfrak{t})$.\\
\end{enumerate}

%\begin{center}
%\RC{Where is this proposition used? (Its proof is quoted in the next proposition, but that is the only reference I found using this.)}
%\end{center}

\noindent These representatives are used to compute microlocal merodromies in the next subsection. The following  result shows that the relative 1-cycle $\eta_F$ constructed in (2) above is a representative of $\eta_F \in B^\vee$ dual to $\gamma_F$. 

\begin{prop}\label{prop: construction of dual cluster cycles} In the notation above, let $p_j$ be a point where gradient flows are not reversed nor where the generic point $p$ is inside the non-boundary face $F$. Then the equivalence class $[\eta_F]\in M$ is the basis element dual to the mutable cluster cycle $\gamma_F\in N$.
\end{prop}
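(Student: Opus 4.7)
The strategy is to verify directly that $\langle \gamma_{F'}, \eta_F\rangle = \delta_{F,F'}$ for every face $F'$ of $\bG$, where the pairing is the Poincar\'e--Lefschetz intersection pairing between $H_1(L,\mathfrak{t})$ and $H_1(L\setminus\mathfrak{t},\Lambda\setminus\mathfrak{t})$. Since $B$ is a basis of $N$ by Lemma~\ref{lem:B-is-basis}, this identification determines $\eta_F$ as the dual basis element, and the statement will follow. The plan is to reduce the computation to a transparent picture on the conjugate surface (via Theorem~\ref{thm:weave_and_conjugate}) and then analyze intersections sheet by sheet using the branched covering $\pi: L\to \bD$.

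First, using the Hamiltonian isotopy $H_t$ of Theorem~\ref{thm:weave_and_conjugate}, identify $L$ with the conjugate surface of $\bG$. Under this isotopy, each mutable cluster cycle $\gamma_{F'}$ corresponds to the boundary loop of the face $F'$ lifted to the conjugate surface (as in Corollary~\ref{cor:Lusztig cycles} and the proof of Lemma~\ref{lem:B-is-basis}). Moreover, the gradient flow trajectories $\eta_{p_i}$ are precisely the trajectories traced out by the boundary of $H_t(L)$ as $t$ increases from $t_i$ to $1$, so they acquire a concrete description as paths from a lift $p_i$ to a boundary point $q_i\in\Lambda\setminus\mathfrak{t}$. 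With these identifications, intersections between $\gamma_{F'}$ and the various paths comprising $\eta_F$ become local: they can occur only where a gradient path meets an edge of $\gamma_{F'}$.

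For the case $F'=F$, observe that $p\in F$ and so each $p_i$ lies above $F$. Exactly one of the $m$ sheets over $F$ is the sheet on which the $Y$-tree $\gamma_F$ is drawn; call it the distinguished sheet. The reversed flows $-\eta_{p_i}$ for $i\neq j$ each exit their starting face and, on the distinguished sheet, must cross $\gamma_F$ algebraically once. The complementary lifts $\pi^{-1}(\pi(\eta_{p_j}))\setminus \eta_{p_j}$ starting at the $p_i$ with $i\ne j$ cancel all of these crossings except one, because $\eta_{p_j}$ itself (the unreversed flow) contributes on the distinguished sheet and cannot be cancelled. A careful sign accounting shows the net intersection is $+1$. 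For $F'\neq F$, the key identity is that the full sum $\sum_{i=1}^m \eta_{p_i}$ of all $m$ lifts represents $\pi^*(\pi\eta_{p_j})$, a class pulled back from $\bD$, so its intersection with any homology class supported in $L$ lying over a bounded face of $\bG$ vanishes. Writing $\eta_F=\sum_{i\ne j}(\tilde\eta_{p_i}-\eta_{p_i})$ and exploiting this pullback identity together with the fact that the $p_i$ are not lifts of points in $F'$, all intersections with $\gamma_{F'}$ cancel in pairs across the sheets, yielding $\langle \gamma_{F'},\eta_F\rangle=0$. For the frozen cycles $\gamma_{F'}$ (boundary faces), whose representatives are short arcs through a pair of base points as in Figure~\ref{fig: exceptional base points} and Figure~\ref{fig: base points}, the intersection with $\eta_F$ is computed directly and vanishes since $\eta_F$ is built from gradient flows avoiding $\mathfrak{t}$.

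The main obstacle is the sign bookkeeping in the $F'=F$ step. One has to check that the chosen orientations of the reversed flows and the complementary lifts at $p_j$ combine coherently so that the total signed intersection with $\gamma_F$, after the cancellations forced by the covering structure, is exactly $+1$ rather than $-1$ or a larger integer. This can be reduced to a local model near the $Y$-tree vertex inside $F$, where the intersection rule from Definition~\ref{defn:local intersection pairing} and the pullback identity above suffice to pin down the sign.
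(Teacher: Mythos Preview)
Your overall strategy---reduce to intersections with the individual gradient flows via the vanishing of intersections with full preimages---is the same as the paper's, but the execution contains two genuine errors that break the argument.

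First, your sentence ``the full sum $\sum_{i=1}^m \eta_{p_i}$ of all $m$ lifts represents $\pi^*(\pi\eta_{p_j})$'' is false. The $m$ gradient flows $\eta_{p_1},\ldots,\eta_{p_m}$ project to $m$ \emph{different} paths in $\bD$ (they run from $p$ out to different boundary points $q_i$), so their sum is not the preimage of any single chain. What \emph{is} a full preimage is the collection of complementary lifts $\pi^{-1}(\pi(\eta_{p_j}))$, and that is the object whose intersection with every $\gamma_{F'}$ vanishes. The correct reduction is
\[
\eta_F \;=\; -\sum_{i\neq j}\eta_{p_i} \;+\; \bigl(\pi^{-1}(\pi(\eta_{p_j}))-\eta_{p_j}\bigr)
\;=\; -\sum_{i=1}^m \eta_{p_i} \;+\; \pi^{-1}(\pi(\eta_{p_j})),
\]
so that $\langle \eta_F,\gamma_{F'}\rangle = \langle -\sum_i \eta_{p_i},\gamma_{F'}\rangle$. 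Your formula ``$\eta_F=\sum_{i\ne j}(\tilde\eta_{p_i}-\eta_{p_i})$'' and the subsequent cancellation claim rely on the mistaken identification above.

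Second, and more seriously, your case $F'=F$ asserts that ``the reversed flows $-\eta_{p_i}$ for $i\neq j$ each \ldots on the distinguished sheet, must cross $\gamma_F$ algebraically once.'' This is not correct and misses the mechanism that actually makes the count work. All cluster cycles $\gamma_{F'}$ live on the \emph{top} weave layer (Theorem~\ref{thm:weave construction}), and the defining feature of a gradient flow is that the front height \emph{decreases} along it. Hence only the gradient flow $\eta_{p_1}$ that starts on the top sheet can ever cross a top-color weave line, and it does so at most once; the flows $\eta_{p_i}$ starting on lower sheets never meet any $\gamma_{F'}$. The remaining point---that $\eta_{p_1}$'s unique crossing occurs with $\gamma_F$ rather than some other $\gamma_{F'}$---follows from the Hamiltonian isotopy of Theorem~\ref{thm:weave_and_conjugate}: the top lift of $p$ is swept up by a zig-zag adjacent to $F$, which first hits the top weave layer inside $F$ (cf.\ Figure~\ref{fig:ConjugateSurface2}). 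This monotonicity argument replaces your ``sign bookkeeping'' entirely; there is no delicate cancellation to track, because only a single flow contributes.
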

\begin{proof} It suffices to prove that $\inprod{\eta_F}{\gamma_F}=1$ and $\inprod{\eta_F}{\gamma_{F'}}=0$ for any other basis element $\gamma_{F'}$ in the cluster cycle basis $B$. First, for any generic $1$-chain $\xi$ on $\bD$, its lift $\pi^{-1}(\xi)$ to $L$ has zero intersection with any element in $B$. Indeed, if $F$ is an exceptional boundary face, then we can shrink $\gamma_F$ to an arc that misses $\pi^{-1}(\xi)$; if $F$ is not an exceptional boundary face, then generically all intersections between $\pi^{-1}(\xi)$ with $\gamma_F$ must occur at pre-images of weave lines, and such intersections must come in canceling pairs. See Figure \ref{fig: canceling pairs}. Therefore, the intersection number between $\eta_F$ and any cluster cycle $\gamma_{F'}$ is the same as the intersection number between $-\sum_{i=1}^m \eta_{p_i}$ and $\gamma_{F'}$.

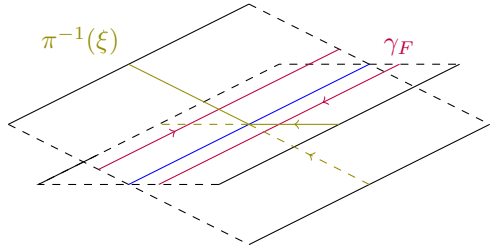
\begin{figure}[H]
    \centering
    \begin{tikzpicture}[baseline=0,scale=0.8]
    \draw [dashed] (3,0) -- (0,0) -- (4,2) -- (7,2);
    \draw (7,2) -- (3,0);
    \draw [dashed] (-0.5,1) -- (3.5,-1);
    \draw [dashed] (3.5,3) -- (7.5,1);
    \draw (-0.5,1) -- (3.5,3);
    \draw (3.5,-1) -- (7.5,1);
    \draw (0,0) -- (1,0.5);
    \draw [blue] (1.5,0) -- (5.5,2);
    \draw [purple, decoration={markings,mark=at position 0.7 with {\arrow{<}}},postaction={decorate}] (2,0) -- (6,2)  node [above] {$\gamma_F$};
    \draw [purple, decoration={markings,mark=at position 0.7 with {\arrow{<}}},postaction={decorate}] (5,2.25) -- (1,0.25);
    \draw [olive, dashed, decoration={markings,mark=at position 0.5 with {\arrow{>}}},postaction={decorate}] (5.5,0) -- (3.5,1);
    \draw [olive] (3.5,1) -- (1.5,2) node [above left] {$\pi^{-1}(\xi)$};
    \draw [olive, decoration={markings,mark=at position 0.5 with {\arrow{>}}},postaction={decorate}] (5,1) -- (3.5,1);
    \draw [olive,dashed] (3.5,1) -- (2,1);
    \end{tikzpicture}
    \caption{Intersections between $\pi^{-1}(\xi)$, in olive, and $\gamma_F$, in purple, always come in canceling pairs. The only weave line is colored in blue.}
    \label{fig: canceling pairs}
\end{figure}

\noindent Second, Theorem \ref{thm:weave construction}  implies that all cluster cycles are located on the top weave layer, and the height function decreases along every gradient flow $\eta_{p_i}$. Thus, the only gradient flow that can possibly cross a top layer weave line (which is the singular locus between the top two sheets in the front projection) is the one that starts at the top sheet, and this happens at most once. Denote this gradient flow by $\eta_{p_1}$.

It remains to argue that this crossing always happens with the cluster cycle $\gamma_F$. For this, we note that in the proof of Theorem \ref{thm:weave_and_conjugate}, the top lift of any generic point in the interior of the non-boundary face $F$ is swept up by one of the zig-zags adjacent to $F$, and this zig-zag then continues and eventually hits the top weave layer inside $F$. See Figure \ref{fig:ConjugateSurface2}. This shows that $-\eta_{p_1}$ only intersects $\gamma_F$ and does so exactly once.
\end{proof}

\begin{rmk} After fixing representatives $\eta_F$'s for the dual cluster cycles, one can follow \cite[Section 4.9]{CW} and perform cluster $\mathcal{A}$-mutations topologically. This is particularly useful, for example, when mutating at a non-square face in a reduced plabic graph (Figure \ref{fig:ConjugateSurface10}): the recipe in ibid.~ gives a new relative cycle whose microlocal merodromy is the mutated cluster $\mathcal{A}$-variable.\hfill$\Box$
\end{rmk}

%%%%%%%%%%%%%%%%%%%%%%%%%%%%%%%%%%%%
%%%%%%%%%%%%%%%%%%%%%%%%%%%%%%%%%%%%

\subsubsection{Computing microlocal merodromies} Let us use the representatives $\eta_F$ for the dual cluster cycles constructed in Subsection \ref{sssec:representatives_dualcycles} above to compute the pull-backs of the microlocal merodromies $A_{\eta_F}$ under the map $\Phi: \Pio_\cP \longrightarrow \modsp(\Lambda_\cP;\tt)$ constructed in Subection \ref{ssec:iso_flagspositroid}. The result reads as follows:

\begin{prop}\label{prop:merodromy-is-plucker} Let $\cP$  be a positroid, $\bG$ the associated plabic graph, and $\ww$ the corresponding positroid weave.  Then the map $\Phi: \Pio_\cP \longrightarrow \modsp(\Lambda_\cP;\tt)$ satisfies $\Phi^*(A_{\eta_F})=\Delta_{\overrightarrow{I_F}}$.
\end{prop}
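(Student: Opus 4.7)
The plan is to split into two cases: boundary faces and interior faces. For boundary faces, the result reduces to Lemma~\ref{lem: boundary frozen A coord}.(ii). Indeed, whether $F$ is exceptional or not, the frozen cluster cycle $\gamma_F$ has its endpoints at a pair $i^\pm$ of base points, and its dual $\eta_F$ may be represented by a short arc on $L\setminus\mathfrak{t}$ crossing one base point with intersection number $+1$ against $\gamma_F$. By the definition of microlocal merodromy, $A_{\eta_F}$ then equals $A_{i^\pm}^{\pm 1}$, and Lemma~\ref{lem: boundary frozen A coord}.(ii) gives $\Phi^*(A_{i^\pm})=D_i=\pm\Delta_{\overrightarrow{I_i}}=\pm\Delta_{\overrightarrow{I_F}}$, since the target label of a boundary face is by construction the corresponding necklace entry. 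The only work here is to check that the orientations of $\gamma_F$ and $\eta_F$ consistently yield $+1$ exponents, which is a purely local bookkeeping.

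The main content is the case of an interior face $F$. Fix a generic point $p$ in the interior of $F$, let $p_1,\dots,p_m$ be its lifts to the sheets of the branched cover $\pi\colon L\to\mathbb{D}$, and recall from Subsection~\ref{sssec:representatives_dualcycles} that $\eta_F$ is assembled from the gradient flows $\eta_{p_i}$ and the complementary lifts of $\pi(\eta_{p_j})$ for a chosen index $j$. The first step will be to identify the endpoints on $\Lambda_\cP=\partial L$. Using the Hamiltonian isotopy of Theorem~\ref{thm:weave_and_conjugate} between the conjugate surface of $\bG$ and $L$, each gradient flow $\eta_{p_i}$ can be traced back through the isotopy to the moment the corresponding boundary point of the moving Lagrangian is created; this boundary point lies on one of the zig-zag strands of $\bG$ adjacent to $F$ with $F$ on its left. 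Since the endpoints of those zig-zags are, by definition, the elements of $\overrightarrow{I_F}$, the $m$ gradient flows terminate at $m$ arcs of $\beta_\cP$ which, under the labeling from Figure~\ref{fig:trivialization} that defines $\Phi$, are precisely the arcs carrying the column vectors $\{v_a:a\in\overrightarrow{I_F}\}$.

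The second step will be to compute the microlocal merodromy. The crucial feature is that by the proof of Proposition~\ref{prop: construction of dual cluster cycles}, aside from the intersection of the top-sheet gradient flow with $\gamma_F$, the gradient flows meet only weave lines of lower colors, and intersections between the lifts $\pi^{-1}(\pi(\eta_{p_j}))$ and every cluster cycle cancel in pairs (Figure~\ref{fig: canceling pairs}). Consequently, parallel transport along $\eta_F$ in the local system $\mathcal{L}(\Phi(V))$ of Construction~\ref{lem:locsys} simply carries each boundary trivialization $v_{a_i}$ into the corresponding 1-dimensional quotient of the flag assigned to the region containing $p$. That flag, by repeatedly applying the wire-by-wire construction from the proof of Lemma~\ref{lem: boundary frozen A coord}.(i) and propagating it from the boundary into the interior along the Demazure weave, is precisely $\cF[\overrightarrow{I_F}]$. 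Assembling the ratios of trivializations as in the microlocal merodromy formula, one obtains (up to the overall normalization imposed by the bottom gradient flow) the wedge $v_{a_1}\wedge\cdots\wedge v_{a_m}=\pm\Delta_{\overrightarrow{I_F}}(V)$, mirroring the calculation at the end of Lemma~\ref{lem: boundary frozen A coord}.(ii).

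The hard part will be the consistency of signs and normalizations in the last step. Tracking the sign-curve conventions of \cite[Section~4.3]{CW} through each hexavalent and tetravalent weave vertex met by the gradient flows, and verifying that the complementary lifts of $\pi(\eta_{p_j})$ contribute trivially to the merodromy (not just to the homological pairing), is where most of the actual computation lives. This is a local check at each type of weave vertex; once the local rules are established, one can proceed face by face, and the cancellations visible in Figure~\ref{fig: canceling pairs} convert into multiplicative cancellations of parallel-transport contributions, leaving $\Delta_{\overrightarrow{I_F}}$ as the final answer.
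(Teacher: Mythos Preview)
Your proposal is correct and follows essentially the same approach as the paper: split into boundary and interior faces, reduce the boundary case to Lemma~\ref{lem: boundary frozen A coord}.(ii), and for interior faces use the gradient-flow representative of $\eta_F$ from Subsection~\ref{sssec:representatives_dualcycles} to identify the transported trivializations with the column vectors $\{v_a:a\in\overrightarrow{I_F}\}$, then compute the merodromy as their wedge. The paper's proof is slightly more streamlined in that it does not pass through your intermediate claim that the flag over $p$ is exactly $\cF[\overrightarrow{I_F}]$---it argues directly that each transported $w_j$ lifts to the column vector $v_j$ indexed by the target of the zig-zag sweeping up $p_j$ (citing Proposition~\ref{prop:face label after a single T-shift}), and that the merodromy is the ratio of their wedge against the top determinant form; your flag claim is consistent with this but is an extra (and not strictly needed) layer. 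On signs, the paper does exactly what you anticipate: it proves the identity up to sign and then appeals to a canonical choice of sign curves (Remark~\ref{rmk:sign_curves}) to pin down $+1$, rather than carrying out the vertex-by-vertex local checks you describe.
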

\begin{proof} 
We prove the equality up to sign, that is $\Phi^*(A_{\eta_F})= \pm \Delta_{\overrightarrow{I_F}}$, for any choice of sign curves, cf.~\cite[Section 4.5]{CW} for details on sign curves. We then choose the canonical set of sign curves and conclude that the sign is $+1$, cf.~Remark \ref{rmk:sign_curves} below. Now, if $F$ is a boundary face containing the basepoints $i^\pm$, then $\eta_F$ is homologous to the paths we used to compute the functions $A_{i^-}=A_{i^+}=A_i$ \eqref{eq: A_i}. This implies that $\Phi^*(A_{\eta_F})= \Phi^*(\pm A_i)= \pm D_i = \pm \Delta_{\tI{i}}$.\\

If $F$ is not a boundary face, we proceed as follows. Let $p$ be a generic point inside the face $F$, $p_1,p_2,\dots, p_m$ be its preimages in $L_\ww$ and $\eta_{p_1},\dots, \eta_{p_m}$ the corresponding gradient flows. Let $q_j$ be the endpoint in $\eta_{p_j}$ at the boundary of the weave. By parallel transporting the boundary trivializations at each $q_j$ along each of these gradient flows $\eta_{p_j}$, we obtain a collection of vectors $w_j$: one for each fiber of local system on $L_\ww$ above $p_j$. By construction, we can lift each of these $w_j$ to a vector $v_j\in\bC^m$,  where $\bC^m$ is the top dimensional space in the flag in $\ww$ over $p$. Each of these vectors $v_j$ is then a column vector in the matrix representative $M$ of a point in $\Pi_\cP^\circ$.
Based on the construction of the trivialization data, the index of each such column vector $v_j$ is exactly the target of the zig-zag that sweeps up $p_i$ for each $i$. These zig-zags are precisely those for which the non-boundary face $F$ is on the left; see also Proposition \ref{prop:face label after a single T-shift}. Since the vectors $v_j$ form a flag, their wedge is non-zero. By following the proof of Proposition \ref{prop: construction of dual cluster cycles}, the microlocal merodromy $A_{\eta_F}$ is, up to sign, the ratio between the wedge of these $v_j$'s with the unique top determinant form. This ratio is precisely $\pm \Delta_{\overrightarrow{I_F}}$, since $I_F$, records the indices of all these $v_j$'s. Thus we conclude $\Phi^*(A_{\eta_F})= \pm \Delta_{\overrightarrow{I_F}}$ for non-boundary faces.
\end{proof}

\begin{rmk}\label{rmk:sign_curves}
As a technical aspect, we note that the signs of merodromies $A_{\eta_F}$ depend on a choice of sign curves, cf.~\cite[Section 4.5]{CW} and Appendix A.2 in the first arXiv version of this manuscript. In the above proposition, we implicitly consider the choice of sign curves in Corollary A.26 of Appendix A.2 in that version, which is referred to as the canonical set of sign curves.\hfill$\Box$
%\footnote{We can include that Appendix A.2 in this submission should the referee want us to do so.}
\end{rmk}
%%%%%%%%%%%%%%%%%%%%%%%%%%%%%%%%%%%%
%%%%%%%%%%%%%%%%%%%%%%%%%%%%%%%%%%%%
%%%%%%%%%%%%%%%%%%%%%%%%%%%%%%%%%%%%
%%%%%%%%%%%%%%%%%%%%%%%%%%%%%%%%%%%%

\subsection{Toggles and Reidemeister III moves}\label{subsec: toggles and RIII moves}
In this section, we define \emph{toggle} isomorphisms
$$\tog: \modsp(\beta_\cP; \tt) \to \modsp(\beta'; \tt).$$
These are particular isomorphisms induced by a sequence of Reidemeister III moves applied to $\beta_\cP$,  resulting in an equivalent positive braid word $\beta'$. These isomorphisms $\tog$ are crucial in the proof of Theorem \ref{thm:mainB}, showing that the twist map is DT. They are also used in Proof of Theorem~\ref{thm: frame sheaf moduli = positroid} in the next subsection. In course, we explain the connection between these isomorphisms and the toggles on Grassmann necklaces introduced in \cite{FSB}. We explicitly compute $\tog \circ \Phi$ in Proposition~\ref{prop:tog-circ-phi} and use it to prove that $\Phi:\Pio_\cP\rightarrow\modsp(\Lambda_\cP;\tt)$ is an isomorphism. An added corollary of these computations is that $\modsp(\Lambda_\cP)$ is an $\cX$-scheme with no frozens, cf.~Corollary~\ref{cor:flag-unfrozen-X} below.\\

Let us begin with the construction of these toggle isomorphisms. Recall the notion of periodic grid patterns and their chords from Definition~\ref{def:grid-pattern}.

\begin{defn}[Toggles]\label{def:toggle}
Given the periodic grid pattern for $\beta_\cP$, the leftward and rightward toggles at $i \in [n]$ are the sequences of Reidemeister III moves shown in Figure \ref{fig:toggles}, applied periodically. In Figure \ref{fig:toggles}, the vertical slice at $x=i$ crosses horizontal segments at heights $\tI{i}$. 

    \begin{figure}[H]
   \begin{center}  %  \centering
    \begin{tikzpicture}[scale=0.5, baseline=15]
        \draw (0,6.2) node [left] {$\vdots$} -- (3,6.2);
        \draw (0,5.8) -- (3,5.8);
        \draw (0,5) node [left] {$a$} -- (1,5) -- (1,-1) -- (3,-1); 
        \draw (0,4.2) node [left] {$\vdots$} -- (3,4.2);
        \draw (0,3.8) -- (3,3.8);
        \draw (0,3) node [left] {$c$} -- (2,3) -- (2,1) -- (3,1);
        \draw (0,2.2) node [left] {$\vdots$}-- (3,2.2);
        \draw (0,1.8) -- (3,1.8);
        \node at (0,1) [left] {$d$};
        \draw (0,0.2) node [left] {$\vdots$} -- (3,0.2);
        \draw (0,-0.2) -- (3,-0.2);
        \node at (0,-1) [left] {$b$};
        \draw (0,-1.8) node [left] {$\vdots$} -- (3,-1.8);
        \draw (0,-2.2) -- (3,-2.2);
       % \draw [dashed, cyan] (0.7,-2.5) -- (0.7,4.5) to [out=90,in=-135] (1.5,5) to [out=45,in=-90] (2,6.5);
        %\draw [dashed, pink] (0.4,-2.5) -- (0.4,6.5);
        \node at (1.5,-3) [] {\footnotesize $ (i-1) ~ i ~ (i+1)$};
    \end{tikzpicture} \quad \quad $\begin{array}{c}\overset{\text{rightward toggle}}{\longleftarrow} \\ \underset{\text{leftward toggle}}{\longrightarrow}\end{array}$ \quad \quad
    \begin{tikzpicture}[scale=0.5, baseline=15]
    \draw (0,6.2) node [left] {$\vdots$} -- (3,6.2);
        \draw (0,5.8) -- (3,5.8);
        \draw (0,5) node [left] {$a$} -- (2,5) -- (2,-1) -- (3,-1); 
        \draw (0,4.2) node [left] {$\vdots$} -- (3,4.2);
        \draw (0,3.8) -- (3,3.8);
        \draw (0,3) node [left] {$c$} -- (1,3) -- (1,1) -- (3,1);
        \draw (0,2.2) node [left] {$\vdots$}-- (3,2.2);
        \draw (0,1.8) -- (3,1.8);
        \node at (0,1) [left] {$d$};
        \draw (0,0.2) node [left] {$\vdots$} -- (3,0.2);
        \draw (0,-0.2) -- (3,-0.2);
        \node at (0,-1) [left] {$b$};
        \draw (0,-1.8) node [left] {$\vdots$} -- (3,-1.8);
        \draw (0,-2.2) -- (3,-2.2);
        %\draw [dashed, cyan] (0.5,-2.5) -- (0.5,4.2) to [out=90,in=-180] (2,4.7) to [out=0,in=-90] (2.6,6.5);
        %\draw [dashed, pink] (0.2,-2.5) -- (0.2,6.5);
        %\draw [dashed, orange] (0.8,-2.5) --(0.8,2.5) to [out=90,in=180] (1,2.8) to [out=0,in=-90] (1.5,4.2) to [out=90,in=180] (2,4.4) to [out=0,in=-90] (2.9,6.5);
        \node at (1.5,-3) [] {\footnotesize $ (i-1) ~ i ~ (i+1)$};
    \end{tikzpicture}
\end{center}
    \caption{Leftward and rightward toggles.}\label{fig:toggles}
\end{figure}
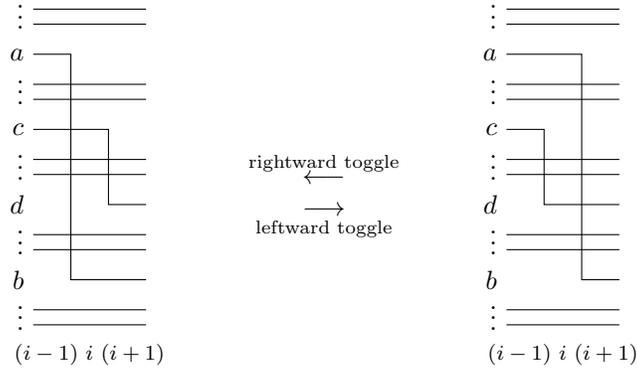

\noindent More formally, a leftward toggle at $i$ moves the $(c,d)$-chord at $x=i+0.5$ below and to the left of the $(a,b)$-chord at $x=i-0.5$, where $a<c<d<b$. A rightward toggle does the reverse. By definition, the result of applying a sequence of toggles to the grid pattern for $\beta_\cP$ is called a periodic grid pattern for $\cP$. If two periodic grid patterns are related by a toggle at $i$, we say the corresponding braid words $\beta, \beta'$ are related by a toggle at $i$, which at the level of braid words is a sequence of commutation moves $s_j s_k \to s_k s_j$ and braid moves $s_k s_j s_k \to s_j s_k s_j$. If $\beta_\cP, \beta'$ are related by a sequence of toggles, then they represent the same positive braid and thus, by the invariance in Subsection \ref{subsec:invariance}, there is a canonical isomorphism
\[\tog: \modsp(\beta_\cP) \to \modsp(\beta'),\]
which we refer to as a \emph{toggle} isomorphism.
\hfill$\Box$
\end{defn}

\noindent 

\begin{defn}[Toggles with basepoints]\label{def:toggle-with-basepoints}
The periodic grid pattern of $(\beta_{\cP};\tt)$ is the periodic grid pattern of $\beta_\cP$ with basepoints $\tt$ added periodically. If $l(\beta_i)>0$, we place $i^+$ in the closest $\begin{tikzpicture}[baseline=7]\draw (0,0.5) -- (0.5,0.5) -- (0.5,0);\end{tikzpicture}$ corner, which is at height $i-1$. The basepoint $i^-$ is free to move anywhere between the adjacent $+$ basepoints $(f^{-1}(i)+1)^+$ and $(i+1)^+$. We extend Definition~\ref{def:toggle} to grid patterns and braid words with basepoints, with the convention that $i^+$ remains in the same corner after a toggle and $i^-$ remains free to move. If $(\beta_\cP;\tt), (\beta';\tt)$ are related by a sequence of toggles, then the invariance in Subsection \ref{subsec:invariance} yields a canonical isomorphism
\[\tog: \modsp(\beta_\cP;\tt) \to \modsp(\beta';\tt),\]
which we refer to as a \emph{toggle} isomorphism. If trivializations for $\cF_\bullet \in \modsp(\beta_\cP;\tt)$ are given by projecting a vector $v \in \bC^m$ into the appropriate 1-dimensional vector space, so are trivializations for $\tog(\cF_\bullet)$.\hfill$\Box$
\end{defn}

%%%%%%%%%%%%%%%%%%%%%%%%%%%%%%%%%%%%
%%%%%%%%%%%%%%%%%%%%%%%%%%%%%%%%%%%%

\subsubsection{Toggles on necklaces}
The map $\Phi: \Pio_{\cP} \rightarrow \modsp(\beta_{\cP}; \tt)$ is defined using the target Grassmann necklace. In order to compute $\tog \circ \Phi$ explicitly, we need to use more general necklaces associated to the positroid $\cP$ \cite[Definition 3.5]{FSB} and define toggles on necklaces. For notational convenience, we write the target Grassmann necklace $\tNec_\cP = (\tI{1}, \dots, \tI{n})$ as 
\[\tI{n} \stacknumber{f(0)}{0} \tI{1} \stacknumber{f(1)}{1} \tI{2} \stacknumber{f(2)}{2} \dots \stacknumber{f({n-1})}{{n-1}} \tI{n}\stacknumber{f({n})}{{n}}. \]
Reducing modulo $n$, the ``stack" $\stacknumber{f(j)}{j}$ records which elements are swapped going from one set to the other. Let us briefly recall toggle moves and use them to recursively define necklaces.\footnote{In \cite{FSB}, these are called \emph{aligned toggles} and necklaces as defined here are called ``Grassmannlike necklaces with $\iota \leq_{\circ} \pi$''.}

\begin{defn} Necklaces with permutation $f$ are recursively defined as follows. The target necklace $\tNec_{\cP}$ is a necklace with permutation $f$, and so are all lists obtained from it by the  following leftward toggle operation. Let $\cI= (I_1, \dots, I_n)$ be a necklace with permutation $f$ and let $a\in [n]$ be an index such that
    \[\cI=\cdots I_{a-1}\stacknumber{f(x)}{x}I_a\stacknumber{f(y)}{y}I_{a+1}\cdots \]
    with $x < y$ and $f(x)>f(y)$. By definition, a \emph{leftward toggle} at $a$ produces
    \[\cI':=\cdots I_{a-1}\stacknumber{f(y)}{y} I_a' \stacknumber{f(x)}{x}I_{a+1}\cdots,\]
    another necklace with permutation $f$.
    More explicitly, all subsets in $\cI$ and $\cI'$ agree except the one indexed by $a$, which in  $\cI$ is $I_a := I_{a-1} \setminus \{x\} \cup \{f(x)\}$ and in $\cI'$ is $I_a' := I_{a-1} \setminus \{y\} \cup \{f(y)\}$ (modulo $n$). The inverse of a leftward toggle at $a$ is a  said to be \emph{rightward toggle} at $a$, which takes 
      \[\cI=\cdots I_{a-1}\stacknumber{f(u)}{u}I_a\stacknumber{f(w)}{w}I_{a+1}\cdots \quad \text{to} \quad \cI':=\cdots I_{a-1}\stacknumber{f(w)}{w} I_a' \stacknumber{f(u)}{u}I_{a+1}\cdots\]
    if $u>w$ and $f(u)<f(w)$.\hfill$\Box$
\end{defn}

\noindent See Example~\ref{exmp:toggling-seq} below for an instance of a toggling sequence going through several necklaces. Note that the permutation of a necklace can be computed by reading the ``stacks'', or equivalently computing which elements are swapped when going from $I_a$ to $I_{a+1}$. 

\begin{rmk}\label{rmk:necklace-poset}
    There is a natural poset structure on necklaces with permutation $f$, where $\cI \geq \cI'$ if $\cI'$ can be obtained from $\cI$ by a sequence of leftward toggles. Alternately, this poset structure can be phrased in terms of the weak order on bounded affine permutations, as follows. A necklace $\cI$ with permutation $f$ encodes another bounded affine permutation $\iota$ (the ``insertion permutation"), defined by
    \[\cI = I_n \stacknumber{\iota(0)}{j_0} I_1 \stacknumber{\iota(1)}{j_1} I_2 \stacknumber{\iota(2)}{j_2} \dots \stacknumber{\iota(n-1)}{j_{n-1}}I_n.\]
    By definition, performing a leftward toggle on $\cI$ right-multiplies $\iota$ by a length-decreasing simple transposition. By \cite[Lemma 4.13]{FSB}, in fact $\cI \geq \cI'$ if and only if $\iota \geq \iota'$ in the right weak order on $\bdmn$. The unique maximal element in this poset is the target necklace $\tNec$, which has insertion permutation $f$; the unique minimal element is a rotation of the source necklace $\sNec$, which has insertion permutation $i \mapsto i+k$. See also \cite[Figure 4]{FSB}.\hfill$\Box$
\end{rmk}

\begin{exmp} \label{exmp:toggling-seq}  Here is a leftward toggling sequence that turns the target Grassmann necklace associated with the positroid in Example \ref{exmp:positroid braid} into the corresponding source Grassmann necklace.
{\small 
  \setlength{\abovedisplayskip}{6pt}
  \setlength{\belowdisplayskip}{\abovedisplayskip}
  \setlength{\abovedisplayshortskip}{0pt}
  \setlength{\belowdisplayshortskip}{3pt}
\begin{align*}
        &7124\stacknumber{6}{0}1246\stacknumber{3}{1}2346\stacknumber{9}{2}3462\stacknumber{8}{3}4612\stacknumber{7}{4}6712\stacknumber{5}{5}6712\stacknumber{11}{6}7124 \\ 
        \sim \ & 
         7124\stacknumber{3}{1}{\color{red}7234}\stacknumber{6}{0}2346\stacknumber{9}{2}3462\stacknumber{8}{3}4612\stacknumber{7}{4}6712\stacknumber{5}{5}6712\stacknumber{11}{6}7124\\
        \sim \ &  
        {\color{red}6723}\stacknumber{4}{-1}7234\stacknumber{6}{0}2346\stacknumber{9}{2}3462\stacknumber{8}{3}4612\stacknumber{7}{4}6712\stacknumber{5}{5}6712\stacknumber{10}{8}{\color{red}6723}\\
        \sim \ &
            {6723}\stacknumber{4}{-1}7234\stacknumber{6}{0}2346\stacknumber{8}{3}{\color{red}2461}\stacknumber{9}{2}4612\stacknumber{7}{4}6712\stacknumber{5}{5}6712\stacknumber{10}{8}{6723}\\
        \sim \ & 
        {6723}\stacknumber{4}{-1}7234\stacknumber{6}{0}2346\stacknumber{8}{3}{2461}\stacknumber{7}{4}{\color{red}2671}\stacknumber{9}{2}6712\stacknumber{5}{5}6712\stacknumber{10}{8}{6723}\\
        \sim \ &
        {6723}\stacknumber{4}{-1}7234\stacknumber{6}{0}2346\stacknumber{7}{4}{\color{red}2367}\stacknumber{8}{3}{2671}\stacknumber{9}{2}6712\stacknumber{5}{5}6712\stacknumber{10}{8}{6723}\\
        \sim \ & 
        {6723}\stacknumber{4}{-1}7234\stacknumber{6}{0}2346\stacknumber{7}{4}{2367}\stacknumber{8}{3}{2671}\stacknumber{5}{5}{\color{red}2671}\stacknumber{9}{2}6712\stacknumber{10}{8}{6723}\\
        \sim  \ &
        {6723}\stacknumber{4}{-1}7234\stacknumber{6}{0}2346\stacknumber{7}{4}{2367}\stacknumber{5}{5} {\color{red}2367} \stacknumber{8}{3}{2671}\stacknumber{9}{2}6712\stacknumber{10}{8}{6723}\\
        \sim \ & 
        {6723}\stacknumber{4}{-1}7234\stacknumber{6}{0}2346\stacknumber{5}{5}{\color{red} 2346}\stacknumber{7}{4} {2367} \stacknumber{8}{3}{2671}\stacknumber{9}{2}6712\stacknumber{10}{8}{6723}\\       
        \sim \ &
        {6723}\stacknumber{4}{-1}7234\stacknumber{5}{5}{\color{red}7234}\stacknumber{13}{7}{2346}\stacknumber{7}{4} {2367} \stacknumber{8}{3}{2671}\stacknumber{9}{2}6712\stacknumber{10}{8}{6723}
    \end{align*}

}
\noindent In the sequence above, we have colored each new necklace entry in red.\hfill$\Box$
\end{exmp}

%%%%%%%%%%%%%%%%%%%%%%%%%%%%%%%%%%%%
%%%%%%%%%%%%%%%%%%%%%%%%%%%%%%%%%%%%

\subsubsection{Toggles and the map $\Phi$} In this subsection we explicitly compute the composition $\tog \circ  \Phi$, which we use when showing that $\Phi$ is an isomorphism.

\begin{prop}\label{prop:tog-circ-phi}
    Consider the braid word $\beta$ obtained from $\beta_\cP$ by a sequence of leftward toggles, and $\cI$ the necklace obtained from $\tNec_\cP$ by the same sequence of toggles. Then the image of $V \in \Pio_\cP$ under the map
    \[ \tog \circ \Phi: \Pio_\cP \stackrel{\Phi}{\rightarrow} \modsp(\beta_\cP;\tt) \stackrel{\tog}{\rightarrow} \modsp(\beta;\tt)\]
    is given by the construction of Definition~\ref{def:phi} using $\beta$ rather than $\beta_\cP$.\\ 
    
    \noindent In particular, the slice of the grid pattern of $\beta$ at $x=i$ corresponds to the flag 
        \[\cF[I_{i}]=0\subset [{i_m}] \subset [{i_{m-1}}{i_m}]\subset \cdots \subset [{i_2}{i_3}\cdots {i_m}]\subset [{i_1}{i_2}\cdots {i_m}]=\mathbb{C}^m\]
        where $I_{i}=\{i_1 <_{i} \dots <_{i} i_m\}$ is the $i$th entry of $\cI$.
\end{prop}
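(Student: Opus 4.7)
The plan is to induct on the length of the sequence of leftward toggles transforming $\beta_\cP$ into $\beta$. The base case of zero toggles is precisely Lemma~\ref{lem: boundary frozen A coord}(i), which identifies the slice of the grid pattern of $\beta_\cP$ at $x=i$ with the flag $\cF[\tI{i-1}]$ prescribed by the target Grassmann necklace. For the inductive step, let $(\beta, \cI)$ already satisfy the statement, and let $(\beta', \cI')$ be obtained from $(\beta,\cI)$ by a single leftward toggle. The toggle changes only one necklace entry: $I_i = I_{i-1}\setminus\{x\}\cup\{f(x)\}$ is replaced by $I_i' = I_{i-1}\setminus\{y\}\cup\{f(y)\}$, while $I_{i-1}$ and $I_{i+1}$ are fixed. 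At the grid-pattern level, this corresponds to swapping the $(x,f(x))$-chord at $x=i-0.5$ with the $(y,f(y))$-chord at $x=i+0.5$, implemented by a sequence of Reidemeister III and commutation moves supported in a neighborhood of the slice $x=i$.

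The toggle isomorphism $\tog$ of Subsection~\ref{subsec:invariance} is the canonical isomorphism induced by this braid-word modification, and it acts trivially on all columns outside of the toggle neighborhood. By the inductive hypothesis, the flags at slices $x=i-1$ and $x=i+1$ in $\tog(\Phi(V))$ are $\cF[I_{i-1}]$ and $\cF[I_{i+1}]$, which are also the flags prescribed by the $\Phi$-construction applied directly to $(\beta', \cI')$ at those slices. It therefore suffices to verify that, after the toggle, the slice at $x=i$ carries the flag $\cF[I_i']$ with matching trivializations. The vectors from $V$ labeling the arcs entering and leaving the toggle neighborhood are unchanged under $\tog$, and since the modified braid word restricted to the neighborhood is reduced, the intermediate flag at $x=i$ is uniquely determined by the outer flags together with those arc labels. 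Applying Lemma~\ref{lem: boundary frozen A coord} to the rearranged column then produces $\cF[I_i']$, and the trivializations agree because both are obtained by projecting the same $\bC^m$-vectors from $V$, now routed through the toggled strands.

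The main obstacle is bookkeeping across the several braid moves that constitute a single toggle, together with tracking how basepoints (which remain in their prescribed corners per Definition~\ref{def:toggle-with-basepoints}) interact with the vector-labeling prescription of Definition~\ref{def:phi}. Once this is handled, the uniqueness of intermediate flags in a reduced $\beta'$-chain, combined with the Pl\"ucker non-vanishing $\Delta_{I_i'}(V)\neq 0$ (which holds because $I_i'\in\cP$) and the elementary one-step subspace-replacement identities already used in Lemma~\ref{lem: boundary frozen A coord}, closes the induction.
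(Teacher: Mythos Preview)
Your inductive strategy and the reduction to flags via uniqueness of intermediate flags in a reduced chain is exactly the paper's approach. However, there is a genuine gap at the point where you claim ``Applying Lemma~\ref{lem: boundary frozen A coord} to the rearranged column then produces $\cF[I_i']$.'' Lemma~\ref{lem: boundary frozen A coord} is stated and proved only for $\beta_\cP$, and its subspace-replacement argument works only at the top level (where both spans equal $\bC^m$). After a toggle, you must verify that $\cF[I_i']$ is actually in the correct relative position with $\cF[I_{i-1}]$ and $\cF[I_{i+1}]$ for the new braid word $\beta'$; uniqueness then identifies it with the image under $\tog$. This verification requires knowing that, for the $(y,f(y))$-chord in the toggled grid pattern, the span of the vectors labeling the strands just to its left equals the span just to its right at every level it crosses. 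That is precisely the content of Lemma~\ref{lem:left span =  right span}, which the paper proves separately by its own induction (on $k-i$) and then invokes in the first paragraph of its proof of the proposition. Your ``elementary one-step subspace-replacement identities'' do not follow from Lemma~\ref{lem: boundary frozen A coord}; they are Lemma~\ref{lem:left span =  right span}.

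A second, smaller issue: the assertion ``$\Delta_{I_i'}(V)\neq 0$ because $I_i'\in\cP$'' is not a valid justification. Membership $I_i'\in\cP$ only says $\Delta_{I_i'}$ is not identically zero on $\Pio_\cP$; pointwise nonvanishing on all of $\Pio_\cP$ is a strictly stronger statement. The paper obtains it from \cite[Theorem 4.14]{FSB}, which shows that every necklace element (in the sense of the toggle poset) indexes a unit in $\bC[\Pio_\cP]$.
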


\noindent Before proving Proposition~\ref{prop:tog-circ-phi}, we need the following preparatory lemma.

\begin{lem}\label{lem:left span =  right span} Consider the following local configuration in a periodic grid pattern for $\cP$,
\begin{center}
    %\centering
    \begin{tikzpicture}[scale=0.7]
        \draw (0,4) node [left] {$i$} -- (1,4) -- (1,0) -- (2,0);
        \draw (0,3.2) node [left] {$j_1$} -- (2,3.2);
        \draw (0,2.4) node [left] {$j_2$} -- (2,2.4);
        \draw (0,0.8)  node [left] {$j_s$} -- (2,0.8);
        \node at (0,1.7) [left] {$\vdots$};
        \node at (0,0) [left] {$k$};
    \end{tikzpicture}
   % \caption{Local picture of a Grassmann necklace braid}
   % \label{fig:local picture of Grassmann necklace}
\end{center} 
where segments are labeled by their heights.
Then $[i{j_1}{j_2}\cdots {j_s}]=[{j_1}{j_2}\cdots {j_s} k]$ and it is $(s+1)$-dimensional. 
\end{lem}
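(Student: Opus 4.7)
The plan is to establish the dimension count and the equality of spans using positroid properties combined with an iterative substitution argument.

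For the dimension count, the two vertical slices flanking the chord correspond to necklace entries $I_L\supseteq\{i,j_1,\ldots,j_s\}$ and $I_R\supseteq\{j_1,\ldots,j_s,k\}$ of $\cI$. Since every entry of a necklace for $\cP$ is a basis of the positroid (cf.\ Remark~\ref{rmk:necklace-poset} and \cite{FSB}), both Pl\"uckers $\Delta_{I_L}$ and $\Delta_{I_R}$ are non-vanishing on $\Pio_\cP$. Thus the subsets $\{v_i,v_{j_1},\ldots,v_{j_s}\}\subset\{v_c:c\in I_L\}$ and $\{v_{j_1},\ldots,v_{j_s},v_k\}\subset\{v_c:c\in I_R\}$ are each linearly independent, so both spans have dimension $s+1$.

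For the equality, by dimension counting it suffices to show $v_i\in[j_1,\ldots,j_s,k]$. Note that $k=f(i)$ in the local picture, and Definition~\ref{def:perm of positroid} supplies the positroid relation $v_i\in\Span(v_{i+1},v_{i+2},\ldots,v_k)$. The strategy is to iteratively eliminate each column $v_c$ appearing in this expression with $c\in(i,k)\setminus\{j_1,\ldots,j_s\}$. For such a non-$j$ column, the fact that $c$ does not appear as a horizontal strand at the chord's $x$-position means its entry chord $(f^{-1}(c),c)$ lies strictly later in $x$ than the $(i,k)$-chord; this forces $f^{-1}(c)\in(i,c)$. Combining with the positroid relation $v_{f^{-1}(c)}\in\Span(v_{f^{-1}(c)+1},\ldots,v_c)$, in which the coefficient of $v_c$ is nonzero by the minimality in Definition~\ref{def:perm of positroid}, we obtain $v_c\in\Span(v_{f^{-1}(c)},v_{f^{-1}(c)+1},\ldots,v_{c-1})$, an expression whose indices all lie strictly between $i$ and $c$. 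Substituting this into the expansion of $v_i$ strictly decreases the maximum non-$j$ index appearing; iterating finitely many times eliminates every non-$j$ column, leaving $v_i\in\Span(v_{j_1},\ldots,v_{j_s},v_k)$ as required.

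The main technical obstacle will be rigorously justifying the characterization $f^{-1}(c)\in(i,c)$ for every non-$j$ column $c$ in an arbitrary leftward-toggled grid pattern. In the base case $\cI=\tNec_\cP$ this is immediate from Definition~\ref{def:grid-pattern}, since a strand at $y=c$ fails to be horizontal at the $(i,k)$-chord precisely when its entry chord sits later in $x$. In the toggled setting, however, chord positions have been rearranged and one must in principle rule out the alternative scenario in which $c$ fails to be a $j$ because its exit chord $(c,f(c))$ sits earlier than the $(i,k)$-chord. The plan is to handle this by induction on the number of leftward toggles: each toggle swaps two adjacent chords in a combinatorially controlled way (Definition~\ref{def:toggle}) while preserving all underlying positroid relations, and a local case analysis verifies that the required characterization propagates to the new grid pattern, keeping the iterative elimination available.
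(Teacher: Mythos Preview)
Your treatment of the base case $\beta_\cP$ is correct and, if anything, more explicit than the paper's (which asserts $v_i\in[j_1\cdots j_s k]$ directly from Definition~\ref{def:perm of positroid}; your iterative elimination is exactly what justifies that step). The dimension count via necklace bases is also fine.

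The gap is in the extension to toggled grid patterns. The characterization you rely on --- that every non-$j$ index $c\in(i,k)$ satisfies $f^{-1}(c)\in(i,c)$ --- does \emph{not} propagate under leftward toggles, so the promised ``local case analysis'' cannot succeed as stated. Concretely, take $c=i+1$ in $\beta_\cP$ with $f(i+1)<k$ and $f^{-1}(i+1)<i$; then $c$ is a $j$ in $\beta_\cP$, but after the leftward toggle moving the chord $(i+1,f(i+1))$ past $(i,k)$ it becomes a non-$j$, while still $f^{-1}(c)<i$. Your backward substitution then introduces indices $\leq i$, and the forward substitution $v_c\in\Span(v_{c+1},\ldots,v_{f(c)})$ can raise the maximal non-$j$ index, so the termination argument collapses either way. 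Induction on toggle count does not repair this: showing that the span is preserved under the toggle replacing $j_\ell$ by $a$ requires the lemma for the \emph{shorter} chord $(a,j_\ell)$, which ``fewer toggles'' does not provide. The paper's fix is to induct on the chord length $k-i$ instead. The base case $k-i=s$ reduces to $\beta_\cP$ since toggles preserve the number of strands a chord crosses, and in the inductive step each rightward toggle back toward $\beta_\cP$ swaps $j_\ell=f(a)$ for $a$ along a chord of length $j_\ell-a<k-i$; the inductive hypothesis applied to this shorter chord immediately gives $[\,\cdots a\cdots\,]=[\,\cdots j_\ell\cdots\,]$ and hence the desired equality.
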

\begin{proof} 
For convenience we set $v_{b+an}= v_b$. First, we suppose that the periodic grid pattern is for $\beta_\cP$. By Lemma~\ref{lem: boundary frozen A coord}, $\dim [ij_1j_2\cdots j_s]=\dim [j_1j_2\cdots j_sk]=s+1$. Furthermore, by Definition~\ref{def:perm of positroid}, $v_i \in [j_1j_2\cdots j_sk]$ but $v_k \notin [j_1j_2\cdots j_s]$. Therefore, when expanding $v_i$ as a linear combination of $v_{j_1},v_{j_2},\dots, v_k$, the coefficient of $v_k$ is non-zero. Thus $v_k$ is a linear combination of $v_i,v_{j_1},v_{j_2},\dots, v_{j_s}$ as well, or in other words $v_k\in [ij_1j_2\cdots j_s]$. This proves $[ij_1j_2\cdots j_s]=[j_1j_2\cdots j_sk]$ and shows the lemma holds for the periodic grid pattern for $\beta_{\cP}$.\\

Now we induct on $k-i$. Since there are $s$ horizontal lines present between levels $i$ and $k$, we must have $k-i \geq s$, so the base case is $k-i=s$. For this base case, the $(i,k)$-chord must cross the horizontal lines at heights $j_1,j_2,\dots, j_s$ in all periodic grid patterns for $\cP$, and does not cross any other horizontal lines. This is because toggles do not change the number of horizontal segments a chord crosses. Thus this local configuration appears in the grid pattern for $\beta_\cP$ and we can conclude by the argument above.\\

\noindent Inductively, let us assume the statement is correct for all $j$ with $f(j)-j < k-i$. Note that a toggle in which the $(i,k)$-chord is the short chord does not change the horizontal segements the $(i,k)$-chord intersects. A rightward toggle in which the $(i,k)$-chord is the long chord will replace some $j_{\ell}=f(a)$ with $a$ and we will have $i<a<j_{\ell}<k$. Since $j_{\ell}-a<k-i$, by the inductive hypothesis we have 
\[
[ij_1\cdots j_s]=[ij_1\cdots a \cdots j_s] \quad \quad \text{and} \quad \quad [j_1 \cdots j_sk]=[j_1\cdots a \cdots j_sk].
\]
\[
\begin{tikzpicture}[scale=0.7, baseline=20]
    \draw (0,3) node [left] {$i$} -- (2,3) -- (2,0) -- (3,0);
    \draw (0,2) node [left] {$a$} -- (1,2) -- (1,1) -- (3,1);
    \node at (0,1) [left] {$j_{\ell}$};
    \node at (0,0) [left] {$k$};
\end{tikzpicture}
\quad \quad \longleftrightarrow \quad \quad 
\begin{tikzpicture}[scale=0.7, baseline=20]
    \draw (0,3) node [left] {$i$} -- (1,3) -- (1,0) -- (3,0);
    \draw (0,2) node [left] {$a$} -- (2,2) -- (2,1) -- (3,1);
    \node at (0,1) [left] {$j_{\ell}$};
    \node at (0,0) [left] {$k$};
\end{tikzpicture}
\]
Apply a sequence of rightward toggles to the periodic grid pattern to get the periodic grid pattern for $\beta_\cP$. The horizontal segments crossing the $(i,k)$-chord are $h_1, \dots, h_s$ and by repeatedly applying the argument above, 
\[
[ij_1\cdots j_s]=[ih_1\cdots h_s] \quad \quad \text{and} \quad \quad [j_1 \cdots j_sk]=[h_1 \cdots h_sk].
\]
Applying the lemma for the periodic grid pattern for $\beta_\cP$ gives the desired conclusion.
\end{proof}

\begin{proof}[Proof of Proposition~\ref{prop:tog-circ-phi}]
Fix $V\in \Pio_\cP$. Since the trivializations for $(\tog \circ \Phi)(V)$ and for Definition~\ref{def:phi} using $\beta$ are given by projecting the same vectors $v_i$ into the lines associated to arcs, it suffices to show the proposition at the level of flags rather than decorated flags. That is, we will show that the construction in Subsection \ref{ssec:iso_flagspositroid} applied to $\beta$ and $(\tog \circ \Phi)(V)$ give the same point in the flag moduli.\\

First, we note that by \cite[Theorem 4.14]{FSB}, the Pl\"ucker coordinate $\Delta_{i_1, \dots, i_m}$ is nonzero on $\Pio_\cP$ and so the vectors $v_{i_1},v_{i_2},\cdots ,v_{i_m}$ are linearly independent. Thus $\cF[I_{i}]$ defined above is a complete flag in $\bC^m$. Regions between horizontal strands, a.k.a.~chambers, in the vertical slices $i-0.5 <x< i+0.5$ of the periodic grid pattern are labeled by the subspaces in this flag; since there are no $j^+$ basepoints in this range, sliding a vertical slice past a basepoint will just rescale a vector. Lemma~\ref{lem:left span =  right span} implies that for any such chamber region for $\beta$ above a $j^+$ basepoint, the labeling subspace can be defined using either $v_j$ or $v_{f(j)}$. Therefore, the construction in Subsection \ref{ssec:iso_flagspositroid} does define a point in the required moduli and vertical slices correspond to $\cF[I_{i}]$. That is, applying Definition~\ref{def:phi} using $\beta$ gives the point $(\cF[I_{n}], \cF[I_{1}], \dots, \cF[I_{n}]) \in \modsp(\beta)$.\\

Second, we now show that the image of $V$ under the map 
 \[ \tog \circ \Phi: \Pio_\cP \to \modsp(\beta_\cP) \to \modsp(\beta)\] is this point. We argue one leftward toggle at a time. Suppose the statement is true for a sequence of leftward toggles which gives the braid word $\beta'$ and necklace $\cI'$. Suppose  that we apply another leftward toggle, moving chords at $x=a-0.5$ and $x=a+0.5$, gives $\beta$. The toggle induces an isomorphism $\modsp(\beta') \to \modsp(\beta)$ in which a $\beta'$-chain of flags is sent to the unique $\beta$-chain of flags where the subspace labeling any chamber not strictly contained between the chord at $x=a-0.5$ and the chord at $x=a+0.5$ is unchanged. Since $I_i' = I_i$ for $i \neq a$, we have that $\cF[I_i'] = \cF[I_i]$ for $i \neq a$, and thus the point $(\cF[I_n], \cF[I_1], \cF[I_2], \dots, \cF[I_{n-1}], \cF[I_n])$ corresponds to this unique $\beta$-chain of flags.
\end{proof}

%%%%%%%%%%%%%%%%%%%%%%%%%%%%%%%%%%%%
%%%%%%%%%%%%%%%%%%%%%%%%%%%%%%%%%%%%
%%%%%%%%%%%%%%%%%%%%%%%%%%%%%%%%%%%%
%%%%%%%%%%%%%%%%%%%%%%%%%%%%%%%%%%%%

\subsection{Proof of Theorem~\ref{thm: frame sheaf moduli = positroid}} We are ready to prove that $\Phi$ is an isomorphism and maps the initial cluster seed $\Sigma_T(\bG)$ to $\Sigma(\ww;\tt)$, where $\ww$ is the positroid weave of $\bG$. As a result, this proves that $\modsp(\Lambda_\cP;\tt)$ is a cluster $\mathcal{A}$-variety. This completes the proof of Theorem~\ref{thm:mainA}.(iii) and (iv). Let us prove Theorem~\ref{thm: frame sheaf moduli = positroid}.\\

In order to show that $\Phi$ is an isomorphism, we construct an inverse map $\Psi$, as follows. Let $\tilde{\cF}_\bullet \in \modsp(\Lambda_\cP;\tt)$, we construct an $m \times n$ matrix $\Psi(\tilde{\cF}_\bullet)$ as follows. Choose a sequence of leftward toggles which takes $\tNec_\cP$ to $\sNec_\cP$. This is possible by Remark~\ref{rmk:necklace-poset}. Applying the same sequence of toggles to the periodic grid pattern of $\beta_\cP$ results in the periodic grid pattern of $\delta_\cP$, which corresponds to the source necklace; this changes $\tilde{\cF}_\bullet$ to $\tog(\tilde{\cF}_\bullet)$. An important feature of the periodic grid pattern of $\delta_\cP$ is that all $\begin{tikzpicture}[baseline=0]\draw (0, 0.25) -- (0,0) -- (0.25,0);\end{tikzpicture}$ corners appear along the lower left boundary.% (see e.g. Figure~\ref{fig:positroid positive braid (source)}).
 Now, we move the $i^-$ basepoint slightly to the right of the $\begin{tikzpicture}[baseline=0]\draw (0, 0.25) -- (0,0) -- (0.25,0);\end{tikzpicture}$ corner at height $y=i$ on the lower left boundary. We set $x_i$ to be the vector labeling the segment $S_i$ containing this corner, which is the segment to the left of $i^-$. Since this segment is at the bottom level of the wiring diagram, $x_i$ is an element of a 1-dimensional subspace of $\bC^m$, rather than a 1-dimensional quotient $(\cF_a)_{j+1}/(\cF_a)_j$. Therefore $x_i \in \bC^m$. By construction,  we declare the matrix $\Psi(\tilde{\cF}_\bullet)$ to have columns $x_1, \dots, x_n$.\\

\noindent The flag corresponding to the vertical slice passing through $S_i$ can be written as 
$$[x_i] \subset [x_i, x_{i_2}] \subset \cdots \subset [x_i, x_{i_2}, \dots, x_{i_{m-1}}] \subset [x_i, x_{i_2}, \dots, x_{i_{m-1}}, x_{i_m}]= \bC^m$$
where $\{i, i_2, \dots, i_m\}=\sI{i}$, the $i$th entry of the source necklace. In particular, the Plucker coordinate $\Delta_{\sI{i}}$ is non-vanishing on $\Psi(\tilde{\cF}_\bullet)$ for all $i$. Moreover, from the source grid pattern $\delta_\cP$, it follows that if $\sI{i} = \{i_m, i_{m-1}, \dots, i_1=i\}$
and $\pi^{-1}(i)=j$ with $i_m <_{i+1} \cdots <_{i+1} i_{k+1} <_{i+1} j <_{i+1} i_k <_{i+1} \cdots <_{i+1} i_1$, then $[x_{i_1} \dots x_{i_k}] = [x_{i_2} \dots x_{i_k} x_j]$. This is enough information to reconstruct the cyclic rank matrix and thus $\Psi(\tilde{\cF}_\bullet)$ is an element of $\Pio_\cP$.\footnote{See also \cite[Theorem 3.9]{STWZ} and its proof where this argument is elaborated.} Thus $\Psi$ is a well-defined map from $\modsp(\beta_{\cP}; \tt)$ to $\Pio_\cP$.\\

\noindent The trivialization of $\Phi(V)$ for the arc between $(\pi^{-1}(i+1))^+$ and $i^-$ is given by projecting $v_i$ into the appropriate 1-dimensional quotient, cf.~Definition \ref{def:phi}. Proposition~\ref{prop:tog-circ-phi} says that the same is true for $(\tog \circ \Phi) (V) \in \modsp(\delta_\cP;\tt)$. Thus the construction of the vector $x_i$ above exactly recovers $v_i$ and $(\Psi\circ \Phi)(V)=V$. If we have a configuration of flags $\tilde{\cF}_\bullet$, $\Psi(\tilde{\cF}_\bullet)$ gives us a sequence of vectors that can be used to recover the flags $\tilde{\cF}_\bullet$ using the map $\Phi$, and thus $(\Phi \circ \Psi)(\tilde{\cF}_\bullet)= \tilde{\cF}_\bullet$ as well.\\

Finally, Proposition~\ref{prop:merodromy-is-plucker} implies that $\Phi^*$ sends the cluster variables of $\Sigma(\ww(\bG); \tt)$ to the cluster variables of $\Sigma_T(\bG)$. Theorem~\ref{thm:weave construction}, together with the fact that each quiver has an isolated frozen vertex corresponding to each exceptional boundary face, implies that this identification of cluster variables induces a quiver isomorphism.\hfill$\Box$ 

\begin{exmp}\label{exmp:phi-inv} We illustrate the proof of Theorem~\ref{thm: frame sheaf moduli = positroid} using our running example from Example~\ref{exmp:positroid braid}. In the Figure~\ref{fig:phi-inv}, we show how to perform toggles to move between the periodic grid patterns for the target and source necklaces. We have also shown a possible configuration for the base points, keeping the $i^+$ basepoints near the $\begin{tikzpicture}[baseline=7]\draw (0,0.5) -- (0.5,0.5) -- (0.5,0);\end{tikzpicture}$ corners, and the $i^-$ basepoints near the $\begin{tikzpicture}[baseline=7]\draw (0,0.5) -- (0,0) -- (0.5,0);\end{tikzpicture}$ corners. Note that the trivializations by the vectors $v_i$ occur at the $\begin{tikzpicture}[baseline=7]\draw (0,0.5) -- (0,0) -- (0.5,0);\end{tikzpicture}$ corners, in particular, the corners between the basepoints $(\pi^{-1}(i)+1)^+$ and $i^-$. Note that we can read off the vectors $v_i$ from the $\begin{tikzpicture}[baseline=7]\draw (0,0.5) -- (0,0) -- (0.5,0);\end{tikzpicture}$ corners in the rightmost diagram.\hfill$\Box$

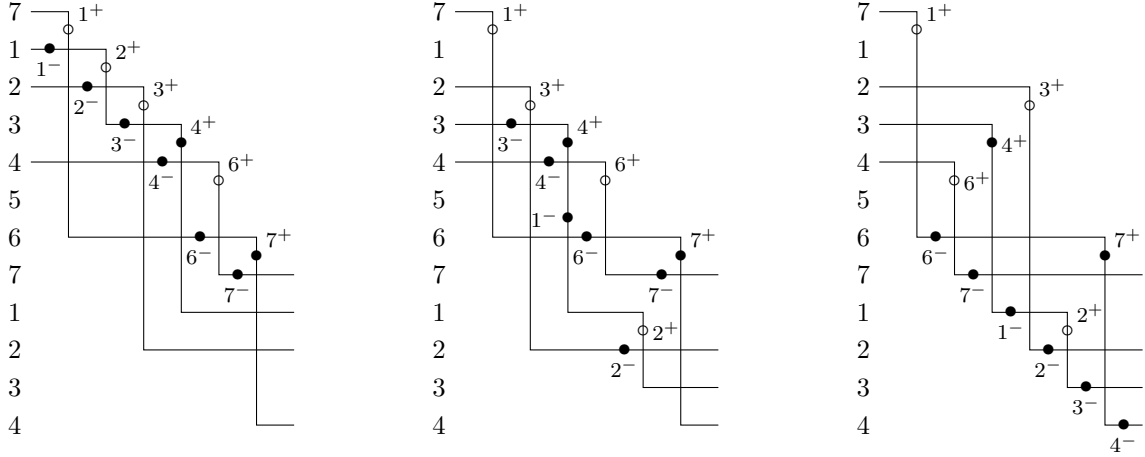
\begin{figure}[H]
    \centering
    \begin{tikzpicture}[scale=0.5,baseline=0]
        \draw (0,11) node [left] {$7$} --(1,11)--(1,5) -- (6,5) -- (6,0)  -- (7,0); 
        \draw (0,10) node [left] {$1$} -- (2,10) -- (2,8) -- (4,8) -- (4,3) -- (7,3);
        \draw (0,9) node [left] {$2$} -- (3,9) -- (3,2) -- (7,2);
        \draw (0,7) node [left] {$4$} -- (5,7) -- (5,4) -- (7,4);
        \foreach \i in {3,5,6,7}
        {
            \node at (0,11-\i) [left] {$\i$};
        }
        \foreach \i in {1,2,3,4}
        {
            \node at (0,4-\i) [left] {$\i$};
        }
        \foreach \i in {1,2,3}
        {
            \node at (\i, 11.5-\i) [] {$\circ$};
            \node at (\i, 11.5-\i) [above right] {\footnotesize{$\i^+$}};
            \node at (\i-0.5,11-\i) [] {$\bullet$};
            \node at (\i-0.5,11-\i) [below] {\footnotesize{$\i^-$}};
        }
        \node at (4,7.5) [] {$\bullet$};
        \node at (4,7.5) [above right] {\footnotesize{$4^+$}};
        \node at (3.5,7) [] {$\bullet$};
        \node at (3.5,7) [below] {\footnotesize{$4^-$}};
        \node at (5,6.5) [] {$\circ$};
        \node at (5,6.5) [above right] {\footnotesize{$6^+$}};
        \node at (4.5,5) [] {$\bullet$};
        \node at (4.5,5) [below] {\footnotesize{$6^-$}};
        \node at (6,4.5) [] {$\bullet$};
        \node at (6,4.5) [above right] {\footnotesize{$7^+$}};
        \node at (5.5,4) [] {$\bullet$};
        \node at (5.5,4) [below] {\footnotesize{$7^-$}};
    \end{tikzpicture}\hspace{1.5cm}
    \begin{tikzpicture}[scale=0.5,baseline=0]
        \draw (0,11) node [left] {$7$} --(1,11)--(1,5) -- (6,5) -- (6,0)  -- (7,0); 
        \draw (0,8) node [left] {$3$} -- (3,8) -- (3,3) -- (5,3) -- (5,1) -- (7,1);
        \draw (0,9) node [left] {$2$} -- (2,9) -- (2,2) -- (7,2);
        \draw (0,7) node [left] {$4$} -- (4,7) -- (4,4) -- (7,4);
        \foreach \i in {1,5,6,7}
        {
            \node at (0,11-\i) [left] {$\i$};
        }
        \foreach \i in {1,2,3,4}
        {
            \node at (0,4-\i) [left] {$\i$};
        }
        \node at (1, 10.5) [] {$\circ$};
        \node at (1, 10.5) [above right] {\footnotesize{$1^+$}};
        \node at (3,5.5) [] {$\bullet$};
        \node at (3,5.5) [left] {\footnotesize{$1^-$}};
        \node at (5, 2.5) [] {$\circ$};
        \node at (5, 2.5) [right] {\footnotesize{$2^+$}};
        \node at (4.5,2) [] {$\bullet$};
        \node at (4.5,2) [below] {\footnotesize{$2^-$}};
        \node at (2, 8.5) [] {$\circ$};
        \node at (2, 8.5) [above right] {\footnotesize{$3^+$}};
        \node at (1.5,8) [] {$\bullet$};
        \node at (1.5,8) [below] {\footnotesize{$3^-$}};        
        \node at (3,7.5) [] {$\bullet$};
        \node at (3,7.5) [above right] {\footnotesize{$4^+$}};
        \node at (2.5,7) [] {$\bullet$};
        \node at (2.5,7) [below] {\footnotesize{$4^-$}};
        \node at (4,6.5) [] {$\circ$};
        \node at (4,6.5) [above right] {\footnotesize{$6^+$}};
        \node at (3.5,5) [] {$\bullet$};
        \node at (3.5,5) [below] {\footnotesize{$6^-$}};
        \node at (6,4.5) [] {$\bullet$};
        \node at (6,4.5) [above right] {\footnotesize{$7^+$}};
        \node at (5.5,4) [] {$\bullet$};
        \node at (5.5,4) [below] {\footnotesize{$7^-$}};
    \end{tikzpicture}\hspace{1.5cm}
     \begin{tikzpicture}[scale=0.5,baseline=0]
        \draw (0,11) node [left] {$7$} --(1,11)--(1,5) -- (6,5) -- (6,0)  -- (7,0); 
        \draw (0,9) node [left] {$2$} -- (4,9) -- (4,2) -- (7,2);
        \draw (0,8) node [left] {$3$} -- (3,8) -- (3,3) -- (5,3) -- (5,1) -- (7,1);
        \draw (0,7) node [left] {$4$} -- (2,7) -- (2,4) -- (7,4);
        \foreach \i in {1,5,6,7}
        {
            \node at (0,11-\i) [left] {$\i$};
        }
        \foreach \i in {1,2,3,4}
        {
            \node at (0,4-\i) [left] {$\i$};
        }
        \node at (1, 10.5) [] {$\circ$};
        \node at (1, 10.5) [above right] {\footnotesize{$1^+$}};
        \node at (3.5,3) [] {$\bullet$};
        \node at (3.5,3) [below] {\footnotesize{$1^-$}};
        \node at (5, 2.5) [] {$\circ$};
        \node at (5, 2.5) [above right] {\footnotesize{$2^+$}};
        \node at (4.5,2) [] {$\bullet$};
        \node at (4.5,2) [below] {\footnotesize{$2^-$}};
        \node at (4, 8.5) [] {$\circ$};
        \node at (4, 8.5) [above right] {\footnotesize{$3^+$}};
        \node at (5.5,1) [] {$\bullet$};
        \node at (5.5,1) [below] {\footnotesize{$3^-$}};        
        \node at (3,7.5) [] {$\bullet$};
        \node at (3,7.5) [right] {\footnotesize{$4^+$}};
        \node at (6.5,0) [] {$\bullet$};
        \node at (6.5,0) [below] {\footnotesize{$4^-$}};
        \node at (2,6.5) [] {$\circ$};
        \node at (2,6.5) [right] {\footnotesize{$6^+$}};
        \node at (1.5,5) [] {$\bullet$};
        \node at (1.5,5) [below] {\footnotesize{$6^-$}};
        \node at (6,4.5) [] {$\bullet$};
        \node at (6,4.5) [above right] {\footnotesize{$7^+$}};
        \node at (2.5,4) [] {$\bullet$};
        \node at (2.5,4) [below] {\footnotesize{$7^-$}};
    \end{tikzpicture}
    \caption{On the left is periodic grid pattern for the target necklace. Performing leftward toggles at $1$ and $7$ gives the middle diagram. Performing further leftward toggles at $3$, $4$, and $3$ gives the periodic grid pattern for the source necklace on the right.}
    \label{fig:phi-inv}
\end{figure}
\end{exmp}

\noindent We also obtain the following consequence from the results in this subsection, included here for completeness.

\begin{cor}\label{cor:flag-unfrozen-X} Let $\cP$ be a positroid. Then the flag moduli space $\modsp(\Lambda_\cP)$ is a cluster $\mathcal{X}$-scheme. Each positroid weave $\ww$ for $\cP$ gives a seed with quiver equal to $Q(\ww;\mathfrak{t})$ with frozen vertices deleted.
\end{cor}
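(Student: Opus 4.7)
The strategy is to realize $\modsp(\Lambda_\cP)$ as the unfrozen cluster $\mathcal{X}$-quotient of the cluster $\mathcal{A}$-scheme $\modsp(\Lambda_\cP;\mathfrak{t})$ from Theorem~\ref{thm: frame sheaf moduli = positroid}. The forgetful map $p:\modsp(\Lambda_\cP;\mathfrak{t})\to\modsp(\Lambda_\cP)$ which discards trivializations is the natural candidate for this quotient, and I would first identify its fibers. Concretely, for each pair of base points $i^\pm\in\mathfrak{t}$, rescaling the trivialization of the local system $\mathcal{L}(\cF_\bullet)$ from Construction~\ref{lem:locsys} by a nonzero scalar at the corresponding segment defines a torus action of $T^{\mathrm{fr}}$ on $\modsp(\Lambda_\cP;\mathfrak{t})$. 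The condition $A_{i^+}=A_{i^-}$ in Definition~\ref{defn: framed sheaf moduli space in terms of local systems} must be tracked carefully so that this defines a genuine torus whose orbits are exactly the fibers of $p$.

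For each positroid weave $\ww=\ww(\bG)$, I would then build the $\mathcal{X}$-seed as follows. The underlying toric chart is the image $T_\ww^{\mathrm{unfr}}:=p(T_\ww)$ of the toric chart from Section~\ref{sssec:step1}, which by the construction of Section~\ref{sssec:step1} without decorations corresponds to $H^1(L_\ww;\bC^\times)$. The quiver is obtained from $Q(\ww;\mathfrak{t})$ by deleting the frozen vertices; this is automatically consistent since, by Lemma~\ref{lem:B-is-basis}, the mutable cluster cycles form a basis of $H_1(L_\ww)$ and the intersection pairing restricted to them coincides with the $\mathcal{X}$-exchange matrix. For each mutable face $F$, I define the $\mathcal{X}$-coordinate
\[
X_F \;:=\; \prod_{F'} A_{\eta_{F'}}^{\,\epsilon_{F F'}},
\]
where the product ranges over all faces (mutable and frozen) and $\epsilon$ is the exchange matrix of $Q(\ww;\mathfrak{t})$. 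The key step will be to verify that each $X_F$ is $T^{\mathrm{fr}}$-invariant and hence descends to a regular function on $\modsp(\Lambda_\cP)$. This follows by a direct computation using the explicit description of the $T^{\mathrm{fr}}$-action on trivializations together with the local contribution of each base point to the exchange matrix encoded in Definition~\ref{defn:local intersection pairing}.

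Once this is established, it remains to verify the cluster compatibility: that the $X_F$'s form coordinates on $T_\ww^{\mathrm{unfr}}$, that weave equivalences correspond to isomorphisms of cluster tori, and that weave mutations descend to $\mathcal{X}$-mutations under $p$. Coordinates on $T_\ww^{\mathrm{unfr}}$ follow from the identification with $H^1(L_\ww;\bC^\times)$ and Poincar\'e duality restricted to $H_1(L_\ww)$. Weave equivalence compatibility is inherited from the $\mathcal{A}$-side treated in Section~\ref{subsec:cluster cycles}, and mutation compatibility reduces to the classical fact that under the $p$-map of a cluster ensemble, $\mathcal{A}$-mutation at a mutable index induces $\mathcal{X}$-mutation at the same index, cf.~\cite[Section 2]{FockGoncharov_ensemble}.

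The main technical obstacle I anticipate is the verification that the functions $X_F$ are exactly $T^{\mathrm{fr}}$-invariant and generate the coordinate ring of $T_\ww^{\mathrm{unfr}}$; the potential subtlety is the pairing $A_{i^+}=A_{i^-}$ which cuts the naive trivialization torus down by one dimension per pair of base points, so the cardinalities of mutable faces of $\bG$ and of $H^1(L_\ww;\bC^\times)$ must be matched carefully. This is handled by Lemma~\ref{lem:B-is-basis} together with the identification $H_1(L_\ww)\cong H_1(\bG)$ from the proof of Theorem~\ref{thm:weave_and_conjugate}.
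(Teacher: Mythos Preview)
Your proposal is correct and follows essentially the same route as the paper: both identify the forgetful map $p:\modsp(\Lambda_\cP;\mathfrak{t})\to\modsp(\Lambda_\cP)$ with the cluster ensemble map $\mathcal{A}\to\mathcal{X}^{\mathrm{uf}}$ and read off the $\mathcal{X}$-structure from the already-established $\mathcal{A}$-structure on $\modsp(\Lambda_\cP;\mathfrak{t})$. The only difference is economy---the paper dispatches this in two lines by invoking \cite[Corollary~4.53]{CW}, which already packages the verifications (torus quotient, descent of $X$-coordinates, mutation compatibility) that you propose to carry out by hand.
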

\begin{proof} Following \cite{CW}, the forgetful map $\modsp(\Lambda_\cP;\tt)\longrightarrow \modsp(\Lambda_\cP)$ commutes with the cluster map $p:\mathcal{A}\longrightarrow \mathcal{X}$. It follows from \cite[Corollary 4.53]{CW} that $\modsp(\Lambda_\cP)$ is a cluster $\mathcal{X}$-scheme, and each seed for $\modsp(\Lambda_\cP;\tt)$ gives a seed for $\modsp(\Lambda_\cP)$ whose quiver is obtained by deleting the frozen vertices of the seed for $\modsp(\Lambda_\cP;\tt)$.
\end{proof}

\begin{cor}\label{thm:cluster-ensemble} $\dM(\Lambda_\cP;\tt)$ is a cluster $\mathcal{X}$-scheme with the same initial quiver as $\modsp(\Lambda_\cP;\tt)$. In particular, $\dM(\Lambda_\cP;\tt)$ is the cluster dual of the cluster $\mathcal{A}$-scheme $\modsp(\Lambda_\cP;\tt)$.
\end{cor}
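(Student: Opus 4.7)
The plan is to upgrade Corollary~\ref{cor:flag-unfrozen-X} by adding the frozen $\mathcal{X}$-coordinates arising from the additional data of the isomorphisms $\phi_i$ in Definition~\ref{def:X-with-frozens}. First, I would construct, for each positroid weave $\ww=\ww(\bG)$, an initial toric chart $T^\vee_\ww\subset\dM(\Lambda_\cP;\tt)$. The forgetful map $\dM(\Lambda_\cP;\tt)\to\modsp(\Lambda_\cP)$ is a $(\bC^\times)^r$-torsor, where $r$ is the number of pairs of basepoints in $\mathfrak{t}$, since each $\phi_i$ is an isomorphism between $1$-dimensional vector spaces. Pulling back the toric chart on $\modsp(\Lambda_\cP)$ from Corollary~\ref{cor:flag-unfrozen-X} and combining it with the torsor coordinates yields $T^\vee_\ww\cong(\bC^\times)^{|B|}$, whose rank matches the size of the cluster cycle basis $B$ of $N_\ww$ from Lemma~\ref{lem:B-is-basis}.

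Next, I would exhibit cluster $\mathcal{X}$-coordinates indexed by $B$. For an internal face $F$ of $\bG$, the $Y$-tree $\gamma_F$ already yields a coordinate on $\modsp(\Lambda_\cP)$ via Corollary~\ref{cor:flag-unfrozen-X}, which I pull back to $T^\vee_\ww$. For a boundary face $F$ with basepoints $i^\pm$, the corresponding relative cycle of Figure~\ref{fig: exceptional base points} produces a frozen coordinate $X_{\gamma_F}$ computed as the composition of parallel transport along $\gamma_F$ with the isomorphism $\phi_i$. I would then verify that the exchange matrix of these $X$-coordinates coincides with the skew-symmetric form on $N_\ww$ defining $Q(\ww;\mathfrak{t})$ in Definition~\ref{defn:quiver}. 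The mutable-mutable block is already provided by Corollary~\ref{cor:flag-unfrozen-X}, so only the rows and columns involving frozen cycles remain; this reduces to a local computation of how the parallel transport defining $X_{\gamma_F}$ for frozen $F$ picks up contributions when it crosses a $Y$-tree attached to a mutable cycle.

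Then I would establish cluster ensemble compatibility. The natural lift $\modsp(\Lambda_\cP;\tt)\to\dM(\Lambda_\cP;\tt)$, which sends a pair of trivializations across $i^\pm$ to the induced isomorphism $\phi_i$ between fibers of $\mathcal{L}_{i^+}$ and $\mathcal{L}_{i^-}$, descends to the $p$-map $\modsp(\Lambda_\cP;\tt)\to\modsp(\Lambda_\cP)$ from Corollary~\ref{cor:flag-unfrozen-X} after forgetting $\phi_i$. The identity $X_{\gamma_F}=\prod_{F'}A_{\eta_{F'}}^{\inprod{\gamma_F}{\gamma_{F'}}}$ can then be checked on the toric charts $T_\ww$ and $T^\vee_\ww$ by expanding each side as a monomial in the weave edge/crossing variables. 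Invariance under change of positroid weave follows from Lemma~\ref{lem: turning Y-trees into short I cycles}: any two positroid weave seeds are related by weave equivalences and mutations at short $I$-cycles, and the $\mathcal{X}$-side of the analysis in \cite[Section 4]{CW} shows that these operations transform the $X$-coordinates above by the tropical $\mathcal{X}$-mutation rule.

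The main obstacle is tracking the frozen $X$-coordinates under weave operations that take place near the boundary, since moving weave lines past a basepoint could a priori affect $\phi_i$. This is handled by the fact that all weave mutations at mutable cycles can be isotoped away from $\mathfrak{t}$ using Lemma~\ref{lem: turning Y-trees into short I cycles}, so the isomorphisms $\phi_i$ and the frozen $X_{\gamma_F}$ are unaffected. Together with Theorem~\ref{thm: frame sheaf moduli = positroid}, this yields that the pair $(\modsp(\Lambda_\cP;\tt),\dM(\Lambda_\cP;\tt))$ forms a cluster ensemble with shared quiver $Q(\ww;\mathfrak{t})$, proving the corollary.
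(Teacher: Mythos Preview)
Your construction of the frozen $\mathcal{X}$-coordinates---parallel transport of $\cL$ along the frozen cycle $\gamma_F$ composed with the isomorphism $\phi_i$ between fibres at the paired basepoints---is exactly the paper's, and the strategy of upgrading Corollary~\ref{cor:flag-unfrozen-X} by adjoining these frozens is also the same. The paper's proof is shorter than yours: it does not build the toric chart or verify the $p$-map identity separately, but simply notes (by direct computation) that the frozen merodromies $X_i$ transform under mutation according to the cluster $\mathcal{X}$-formula, and that no \emph{mutable} $\mathcal{X}$-coordinate, in any seed, acquires a frozen $X_i$ as a factor---because the frozen cycles end at basepoints and hence their merodromies see the data $\phi_i$, whereas mutable monodromies are pulled back from $\modsp(\Lambda_\cP)$ and do not. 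That second observation is what allows the mutable part of the structure to be inherited verbatim from Corollary~\ref{cor:flag-unfrozen-X}.

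There is one place where your argument breaks. In the last paragraph you assert that, after isotoping a weave mutation away from $\mathfrak{t}$, the frozen $X_{\gamma_F}$ are \emph{unaffected}. This contradicts the cluster $\mathcal{X}$-mutation formula: whenever there is an arrow between $\gamma_F$ and the mutated cycle $\gamma_k$ (e.g.\ when the boundary face $F$ and the internal face $k$ share an edge in $\bG$), the coordinate $X_{\gamma_F}$ must change by a factor $(1+X_{\gamma_k}^{\pm 1})^{\pm\epsilon_{kF}}$. Geometrically, even if the surgery region avoids $\mathfrak{t}$, it does not avoid the support of the frozen $Y$-tree $\gamma_F$, so the merodromy along $\gamma_F$ genuinely changes. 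The correct statement---and the one the paper uses---runs in the opposite direction: the \emph{mutable} coordinates never pick up the $\phi_i$, so the unfrozen result carries over unchanged, and then one checks separately that the frozens transform by the (birational) cluster $\mathcal{X}$-mutation formula. Relatedly, ``tropical $\mathcal{X}$-mutation rule'' in your third paragraph should read ``cluster $\mathcal{X}$-mutation formula''; the tropical rule is its piecewise-linear shadow, not the birational transformation needed here.
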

\begin{proof} Suppose $\cP$ is a positroid of type $(m,n)$, $\bG$ is a reduced plabic graph associated with $\cP$ and $\ww=\ww(\bG)$ is a positroid weave obtained from $\bG$. In order for $\dM(\Lambda_\cP;\tt)$ to have the same quiver as $\modsp(\Lambda_\cP;\tt)$, we need to specify how the frozen $\mathcal{X}$-coordinates are defined. By the construction above, for each $1\leq i\leq n$, there is a frozen cluster cycle $\gamma_i$ associated with the boundary face sandwiched between $i-1$ and $i$ of $\bG$. This frozen relative 1-cycle $\gamma_i$ starts at the base point $i^-$ and ends at the base point $i^+$, see Figures \ref{fig: exceptional base points} and \ref{fig: base points}. The local system $\cL$ on the exact Lagrangian surface $L_\ww$ provides a well-defined parallel transport along $\gamma_i$. Since the base point $i^-$ is located on a strand with stalks $l_{i-1}$ and the base point $i^+$ is located on a strand with stalks $l_i$, we can compose the parallel transport along $\gamma_i$ with the linear isomorphism $\phi_{i-1}:l_{i-1}\rightarrow l_i$. This composition defines a monodromy $X_i$, or more accurately a merodromy (see \cite[Section 4]{CW}), along $\gamma_i$. This monodromy $X_i$ is the frozen cluster $\mathcal{X}$-coordinate we associated with the frozen cluster cycle $\gamma_i$. A direct computation shows that under mutation these monodromies $X_i$ transform according to the cluster $\mathcal{X}$-transformation formula. Since these frozen cluster $\mathcal{X}$-coordinates occur along relative 1-cycles $\gamma_i$ that are contained in a neighborhood of the boundary, none of the mutable cluster $\mathcal{X}$-coordinates (including those that are not in the initial cluster seed) have them as factors. Therefore, Corollary \ref{cor:flag-unfrozen-X}, which shows that $\modsp(\Lambda_\cP)$ is a cluster $\mathcal{X}$-scheme, implies that $\dM(\Lambda_\cP;\tt)$ is also a cluster $\mathcal{X}$-scheme.
\end{proof}

\section{Donaldson-Thomas Transformation and the Muller-Speyer Twist Map}\label{sec:twist}
The goal of this section is proving Theorem \ref{thm:mainB} and Corollary \ref{cor:quasiequivalence}.

%%%%%%%%%%%%%%%%%%%%%%%%%%%%%%%%%%%%
%%%%%%%%%%%%%%%%%%%%%%%%%%%%%%%%%%%%
%%%%%%%%%%%%%%%%%%%%%%%%%%%%%%%%%%%%
%%%%%%%%%%%%%%%%%%%%%%%%%%%%%%%%%%%%

\subsection{Summary of ingredients} Theorem \ref{thm:mainB} compares two automorphisms of any positroid cell $\Pi^\circ_\cP$: the twist map and the Donaldson-Thomas (DT) transformation.

\begin{itemize}
    \item[(i)] First, let us recall that \cite{MullerSpeyertwist} defined the \emph{twist map} $\tw:\Pi^\circ_\cP\lr\Pi^\circ_\cP$, an automorphism\footnote{The map $\tw$ is called the \emph{right twist map}  in \cite{MullerSpeyertwist}. They also define a \emph{left twist map} using the source Grassmann necklace, and show that the right and the left twist maps are inverses of each other.} on any positroid variety $\Pi^\circ_\cP$. By definition, it acts on matrix representatives by
\begin{align*}
    \tw:\Pi^\circ_\cP&\longrightarrow \Pi^\circ_\cP\\
    [v_1,v_2,\dots, v_n]&\longmapsto \left[\Delta_1^{-1}\alpha_1,\Delta_2^{-1}\alpha_2,\dots \Delta_n^{-1}\alpha_n\right],
\end{align*}
where $\alpha_i:=\overrightarrow{\bigwedge}_{j\in\overrightarrow{I_i}\setminus \{i\}}v_j$ is a wedge of the $m-1$ column vectors indexed by $\overrightarrow{I_i}\setminus \{i\}$ according to the ordering $<_i$, viewed as a vector in $(\bC^m)^*\cong \bC^m$.\\

\item[(ii)] Independently, \cite{GL} shows that plabic graph seeds are part of a cluster structure on positroid varieties. By Theorem~\ref{thm:weave construction}, plabic graph quivers coincide with the quiver from a positroid weave. By Theorem \ref{thm:Demazure weave}, each positroid weave is equivalent to a complete Demazure weave. In consequence, the results from \cite[Section 8]{CGGLSS} apply and provide Donaldson-Thomas transformations for $\modsp(\Lambda_\cP)$, $\modsp(\Lambda_\cP;\tt)$, and thus $\Pio_\cP$. All these DT transformations are quasi-cluster automorphisms.\\
\end{itemize}

\noindent We take the following two steps to prove Theorem \ref{thm:mainB}:\\
\begin{enumerate}
    \item Describe the Donaldson-Thomas automorphism $\DT:\modsp(\Lambda_\cP;\tt)\lr \modsp(\Lambda_\cP;\tt)$ in a contact geometric manner, using a front diagram for the Legendrian link $\Lambda_\cP$ and a contact isotopy that induces DT in the flag moduli. This is the content of Subsection \ref{subsec: review of DT}. This description in the context of Legendrian links and the cluster theory of their associated microlocal moduli is in line with our previous work \cite[Section 5]{CW}, see also \cite[Section 8]{CGGLSS}.\\

    \item Show that the following diagram commutes and prove Theorem~\ref{thm:mainB}.%\DW{look up how to color the middle arrow}\RC{I put the color already.}
    \begin{equation}\label{eq:big-sq}
    \xymatrix{\Pi^\circ_\cP\ar[r]^(0.4)\Phi_(0.4)\cong \ar[d]_\tw & \modsp(\Lambda_\cP;\tt)\ar[r]^p \ar@[blue][d]^\DT & \modsp(\Lambda_\cP) \ar[d]^\DT \\
    	\Pi^\circ_\cP \ar[r]^(0.4)\Phi_(0.4)\cong & \modsp(\Lambda_\cP;\tt)\ar[r]^p & \modsp(\Lambda_\cP)}
    \end{equation}
The middle blue arrow $\DT$ in the above diagram is the Donaldson-Thomas transformation for $\modsp(\Lambda_\cP;\tt)$. We show the commutativity of this diagram by using the geometric description of $\DT$ in item (1). This is the content of Subsection \ref{ssec:unfrozen DT}, which proves that the outer rectangular diagram commutes, and Subsection \ref{ssec:proof_TheoremB}, that establishes the remaining commutativity. In particular, this implies that the twist map $\tw:\Pio_\cP\lr\Pio_\cP$ is a DT-transformation, and thus a quasi-cluster automorphism.
\end{enumerate}
%\RC{Dashed?}\MSB{If Roger's comment means ``Why is this dashed when it's a biregular map?" then I agree with him and think it should not be dashed.}\RC{Yes, that's what it meant, thanks.}

%\DW{Add the statement that the undecorated sheaf moduli space is isomorphic to the version of braid variety with no frozen.}

\color{black}
%%%%%%%%%%%%%%%%%%%%%%%%%%%%%%%%%%%%
%%%%%%%%%%%%%%%%%%%%%%%%%%%%%%%%%%%%
%%%%%%%%%%%%%%%%%%%%%%%%%%%%%%%%%%%%
%%%%%%%%%%%%%%%%%%%%%%%%%%%%%%%%%%%%

\subsection{Donaldson-Thomas transformations on Flag Moduli Spaces}\label{subsec: review of DT}
In this subsection we introduce the building blocks of the DT transformation on flag moduli spaces. In particular, we explain how the DT automorphisms of $\modsp(\Lambda_{\cP})$, $\modsp(\Lambda_{\cP};\tt)$ and $\dM(\Lambda_{\cP};\tt)$ are induced by a Legendrian isotopy of $\Lambda_{\cP}$ composed with a contactomorphism (together with moving the base points in the latter two cases): specifically, by a half K\'{a}lm\'{a}n loop rotation and a reflection. In this subsection and hereafter, we endow the ring $\bC[\modsp(\Lambda_{\cP};\tt)]$ with the cluster algebra structure constructed in Section~\ref{subsec:phi map}. We also endow $\modsp(\Lambda_\cP)$ and $\dM(\Lambda_{\cP};\tt)$ with the $\cX$-cluster structures of Corollaries~\ref{cor:flag-unfrozen-X} and~\ref{thm:cluster-ensemble}, respectively.

%%%%%%%%%%%%%%%%%%%%%%%%%%%%%%%%%%%%
%%%%%%%%%%%%%%%%%%%%%%%%%%%%%%%%%%%%
\subsubsection{DT transformation on $\modsp(\Lambda_{\cP})$} The results from \cite[Section 8.2]{CGGLSS} allow us to  describe the DT transformation for $\modsp(\Lambda_{\cP})$ in this contact geometric manner. See also the cases  previously discussed in \cite[Section 5]{CW}. The first result is the following:
%\noindent Let us first state what we need about DT on $\modsp(\Lambda_\cP)$ applying results from \cite{CGGLSS}, cf.~ also \cite[Section 5]{CW}. The rest of this subsection discusses the necessary material on rotations and reflections.

\begin{prop}\label{prop:braid-var-to-flag-moduli} Let $\cP$  be a positroid and $\Lambda_{\cP}$  its associated Legendrian link.
Then the cluster $\cX$-variety $\modsp(\Lambda_{\cP})$ admits a DT transformation. In addition, there exists a contact isotopy in $(\bR^3,\xi_{st})$, consisting of a sequence of rotations on $\Lambda_{\cP}$, and a contactomorphism of $(\bR^3,\xi_{st})$, consisting of a reflection, such that their composition induces the DT transformation on $\modsp(\Lambda_{\cP})$.
\end{prop}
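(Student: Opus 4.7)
The plan is to invoke the general machinery developed in \cite[Section 8]{CGGLSS} for complete Demazure weaves, and then translate the output back to the setting of positroid Legendrian links. Since Theorem~\ref{thm:Demazure weave} guarantees that every positroid weave $\ww(\bG)$ is equivalent to a complete Demazure weave with boundary $\beta_\cP$, the hypotheses of \cite[Theorem~8.1]{CGGLSS} are met. This yields a DT transformation on the braid variety/flag moduli whose square is realized by a full cyclic rotation of the positive braid word $\beta_\cP$ about the Legendrian link $\Lambda_\cP$. Combining this with Corollary~\ref{cor:flag-unfrozen-X}, which endows $\modsp(\Lambda_\cP)$ with a cluster $\cX$-structure whose initial quiver coincides with the quiver of $\ww(\bG)$ (with frozens deleted), gives the existence of the DT transformation on $\modsp(\Lambda_\cP)$ as a quasi-cluster automorphism.

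Having established existence, the second half of the statement is a geometric description. Here I would follow the strategy of \cite[Section 5]{CW} in the grid plabic case, generalized via \cite[Section 8.2]{CGGLSS}. Concretely, let $\beta_\cP = \beta_1\beta_2\cdots \beta_n$ be the positroid braid word. I will describe an ambient contact isotopy of $(\bR^3,\xi_{st})$ supported in a neighborhood of $\Lambda_\cP$ that cyclically slides the letters of $\beta_\cP$ one half-turn around the $(-1)$-closure, i.e.\ that realizes a half K\'alm\'an loop of $\Lambda_\cP$. Composed with the standard reflection contactomorphism (reversing the cyclic direction), the resulting self-Legendrian-isotopy of $\Lambda_\cP$ induces, via the invariance noted in Subsection~\ref{subsec:invariance}, an automorphism of $\modsp(\Lambda_\cP)$.

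The key step is to verify that this induced automorphism agrees with the cluster-theoretic DT transformation on the initial seed given by $\ww(\bG)$. For this, I would use the weave description of $\modsp(\Lambda_\cP)$: tracking the toric chart $T_\ww$ through the half rotation plus reflection and computing its effect on the basis of $H_1(L_\ww)$ dual to the cluster $\cX$-coordinates. By Theorem~\ref{thm:Demazure weave}, the weave $\ww(\bG)$ can be redrawn as a complete Demazure weave, and in the Demazure normal form the effect of a half cyclic rotation on cycles is precisely the reddening sequence computed in \cite[Theorem~8.1]{CGGLSS}. Composing with reflection flips the sign and produces the DT transformation $-T^{-1}$ in the cluster modular group sense. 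Uniqueness of the DT transformation, together with the fact that it is determined by its action on a single seed, then identifies the geometrically-induced automorphism with $\DT$.

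The main obstacle is the explicit verification that the ambient contact isotopy produces the correct quasi-cluster mutation sequence. One must keep careful track of how the weave lines and their Lusztig cycles transform under the half-rotation, including how the boundary braid word is reshuffled by the Reidemeister~III moves (toggles of Definition~\ref{def:toggle}) that accompany the rotation. Since this mirrors the calculation in \cite[Section 8.2]{CGGLSS} almost verbatim, with positroid-braid-specific bookkeeping, I expect no conceptual difficulty but a nontrivial amount of diagrammatic checking, best organized by reducing to the local model of a single letter of $\beta_\cP$ crossing the ``seam'' of the $(-1)$-closure. Once done, the remaining assertion, that the composite is a quasi-cluster automorphism in the sense required for DT, follows immediately from the general results of \cite{CW,CGGLSS}.
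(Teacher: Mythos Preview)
Your proposal is correct and uses the same essential ingredients as the paper: Theorem~\ref{thm:Demazure weave} to reduce to a complete Demazure weave, Corollary~\ref{cor:flag-unfrozen-X} for the $\cX$-structure, and \cite[Section~8]{CGGLSS} for the DT on braid varieties and its rotation-plus-reflection realization.

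The one place where the paper is more efficient is in the step you flag as the ``main obstacle.'' Rather than tracking Lusztig cycles through the half-rotation and verifying the reddening sequence diagrammatically, the paper observes that once $\beta_\cP$ is written as $\gamma w_0$ (Proposition~\ref{prop: containing w_0}), there is a quotient map $q:X(\gamma)\to\modsp(\Lambda_\cP)$ from the braid variety which, at the level of seeds, is exactly the map forgetting the frozen $\cX$-coordinates. Since condition~(i') of Definition~\ref{defn: DT} says DT is detected on the unfrozen $\cX$-scheme, the DT transformation on $X(\gamma)$ descends along $q$ automatically, and so does its geometric realization from \cite[Section~8.2]{CGGLSS}. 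This sidesteps all of the cycle-by-cycle verification you anticipate; no local-model analysis near the seam is needed.
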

\begin{proof} 
By Corollary~\ref{cor:flag-unfrozen-X}, $\modsp(\Lambda_{\cP})$ is a cluster $\cX$-variety with seeds given by positroid weaves. By Theorem \ref{thm:Demazure weave}, any positroid weave is equivalent to a Demazure weave, which defines a cluster seed for a braid variety \cite{CGGLSS}. In our case, let $\beta_\cP$ be the positive braid word associated with a positroid $\cP$. By Proposition \ref{prop: containing w_0},  $\beta_\cP$  can be assumed to be of the form $\gamma w_0$ for some positive braid word $\gamma$, after applying braid moves and cyclic rotations. Then, any positroid weave $\ww$ associated with $\cP$ is equivalent to a Demazure weave $\wv$ for the braid variety $X(\gamma)$. By \cite[Theorem 8.6]{CGGLSS}, the braid variety is a cluster $\mathcal{X}$-variety. By construction there is a quotient map $q:X(\gamma)\longrightarrow \modsp(\Lambda_\cP)$; it can be described by comparing the seeds for $X(\gamma)$ and $\modsp(\Lambda_\cP)$ that come from $\wv$ and $\ww$ respectively. This quotient map coincides with the cluster-theoretic map defined by forgetting the frozen $\mathcal{X}$-coordinates. By Definition~\ref{defn: DT}, the DT transformation on $X(\gamma)$ descends to the DT transformation on $\modsp(\Lambda_\cP)$ via the quotient map $q$. By \cite[Section 8.2]{CGGLSS}, the DT on the braid variety $X(\gamma)$ is given by a contact isotopy, induced by a Legendrian isotopy  given by rotations, followed by a reflection contactomorphism. Therefore, DT descends to the DT transformation on $\modsp(\Lambda_\cP)$ defined by the same contact isotopy followed by the same contactomorphism.
\end{proof}

In Subsections~\ref{ssec:rotation} and \ref{ssec:reflection}, we describe the contact isotopy referred to as a \emph{rotation} and the contactomorphism referred to as a \emph{reflection} in Proposition~\ref{prop:braid-var-to-flag-moduli}. Simultaneously, we generalize the results from \cite{CGGLSS} in the presence of base points. This generalization allows us to construct $\DT$ on the decorated moduli $\modsp(\Lambda_\cP; \tt)$.\\

%\noindent There are two biregular morphisms between flag moduli spaces that serve as building blocks for DT transformations. One of them is induced by a Legendrian isotopy that we refer to as a \emph{rotation}. The other one is induced by a contactomorphism of the standard contact $\bR^3$, referred to as a {\it reflection}.

%\medskip

%%%%%%%%%%%%%%%%%%%%%%%%%%%%%%%%%%%%
%%%%%%%%%%%%%%%%%%%%%%%%%%%%%%%%%%%%
\subsubsection{General flag moduli $\modsp(\Lambda; \tt)$} Individually, rotations and reflections do not induce automorphisms of a flag moduli. Rather, they  induce isomorphisms between two algebraic varieties, each of which can be described in a manner similar to our previous flag moduli. It is only the composition of several such isomorphisms that leads to an automorphism.  The next task is to define these intermediate algebraic varieties, which are a generalization of the flag moduli spaces introduced thus far.

\begin{defn}\label{def:general-modsp} Let $S:= \{s_1, \dots, s_{m-1}\}$ and $T:= \{\tau_1, \dots, \tau_m\}$.
Fix $\alpha=a_1 \dots a_l$ a word in $S \cup T$ with an even number of letters $a_{i_1}, \dots, a_{i_{2r}}\in T$ and label the letters in $\alpha$ from $T$ by fixing a bijection $p:\{1^+, 1^-, \dots, 2r^+, 2r^-\} \to \{i_1, \dots i_{2r}\}$. We also fix a sign $\sigma_i \in \{\pm1\}$ for each $i \in [r]$.\\

\noindent By definition, the flag moduli space  associated to such a triple $(\alpha;p, \sigma)$ is
\[\modsp(\alpha;p, \sigma):= \{(\cF_0, \cF_1, \dots, \cF_l) \in (\widetilde{\Fl}_m)^{l+1} : \cF_0=\cF_l \text{ and } (**)  \}/\GL_m\]
where $(**)$ is the following set of three conditions, which must hold for all $j \in [l]:$
\begin{enumerate}
    \item If $a_j=s_i$, then $\cF_{j-1} \doublearrow{i} \cF_{j}$.
    \item If $a_j= \tau_i$, then $\cF_{j-1}$ and $\cF_j$ are $\tau_i$-scaled.
    \item If $a_{p(i^+)}=a_j=\tau_c$ and $a_{p(i^-)}=a_k=\tau_d$, then 
    \[\displaystyle\frac{\left(v_{\cF_{j}}\right)_c}{\left(v_{\cF_{j-1}}\right)_c}= \sigma_i \frac{\left(v_{\cF_{k-1}}\right)_d}{\left(v_{\cF_{k}}\right)_d}. 
    \]
\end{enumerate}
\hfill$\Box$
\end{defn}

\begin{defn}\label{def:general-modsp-leg}  Let $(\alpha;p, \sigma)$ be a triple as in Definition \ref{def:general-modsp}. From $\alpha$, we consider the following braid diagram with base points: each $s_i$ gives a crossing and each $\tau_j= a_{p(i^\pm)}$ gives the base point $i^{\pm}$ on strand $j$. Let $\Lambda_\alpha\sse(\bR^3,\xi_{st})$ be the $(-1)$-closure of this braid diagram and $\tt_\alpha$ the  associated set of base points. Via Construction~\ref{lem:locsys}, each point of $\modsp(\alpha; p, \sigma)$ gives a local system on $\Lambda_\alpha$ with trivializations on $\Lambda_\alpha\setminus \tt_\alpha$. By definition, the moduli space $\modsp(\Lambda_\alpha;\tt_\alpha)$ is defined to be $\modsp(\alpha; p, \sigma)$, where $\sigma$ is chosen so that the microlocal merodromies $A_{i^+},A_{i^-}$ are equal for pairs of basepoints $(i^+, i^-)$ paired according to $p$.
\hfill$\Box$
\end{defn}

Definition~\ref{defn:framed-arrangement} of $\modsp(\Lambda_\cP; \tt)$ is a special case of Definition~\ref{def:general-modsp-leg} for the choice $$\alpha:=a_{p(1^+)} a_{p(1^-)} \beta_1 a_{p(2^+)} a_{p(2^-)} \beta_2 \cdots a_{p(n^+)} a_{p(n^-)} \beta_n,$$ where $a_{p(i^+)}= \tau_m$, $a_{p(i^-)}$ is $\tau_m$ if $l(\beta_i)=0$ and is $\tau_{m-1}$ otherwise, and the signs $\sigma_i$ are chosen so that the two microlocal merodromies $A_{i^+}=A_{i^-}$ coincide for each pair of base points. Finally, Definition~\ref{def:general-modsp-leg} can be modified in the same vein as Definition \ref{def:X-with-frozens}  so as to define the clipped versions $\dM(\alpha; p, \sigma)$ and $\dM(\Lambda_\beta;\tt)$.

%%%%%%%%%%%%%%%%%%%%%%%%%%%%%%%%%%%%
%%%%%%%%%%%%%%%%%%%%%%%%%%%%%%%%%%%%
\subsubsection{The rotation isotopy.}\label{ssec:rotation}
In this subsection, we describe the rotation isotopy for $(-1)$-closures of positive braid words with $w_0$ as a consecutive subword. By Proposition~\ref{prop: containing w_0}, any positroid braid word $\beta_\cP$ is equivalent to a braid of this form. We also recall the notation $s_{i^*}:=w_0 s_i w_0= s_{m-i}$ if $s_i\in S_m$.

%Given a link $\Lambda$ which is the $(-1)$-closure of $\gamma w_0$, one may use Reidemeister II moves to turn the $(-1)$-cusps on the right of the front diagram to rainbow cusps. See Figure~\ref{fig: rainbow cusps vs -1 cusps} for an illustration. We will use the diagram with rainbow cusps to define rotation. \MSB{This seems unnatural to me, since to deal with base points we actually want to leave the $w_0$ where it is.}

% \begin{figure}[H]
% \centering
% \begin{tikzpicture}[baseline=23]
% \draw (-0.5,-0.1) rectangle node [] {$w_0$} (0.5,0.7);
% \foreach \i in {0,1,2}
% {
% \draw (-1,\i*0.3) -- (-0.5,\i*0.3);
% \draw (0.5,\i*0.3) -- (1,\i*0.3) to [out=0,in=180] (2,0.7+0.3*\i) to [out=180,in=0] (1,1.4+\i*0.3) -- (-1,1.4+\i*0.3);
% }
% \end{tikzpicture} \quad $\leftrightsquigarrow$ \quad 
% \begin{tikzpicture}[baseline=23]
% \foreach \i in {0,1,2}
% {
% \draw (0,\i*0.3) -- (1,\i*0.3) to [out=0,in=180] (3-\i*0.6,1) to [out=180,in=0] (1,2-\i*0.3) -- (0,2-\i*0.3);
% }
% \end{tikzpicture}
% \caption{On the left, part of the front diagram for the $(-1)$-closure of a positive braid ending in a word for $w_0$. On the right, rainbow cusps after applying a sequence of Reidemeister II moves. These two fronts are homotopic relative to their boundaries. Thus, the Legendrian tangles they define are Legendrian isotopic relative to their boundaries.}
% \label{fig: rainbow cusps vs -1 cusps}
% \end{figure}

\begin{defn}[Rotation]
Let $\Lambda_\beta \subset (\bR^3,\xi_{st})$ be the $(-1)$-closure of a positive braid word $\beta$. Suppose that $\beta$ is of the form $\beta=\beta_1 s_iw_0\beta_2$, where $\beta_1,\beta_2$ are positive braid words. By definition, a \emph{rotation} at  the crossing $s_i$ is the Legendrian isotopy given by any composition of Reidemeister III moves which takes $\beta_1 s_iw_0\beta_2$ to $\beta_1w_0 s_{i^*}\beta_2$. By definition, a Reidemeister III move acts on the set of base points according to Figure~\ref{fig: movement of base points in RIII},  leaving all the base points outside of that region  invariant. By definition, a rotation acts on the set of base points basepoints $\tt$  of $\Lambda_\beta$  by composing the action of its Reidemeister III moves.\hfill$\Box$
\end{defn}
\begin{comment}
\begin{figure}[H]
    \centering
    \begin{tikzpicture}[baseline=18]
        \draw (0,0.5) to [out=0,in=180] (1,0) to [out=0,in=180] (2.25,0.75) to [out=180,in=0] (1,1.5) -- (0,1.5);
        \draw (0,0) to [out=0,in=180] (1,0.5) to [out=0,in=180] (1.5,0.75) to [out=180,in=0] (1,1) -- (0,1);
    \end{tikzpicture}\quad $\leftrightsquigarrow$ \quad 
    \begin{tikzpicture}[baseline=18]
        \draw (0,0.5) to [out=0,in=180] (1,0) to [out=0,in=180] (2,0.5) to [out=180,in=0] (1,1) to [out=180,in=0] (0,1.5);
        \draw (0,0) to [out=0,in=180] (1,0.5) to [out=0,in=180] (2,1) to [out=180,in=0] (1,1.5) to [out=180,in=0] (0,1);
    \end{tikzpicture}\quad $\leftrightsquigarrow$ \quad 
     \begin{tikzpicture}[baseline=18]
        \draw (0,0) to [out=0,in=180] (1,0) to [out=0,in=180] (2.25,0.75) to [out=180,in=0] (1,1.5) to [out=180,in=0] (0,1);
        \draw (0,0.5) to [out=0,in=180] (1,0.5) to [out=0,in=180] (1.5,0.75) to [out=180,in=0] (1,1) to [out=180,in=0] (0,1.5);
    \end{tikzpicture}
    \caption{An example of a rotation isotopy on the right of a front diagram. \MSB{Change figure to have $w_0$ and $(-1)$ cusps}}
    \label{fig: rotation isotopy}
\end{figure}
\end{comment}
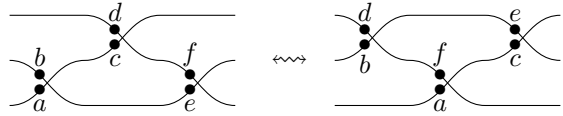
\begin{figure}[H]
\centering
\begin{tikzpicture}[baseline=15]
\draw (0,0) to [out=0,in=180] (1,0.6) to [out=0,in=180] (2,1.2) -- (3,1.2);
\draw (0,0.6) to [out=0,in=180] (1,0)  -- (2,0) to [out=0,in=180] (3,0.6);
\draw (0,1.2) -- (1,1.2) to [out=0,in=180] (2,0.6) to [out=0,in=180] (3,0);
\node (a) at (0.4,0.2) [] {$\bullet$};
\node (b) at (0.4,0.4) [] {$\bullet$};
\node (c) at (1.4,0.8) [] {$\bullet$};
\node (d) at (1.4,1) [] {$\bullet$};
\node (e) at (2.4,0.2) [] {$\bullet$};
\node (f) at (2.4,0.4) [] {$\bullet$};
\foreach \i in {a,c,e}
{
\node at (\i) [below] {$\i$};
}
\foreach \i in {b,d,f}
{
\node at (\i) [above] {$\i$};
}
\end{tikzpicture}
\quad $\leftrightsquigarrow$\quad 
\begin{tikzpicture}[baseline=15]
\draw (0,0) -- (1,0) to [out=0,in=180] (2,0.6) to [out=0,in=180] (3,1.2);
\draw (0,0.6) to [out=0,in=180] (1,1.2)  -- (2,1.2) to [out=0,in=180] (3,0.6);
\draw (0,1.2) to [out=0,in=180] (1,0.6) to [out=0,in=180] (2,0) -- (3,0);
\node (b) at (0.4,0.8) [] {$\bullet$};
\node (d) at (0.4,1) [] {$\bullet$};
\node (a) at (1.4,0.2) [] {$\bullet$};
\node (f) at (1.4,0.4) [] {$\bullet$};
\node (c) at (2.4,0.8) [] {$\bullet$};
\node (e) at (2.4,1) [] {$\bullet$};
\foreach \i in {b,a,c}
{
\node at (\i) [below] {$\i$};
}
\foreach \i in {d,f,e}
{
\node at (\i) [above] {$\i$};
}
\end{tikzpicture}
\caption{Movement of base points under Reidemeister III moves. Note that the cyclic order of basepoints along a strand is preserved.}
    \label{fig: movement of base points in RIII}
\end{figure}

\begin{lem}\label{lem:rotation_action} Let $\Lambda_\beta \subset (\bR^3,\xi_{st})$ be the $(-1)$-closure of a positive braid word $\beta$ and $\tt$ a set of base points. Suppose that $\beta$ is of the form $\beta=\beta_1 s_iw_0\beta_2$, where $\beta_1,\beta_2$ are positive braid words. Let $(\Lambda',\tt')$  be the result of applying a rotation at the crossing $s_i$ of $(\Lambda_\beta,\tt)$, and thus $\La'=\La_{\beta'}$ with $\beta'=\beta_1w_0 s_{i^*}\beta_2$. Then:

\begin{enumerate}
    \item Rotation at the crossing $s_i$ induces an isomorphism $r_i:\modsp(\Lambda_\beta;\tt)\lr \modsp(\Lambda';\tt')$.\\

    \item At the level of flags, $r_i$ acts according to
    \[(\cF_0\to \cdots \cF_{j-1} \xrightarrow{s_i} \cF_{j} \xrightarrow{w_0} \cF_{j+1} \to \cdots \to \cF_0) \mapsto (\cF_0\to \cdots \cF_{j-1} \xrightarrow{w_0} \cF_{j}' \xrightarrow{s_{i^*}} \cF_{j+1} \to \cdots \to \cF_0)\]
where $\cF_j'$ is the unique flag such that $\cF_{j} \xrightarrow{s_i w_0} \cF_j' \xrightarrow{s_{i^*}} \cF_{j+1}$.\\

%\item $r_i$ preserves the isomorphism type of the induced $\GL_1(\bC)$-local systems of Construction~\ref{lem:locsys} when applied to $\La_\beta$ and $\La'$.
\item Let $\cF_\bullet \in \modsp(\Lambda_\beta;\tt)$ and define $\cF_\bullet':= r_i(\cF_\bullet) \in \modsp(\Lambda_\beta';\tt')$.  Let $\cL(\cF_\bullet), \cL(\cF_\bullet')$ denote the induced $\GL_1(\bC)$-local systems on $\La$ and $\La'$ with trivializations away from the base points, described in Construction~\ref{lem:locsys}. Then $\cL(\cF_\bullet)$ and $\cL(\cF_\bullet')$ are isomorphic.% \MSB{This item did not say anything about trivializations originally. I modified it to what I think was intended.}
\end{enumerate}
In addition, the statements above also hold for the clipped moduli space $\dM(\Lambda_\beta;\tt)$.
\end{lem}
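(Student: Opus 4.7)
The plan is to decompose the rotation into a sequence of elementary Reidemeister III moves and verify each statement one move at a time. Each elementary Reidemeister III move takes a local subword of the form $s_j s_k s_j$ to $s_k s_j s_k$ (with $|j-k|=1$) or commutes distant generators; by the invariance discussed in Subsection~\ref{subsec:invariance}, each such move induces a canonical isomorphism between the corresponding flag moduli (with the base points moving as in Figure~\ref{fig: movement of base points in RIII}). Composing gives the isomorphism $r_i$ of part~(1), provided we verify that the resulting map is well-defined on the decorated moduli. The fact that rotation at the crossing $s_i$ takes $\beta_1 s_i w_0 \beta_2$ to $\beta_1 w_0 s_{i^*} \beta_2$ is the standard observation that $s_i \cdot w_0$ and $w_0 \cdot s_{i^*}$ are two reduced words for the same element $s_i w_0 = w_0 s_{i^*}$ of the symmetric group, so they are connected by a sequence of Reidemeister III moves.

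For part~(2), we argue as follows. Before the rotation, the sub-chain $\cF_{j-1} \xrightarrow{s_i} \cF_j \xrightarrow{w_0} \cF_{j+1}$ is a chain of flags witnessing that $\cF_{j-1}$ and $\cF_{j+1}$ are in relative position $s_i w_0 = w_0 s_{i^*}$. Since $w_0 s_{i^*}$ is reduced, there is a unique intermediate flag $\cF_j'$ with $\cF_{j-1} \xrightarrow{w_0} \cF_j' \xrightarrow{s_{i^*}} \cF_{j+1}$. This existence and uniqueness is  the standard fact that reduced subwords of $w_0$ give unique chains of flags once the endpoints are fixed (cf.~\cite[Lemma 2.5]{ShenWeng}). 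To see that $r_i$ acts exactly this way, we trace through the elementary Reidemeister III moves: each one replaces a reduced subchain of length two or three in the $\beta$-chain with the unique reduced chain connecting the same endpoint flags in the new reduced order. Composing, the net effect on the intermediate flags in the interval between positions $j-1$ and $j+1$ is exactly the replacement $\cF_j \rightsquigarrow \cF_j'$ described, while all flags outside that interval are unchanged. The characterization $\cF_j \xrightarrow{s_i w_0} \cF_j' \xrightarrow{s_{i^*}} \cF_{j+1}$ follows because composing the initial chain $\cF_j \xrightarrow{w_0} \cF_{j+1}$ with its replacement $\cF_j \xrightarrow{w_0 s_{i^*}} \cF_j' \xrightarrow{s_{i^*}} \cF_{j+1}$ gives the indicated reduced factorization.

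For part~(3), by Construction~\ref{lem:locsys} each point $\cF_\bullet$ determines a local system $\cL(\cF_\bullet)$ on the link $\Lambda_\beta$ with trivializations along the segments of $\Lambda_\beta \setminus \tt$, where the transition isomorphisms at crossings are built from the maps $\rho, \lambda$ in \eqref{eq:rho}, \eqref{eq:lambda}. The elementary Reidemeister III moves are Legendrian isotopies, and at the sheaf level they induce equivalences of the dg-category of constructible sheaves singularly supported on the link, as recalled in Subsection~\ref{subsec:invariance}. Under such an equivalence the microlocal monodromy local system is preserved up to a canonical isomorphism; the combinatorial description of this isomorphism is exactly the identification of fibers of $\cL(\cF_\bullet)$ and $\cL(\cF_\bullet')$ over corresponding arcs, with the trivializations on $\Lambda \setminus \tt$ matching the trivializations on $\Lambda' \setminus \tt'$ via the base-point movement of Figure~\ref{fig: movement of base points in RIII}. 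Together with the fact that the sign parameters $\sigma$ are forced by the equality $A_{i^+} = A_{i^-}$ and this equality is preserved under isotopy, this produces the required isomorphism of decorated local systems.

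The step I expect to be the main technical obstacle is part~(3) — specifically, verifying that under each elementary Reidemeister III move the microlocal data on $\Lambda \setminus \tt$ and on $\Lambda' \setminus \tt'$ are identified compatibly with the base-point rule in Figure~\ref{fig: movement of base points in RIII}. The issue is that the base points on the middle strand of the Reidemeister III configuration are moved across a cusp-like region, so one must confirm that the parallel transports and the decoration-scaling conditions $(**)$ behave coherently under this move; this reduces to a local computation at each RIII vertex, matching the compatibility across a crossing encoded in Definition~\ref{defn:compatible decoration}. The statement for the clipped version $\dM(\Lambda_\beta;\tt)$ then follows formally, since the clipping data (the fiberwise isomorphisms $\phi_i$) are attached to pairs of base points which move coherently along the Legendrian isotopy.
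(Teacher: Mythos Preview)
Your proposal is correct and follows essentially the same decomposition-into-Reidemeister-III-moves approach as the paper for Parts~(1) and~(2). The one place where you diverge is Part~(3): you invoke the sheaf-theoretic equivalence under Legendrian isotopy and then flag as the ``main technical obstacle'' a local verification of parallel transports and decoration-scaling across each RIII move. The paper's argument here is much lighter: it simply observes that the base-point rule of Figure~\ref{fig: movement of base points in RIII} preserves the cyclic ordering of base points along each link component. Hence the connected components of $\Lambda\setminus\tt$ and of $\Lambda'\setminus\tt'$ are in canonical bijection, and the trivialization on each segment transports directly to the corresponding segment after the move; no crossing-by-crossing compatibility check is needed. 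Your worry about base points ``moving across a cusp-like region'' does not arise, since an RIII move involves no cusps and the strands retain their identity throughout. So your argument works, but you are doing more than necessary; the order-preservation observation is the clean way to close Part~(3).
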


\begin{proof}
Part (1)  follows from the fact that any Legendrian isotopy is invertible, as is the action on base points given by  Figure~\ref{fig: movement of base points in RIII}.  In this case, each Reidemeister III move induces an isomorphism, see e.g.~ \cite[Section 5.3]{CZ} or \cite[Section 4.1.2]{CasalsNg}, and thus their composition induces an  isomorphism. For Part (2), \cite[Section 5.3]{CZ}  provides an explicit formula for the action of a Reidemeister III on flags.  The formula in Part (2)  is directly obtained by composing the formulas for the corresponding sequence of Reidemeister III moves.

For Part (3), note that the  the action given by according to Figure~\ref{fig: movement of base points in RIII} on base points is such that the relative ordering of base points along each link component remains unchanged. Therefore, the trivializations away from base points before the rotation induce trivializations away from base points after the rotation. This defines an isomorphism of the local systems as required.  Finally, the arguments for Parts (1), (2) and (3)  are identical in the case of $\dM(\Lambda_\beta;\tt)$.
\end{proof}

By \cite[Section 8.1]{CGGLSS}, the flag moduli space $\modsp(\Lambda_\beta)$ carries a  Poisson structure. It is inherited from the Poisson structure on a braid variety via the quotient map appearing in the proof of Proposition \ref{prop:braid-var-to-flag-moduli}. At a toric $\mathcal{X}$-chart given by a filling $L$ of $\Lambda$, the Poisson bracket is precisely the intersection form in $H_1(L,\bZ)$ and the cluster $\mathcal{X}$-variables are microlocal monodromies, see \cite[Section 7.2.1]{CZ} and \cite[Section 4.4]{CW}. In such a seed, the Poisson bracket is log-canonical when expressed in terms of the microlocal monodromies $\{X_i\}$ around the corresponding absolute cycles in the seed:
\begin{equation}\label{eq:log canonical bracket}
\{X_\gamma,X_\delta\}=\inprod{\gamma}{\delta}X_\gamma X_\delta,
\end{equation}
where $\inprod{\gamma}{\delta}$ denotes the intersection pairing between $\gamma$ and $\delta$, considered as absolute 1-cycles in $H_1(L,\bZ)$.

\noindent The same holds for the clipped moduli spaces $\dM(\Lambda_\beta;\tt)$. Using the notation above, we have:

\begin{lem}\label{lem:poisson}
The isomorphisms $r_i:\modsp(\Lambda_\beta)\lr \modsp(\Lambda')$ and $r_i:\dM(\Lambda_\beta;\tt)\lr \dM(\Lambda';\tt')$ induced by a rotation preserve the Poisson brackets.
\end{lem}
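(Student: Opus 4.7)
The plan is to reduce the statement to the fact that the Poisson structure is intrinsic to the contact-geometric/sheaf-theoretic description of the moduli spaces, and that rotations are Legendrian isotopies (together with a compatible motion of basepoints). Concretely, by Lemma~\ref{lem:rotation_action}.(3), a rotation induces an isomorphism of the underlying local systems with their trivializations, which is precisely the categorical invariance behind the Poisson-preserving property.

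First, I would reduce to checking the statement for a single Reidemeister III move, since by definition a rotation is a composition of such moves (with the action on basepoints prescribed by Figure~\ref{fig: movement of base points in RIII}). Since the composition of Poisson isomorphisms is Poisson, it suffices to show that each individual Reidemeister III move $r_{\text{III}}:\modsp(\Lambda_\beta;\tt)\lr \modsp(\Lambda'';\tt'')$ and $r_{\text{III}}:\dM(\Lambda_\beta;\tt)\lr\dM(\Lambda'';\tt'')$ preserves the Poisson bracket.

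Second, I would verify Poisson-invariance by comparing cluster seeds. Pick a positroid weave (equivalently, a Demazure weave) $\ww$ whose boundary braid contains the local triangular region where the Reidemeister III is performed. Such a weave gives a cluster $\cX$-seed for $\modsp(\Lambda_\beta)$, with a toric chart parametrized by the microlocal monodromies $X_\gamma$, $\gamma\in H_1(L_\ww,\bZ)$. Expressed in the basis of cluster cycles, the Poisson bracket is the log-canonical bracket~\eqref{eq:log canonical bracket}, governed by the intersection form on $H_1(L_\ww,\bZ)$. A Reidemeister III move on the boundary of $\ww$ corresponds to a local modification of the weave (a hexagonal flip, Figure~\ref{fig:weave equivalence}.(III)), which by \cite[Section 5.3]{CZ} is a weave equivalence and therefore induces a Hamiltonian (in particular symplectic) isotopy of the Lagrangian fillings. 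Under such an isotopy, the absolute homology $H_1(L_\ww,\bZ)$ is identified with the corresponding group for the new weave in a way that preserves the intersection pairing. The log-canonical Poisson bracket is therefore preserved.

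Third, for the decorated/clipped versions $\dM(\Lambda_\beta;\tt)$, we need to include the relative $1$-cycles coming from basepoints. The action on basepoints prescribed by Figure~\ref{fig: movement of base points in RIII} preserves their cyclic order along each Legendrian component, hence induces a well-defined identification of $H_1(L_\ww\setminus\tt)$ with $H_1(L_\ww\setminus\tt'')$ compatible with the intersection form. The Poisson bracket on $\dM(\Lambda_\beta;\tt)$ is again log-canonical in terms of monodromies along the corresponding cycles (where the $\phi_i$'s of Definition~\ref{def:X-with-frozens} contribute the frozen monodromies), and by the same argument is preserved. The main technical subtlety will be to carefully check compatibility in the case when basepoints lie on the three strands that participate in the Reidemeister III move: this amounts to a local verification that the movement in Figure~\ref{fig: movement of base points in RIII} does not change the intersection indices between frozen and mutable cycles, which can be done by inspection of the local model. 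Once these local checks are in place, composing them along the sequence of Reidemeister III moves making up the rotation $r_i$ finishes the proof for both $\modsp(\Lambda_\beta)$ and $\dM(\Lambda_\beta;\tt)$.
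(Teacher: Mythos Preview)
Your approach is essentially the same as the paper's: both reduce to a single Reidemeister III move and argue via the log-canonical Poisson bracket on the seed associated to a weave filling. There is one misidentification worth flagging: a Reidemeister III move on the boundary braid is \emph{not} the weave flop move of Figure~\ref{fig:weave equivalence}.(III). The flop is an interior weave equivalence (preserving the boundary braid word), whereas a boundary RIII changes the braid word and is realized on the weave by appending a hexavalent vertex near the boundary. This does not harm your argument, since appending a hexavalent vertex still yields a Hamiltonian isotopic filling with the same absolute $H_1$ and intersection form; but the citation to Figure~\ref{fig:weave equivalence}.(III) is not the right justification.

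The paper's argument is in fact more direct than yours for the mutable part: rather than invoking a Hamiltonian isotopy, it simply observes that mutable cycles are absolute $1$-cycles supported in a compact region of the weave, away from the boundary where the RIII happens. Hence the RIII acts as the \emph{identity} on these cycles and their microlocal monodromies (cross-ratios), so Poisson invariance is immediate. For the frozen part of $\dM(\Lambda_\beta;\tt)$, the paper defers to a computation analogous to \cite[Section 8.1]{CGGLSS}; your local check of intersection indices under the basepoint motion of Figure~\ref{fig: movement of base points in RIII} is in the same spirit and would suffice.
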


\begin{proof} Let us argue that each Reidemeister III move induces an isomorphism preserving the Poisson brackets. The result will then follow, as a rotation is a composition of Reidemeister III moves.

In the case of $\modsp(\Lambda_\beta)$, or when studying the mutable part of $\dM(\Lambda_\beta;\tt)$, the effect of a Reidemeister III move only affects the boundary of the weave. Since the mutable vertices correspond to absolute 1-cycles, they are confined in a compact region of the weave which is left invariant under Reidemeister III moves. By \cite[Section 7.2.1]{CZ}, the associated variables are microlocal monodromies around those 1-cycles and can be computed using cross-ratios of flags around them. Therefore, each Reidemeister III move acts as the identity in that mutable sub-lattice and it induces an isomorphism preserving the Poisson brackets. In the case of the frozen relative 1-cycles for $\dM(\Lambda_\beta;\tt)$ invariance under Reidemeister III moves follows from a computation similar to \cite[Section 8.1]{CGGLSS}. %Since a rotation isotopy induces weave isotopies, cf.~ Figure \ref{fig:weave under rotation isotopy}, it does not change the intersection form on the weave filling. %Therefore, we conclude that the isomorphism between flag moduli spaces induced by a rotation isotopy (in both the undecorated and clipped cases) is a Poisson isomorphism.

\end{proof}

%%%%%%%%%%%%%%%%%%%%%%%%%%%%%%%%%%%%
%%%%%%%%%%%%%%%%%%%%%%%%%%%%%%%%%%%%
\subsubsection{The reflection contactomorphism.}\label{ssec:reflection} Let $(\bR^3,\xi_{st})$ be equipped with the contact form $\xi_{st}=\ker\{dz-ydx\}$.
\begin{defn}
     The \emph{reflection contactomorphism} is $t:(\bR^3,\xi_{st})\lr(\bR^3,\xi_{st})$, $t(x,y,z):=(-x,y,-z)$. \hfill $\Box$
    %Note that if we place the center of a $(-1)$-closure Legendrian link at the origin, then the contactomorphism maps the $(-1)$-closure $\Lambda$ of a cyclic positive braid word $\beta$ to the $-1$ closure $\Lambda'$ of a cyclic positive braid word $(\beta^*)^\circ$. \MSB{We should say what this is}\\
\end{defn}
 
Let $\Lambda^\eta$ be the {\it downwards} $(-1)$-closure of a positive braid $\eta$, which is the front in Figure \ref{fig:downwards -1 closure of a positive braid}.

\begin{figure}[H]
    \centering
    \begin{tikzpicture}[baseline=0,scale=0.8]
    \draw [dashed] (0,-0.25) rectangle node [] {$\eta$} (4,-2.25);
    \foreach \i in {1,3,4}
    {
    \draw [decoration={markings,mark=at position 0.5 with {\arrow{<}}},postaction={decorate}] (4,-0.5*\i) to [out=0,in=180] (5,-1-0.5*\i) to [out=180,in=0] (4,-2-0.5*\i) -- (0,-2-0.5*\i) to [out=180,in=0] (-1,-1-0.5*\i) to [out=0,in=180] (0,-0.5*\i);
    }
    \node at (2,-3) [] {\footnotesize{$\vdots$}};
    \node at (-1,-2) [] {\footnotesize{$\vdots$}};
    \node at (5,-2) [] {\footnotesize{$\vdots$}};
    \end{tikzpicture}%\hspace{2cm}
    % \begin{tikzpicture}[baseline=-20,scale=0.8]
    %     \draw [dashed] (0,0.25) rectangle node [] {$\eta$} (4,2.25);
    %     \foreach \i in {1,3,4}
    % {
    % \draw [decoration={markings,mark=at position 0.5 with {\arrow{>}}},postaction={decorate}](-1,0.5*\i) -- (0,0.5*\i);
    % \draw [decoration={markings,mark=at position 0.5 with {\arrow{>}}},postaction={decorate}](4,0.5*\i) -- (5,0.5*\i);
    % }
    % \draw [dotted] (-1,0) -- (-1,2.5);
    % \draw [dotted] (5,0) -- (5,2.5);
    % \node at (-0.5,1) [] {\footnotesize{$\vdots$}};
    %     \node at (4.5,1) [] {\footnotesize{$\vdots$}};
    % \end{tikzpicture}
    \caption{Downwards $(-1)$-closure of a positive braid word $\eta$.}
    \label{fig:downwards -1 closure of a positive braid}
\end{figure}
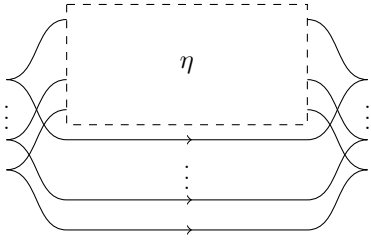

Consider a positive braid word $\beta= s_{i_1} \dots s_{i_\ell}$. Place the center of this front of $\Lambda_\beta$ at the origin of $\bR^2_{x,z}$, the front being the (standard) $(-1)$-closure of $\beta$. Then the contactomorphism $t$ sends $\Lambda_\beta$ to a Legendrian $\Lambda^\eta=t(\Lambda_\beta)$ whose front is the downwards $(-1)$-closure of a positive braid word $\eta$. In this case, we obtain $\eta=(\beta^*)^\circ= s_{i_\ell^*} \dots s_{i_1^*}$ (the $\circ$ stands for ``opposite"). In addition, if $\tt$ are base points for $\La_\beta$, the contactomorphism $t$ endows the image Legendrian $\La^\eta$ with a set of base points $t(\tt)$.\\

The reflection contactomorphism induces a homonymous isomorphism $t:\modsp(\La_\beta;\tt)\lr\modsp(\La^{(\beta^*)^\circ};t(\tt))$ between the corresponding flag moduli spaces. In order to explicitly describe this isomorphism, we must discuss the \emph{Hodge star} action on flags, as follows. For a $k$-dimensional subspace $V\subset \bC^m$, we define $\star V:=\ker((\bC^m)^*\rightarrow V^*)$, where the map $(\bC^m)^*\rightarrow V^*$ is dual to the inclusion; this is a $(m-k)$-dimensional subspace of $(\bC^m)^*$. Given a flag $\cF=(0\subset (\cF)_1\subset (\cF)_2\subset \cdots \subset (\cF)_{m-1}\subset (\cF)_m=\bC^m$, we define $\star\cF$ to be the flag
\[
\star\cF:=(0\subset \star (\cF)_{m-1}\subset \star(\cF)_{m-2}\subset \cdots \subset \star(\cF)_1\subset (\bC^m)^*.
\]
Note that if $\cF\xrightarrow{s_i}\cF'$, then $\star \cF \xrightarrow{s_{i^*}}\star \cF'$. The next lemma follows from \cite[Section 5]{CW}:

\begin{lem}\label{lem:reflection_flags}
Let $\Lambda_\beta \subset (\bR^3,\xi_{st})$ be the $(-1)$-closure of a positive braid word $\beta$ and $\tt$ a set of base points. In terms of flags, the isomorphism $t:\modsp(\La_\beta;\tt)\lr\modsp(\La^{(\beta^*)^\circ};t(\tt))$ induced by the eponymous reflection contactomorphism $t:(\bR^3,\xi_{st})\lr(\bR^3,\xi_{st})$ is
\[
t: \left[\cF_0 \xrightarrow{s_{i_1}} \cF_1 \xrightarrow{s_{i_2}}  \cdots \xrightarrow{s_{i_{l-1}}} \cF_{l-1}\xrightarrow{s_{i_l}} \cF_0\right] \longmapsto \left[\star \cF_0\xrightarrow{s_{i_l}^*}\star \cF_{l-1} \xrightarrow{s_{i_{l-1}}^*} \cdots \xrightarrow{s_{i_2}^*}\star \cF_1 \xrightarrow{s_{i_1}^*}\star \cF_0\right].
\]
%For the undecorated moduli, $t:\modsp(\Lambda)\rightarrow \modsp(\Lambda')$ is a Poisson isomorphism.
\end{lem}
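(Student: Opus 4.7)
The plan is to verify the formula in two stages: first describing how the reflection acts on the underlying braid diagram combinatorially, and then computing the induced action on flags via the Hodge star operation. Throughout, I would work with the explicit front projection description of $\La_\beta$ in the $xz$-plane.

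First, I would check that the reflection $t(x,y,z) = (-x,y,-z)$ indeed sends the $(-1)$-closure $\La_\beta$ to the downward $(-1)$-closure $\La^{(\beta^*)^\circ}$. On fronts this is the map $(x,z)\mapsto(-x,-z)$, i.e.\ a $180^\circ$ rotation. The horizontal flip $x\mapsto -x$ reverses the order of crossings along the braid, producing $s_{i_l}s_{i_{l-1}}\cdots s_{i_1}$. The vertical flip $z\mapsto -z$ re-indexes the strands from top to bottom, so that a crossing originally labeled $s_i$ becomes a crossing between the $(m-i)$-th and $(m-i+1)$-th strands, i.e.\ $s_{i^*}$. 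The cusps of the $(-1)$-closure become cusps opening in the opposite horizontal direction, giving precisely the downward front of Figure~\ref{fig:downwards -1 closure of a positive braid} for the braid word $(\beta^*)^\circ = s_{i_l^*}\cdots s_{i_1^*}$. A parallel check shows that base points in $\tt$ are taken to the corresponding positions in $t(\tt)$ and the cyclic pairing of $+/-$ base points is respected.

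Second, I would verify that the induced map on flag moduli is the Hodge star applied flag-by-flag. The key observation is that the contactomorphism $t$ reverses coorientation of $\xi_{st}$, since $t^*(dz-y\,dx) = -(dz-y\,dx)$, and therefore under microlocalization, as developed in \cite{CW,CZ}, the functor induced by $t$ on sheaves singularly supported on $\La_\beta$ is pullback by $t$ composed with Verdier duality. Under the flag description of objects in these categories, Verdier duality corresponds precisely to the operation $\cF\mapsto \star\cF$ sending the chain $0\subset (\cF)_1\subset\cdots\subset(\cF)_m$ to the chain $0\subset \star(\cF)_{m-1}\subset\cdots\subset \star(\cF)_1\subset(\bC^m)^*$. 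The purely Lie-theoretic content to check is the local identity: if two flags satisfy $\cF\xrightarrow{s_i}\cF'$, then $\star\cF\xrightarrow{s_{i^*}}\star\cF'$. This is a direct computation from the definition of $\star$: the subspaces $(\star\cF)_k = \star(\cF)_{m-k}$ and $(\star\cF')_k = \star(\cF')_{m-k}$ agree for $k\ne i^* = m-i$ because $(\cF)_j=(\cF')_j$ for $j\ne i$, and at level $i^*$ they differ precisely by a one-dimensional subspace.

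Combining the two stages, I would read the image $\beta$-chain of flags from the reflected front: what used to be the leftmost flag $\cF_0$ is now the rightmost, with the order of all flags reversed and with each flag replaced by its Hodge star. The formula in the lemma then follows, together with the observation that the base-point compatibility condition on trivializations (condition ($*$) in Definition~\ref{defn:framed-arrangement}, or more generally condition ($**$) in Definition~\ref{def:general-modsp}) is invariant under this duality, since Verdier duality preserves microlocal merodromies up to the (already accounted for) sign choice $\sigma$. The main obstacle is the bookkeeping in this second stage, in particular making sure that the correspondence between decorations and trivializations of the associated local system is preserved under the dualization; this is carried out in \cite[Section 5]{CW} and can be invoked directly in our setting.
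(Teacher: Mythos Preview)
Your proposal is correct and aligns with the paper's treatment: the paper does not give an independent proof of this lemma but simply states that it ``follows from \cite[Section 5]{CW},'' which is exactly the reference you invoke at the end. Your two-stage argument (front combinatorics under $(x,z)\mapsto(-x,-z)$, then Hodge star via the coorientation-reversing property $t^*(dz-y\,dx)=-(dz-y\,dx)$ and Verdier duality) is a faithful unpacking of what that citation contains, and the local check that $\cF\xrightarrow{s_i}\cF'$ implies $\star\cF\xrightarrow{s_{i^*}}\star\cF'$ is precisely the observation the paper records just before the lemma statement.
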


%\MSB{Are these paragraphs important to the reader's ability to understand our proofs about the twist? Or just important for Lemma~\ref{lem:reflection-Poisson}? If the former, maybe good to make one more lemma out of this material. If the latter, fine as is. Could add statement about how trivializations change to Lemma~\ref{lem:reflection_flags} or afterwards}
\noindent Let us now study the action of the reflection $t$ on the $\GL_1(\bC)$-local system $\cL$ from Construction~\ref{lem:locsys}, as follows. Consider a flag $\cF=(0\subset (\cF)_1\subset (\cF)_2\subset \cdots \subset (\cF)_{m-1}\subset (\cF)_m=\bC^m)$. Taking Hodge star and then taking quotients of consecutive subspaces, we obtain
\begin{equation}\label{eq:dual stalks}
\frac{\star (\cF)_k}{\star (\cF)_{k+1}}=\frac{\ker ((\bC^m)^*\rightarrow (\cF)_k^*)}{\ker ((\bC^m)^*\rightarrow (\cF)_{k+1}^*)}\cong \left(\frac{(\cF)_{k+1}}{(\cF)_k}\right)^*.
\end{equation}
Therefore, the stalks of the rank $\GL_1(\bC)$-local system $\cL'$ on $\Lambda':=t(\La)$, equipped with $t(\tt)$, are dual to the stalks of the $\GL_1(\bC)$-local system $\cL$ on $\Lambda$. Thus, away from the crossings in the front projection of $\Lambda$ and $\Lambda'$, we have a natural duality on stalks. See Figure \ref{fig:hodge star parallel transport}. We can extend this across the crossings as follows.\\

\begin{figure}[H]
\centering
\begin{tikzpicture}[baseline=0]
\draw (-2,0) node [blue,left] {$\left(\frac{(\cF_{j})_{i_j+1}}{(\cF_{j})_{i_j}}\right)^*$} -- (-1,0) to [out=0,in=180] (1,1) -- (2,1)node [blue,right] {$\left(\frac{(\cF_{j-1})_{i_j}}{(\cF_{j-1})_{i_j-1}}\right)^*$};
\draw (-2,1) node [blue,left] {$\left(\frac{(\cF_{j})_{i_j}}{(\cF_{j})_{i_j-1}}\right)^*$} -- (-1,1) to [out=0,in=180] (1,0) -- (2,0)node [blue,right] {$\left(\frac{(\cF_{j-1})_{i_j+1}}{(\cF_{j-1})_{i_j}}\right)^*$};
\node [red] at (-1.1,-0.5) [] {$\star(\cF_{j})_{i_j+1}$};
\node [red] at (-1.1,0.5) [] {$\star(\cF_{j})_{i_j}$};
\node [red] at (-1.1,1.5) [] {$\star(\cF_{j})_{i_j-1}$};
\node [red] at (1.1,-0.5) [] {$\star(\cF_{j-1})_{i_j+1}$};
\node [red] at (1.1,0.5) [] {$\star(\cF_{j-1})_{i_j}$};
\node [red] at (1.1,1.5) [] {$\star(\cF_{j-1})_{i_j-1}$};
\node [red] at (0,-0.5) [] {$=$};
\node [red] at (0,1.5) [] {$=$};
\draw (-2,-1) -- (2,-1);
\draw (-2,2) -- (2,2);
\node at (0,-1.5) [] {$\vdots$};
\node at (0,2.5) [] {$\vdots$};
\end{tikzpicture} 
\caption{Microlocal parallel transport under the Hodge star action.}
\label{fig:hodge star parallel transport}
\end{figure}

\noindent Consider the computation of the microlocal parallel transport of $\cL$ from Equations \eqref{eq:rho} and \eqref{eq:lambda}. Applying the Hodge star action yields the microlocal parallel transport for the local system $\cL'$. Since microlocal parallel transport in $\cL'$ is dual to the microlocal parallel transport of $\cL$, the microlocal monodromies of $\cL'$ are inverses of the corresponding ones in $\cL$. The duals of $\lambda,\rho$ have the following equations:\\
\[
\left(\frac{(\cF_{j})_{i_j+1}}{(\cF_{j})_{i_j}}\right)^*\hookrightarrow \left(\frac{(\cF_j)_{i_j+1}}{(\cF_j)_{i_j-1}}\right)^*=\left(\frac{(\cF_{j-1})_{i_j+1}}{(\cF_{j-1})_{i_j-1}}\right)^*\twoheadrightarrow \left(\frac{(\cF_{j-1})_{i_j}}{(\cF_{j-1})_{i_j-1}}\right)^*
\]
\[
\left(\frac{(\cF_{j})_{i_j}}{(\cF_{j})_{i_j-1}}\right)^* \twoheadleftarrow \left(\frac{(\cF_{j})_{i_j+1}}{(\cF_{j})_{i_j-1}}\right)^*=\left(\frac{(\cF_{j-1})_{i_j+1}}{(\cF_{j-1})_{i_j-1}}\right)^*\hookleftarrow \left(\frac{(\cF_{j-1})_{i_j+1}}{(\cF_{j-1})_{i_j}}\right)^*.
\]

This understanding of the change in local systems under $t$ allows us prove the following result:
\begin{lem}\label{lem:reflection-Poisson}
Let $\Lambda_\beta \subset (\bR^3,\xi_{st})$ be the $(-1)$-closure of a positive braid word $\beta$ and $\tt$ its base points. Then:\\
\begin{enumerate}
    \item For the undecorated moduli, $t:\modsp(\La_\beta)\lr\modsp(\La^{(\beta^*)^\circ})$ is a Poisson isomorphism.\\

    \item The isomorphism $t:\modsp(\La_\beta;\tt)\lr\modsp(\La^{(\beta^*)^\circ};t(\tt))$ induced by the eponymous reflection contactomorphism $t$  pulls back the microlocal merodromy $A'_s$ to $A_s^{-1}$, i.e.~ $t^*(A'_s)=A_s^{-1}$.
\end{enumerate}

\end{lem}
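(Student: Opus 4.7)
The plan is to prove Part (2) first and then bootstrap to Part (1). For Part (2), I would work directly from Definition~\ref{defn: merodromy} together with the Hodge-star description of $\cL'$ laid out just before the statement. The stalks of $\cL'$ are canonically the duals of the stalks of $\cL$ via \eqref{eq:dual stalks}, and the parallel transport of $\cL'$ across each crossing and base point is, by the dual formulas displayed in the discussion following \eqref{eq:dual stalks}, the dual-inverse of the corresponding transport of $\cL$; trivializations of $\cL'$ along segments of $\La^{(\beta^*)^\circ}\setminus t(\tt)$ are the dual vectors $v^*$ attached to the segment-level trivializations $v$ of $\cL$. A one-line calculation in a one-dimensional vector space shows that if $\psi_\eta(v)=c\cdot w$ for $c\in\bC^\times$, then the dual-inverse of $\psi_\eta$ sends $v^*$ to $c^{-1}\cdot w^*$. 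Since $A_\eta=c$ by definition, extending multiplicatively over an oriented representative of a relative $1$-cycle yields $t^*(A'_s)=A_s^{-1}$ as claimed.

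For Part (1), both Poisson structures are log-canonical in the microlocal monodromies along absolute $1$-cycles of a Lagrangian filling, with coefficients given by the intersection form on $H_1$ of that filling, by Equation~\eqref{eq:log canonical bracket}. The reflection $t$ identifies a filling $L$ of $\La_\beta$ with a filling $t(L)$ of $\La^{(\beta^*)^\circ}$, and Part (2) applies equally to microlocal monodromies along absolute cycles, giving $t^*X'_{t_*\gamma}=X_\gamma^{-1}$. Using Leibniz on the log-canonical bracket one computes $\{X_\gamma^{-1}, X_\delta^{-1}\}=\inprod{\gamma}{\delta}_{L}\, X_\gamma^{-1}X_\delta^{-1}$, while $t^*\{X'_{t_*\gamma}, X'_{t_*\delta}\}=\inprod{t_*\gamma}{t_*\delta}_{t(L)}\, X_\gamma^{-1}X_\delta^{-1}$. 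Hence $t$ is Poisson provided $\inprod{\gamma}{\delta}_{L}=\inprod{t_*\gamma}{t_*\delta}_{t(L)}$ for all $\gamma,\delta\in H_1(L)$.

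The main obstacle is precisely this last compatibility of intersection forms, since a priori the reflection could reverse orientations on the filling and flip the sign of the pairing. The plan for this step is to exhibit an orientation convention on $L$ coming from a Demazure weave presentation of $\La_\beta$, cf.~Theorem~\ref{thm:Demazure weave}, and verify that the reflected weave on $\La^{(\beta^*)^\circ}$ carries the analogous convention. Under such matched conventions, $t_*$ preserves the combinatorial intersection pairing of Definition~\ref{defn:local intersection pairing} weave-vertex by weave-vertex, because a reflection maps each local vertex configuration to its mirror image with the same sign contribution once the ordering of incident edges is read consistently. Since this combinatorial pairing equals the homological intersection form by the lemma following Definition~\ref{defn:local intersection pairing}, this finishes Part (1). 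The analogous statement for $\dM(\La_\beta;\tt)$ then follows since the frozen coordinates $X_i$ are merodromies along relative cycles localized near the boundary, which are treated identically once one tracks the dualization in the definition of $\phi_i$.
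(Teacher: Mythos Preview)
Your overall approach matches the paper's: both argue Part (2) from the stalk-duality \eqref{eq:dual stalks}, and both verify Part (1) in a log-canonical chart from a weave filling by combining the monodromy-inversion $t^*X'_{t_*\gamma}=X_\gamma^{-1}$ with a comparison of intersection forms. Your order (Part (2) first, then bootstrap) is a mild reorganization of the same ingredients.

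There is one substantive point on which you and the paper disagree, and your version is the correct one. You correctly derive that, given $t^*X'_{t_*\gamma}=X_\gamma^{-1}$, the Poisson condition is equivalent to $\inprod{\gamma}{\delta}_{L}=\inprod{t_*\gamma}{t_*\delta}_{t(L)}$, since $\{X_\gamma^{-1},X_\delta^{-1}\}=\inprod{\gamma}{\delta}_{L}X_\gamma^{-1}X_\delta^{-1}$ by Leibniz. The paper's proof instead asserts that the intersection forms are \emph{opposite}; combined with monodromy inversion, that would make $t$ anti-Poisson rather than Poisson, so the paper's sign appears to be a slip. Your derivation of the necessary condition is cleaner here.

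However, your justification that the pairing is preserved is too loose to stand. You write ``a reflection maps each local vertex configuration to its mirror image with the same sign contribution once the ordering of incident edges is read consistently''. A genuine planar reflection \emph{reverses} the cyclic order of edges at each weave vertex and therefore flips the sign of the determinant in Definition~\ref{defn:local intersection pairing}. What actually happens (and what the paper identifies explicitly) is that $t$ acts on the front $(x,z,q)$ by the \emph{antipodal} map $(x,z,q)\mapsto(-x,-z,-q)$, together with the color swap $i\leftrightarrow m-i$. On the weave plane this is a $180^\circ$ rotation, which \emph{preserves} cyclic orders at every weave vertex; the color swap does not affect the (color-symmetric) local formulas. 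Hence the local pairing of Definition~\ref{defn:local intersection pairing} is preserved vertex by vertex. Equivalently, since the base antipodal map is orientation-preserving and both fillings inherit their orientation from the branched cover to $\bD^2$, the induced diffeomorphism $L_\ww\to L_{\ww'}$ is orientation-preserving, so the homological intersection form is preserved. Replace your ``reflection\dots mirror image'' sentence with this argument and the proof is complete.
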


\begin{proof}
Let us ease notation with $\La:=\La_\beta$ and $\La':=\La^{(\beta^*)^\circ}$. For Part (1), we verify this in the log-canonical Darboux chart---cf. Equation~\eqref{eq:log canonical bracket}--- given by any Lagrangian filling $L_\ww$ of $\La$. Let $L_\ww$ is a filling of $\Lambda$ defined by a weave $\ww$. The Legendrian lift of $L_\ww$ is a Legendrian surface in $(\bR^5,\ker\{dz-ydx-pdq\})$. The front of the Legendrian lift of $L_\ww$ is a subset of $\bR^3_{x,z,q}$.  The contactomorphism $t$ extends to a contactomorphism of $(\bR^5,\ker\{dz-ydx-pdq\})$ by $(q,p)\mapsto (-q,p)$.  Therefore, the front in $\bR^3_{x,z,q}$ associated to the weave $\ww$ changes according to $(x,z,q)\mapsto (-x,-z,-q)$. From the diagrammatics of the weave, the weave resulting from applying $t$ to the lift of $L_\ww$ is obtained by applying the antipodal map to the plane and simultaneously interchanging the $i$th and $(m-i)$th colors for all $i\in[m-1]$, if there are $m-1$ colors in total.

The resulting weave $\ww'$ defines a filling $L_{\ww'}$ for $\Lambda'$. By construction, there is a canonical bijection between cycles on $L_\ww$ and $L_{\ww'}$. If $\gamma$ and $\gamma'$ are absolute cycles on $L_\ww$ and $L_{\ww'}$ corresponding to each other, then their microlocal monodromies satisfy $
X_\gamma=X_{\gamma'}^{-1}$. The intersection forms on $L_\ww$ and $L_{\ww'}$ are also opposite to each other: if $(\gamma',\delta')$ is a pair of 1-cycles on $L_{\ww'}$ corresponding to a pair of 1-cycles $(\gamma,\delta)$ on $L_\ww$, then $\inprod{\gamma}{\delta}_{L_\ww}=-\inprod{\gamma'}{\delta'}_{L_{\ww'}}$. These two properties and the log-canonical Poisson bracket formula \eqref{eq:log canonical bracket} imply that the induced isomorphism $t:\modsp(\Lambda)\rightarrow \modsp(\Lambda')$ is a Poisson isomorphism.\\

\begin{figure}[H]
    \centering
    \begin{tikzpicture}
        \draw (-2,0) node [above right] {$I_1$} -- (2,0) node [above left] {$I_2$};
        \node at (0,0) [] {$\bullet$};
        \node at (0,0) [below] {$s$};
        \draw [decoration={markings,mark=at position 0.5 with {\arrow{>}}},postaction={decorate}, blue] (-0.5,0) node [below] {$s_1$} arc (180:0:0.5) node [below] {$s_2$};
        \node [blue] at (0,0.5) [above] {$A_s$};
        \node at (0,-0.5) [below] {$\frmodsp(\Lambda)$};
    \end{tikzpicture}\hspace{2cm}
    \begin{tikzpicture}
        \draw (-2,0) node [above right] {$I_2$} -- (2,0) node [above left] {$I_1$};
        \node at (0,0) [] {$\bullet$};
        \node at (0,0) [below] {$s$};
        \draw [decoration={markings,mark=at position 0.5 with {\arrow{>}}},postaction={decorate}, blue] (-0.5,0) node [below] {$s_2$} arc (180:0:0.5) node [below] {$s_1$};
        \node [blue] at (0,0.5) [above] {$A'_s$};
        \node at (0,-0.5) [below] {$\frmodsp(\Lambda')$};
    \end{tikzpicture}
    \caption{Microlocal merodromy across base points under the isomorphism $t$.}
    \label{fig:transposed microlocal merodromy}
\end{figure}
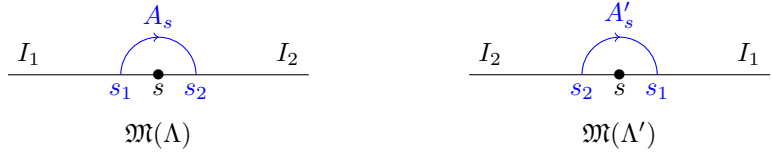

For Part (2), note that a trivialization of the local system $\cL$ over an open interval $I$ is the same as given an isomorphism between the global sections $\Gamma(I,\cL)$ and $\bC$. By Equation \eqref{eq:dual stalks}, $\Gamma(I,\cL')\cong \Gamma(I,\cL)^*$. Thus, any trivialization of $\cL$ over $I$ naturally gives rise to a trivialization of $\cL'$ over the corresponding open interval $I$. To compare microlocal merodromies, suppose that a base point $s$ is separating two open intervals $I_1$ and $I_2$. Each such interval is equipped with a trivialization and we can thus compare the two trivializations using parallel transport: this yields a non-zero microlocal merodromy $A_s$. In particular, $A_s$ is the composition $\bC\cong \cL_{s_1}\xrightarrow{\cong} \cL_{s_2}\cong \bC$, where $s_i$ is a point on $I_i$ that is sufficiently close to the base point $s$. After applying the contactomorphism $t$, the two intervals swap places and the composition above is transposed. See Figure \ref{fig:transposed microlocal merodromy}. Thus the microlocal merodromy $A'_s\in\bC[\frmodsp(\Lambda';t(\tt))]$ relates to $A_s\in\bC[\frmodsp(\Lambda;\tt)]$ under $t$ as $A'_s=A_s^{-1}$.
\end{proof}

\subsubsection{Donaldson-Thomas contactomorphism.} We now describe the DT transformation on $\modsp(\Lambda;\tt)$ in terms of the rotations in Subsection \ref{ssec:rotation} and the reflection in Subsection \ref{ssec:reflection}. Let $\Lambda_\beta \subset (\bR^3,\xi_{st})$ be the $(-1)$-closure of a positive braid word $\beta=w_0\theta$ and $\tt$ a set of base points.\\

Consider a Legendrian isotopy in $(\bR^3,\xi_{st})$  that moves the crossings of $\theta$  across the right cusps of the $(-1)$-closure of $\beta$ and moves the crossings of $w_0$  across the left cusps of this $(-1)$-closure. This leads to the front given by the downwards $(-1)$-closure of $w_0^\circ\theta^\circ$, where $\theta^\circ$ is the positive braid word obtained by reading $\theta$ right to left. Such a Legendrian isotopy can be obtained as a sequence of Reidemeister II moves.  Let us denote by $\Theta_t\in\mbox{Cont}(\bR^3,\xi_{st})$, $t\in[0,1]$, a contact isotopy realizing such Legendrian isotopy. In particular, $\Theta_1(\La_{w_0\theta})=\La^{w_0^\circ\theta^\circ}$. Let us still denote by $\Theta_1:\modsp(\La_{w_0\theta};\tt) \to \modsp(\La^{w_0^\circ\theta^\circ};\Theta_1(\tt))$ the associated isomorphism.

\begin{defn}\label{def:DT_link}
    Let $\beta=\eta\cdot w_0$ be a positive braid word with $\eta=s_{i_1} \dots s_{i_\ell}$, and ${r}_\beta:= r_{i_1} \circ r_{i_2} \circ \cdots \circ r_{i_\ell}$ the composition of rotations according to the crossings of $\eta$. By definition, the automorphism $\Psi_\beta:\modsp(\La_\beta;\tt)\lr \modsp(\La_\beta;\tt)$ is the composition of isomorphisms
    $$\Psi_\beta:\modsp(\La_\beta;\tt)\stackrel{r_\beta}{\lr}\modsp(\La_{w_0\eta^*};r_\beta(\tt))\stackrel{\Theta_1}{\lr}\modsp(\La^{(\eta^*w_0)^\circ};\Theta_1(r_\beta(\tt)))\stackrel{t}{\lr}\modsp(\La_\beta;\tt),$$
    where we have identified $\eta=(((\eta^*)^\circ)^*)^\circ$, $w_0=w_0^\circ$, and $\tt=t(\Theta_1(r_\beta(\tt)))$ in the target of the last map. Figure \ref{fig:half loop} depicts the underlying Legendrian isotopy and contactomorphism in the front inducing $\Psi_\beta$.\hfill$\Box$
\end{defn}

\noindent Let $\beta_1,\beta_2$ be two positive braid words related by a sequence of Reidemeister II and III moves. %\MSB{Does this include cyclic rotations or do we need to say something separate about that?}\RC{Yes.}%
Such a sequence defines an isomorphism $R:\modsp(\La_{\beta_1};\tt)\lr\modsp(\La_{\beta_2};\tt)$, independent of the choice of such a sequence. If $\beta_1=s_{i_1} \dots s_{i_\ell}w_0$ ends in a word for $w_0$, then we define $\Psi_{\beta_2}:=R\circ\Psi_{\beta_1}\circ R^{-1}$.

\begin{figure}[H]
    \centering
    \begin{tikzpicture}[node1/.pic={
        \draw (0,0) rectangle (2,2);
        \node at (1,1) [] {$\eta$};
        \draw (3,0)  rectangle (5,2);
        \node at (4,1) [] {$w_0$};
        \draw (5,0.5) to [out=0,in=180] (7,2) to [out=180,in=0] (5,3.5) -- (0,3.5) to [out=180,in=0] (-2,2) to [out=0,in=180] (0,0.5);
        \draw (5,1.5) to [out=0,in=180] (7,3) to [out=180,in=0] (5,4.5) -- (0,4.5) to [out=180,in=0] (-2,3) to [out=0,in=180] (0,1.5);
        \node at (2,4.2) [] {$\vdots$};
        \node at (7,2.7) [] {$\vdots$};
        \node at (-2,2.7) [] {$\vdots$};
        \draw (2,0.5) -- (3,0.5);
        \draw (2,1.5) -- (3,1.5);
        \draw [red,dashed] (2.5,0) node [below] {$\cF$} -- (2.5,2);
        \draw [red,dashed] (2.5,3) -- (2.5,5) node [above] {$\cF'$};
    }, node2/.pic={
        \draw (0,0) rectangle (2,2);
        \node at (1,1) [] {$w_0$};
        \draw (3,0)  rectangle (5,2);
        \node at (4,1) [] {$\eta^*$};
        \draw (5,0.5) to [out=0,in=180] (7,2) to [out=180,in=0] (5,3.5) -- (0,3.5) to [out=180,in=0] (-2,2) to [out=0,in=180] (0,0.5);
        \draw (5,1.5) to [out=0,in=180] (7,3) to [out=180,in=0] (5,4.5) -- (0,4.5) to [out=180,in=0] (-2,3) to [out=0,in=180] (0,1.5);
        \node at (2,4.2) [] {$\vdots$};
        \node at (7,2.7) [] {$\vdots$};
        \node at (-2,2.7) [] {$\vdots$};
        \draw (2,0.5) -- (3,0.5);
        \draw (2,1.5) -- (3,1.5);
        \draw [red,dashed] (2.5,0) node [below] {$\cF''$} -- (2.5,2);
        \draw [red,dashed] (2.5,3) -- (2.5,5) node [above] {$\cF'$};
    }, node3/.pic={
        \draw (0,3) rectangle (2,5);
        \node at (1,4) [] {$w_0$};
        \draw (3,3)  rectangle (5,5);
        \node at (4,4) [] {$(\eta^*)^\circ$};
        \draw (5,3.5) to [out=0,in=180] (7,2) to [out=180,in=0] (5,0.5) -- (0,0.5) to [out=180,in=0] (-2,2) to [out=0,in=180] (0,3.5);
        \draw (5,4.5) to [out=0,in=180] (7,3) to [out=180,in=0] (5,1.5) -- (0,1.5) to [out=180,in=0] (-2,3) to [out=0,in=180] (0,4.5);
        \node at (2,1.2) [] {$\vdots$};
        \node at (7,2.7) [] {$\vdots$};
        \node at (-2,2.7) [] {$\vdots$};
        \draw (2,3.5) -- (3,3.5);
        \draw (2,4.5) -- (3,4.5);
        \draw [red,dashed] (2.5,0) node [below] {$\cF''$} -- (2.5,2);
        \draw [red,dashed] (2.5,3) -- (2.5,5) node [above] {$\cF'$};
    }, node4/.pic={
        \draw (0,0) rectangle (2,2);
        \node at (1,1) [] {$\eta$};
        \draw (3,0)  rectangle (5,2);
        \node at (4,1) [] {$w_0$};
        \draw (5,0.5) to [out=0,in=180] (7,2) to [out=180,in=0] (5,3.5) -- (0,3.5) to [out=180,in=0] (-2,2) to [out=0,in=180] (0,0.5);
        \draw (5,1.5) to [out=0,in=180] (7,3) to [out=180,in=0] (5,4.5) -- (0,4.5) to [out=180,in=0] (-2,3) to [out=0,in=180] (0,1.5);
        \node at (2,4.2) [] {$\vdots$};
        \node at (7,2.7) [] {$\vdots$};
        \node at (-2,2.7) [] {$\vdots$};
        \draw (2,0.5) -- (3,0.5);
        \draw (2,1.5) -- (3,1.5);
        \draw [red,dashed] (2.5,0) node [below] {$\star\cF'$} -- (2.5,2);
        \draw [red,dashed] (2.5,3) -- (2.5,5) node [above] {$\star\cF''$};
    }]
    \draw (0,0) pic [scale=0.5] {node4};
    \draw (8,0) pic [scale=0.5] {node3};
    \draw (8,4) pic [scale=0.5] {node2};
    \draw (0,4) pic [scale=0.5] {node1};
    \draw [->] (6,1) -- node [below] {contactomorphism $t$} (4,1);
    \draw [->] (4,5) -- node [above] {rotation $r_\beta$} (6,5);
    \draw[->] (10,3.5) -- node [right] {$\Theta_1$} (10,3);
    \end{tikzpicture}
    \caption{A sequence of moves inducing $\Psi_\beta$, which we momentarily prove coincides with the DT transformation. Here $\beta=\eta\cdot w_0$.%\RC{Daping: can you modify these pictures to $(-1)$-closures?}\DW{Done.}
    }
    \label{fig:half loop}
\end{figure}
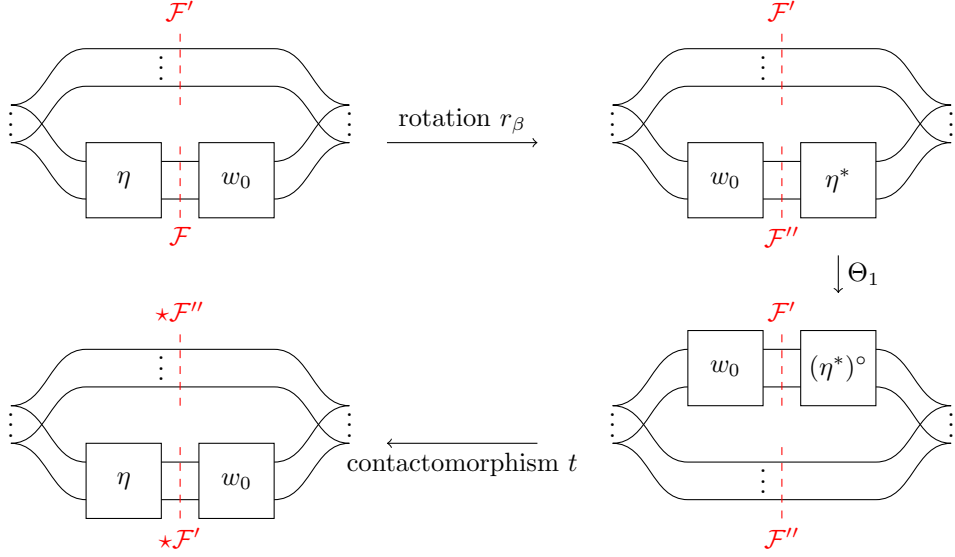

\begin{prop}\label{prop:Psi_DT}
Let $\cP$ be a positroid, $\beta_\cP$ its associated positroid braid and $\La_\cP$ its positroid Legendrian, endowed with the positroid base points $\tt$. Then $\Psi_{\beta_\cP}:\modsp(\La_\cP; \tt)\lr\modsp(\La_\cP; \tt)$ is a DT transformation.
\end{prop}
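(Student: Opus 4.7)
My plan is to reduce this statement to the analogous result for braid varieties and then verify compatibility with the basepoint data.

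First, I would check that $\Psi_{\beta_\cP}$ is a well-defined automorphism of $\modsp(\Lambda_\cP;\tt)$. Each of its three building blocks---the rotation composite ${r}_{\beta_\cP}$, the Reidemeister-II Legendrian isotopy $\Theta_1$, and the reflection contactomorphism $t$---has already been shown (in Lemmas~\ref{lem:rotation_action} and \ref{lem:reflection_flags}, and the paragraph preceding Definition~\ref{def:DT_link}) to induce an isomorphism between the corresponding decorated flag moduli spaces. Composing them and using the identifications indicated in Definition~\ref{def:DT_link} produces the claimed self-map.

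Second, I would verify that $\Psi_{\beta_\cP}$ descends to the Donaldson--Thomas transformation on the undecorated moduli $\modsp(\Lambda_\cP)$. The forgetful map $p:\modsp(\Lambda_\cP;\tt)\to \modsp(\Lambda_\cP)$ intertwines $\Psi_{\beta_\cP}$ with the analogous geometric composition on $\modsp(\Lambda_\cP)$, since each constituent operation is already defined at the level of Legendrian fronts and is compatible with forgetting decorations. By Proposition~\ref{prop:braid-var-to-flag-moduli}, that latter composition realizes $\DT$ on $\modsp(\Lambda_\cP)$.

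Third, I would promote this to a statement at the decorated level. By Theorem~\ref{thm:Demazure weave} any positroid weave is equivalent to a complete Demazure weave, so the cluster $\mathcal{A}$-structure on $\modsp(\Lambda_\cP;\tt)$ coming from such a weave matches the braid variety cluster structure studied in \cite[Section 8.2]{CGGLSS}. In that setting, the Donaldson--Thomas transformation was shown to be induced by precisely the same half-loop rotation followed by reflection used to define $\Psi_{\beta_\cP}$. Transporting this identification back to $\modsp(\Lambda_\cP;\tt)$ and combining it with the descent statement of the previous paragraph yields that $\Psi_{\beta_\cP}$ is indeed a DT transformation.

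The main obstacle will be the bookkeeping for basepoints under the three steps, which is required to show that $\Psi_{\beta_\cP}$ acts correctly on the frozen part. The rotations permute basepoints along each link component according to Figure~\ref{fig: movement of base points in RIII}; the isotopy $\Theta_1$ transports basepoints across cusps, swapping right-side and left-side basepoints; and the reflection $t$ reverses the cyclic order on each component while inverting the microlocal merodromies by Lemma~\ref{lem:reflection-Poisson}. I would need to confirm that the composite permutation preserves the pairing $i^+\leftrightarrow i^-$ (up to the cyclic shift realized by DT on the frozen $\mathcal{X}$-coordinates), and that the inversion coming from $t$, combined with the Poisson-preserving rotations of Lemma~\ref{lem:poisson}, reproduces the expected DT action on the frozen $\mathcal{X}$-coordinates of the dual scheme $\dM(\Lambda_\cP;\tt)$ from Corollary~\ref{thm:cluster-ensemble}. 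Once these compatibilities are in place, the characterization of $\DT$ via its action on the cluster ensemble forces $\Psi_{\beta_\cP}=\DT$.
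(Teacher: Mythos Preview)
Your second step, verifying that $\Psi_{\beta_\cP}$ descends to $\DT$ on the undecorated moduli $\modsp(\Lambda_\cP)$ via Proposition~\ref{prop:braid-var-to-flag-moduli}, is exactly what the paper does to establish Condition~(i') of Definition~\ref{defn: DT}. That part is correct and matches the paper.

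However, your third step and the subsequent ``obstacle'' paragraph take a substantially more roundabout route than necessary, and contain a gap. You propose to transport the $\DT$ identification from the braid variety of \cite{CGGLSS} back to $\modsp(\Lambda_\cP;\tt)$, but the braid variety and $\modsp(\Lambda_\cP;\tt)$ carry \emph{different} frozen structures: the set of marked points $\tt_\cP$ used here (Definition~\ref{def:base-pts}) is not the one used in \cite{CGGLSS}, as the paper itself emphasizes after the statement of Theorem~\ref{thm:mainA}. So the transport is not automatic, and the basepoint bookkeeping you flag as an obstacle is genuinely nontrivial along that route. Moreover, you frame the remaining check in terms of frozen $\mathcal{X}$-coordinates on $\dM(\Lambda_\cP;\tt)$, whereas what Definition~\ref{defn: DT} actually asks for is Condition~(ii'): that $\Psi_{\beta_\cP}^*(A_i)=A_i^{-1}$ for every frozen cluster $\mathcal{A}$-coordinate.

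The paper's proof is a two-line verification of (i') and (ii') directly. For (ii'), the key observation is that rotations $r_\beta$ and the isotopy $\Theta_1$ are induced by Legendrian isotopies and hence preserve the local system and its trivializations (Lemma~\ref{lem:rotation_action}(3)); thus they do not change the microlocal merodromies $A_{i^\pm}$. Only the reflection $t$ acts, and by Lemma~\ref{lem:reflection-Poisson}(2) it sends each $A_s$ to $A_s^{-1}$. Since the frozen $\mathcal{A}$-variables of $\Sigma(\ww;\tt)$ are precisely these boundary merodromies, Condition~(ii') follows immediately. No basepoint-permutation analysis or comparison with the CGGLSS frozens is needed.
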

\begin{proof} We must verify conditions (i'),(ii') of Definition \ref{defn: DT}. Condition (i') is satisfied thanks to Proposition \ref{prop:braid-var-to-flag-moduli}. Lemma \ref{lem:reflection-Poisson}.(2) implies that Condition (ii') also holds, as $r_\beta$ and $\Theta_1$ do not affect the microlocal merodromies. Therefore, $\Psi_{\beta_\cP}$ is a quasi-cluster DT transformation.
\end{proof}

\noindent For reference, Figure \ref{fig:DT action on flags} illustrates how $\Psi_\beta$, and therefore the DT transformation, acts on the points of the flag moduli spaces. For a positroid $\cP$, we choose $\beta$ to be the braid word from Proposition~\ref{prop: containing w_0} which ends in $w_0$ and is cyclically equivalent to $\beta_\cP$.

\begin{figure}[H]
    \centering
    \begin{tikzpicture}[node1/.pic={
    \node (0)  at (0,0) [] {$\cF'$};
    \node (1) at (1,0) [] {$\cF_1$};
        \node (3) at (3,0) [] {$\cF_{l-1}$};
        \node (2) at (2,0) [] {$\cdots$};
        \node (4) at (4,0) [] {$\cF$};
        \node (a) at (2.5,2) [] {$\cF'$};
        \draw [->] (0) -- (1);
        \draw [->] (1) -- (2);
        \draw [->] (2) -- (3);
        \draw [->] (3) -- (4);
        \node at (0.5,0) [below] {\footnotesize{$s_{i_1}$}};
        \node at (1.5,0) [below] {\footnotesize{$s_{i_2}$}};
        \node at (2.5,0) [below] {\footnotesize{$s_{i_{l-1}}$}};
        \node at (3.5,0) [below] {\footnotesize{$s_{i_l}$}};
        \node (5) at (5,0) [] {$\cF'$};
        \draw [double] (0) -- (a);
        \draw [->] (4) -- (5);
        \draw [double] (a) -- (5);
        \node at (4.5,0) [below] {\footnotesize{$w_0$}};
    }, node2/.pic={
        \node (0) at (0,0) [] {$\cF'$};
        \node (1) at (1,0) [] {$\cF''$};
        \node (2) at (2,0) [] {$\cF'_1$};
        \node (3) at (3,0) [] {$\cdots$};
        \node (4) at (4,0) [] {$\cF'_{l-1}$};
        \node (5) at (5,0) [] {$\cF'$};
        \node (a) at (2.5,2) [] {$\cF'$};
        \draw [->] (0) -- (1);
        \draw [->] (1) -- (2);
        \draw [->] (2) -- (3);
        \draw [->] (3) -- (4);
        \draw [->] (4) -- (5);
        \node at (0.5,0) [below] {\footnotesize{$w_0$}};
        \node at (1.5,0) [below] {\footnotesize{$s^*_{i_1}$}};
        \node at (2.5,0) [below] {\footnotesize{$s^*_{i_2}$}};
        \node at (3.5,0) [below] {\footnotesize{$s^*_{i_{l-1}}$}};
        \node at (4.5,0) [below] {\footnotesize{$s^*_{i_l}$}};
        \draw [double] (0) -- (a);
        \draw [double] (5) -- (a);
    }, node3/.pic={
        \node (1) at (5,2) [] {$\cF''$};
        \node (2) at (4,2) [] {$\cF'_1$};
        \node (3) at (3,2) [] {$\cdots$};
        \node (4) at (2,2) [] {$\cF'_{l-1}$};
        \node (0) at (1,2) [] {$\cF'$};
        \node (-1) at (0,2) [] {$\cF''$};
        \node (a) at (2.5,0) [] {$\cF''$};
        \draw [->] (4) -- (0);
        \draw [->] (1) -- (2);
        \draw [->] (2) -- (3);
        \draw [->] (3) -- (4);
        \draw [->] (0) -- (-1);
        \node at (4.5,2) [above] {\footnotesize{$s_{i_1}^*$}};
        \node at (3.5,2) [above] {\footnotesize{$s_{i_2}^*$}};
        \node at (2.5,2) [above] {\footnotesize{$s_{i_{l-1}}^*$}};
        \node at (1.5,2) [above] {\footnotesize{$s_{i_l}^*$}};
        \node at (0.5,2) [above] {\footnotesize{$w_0$}};
        \draw [double] (-1) -- (a);
        \draw [double] (a) -- (1);
        %\node at (1,1) [below left] {\footnotesize{$w_0$}};
    }, node4/.pic={
        \node (0)  at (0,0) [] {$\star\cF''$};
    \node (1) at (1,0) [] {$\star\cF'_1$};
        \node (3) at (3.5,0) [] {$\star\cF'_{l-1}$};
        \node (2) at (2,0) [] {$\cdots$};
        \node (4) at (5,0) [] {$\star\cF'$};
        \node (a) at (3,2) [] {$\star\cF''$};
        \draw [->] (0) -- (1);
        \draw [->] (1) -- (2);
        \draw [->] (2) -- (3);
        \draw [->] (3) -- (4);
        \node at (0.5,0) [below] {\footnotesize{$s_{i_1}$}};
        \node at (1.5,0) [below] {\footnotesize{$s_{i_2}$}};
        \node at (2.75,0) [below] {\footnotesize{$s_{i_{l-1}}$}};
        \node at (4.25,0) [below] {\footnotesize{$s_{i_l}$}};
        \node (5) at (6,0) [] {$\star\cF''$};
        \draw [double] (0) -- (a);
        \draw [->] (4) -- (5);
        \draw [double] (a) -- (5);
        \node at (5.5,0) [below] {\footnotesize{$w_0$}};
    }
    ]
    \draw (0,4) pic {node1};
    \draw (8,4) pic {node2};
    \draw (8,0) pic {node3};
    \draw (-1,0) pic {node4};
    \draw [->] (7,1) -- node [below] {reflection $t$} (5,1);
    \draw [->] (5,5) -- node [above] {rotation $r_\beta$} (7,5);
    \draw[->] (10,3.5) -- node [right] {$\Theta_1$} (10,3);
    \end{tikzpicture}
    \caption{The action of DT on the flag moduli space.
    %\DW{I think with the updated Figure 49, this figure is no longer needed, right?}
    }
    \label{fig:DT action on flags}
\end{figure}
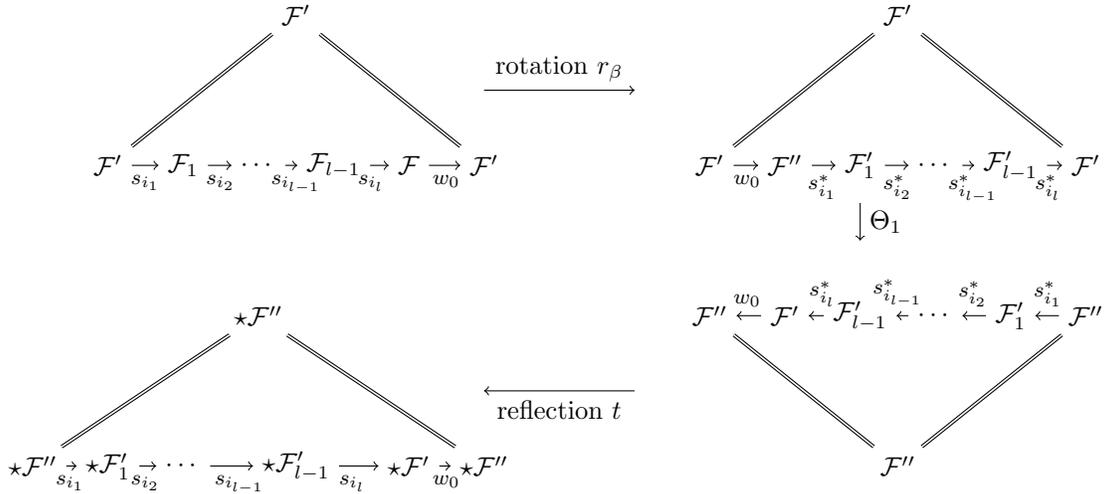

\begin{rmk} By construction, $\Psi_{\beta_\cP}:\modsp(\La_\cP; \tt)\lr\modsp(\La_\cP; \tt)$ descend to an automorphism $\modsp(\La_\cP)\lr\modsp(\La_\cP)$, still denoted by $\Psi_{\beta_\cP}$, of the undecorated moduli. Proposition \ref{prop:braid-var-to-flag-moduli} implies that this induced map on the undecorated moduli is the DT transformation.\hfill$\Box$
\end{rmk}

%%%%%%%%%%%%%%%%%%%%%%%%%%%%%%%%%%%%
%%%%%%%%%%%%%%%%%%%%%%%%%%%%%%%%%%%%

\subsection{The Twist Map and Donaldson-Thomas transformation on \texorpdfstring{$\modsp(\Lambda_\cP)$}{}}\label{ssec:unfrozen DT}
%\MSB{Add references to new lemmas, defs from Section 8.1.}
Proposition \ref{prop:Psi_DT} shows that the automorphism in Definition \ref{def:DT_link} is a Donaldson-Thomas transformation. For ease of notation, we also refer to the map $\Psi_{\beta_\cP}$, as an automorphism of either $\modsp(\La_{\beta};\tt)$ or $\modsp(\La_{\beta})$, by the name of $\DT$.\\

Let $\cP$ be a positroid of type $(m,n)$. In this section, we compare the twist map on $\Pio_\cP$ and DT on the undecorated moduli $\modsp(\Lambda_\cP)$. We show the commutativity of the diagram
\[
\xymatrix{\Pio_\cP\ar[r]^{p\circ\Phi} \ar[d]_\tw & \modsp(\Lambda_\cP) \ar[d]^\DT \\
\Pio_\cP\ar[r]_{p\circ\Phi} & \modsp(\Lambda_\cP)}
\]
which is the outer rectangle of the diagram \eqref{eq:big-sq} above. This shows that $\tw$ descends to $\DT$ on $\modsp(\Lambda_\cP)$. It is the first step towards showing that $\tw$ is a $\DT$-transformation on $\Pio_\cP$, as it proves Condition (i') in Definition~\ref{defn: DT}.\\

\noindent By construction, the quotient map $p:\modsp(\Lambda_\cP;\tt)\lr \modsp(\Lambda_\cP)$ forgets the data of trivializations. Thus, when comparing $\tw$ on $\Pi^\circ_\cP$ and $\DT$ on $\modsp(\Lambda_\cP)$ we only need to consider the column spans of the matrix representative $M\in\Pi^\circ_\cP$, instead of its actual column vectors. Let us then define $\Conf(\cP):=\Pi^\circ_\cP/(\bC^\times)^n$, where $(\bC^\times)^n$ acts on a point $M\in\Pi^\circ_\cP$ by scaling its column vectors. Then the Muller-Speyer twist map $\tw:\Pi^\circ_\cP\lr\Pi^\circ_\cP$ descends to a homonymous automorphism $\tw:\Conf(\cP)\lr \Conf(\cP)$. This map can be described as
\begin{align*} \tw: \Conf(\cP) & \longrightarrow \Conf(\cP)\\
[l_1,l_2,\dots, l_n] & \longmapsto [h_1,h_2,\dots, h_n]
\end{align*}
where $h_i:=\star\left[\overrightarrow{I_i}\setminus \{i\}\right]\sse\bC^m$ is the hyperplane spanned by $m-1$ lines, viewed as a line in the dual vector space $(\bC^m)^*\cong \bC^m$. Note that the quotient of the $(\bC^\times)^n$-scaling action on $\Pi^\circ_\cP$ is equivalent to the forgetful map $p:\modsp(\Lambda_\cP;\tt)\lr \modsp(\Lambda_\cP)$. Therefore, the isomorphism $\Phi:\Pi_\cP^\circ\lr\modsp(\Lambda_\cP;\tt)$ descends to the respective quotients as an isomorphism of algebraic stacks:

\begin{defn} By definition, $\Phi:\Conf(\cP)\lr \modsp(\Lambda_\cP)$ is the isomorphism obtained from the eponymous isomorphism $\Phi:\Pi_\cP^\circ\lr \modsp(\Lambda_\cP;\tt)$ in Definition \ref{def:phi} by taking the quotient of $\Pi_\cP^\circ$ by the scaling action and the quotient of $\modsp(\Lambda_\cP;\tt)$ that forgets the data of the decorations.\hfill$\Box$
\end{defn}

By construction, the upper and lower faces of the following cubic diagram commute:

\[
\xymatrix{ & \Conf(\cP) \ar[rr]^\Phi_\cong \ar[dd]_(0.35){\tw} & & \modsp(\Lambda_\cP) \ar[dd]^\DT \\
\Pi_\cP^\circ \ar[ur]^p \ar[rr]^(0.65){\Phi}_(0.65){\cong} \ar[dd]_\tw & & \modsp(\Lambda_\cP;\tt)  \ar[ur]_p \ar[dd]^(0.35){\DT} & \\
& \Conf(\cP) \ar[rr]^(0.35){\Phi}_(0.35){\cong} & & \modsp(\Lambda_\cP) \\
\Pi_\cP^\circ \ar[ur]^p \ar[rr]^\Phi_\cong & & \modsp(\Lambda_\cP;\tt) \ar[ur]_p & }
\]

\noindent We now show that the back face of the cube commutes. Section \ref{ssec:proof_TheoremB} shows commutativity of the front face.

\begin{thm}\label{thm:DT without frozens} Let $\cP$ be a  positroid. Then the following diagram commutes:
\[
\xymatrix{\Conf(\cP)\ar[r]^\Phi \ar[d]_\tw & \modsp(\Lambda_\cP) \ar[d]^\DT \\
\Conf(\cP)\ar[r]_\Phi & \modsp(\Lambda_\cP)}
\]
In particular, the quotient of the Muller-Speyer twist $\tw:\Pi^\circ_\cP/(\bC^\times)^n\lr\Pi^\circ_\cP/(\bC^\times)^n$ is the DT-transformation on the quotient $\Pi^\circ_\cP/(\bC^\times)^n$ of the positroid variety $\Pi^\circ_\cP$.
\end{thm}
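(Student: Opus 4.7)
The plan is to prove the commutativity by directly computing both compositions $\DT\circ\Phi$ and $\Phi\circ\tw$ on a point $[l_1,\ldots,l_n]\in\Conf(\cP)$ represented by a matrix $V=[v_1,\ldots,v_n]\in\Pio_\cP$, and checking that they agree as configurations of flags in $\modsp(\Lambda_\cP)$. The main input is the explicit geometric description of $\DT=\Psi_{\beta_\cP}$ from Definition~\ref{def:DT_link} as the composition $t\circ\Theta_1\circ r_\beta$, together with the explicit flag-theoretic description of $\Phi$ provided by Proposition~\ref{prop:tog-circ-phi}, which identifies the flag at the vertical slice $x=i$ in the grid pattern with $\cF[\tI{i}]$, the flag spanned bottom-up by $v_{i_m}, v_{i_{m-1}}, \ldots, v_{i_1}=v_i$ where $\tI{i}=\{i_1<_i i_2<_i\cdots<_i i_m\}$.

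The first two stages of $\DT$, namely $r_\beta$ followed by $\Theta_1$, are sequences of Reidemeister II/III moves. By Lemma~\ref{lem:rotation_action} and the invariance of $\modsp(\Lambda_\cP)$ under Reidemeister moves reviewed in Subsection~\ref{subsec:invariance}, these isomorphisms only reorder the data of the chain of flags while preserving the collection of \emph{distinct} flags labelling the regions of the braid diagram. In particular, after $\Theta_1\circ r_\beta$, the flag in the region corresponding (through the sequence of moves) to the original slice $x=i$ is still $\cF[\tI{i}]$. The essential step is therefore the reflection $t$, and by Lemma~\ref{lem:reflection_flags} this step replaces each flag $\cF_j$ by its Hodge dual $\star\cF_j$, simultaneously reversing the direction of the chain. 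Identifying $(\bC^m)^*\cong\bC^m$ via the standard volume form, we thus need to check that the Hodge dual $\star\cF[\tI{i}]$ coincides with the flag $\cF^\tw[\tI{i}]$ built from the twisted columns $h_1,\ldots,h_n$ where $h_j=\star[\tI{j}\setminus\{j\}]$.

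The bottom $1$-dimensional subspace of $\star\cF[\tI{i}]$ is $\star[v_{i_2},v_{i_3},\ldots,v_{i_m}]=\star[\tI{i}\setminus\{i\}]=h_i$, which matches the bottom subspace of $\cF^\tw[\tI{i}]$ exactly. More generally, the $k$-th subspace of $\star\cF[\tI{i}]$ is $\star[v_{i_{k+1}},v_{i_{k+2}},\ldots,v_{i_m}]$, and the $k$-th subspace of $\cF^\tw[\tI{i}]$ is $[h_{i_m},h_{i_{m-1}},\ldots,h_{i_{m-k+1}}]$. To verify these two $k$-dimensional subspaces agree, we argue that $h_{i_s}\in\star[v_{i_{k+1}},\ldots,v_{i_m}]$ for each $s\in\{m-k+1,\ldots,m\}$. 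By definition $h_{i_s}$ annihilates every vector in the span $[\tI{i_s}\setminus\{i_s\}]$, and the nested exchange property of the Grassmann necklace, together with the non-vanishing of $\Delta_{\tI{a}}$ on $\Pio_\cP$ for all $a$, will imply that $[v_{i_{k+1}},\ldots,v_{i_m}]\subseteq[\tI{i_s}\setminus\{i_s\}]$ for all such $s$. A dimension count using Lemma~\ref{lem: boundary frozen A coord} then upgrades the containment to the equality of subspaces, and hence of flags.

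The main technical obstacle is the combinatorial verification in the last step: tracking how the indices $i_s$ of $\tI{i}$ sit inside the Grassmann necklace entries $\tI{i_s}$ and showing that the relevant containment $[v_{i_{k+1}},\ldots,v_{i_m}]\subseteq[\tI{i_s}\setminus\{i_s\}]$ holds. This is really a property of the bounded affine permutation $f$ and the poset structure on necklaces reviewed in Remark~\ref{rmk:necklace-poset}, and it mirrors the cyclic-rank-matrix arguments used in the proof of Theorem~\ref{thm: frame sheaf moduli = positroid}. Once this identification of flags is established at every position $i$ simultaneously, we obtain $t(\Theta_1(r_\beta(\Phi(V))))=\Phi(\tw(V))$ in $\modsp(\Lambda_\cP)$, yielding the desired commutativity $\DT\circ\Phi=\Phi\circ\tw$ and hence identifying the Muller-Speyer twist on $\Conf(\cP)$ with the Donaldson–Thomas transformation on $\modsp(\Lambda_\cP)$.
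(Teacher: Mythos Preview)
Your proposal has a genuine gap at the first step. You claim that $r_\beta$ and $\Theta_1$ ``only reorder the data of the chain of flags while preserving the collection of \emph{distinct} flags.'' This is false. Lemma~\ref{lem:rotation_action}(2) explicitly shows that a single rotation $r_i$ replaces the flag $\cF_j$ sitting between the crossing $s_i$ and the $w_0$ block by a genuinely \emph{new} flag $\cF'_j$, the unique flag with $\cF_{j-1}\xrightarrow{w_0}\cF'_j\xrightarrow{s_{i^*}}\cF_{j+1}$. After the full rotation $r_\beta$ (which rotates every crossing of $\eta$ across $w_0$) almost every flag in the chain has been replaced; this is exactly the content of Figure~\ref{fig:DT action on flags}, where $(\cF',\cF_1,\ldots,\cF_{l-1},\cF)$ becomes $(\cF',\cF'',\cF'_1,\ldots,\cF'_{l-1})$ with new entries $\cF'',\cF'_j$. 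In particular $\DT$ does \emph{not} send $\cF[\tI{i}]\mapsto\star\cF[\tI{i}]$, and the equality you propose to verify, $\star\cF[\tI{i}]=\cF^{\tw}[\tI{i}]$, is the wrong target. It already fails for the top cell of $\Gr_{2,4}$: there $\tI{1}=\{1,2\}$, so the $1$-dimensional part of $\star\cF[\tI{1}]$ is $\star[v_2]$, while the $1$-dimensional part of $\cF^{\tw}[\tI{1}]$ is $[h_{i_m}]=[h_2]=\star[v_3]$, and these differ generically. (This also shows your claim that $h_i$ ``matches the bottom subspace of $\cF^{\tw}[\tI{i}]$ exactly'' is incorrect: the bottom line of $\cF^{\tw}[\tI{i}]$ is $[h_{i_m}]$, not $[h_i]$.)

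The paper's argument sidesteps this by not attempting to match full flags at all positions. For each fixed $i$ it first performs toggles (Figure~\ref{fig: convenient representative}) to place a $w_0$ block so that the flag $\cF'$ immediately to the \emph{right} of $w_0$ is the one flag that \emph{is} preserved by the rotation $r_\beta$ and by $\Theta_1$. After the reflection $t$, the flag $\star\cF'$ lands in the slot previously occupied by $\cF$, and its $1$-dimensional subspace is $\star[j_2\cdots j_m]=h_i$. Since a point of $\Conf(\cP)$ is determined by its lines $l_1,\ldots,l_n$ via the inverse map $\Psi=\Phi^{-1}$, checking that $l_i\mapsto h_i$ for each $i$ separately (using a different toggled presentation for each $i$) is enough; no comparison of higher-dimensional pieces of the flags is needed. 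What your outline is missing is precisely this identification of which single flag survives the rotation unchanged and how to exploit it.
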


\begin{proof} We directly compute the image $(\Phi^{-1}\circ \DT\circ \Phi)([l_1,l_2,\dots, l_n])$ and compare it with $[h_1,h_2,\dots, h_n]:=\tw([l_1,l_2,\dots, l_n])$, the result of applying the twist map. Geometrically, for each $i$ we choose a particular flag $\cF$ whose 1-dimensional subspace is $l_i$. Then we compute the image of $\cF$ under the DT-transformation, which involves another flag $\cF'$. The position of these two flags in the front, and how $\cF$ changes under the DT-transformation, is sketched in Figure \ref{fig:half loop}. Let us now provide the details. Throughout, we view points of $\modsp(\Lambda_\cP)$ as images of points in $\Conf(\cP)$ under $\Phi$; that is, we obtain flags using the construction of Definition~\ref{def:phi}.\\

First, for each $i \in [n]$, we choose a particular sequence of Reidemeister II and III moves to turn $\beta_\cP$ into a braid word ending in $w_0$. That is, we choose a particular isomorphism $R$ as mentioned right after Definition~\ref{def:DT_link}. We now use toggles from Section~\ref{subsec: toggles and RIII moves} to define this sequence, as follows. Fix $i\in[n]$. We start with the target Grassmann necklace. Suppose $\overrightarrow{I_i}=\{i<_i j_2<_i\cdots<_i j_m\}$ and $j_k<_{j_2}\pi_f(i)<_{j_2}<j_{k+1}$, cf.~ Figure \ref{fig: convenient representative} (left). To the left of the $(i, \pi_f(i))$ chord and between heights $y=i$ and $y=j_m$, there are $\begin{tikzpicture}\draw (0,0.25) -- (0,0) -- (0.25,0);\end{tikzpicture}$ corners exactly at $y=i$ and $y=j_2, \dots, j_m$. We claim that we can perform a sequence of leftward toggles to the chord $(\pi_f^{-1}(i),i)$ and the chords $(\pi_f^{-1}(j_l),j_l)$ for $1<l\leq m$ so that these $\begin{tikzpicture}\draw (0,0.25) -- (0,0) -- (0.25,0);\end{tikzpicture}$ corners appear with respect to the $<_i$ ordering.

Indeed, if any two such  $\begin{tikzpicture}\draw (0,0.25) -- (0,0) -- (0.25,0);\end{tikzpicture}$ corners are out of order, the right one $R$ appears northeast of the left one $L$. Since we are in the grid pattern for $\beta_\cP$, the chord ending at $R$ has its $\begin{tikzpicture}\draw (0,0.25) -- (0.25,0.25) -- (0.25,0);\end{tikzpicture}$ corner southeast of the $\begin{tikzpicture}\draw (0,0.25) -- (0.25,0.25) -- (0.25,0);\end{tikzpicture}$ corner of the chord ending at $L$. Therefore, these chords are aligned and can be toggled, putting the corners in the correct order as claimed. In the end, we obtain the initial position as depicted in Figure \ref{fig: convenient representative} (right). This coincides with the position of $\cF,\cF'$ in the front as in Figure \ref{fig:half loop} (upper left).

\begin{figure}[H]
\centering
\begin{tikzpicture}[scale=0.6,baseline=0]
    \draw [dashed] (-2,5) -- (0,5);
    \node [teal] at (-1,5) [] {$\bullet$};
    \node [teal] at (-1,5) [above] {$i^-$};
    \draw (0,5) node [left] {$i$} -- (1,5) -- (1,1.5) -- (2,1.5) node[right] {$\pi_f(i)$};
    \draw (0,4) node [left] {$j_2$}-- (2,4);
    \draw (0,2.5) node [left] {$j_k$} -- (2,2.5);
    \draw (0,0.5) node [left] {$j_{k+1}$}-- (2,0.5);
    \draw (0,-1) node [left] {$j_m$} -- (2, -1);
    \node at (0,3.25) [] {$\vdots$};
    \node at (2,3.25) [] {$\vdots$};
    \node at (1,-0.25) [] {$\vdots$};
    \node [teal] at (1,4.5) [] {$\bullet$};
    \node [teal] at (1,4.5) [right] {$(i+1)^+$};
\end{tikzpicture}\hspace{2cm}
\begin{tikzpicture}[baseline=-10]
    \draw (-0.5,3) node [above] {$i$} -- (-0.5,1.75) -- (3.5,1.75) -- (3.5,0) -- (4,0);
    \draw [blue,dashed] (-0.2,1.9) -- (3,1.9) -- (3,-1.5) -- cycle;
    \node [blue] at (1.7,1) [] {$w_0$};
    \foreach \i in {2,3,6}
    {
    \draw (0.5*\i-0.25,3) -- (0.5*\i-0.25,2.5-0.5*\i-0.25) -- (4,2.5-0.5*\i-0.25);
    }
    \node at (2,3) [] {$\cdots$};
    \node at (4,0.4) [] {$\vdots$};
    \node at (4,-0.4) [] {$\vdots$};
    \node at (0.75,3) [above] {$j_2$};
    \node at (1.25,3) [above] {$j_3$};
    \node at (2.75,3) [above] {$j_m$};
    \node at (4,1.25) [right] {$j_2$};
    \node at (4,0.75) [right] {$j_3$};
    \node at (4,-0.75) [right] {$j_m$};
%    \node at (4,1.75) [right] {$i$};
    \node at (4,0) [right] {$\pi_f(i)$};
    \draw[red,dashed] (-1,2.5) node [left] {$\cF$} -- (3,2.5);
    \draw [red,dashed] (3.25,2) -- (3.25,-2) node [below] {$\cF'$};
    \node [teal] at (3.5,1.5) [] {$\bullet$};
    \node [teal] at (2,1.75) [] {$\bullet$};
    \node [teal] at (3.5,1.5) [above right] {$(i+1)^+$};
    \node [teal] at (2,1.75) [above] {$i^-$};
\end{tikzpicture}
\caption{The cyclic braid representative we use in the proof of Theorem \ref{thm:DT without frozens} and Theorem~\ref{thm:DT-with-froz}. Along the strands labeled by $j_l$, the trivialization is $v_{j_l}$. Along the strand labeled by $i$, up until the basepoint $i^-$, the trivialization is $v_{i}$.}
\label{fig: convenient representative}
\end{figure}

%\noindent These toggles all induce cluster automorphisms \MSB{Toggles also are isomorphisms rather than automorphisms, so we should rephrase this (though it's true the cluster structure throughout is the one induced by the toggle isomorphisms). Also, this sequence of toggles gives an isomorphism $R$ as in the sentences above Prop 8.11, which is already built in to the defn of $\Psi_{\beta_\cP}$.}. \ITL{I suggest we say that toggles induce a canonical isomorphism of cluster varieties.} Therefore, up to a conjugation by a cluster automorphism, we can (and do) assume that the initial periodic braid pattern is as in Figure \ref{fig: convenient representative} (right), i.e.~Figure \ref{fig:half loop} (upper left), and that the given initial flag is $\cF$. Our goal is to compute $\DT(\cF)$.\\

Second, we now compute $DT(\cF)$ with this particular choice of periodic braid pattern, using the description of the DT-transformation as in Definition \ref{def:DT_link} and $\cF$ is chosen to be the flag exactly to the left of the fixed $w_0$ subword. By Lemma \ref{lem:rotation_action}, the $r_\beta$ isomorphism leaves $\cF':=\cF_i$ invariant. The Legendrian isotopy that induces $\Theta_1$, which consists of Reidemeister II moves near the cusps, leaves the region near $\cF'$ fixed. Therefore the map induced by $\Theta_1$ also leaves $\cF'$ invariant. Therefore, the flag $t(\cF')$ is precisely the flag that replaces $\cF$ under DT, i.e.~ we must apply the reflection contactomorphism $t$ to $\cF'$ to obtain $\DT(\cF)=t(\cF')$. By Lemma \ref{lem:reflection_flags}, the line $l_i$ is replaced by the $1$-dimensional subspace in the flag $\star \cF'$, namely $\star[j_2j_3\cdots j_m]$. Since $\cF'=\cF_i$ by definition, they share the same $(m-1)$-dimensional subspace $[j_m] \subset [j_{m-1}, j_{m}] \subset \cdots \subset [j_2, \dots, j_m] \subset [i, j_2, \dots, j_m]$. Therefore, the image of $l_i$ under the DT transformation is $\star[j_2j_3\cdots j_m]=h_i$. This coincides with the descent of the Muller-Speyer twist automorphism $\tw:\Conf(\cP)\lr\Conf(\cP)$, i.e.~ $DT(\cF)=\tw(\cF)$. For any other flag $\mathcal{G}$ in the tuple of flags specifying a point in $\modsp(\Lambda_\cP)$, the argument above also implies $DT(\mathcal{G})=\tw(\mathcal{G})$. Indeed, consider a sequence of Reidemeister III moves that moves the $w_0$ piece (to the left) so that the given $\mathcal{G}$ is the flag exactly to the left of $w_0$. Conjugating by the isomorphism induced by these Reidemeister III moves, the argument above also establishes the equality $DT(\mathcal{G})=\tw(\mathcal{G})$. Therefore $DT=\tw$, up to conjugation by $\Phi$, thus establishing the required commutativity of the diagram.
%For that, we first need to move the $w_0$ part towards the northwest direction all the way through the cyclic braid until it comes back around from the southeast direction, and finally stops immediately to the right of the vertical slice labeled by $\cF'$. Then, we apply the morphism $t$ induced from the contactomorphism $t:(x,y,z)\mapsto (-x,y,-z)$, and the flag $t(\cF')$ is precisely the flag that replaces $\cF$. It follows that under the DT transformation, the line $l_i$ is replaced by the $1$-dimensional subspace in the flag $\star \cF'$, namely $\star[j_2j_3\cdots j_m]$. We then observe that $\cF'$ has the same $(m-1)$-dimensional subspace as $\cF_i$ as they have the same set of indices (cf.~Remark \ref{rmk: preservation of Grassmannian necklace}) and the same top-level index $i$. Thus, we conclude that under the DT transformation, $l_i$ is replaced by $\star[j_2j_3\cdots j_m]=h_i$. This agrees with the descent of the Muller-Speyer twist map $\tw$ to $\Conf(\cP)$, thus establishing the required commutativity of the diagram.
\end{proof}

\subsection{The Twist Map and Donaldson-Thomas on \texorpdfstring{$\frmodsp(\Lambda_\cP;\tt)$}{}}\label{ssec:proof_TheoremB}
This subsection proves that the Muller-Speyer twist map $\tw:\Pi^\circ_\cP\lr\Pi^\circ_\cP$ is a DT-transformation. This proves Theorem~\ref{thm:mainB} in the introduction. We also show that the source and target cluster structures on $\Pio_\cP$ are related by a quasi-cluster transformation, proving Corollary~\ref{cor:quasiequivalence} in the introduction. First, we establish Theorem~\ref{thm:mainB} by proving the following result:

\begin{thm}\label{thm:DT-with-froz} Let $\cP$ be a  positroid. Then the following diagram commutes:
\[
\xymatrix{\Pi^\circ_\cP\ar[r]^(0.4)\Phi \ar[d]_\tw & \frmodsp(\Lambda_\cP;\tt) \ar[d]^\DT \\
\Pi^\circ_\cP\ar[r]_(0.4)\Phi & \frmodsp(\Lambda_\cP;\tt)}.
\]
\noindent In addition, the Muller-Speyer twist map $\tw:\Pio_\cP\lr\Pio_\cP$ is a DT-transformation, where $\Pio_\cP$ is considered as a cluster $\cA$-scheme with initial seed $\Sigma_T(\bG)$.
\end{thm}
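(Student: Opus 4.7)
The plan is to deduce the decorated statement from its undecorated counterpart, already established in Theorem~\ref{thm:DT without frozens}, by checking that the frozen trivialization data agrees on both sides of the square. To do this, I would first observe that a point of $\frmodsp(\Lambda_\cP;\tt)$ is uniquely determined by its image in the undecorated moduli $\modsp(\Lambda_\cP)$ together with the values of the frozen merodromies $A_1,\dots,A_n$ attached to the paired base points $i^\pm\in\tt$. Indeed, the forgetful map $p\colon\frmodsp(\Lambda_\cP;\tt)\to\modsp(\Lambda_\cP)$ is a $(\bC^\times)^n$-torsor whose fiber coordinates are precisely these $A_i$, matching the scaling $(\bC^\times)^n$-action on $\Pio_\cP$. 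Since Theorem~\ref{thm:DT without frozens} shows that $p\circ\DT\circ\Phi=p\circ\Phi\circ\tw$, it is enough to check that both compositions pull back each $A_i$ to the same regular function on $\Pio_\cP$.

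The computation on the twist side is direct: by Lemma~\ref{lem: boundary frozen A coord}, $\Phi^*(A_i)=D_i=\Delta_{\tI{i}}$, so
\[
(\Phi\circ\tw)^*(A_i)=\tw^*(\Delta_{\tI{i}})=\Delta_{\tI{i}}\circ\tw,
\]
which the definition of the Muller--Speyer twist turns into an explicit rational monomial in the Pl\"uckers $\Delta_{\tI{j}}(V)$ attached to the target necklace, using the identities $\alpha_j=\pm\star[\tI{j}\setminus\{j\}]$. For the DT side I would unpack $\DT=t\circ\Theta_1\circ r_{\beta_\cP}$ from Definition~\ref{def:DT_link}. The rotations $r_{\beta_\cP}$ and the Legendrian isotopy $\Theta_1$ are Reidemeister sequences which, by Lemma~\ref{lem:rotation_action}(3) and the analogous statement for Reidemeister~II, carry each arc of $\Lambda_\cP\setminus\tt$ isomorphically onto an arc of the intermediate Legendrian and identify the associated trivializations; in particular they preserve each $A_i$ canonically. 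The reflection $t$ is the only nontrivial piece: by Lemma~\ref{lem:reflection-Poisson}(2) it sends the merodromy $A_i$ on the arc adjacent to $i^-$ to $A_{\sigma(i)}^{-1}$, where $\sigma$ is the explicit permutation of base points recorded by the front-diagram picture of Figure~\ref{fig:half loop} (the one implicit in Theorem~\ref{thm:DT without frozens}). Combining these three steps, $\DT^*(A_i)$ becomes a Laurent monomial in the frozens $A_{\sigma^{\pm 1}(j)}$, which after applying $\Phi^*$ turns into the corresponding Laurent monomial in the Pl\"uckers $\Delta_{\tI{j}}$.

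The heart of the proof, and the step I expect to be the main obstacle, is to verify that this Laurent monomial coincides with $\Delta_{\tI{i}}\circ\tw$. I would set this up by choosing the cyclic braid representative in Figure~\ref{fig: convenient representative}, which aligns the $w_0$-block with the basepoint $i^-$ and exposes the flag $\cF'$ whose first subspace is $\star[j_2\cdots j_m]$ with $\tI{i}=\{i<_ij_2<_i\cdots<_i j_m\}$. In this local model, the Hodge-star duality built into $t$ exactly converts the vector $v_i$ decorating the arc at $i^-$ into the functional $\alpha_i/D_i$ decorating the corresponding arc of the rotated-reflected front, while $\Theta_1$ and $r_{\beta_\cP}$ identify the surrounding trivializations with $v_{j_2},\dots,v_{j_m}$; a direct expansion of $\det(\star v_{j_2}\wedge\cdots\wedge\star v_{j_m}\wedge\star v_i)$ in the canonical basis of $(\bC^m)^\ast$ then yields exactly $\Delta_{\tI{i}}\circ\tw$ up to the sign convention fixed by the canonical choice of sign curves of Remark~\ref{rmk:sign_curves}. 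This local matching, together with the earlier steps, produces the desired identity $(\DT\circ\Phi)^*(A_i)=(\Phi\circ\tw)^*(A_i)$ for every $i$, hence $\DT\circ\Phi=\Phi\circ\tw$.

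Finally, I would conclude that $\tw$ is a DT-transformation on $\Pio_\cP$ with the target cluster structure $\Sigma_T(\bG)$ by transporting the DT properties of $\Psi_{\beta_\cP}$ along the isomorphism $\Phi$: the identification of quivers and cluster variables in Theorem~\ref{thm: frame sheaf moduli = positroid} ensures that Conditions~(i') and~(ii') of Definition~\ref{defn: DT}, established for $\Psi_{\beta_\cP}$ in Proposition~\ref{prop:Psi_DT}, pass through $\Phi^*$ to the map $\tw$ on $\Pio_\cP$. This yields the second statement of the theorem and, a fortiori, the quasi-cluster character of the twist announced in Theorem~\ref{thm:mainB}.
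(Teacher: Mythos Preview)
Your reduction in the first paragraph has a genuine gap: the claim that a point of $\frmodsp(\Lambda_\cP;\tt)$ is determined by its image in $\modsp(\Lambda_\cP)$ together with the values $A_1,\dots,A_n$ is false. Equivalently, the frozen Pl\"uckers $\Delta_{\tI{1}},\dots,\Delta_{\tI{n}}$ are \emph{not} coordinates on the column-scaling fiber of $\Pio_\cP\to\Conf(\cP)$. For the top cell of $\Gr(2,4)$ the $4\times4$ incidence matrix $M_{ji}=[\,i\in\tI{j}\,]$ is singular, with multiplicative kernel the one-parameter family $(c,c^{-1},c,c^{-1})$; scaling the columns of any $V$ by such a tuple produces a distinct point of $\Pio_\cP$ with the same column spans and the same $D_1,\dots,D_4$. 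So even after matching the undecorated diagram (Theorem~\ref{thm:DT without frozens}) and all of the $A_i$'s, you cannot conclude $\DT\circ\Phi=\Phi\circ\tw$.

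The paper does not attempt this decomposition. Instead it tracks the actual trivialization \emph{vectors} through $\DT$: for each $i$ it passes to the toggled braid representative of Figure~\ref{fig: convenient representative}, follows the basepoints $i^-$ and $(i+1)^+$ through the rotation $r_\beta$ and then through the reflection $t$, and observes that after $t$ the basepoint $i^-$ is no longer sitting at the required $\begin{tikzpicture}[baseline=7]\draw (0,0.5) -- (0,0) -- (0.5,0);\end{tikzpicture}$ corner. Sliding $i^-$ back across that corner is part of the identification $t(\Theta_1(r_\beta(\tt)))=\tt$ and rescales the corner trivialization by $A_i^{-1}=\Delta_{\tI{i}}^{-1}$. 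The upshot is that the vector $v_i$ labeling that corner is replaced by $\Delta_{\tI{i}}^{-1}\,\star(v_{j_2}\wedge\cdots\wedge v_{j_m})$, which is exactly the $i$th column of $\tw(V)$. Since the collection of these corner vectors determines the point of $\Pio_\cP$ (this is the inverse map $\Psi$ from the proof of Theorem~\ref{thm: frame sheaf moduli = positroid}), commutativity follows. Your third paragraph is very close to this computation, but it is framed as a tool for the flawed $A_i$-matching rather than as the argument itself, and it omits the crucial ``slide $i^-$ back through the corner'' step that produces the factor $\Delta_{\tI{i}}^{-1}$. If you drop the first two paragraphs and rewrite the goal as ``compute the image of the trivialization vector $v_i$ under $\DT$,'' your local-model sketch essentially becomes the paper's proof.
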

%\RC{The only difference with the undecorated case is that we need to move basepoints. In this case, only one basepoint $i^-$ needs to moved after applying the reflection, so it is on the right of the bottom cusps.}
\begin{proof} The main difference between this argument and the proof of Theorem \ref{thm:DT without frozens} is that we must also understand how the base points move (and affect the flags) under the DT-transformation. Let us use the same cyclic braid representative as in the proof of Theorem \ref{thm:DT without frozens}, in which the decoration vector $v_i$ can be conveniently read off, cf.~Figure \ref{fig: convenient representative}. We will now compare the action of the twist map on the $i$th column vector $v_i$ and the action of $\DT$ on the corresponding trivialization, for each $1\leq i\leq n$. Since we argue for each $i$ separately, we only keep track of the basepoints $i^-$ and $(i+1)^+$ at a given time. Note that immediately to the left of $i^-$, the trivialization is given by (a projection of) the vector $v_i$. Consequently, in Figure \ref{fig: convenient representative} we only indicate the two base points $i^-$ and $(i+1)^+$. In fact, only $i^-$ will be moved in the end, whereas there is no need to move $(i+1)^+$. 
%We will further use a convenient description of the $\DT$ map in terms of toggles and braid moves: this will allow us to compare the action of the twist map on the $i$th column vector $v_i$ and the action of $\DT$ on the corresponding trivialization for each $1\leq i\leq n$. Because we are arguing for each $i$ separately, we only keep track of the basepoints $i^-$ and $(i+1)^+$. Note that right before $i^-$, the trivialization is given by projecting $v_i$.\\

We are working in the decorated moduli space $\frmodsp(\Lambda_\cP;\tt)$, instead of the undecorated moduli $\frmodsp(\Lambda_\cP)$. Therefore, given the cyclic braid representative as in the proof of Theorem \ref{thm:DT without frozens}, we also need to specify where the base points are located in this particular representative. By Definition~\ref{def:base-pts}, the $j^+$ base points are in the $\begin{tikzpicture}[baseline=7]\draw (0,0.5) -- (0.5,0.5) -- (0.5,0);\end{tikzpicture}$ corners and remain there throughout a sequence of toggles. From the proof of Proposition~\ref{thm: frame sheaf moduli = positroid}, in order to compute the column vector $v_i$ we need to move the {\it minus} base points so that they occur after the $\begin{tikzpicture}[baseline=7]\draw (0,0.5) -- (0,0) -- (0.5,0);\end{tikzpicture}$ corners. Thus, we choose to place the base points in a way such that the only base points that show up in the local picture of Figure \ref{fig: convenient representative} are $(i+1)^+$ and $i^-$, which  are colored in green. In particular, $i^-$ is the base point where the trivialization changes from $v_i$ to $\Delta_{\overrightarrow{I_i}}^{-1}v_i$ and $(i+1)^+$ is the base point that changes trivialization from $\Delta_{\overrightarrow{I_i}}^{-1}v_i$ to $v_{\pi(i)}$.\\

\noindent Let us fix these initial choices and start applying the DT transformation, following Definition \ref{def:DT_link} and Proposition \ref{prop:Psi_DT}. After applying $r_\beta$, the situation is as depicted in Figure \ref{fig: local picture after DT} (left). The contact isotopy $\Theta_1$ has no effect and therefore we apply the contactomorphism $t$ next. Since Figure \ref{fig: local picture after DT} (left) is part of a {\it cyclic} braid pattern, instead of a front projection, the contactomorphism $t$ only reflects across the northwest-southeast axis. The result of applying $t$ to Figure \ref{fig: local picture after DT} (left) is depicted in Figure \ref{fig: local picture after DT} (right).
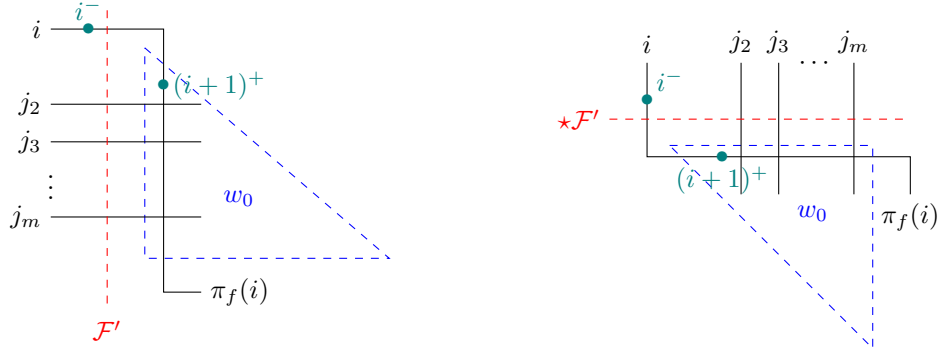
\begin{figure}[H]
    \centering
    \begin{tikzpicture}
    \draw (2,2.25) -- (3.5,2.25) -- (3.5,-1.25) -- (4,-1.25);
    \draw [blue,dashed] (3.25,2) -- (6.5,-0.8) -- (3.25,-0.8) -- cycle;
    \node [blue] at (4.5,0) [] {$w_0$};
    \foreach \i in {2,3,5}
    {
    \draw (2,2.5-0.5*\i-0.25) -- (4,2.5-0.5*\i-0.25);
    }
    \node at (2,0.25) [] {$\vdots$};
    \node at (2,1.25) [left] {$j_2$};
    \node at (2,0.75) [left] {$j_3$};
    \node at (2,-0.25) [left] {$j_m$};
    \node at (2,2.25) [left] {$i$};
    \node at (4,-1.25) [right] {$\pi_f(i)$};
    \draw [red,dashed] (2.75,2.5) -- (2.75,-1.5) node [below] {$\cF'$};
    \node [teal] at (3.5,1.5) [] {$\bullet$};
    \node [teal] at (2.5,2.25) [] {$\bullet$};
    \node [teal] at (3.5,1.5) [right] {$(i+1)^+$};
    \node [teal] at (2.5,2.25) [above] {$i^-$};
    \end{tikzpicture}\hspace{2cm}
    \begin{tikzpicture}
    \draw (-0.5,3) node [above] {$i$} -- (-0.5,1.75) -- (3,1.75) -- (3,1.25);
    \draw [blue,dashed] (-0.2,1.9) -- (2.5,1.9) -- (2.5,-0.8) -- cycle;
    \node [blue] at (1.7,1) [] {$w_0$};
    \foreach \i in {2,3,5}
    {
    \draw (0.5*\i-0.25,3) -- (0.5*\i-0.25,1.25);
    }
    \node at (1.75,3) [] {$\cdots$};
    \node at (0.75,3) [above] {$j_2$};
    \node at (1.25,3) [above] {$j_3$};
    \node at (2.25,3) [above] {$j_m$};
    \node at (3,1.25) [below] {$\pi_f(i)$};
    \draw[red,dashed] (-1,2.25) node [left] {$\star\cF'$} -- (3,2.25);
    \node [teal] at (0.5,1.75) [] {$\bullet$};
    \node [teal] at (-0.5,2.5) [] {$\bullet$};
    \node [teal] at (0.5,1.75) [below] {$(i+1)^+$};
    \node [teal] at (-0.5,2.5) [above right] {$i^-$};
\end{tikzpicture}
    \caption{(Left) Cyclic braid diagram near $w_0$ after applying $r_\beta$. (Right) Cyclic braid diagram near $w_0$ after applying $r_\beta,\Theta_1$ and the contactomorphism $t$. On the left, along the strands labeled by $j_l$, the trivialization is $v_{j_l}$. Along the strand labelled by $i$, up until the basepoint $i^-$, the trivialization is $v_{i}$.}
    \label{fig: local picture after DT}
\end{figure}

Let us compute the trivialization along the interval $(i^-,(i+1)^+)$ in Figure \ref{fig: local picture after DT} (right). As discussed in Subsection \ref{subsec: review of DT}, it follows from Lemma \ref{lem:reflection_flags} that the trivialization after the contactomorphism $t$ is dual to the trivialization in Figure \ref{fig: local picture after DT} (left), which depicts the (decorated) cyclic braid pattern before applying $t$. The trivialization along the interval $(i^-,(i+1)^+)$ in Figure \ref{fig: local picture after DT} (left) is
given by the image of the vector $\Delta_{\overrightarrow{I_i}}^{-1}v_i$ in the projection $\bC^m/(\cF')_{m-1}$ and $\Delta_{\overrightarrow{I_i}}=\det(v_i\wedge v_{j_2}\wedge \cdots \wedge v_{j_m})$. Therefore, the dual trivialization of $(\star\cF')_1\cong \star[j_2j_3\cdots j_m]$ along $(i^-,(i+1)^+)$ is given by $\star(v_{j_2}\wedge v_{j_3}\wedge \cdots \wedge v_{j_m})$. In order to complete the DT transformation, we also need to move the base points accordingly. Indeed, although the cyclic braid pattern (equivalently, the front of the Legendrian link) is the same as the initial one after applying the rotations and the reflection, the base points are not in the right place, cf.~Figure \ref{fig: convenient representative}. The final step is to move the base points back to where they initially were. By construction, it suffices to move the $i^-$ base point through the $\begin{tikzpicture}[baseline=7]\draw (0,0.5) -- (0,0) -- (0.5,0);\end{tikzpicture}$ corner in Figure \ref{fig: local picture after DT} (right). This action scales the trivialization at the corner by $A'_i=A_i^{-1}=\Delta_{\overrightarrow{I_i}}^{-1}$. Therefore, the DT-transformation is such that the trivialization along the $\begin{tikzpicture}[baseline=7]\draw (0,0.5) -- (0,0) -- (0.5,0);\end{tikzpicture}$ corner in the local picture changes from $v_i$ to $\Delta_{\overrightarrow{I_i}}^{-1}\star(v_{j_2}\wedge v_{j_3}\wedge \cdots \wedge v_{j_m})$, i.e.~DT sends $v_i$ to $\Delta_{\overrightarrow{I_i}}^{-1}\star(v_{j_2}\wedge v_{j_3}\wedge \cdots \wedge v_{j_m})$. This agrees with the Muller-Speyer twist and hence the diagram commutes as required.\\

\noindent We have now established that $\tw = \Phi^{-1} \circ \DT \circ \Phi$ as automorphisms of $\Pio_\cP$, where $\DT$ is the Donaldson-Thomas transformation on $\modsp(\Lambda_\cP;\tt)$ considered as an $\cA$-scheme with initial seed given by a positroid weave. This implies that $\tw:\Pio_\cP\lr\Pio_\cP$ is a quasi-cluster transformation, considered as an $\cA$-scheme with initial seed $\Sigma_T(\bG)$. Indeed, Theorem \ref{thm: frame sheaf moduli = positroid} implies that $\Phi^*$ sends $\Sigma(\ww(\bG);\tt)$ to $\Sigma_T(\bG)$ and commutes with mutations. Finally, by Theorem~\ref{thm:DT without frozens}, the mutation sequence in the quasi-cluster transformation $\tw$ is a reddening sequence. In addition, \cite[Equation 9]{MullerSpeyertwist} shows that $\tw$ inverts the target frozens. Therefore, $\tw$ is a $\DT$-transformation on $\Pio_\cP$ considered as an $\cA$-scheme with initial seed $\Sigma_T(\bG)$.
\end{proof}

\noindent We conclude Corollary~\ref{cor:quasiequivalence}, showing that the target and source seeds are quasi-cluster equivalent.

\begin{cor}\label{cor:twist-quasi-equivalence}
Let $\cP$ be a positroid and $\bG$ a plabic graph for $\cP$. Then $\Sigma_T(\bG)$ and $\Sigma_S(\bG)$ are related by a quasi-cluster transformation.
\end{cor}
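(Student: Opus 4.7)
The plan is to combine Theorem~\ref{thm:DT-with-froz} with the explicit wedge-formula description of the twist map recalled at the start of Section~\ref{sec:twist}. By Theorem~\ref{thm:DT-with-froz}, the twist $\tw$ is a DT-transformation on $\Pio_\cP$ equipped with the target cluster structure; in particular it is a quasi-cluster automorphism, so the pullback seed $\tw^*\Sigma_T(\bG)$ is quasi-cluster equivalent to $\Sigma_T(\bG)$, via the reddening mutation sequence whose existence is established in the proof of Theorem~\ref{thm:DT-with-froz}.

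I would then identify $\tw^*\Sigma_T(\bG)$ with $\Sigma_S(\bG)$ up to a further quasi-cluster transformation. Using the column formula $h_i = \Delta_{\overrightarrow{I_i}}^{-1}\,\star\bigl(\overrightarrow{\bigwedge}_{j \in \overrightarrow{I_i}\setminus\{i\}} v_j\bigr)$, a direct Laplace-type expansion of $\Delta_{\overrightarrow{I_F}}(h_1,\dots,h_n)$ should yield, for each face $F$ of $\bG$, an identity of the form $\tw^*(\Delta_{\overrightarrow{I_F}}) = M_F \cdot \Delta_{\overleftarrow{I_F}}$, where $M_F$ is a Laurent monomial in the necklace Pl\"uckers $\{\Delta_{\overrightarrow{I_i}}\}_{i\in[n]}$; this is essentially the content of \cite[Theorem 6.7]{MullerSpeyertwist}. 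Since both $\Sigma_T(\bG)$ and $\Sigma_S(\bG)$ carry the plabic quiver $Q_\bG$, and $\tw$ preserves the quiver up to quasi-cluster transformations, this identifies $\tw^*\Sigma_T(\bG)$ with $\Sigma_S(\bG)$ up to rescaling of mutable cluster variables by Laurent monomials in the frozens.

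Concatenating the two quasi-cluster transformations would then yield the desired equivalence $\Sigma_T(\bG)\sim \Sigma_S(\bG)$. The main obstacle will be tracking the frozens carefully: the target and source seeds use different frozen sets, so one must verify that the rescaling monomials $M_F$—together with their behaviour on boundary faces, where \cite[Equation 9]{MullerSpeyertwist} shows that $\tw^*$ inverts the corresponding frozen Pl\"uckers—assemble into an honest quasi-cluster transformation relating $\Sigma_T(\bG)$ and $\Sigma_S(\bG)$ rather than an abstract biregular rescaling. This bookkeeping, although elementary, is the one delicate ingredient of the argument.
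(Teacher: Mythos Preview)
Your approach follows the same two-step strategy as the paper: invoke Theorem~\ref{thm:DT-with-froz} so that $\tw$ (or a power) acts by a quasi-cluster transformation on the target structure, then use a Muller--Speyer identity expressing twisted target Pl\"uckers as source Pl\"uckers times Laurent monomials in necklace frozens, and finally check that this rescaling is itself quasi-cluster.

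The paper differs in two practical ways that sidestep exactly the delicate bookkeeping you flag. First, it uses $(\tw^*)^{-2}$ rather than a single $\tw^*$, because \cite[Proposition~7.13]{MullerSpeyertwist} states directly that $\Sigma_S(\bG)$ and $(\tw^*)^{-2}\Sigma_T(\bG)$ differ by rescaling each cluster variable by a Laurent monomial in the \emph{target} necklace frozens; no Laplace expansion is required. Second, rather than verifying by hand that this rescaling preserves exchange ratios, the paper cites the proof of \cite[Theorem~5.17]{FSB}, where precisely this check is carried out. Your single-twist identity $\tw^*(\Delta_{\overrightarrow{I_F}}) = M_F\cdot\Delta_{\overleftarrow{I_F}}$ with $M_F$ a monomial in target frozens is not quite what one reads off directly from Muller--Speyer for a single application of $\tw$, and the exchange-ratio condition on the $M_F$ would still need separate verification; passing to the squared twist turns both of these into citations.
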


\begin{proof}
First, \cite[Proposition 7.13]{MullerSpeyertwist} shows that  $\Sigma_S(\bG)$ differs from $(\tw^*)^{-2} \Sigma_T(\bG)$ by rescaling by target necklace frozens. These rescalings preserve exchange ratios (see e.g. the last paragraph of the proof of \cite[Theorem 5.17]{FSB}), and thus they yield a quasi-cluster transformation. Since $(\tw^*)^{-2}$ is also a quasi-cluster transformation by Theorem~\ref{thm:DT-with-froz}, $\Sigma_S(\bG)$ and $\Sigma_T(\bG)$ are related by a quasi-cluster transformation.
\end{proof}

\noindent In particular, this corollary implies that the seeds in the target and source cluster structures on $\Pio_\cP$ are the same up to Laurent monomials in frozens.

\renewcommand*{\thesection}{\Alph{section}}
\setcounter{section}{0}

\section{Appendix}

\subsection{Quasi-Cluster Donaldson-Thomas Transformation} Quasi-cluster transformations of cluster algebras were introduced  in \cite{Fra}. We use a slightly more restrictive version for the cluster ensemble setting: we give a brief review in this subsection, cf.~ \cite[Appendix]{CW} for more details. We assume basic familiarity with cluster algebras and cluster varieties, see e.g.~ \cite{FZI, FockGoncharov_ensemble}. 

Let us denote the mutable part of a quiver $Q$ by $Q^\uf$, and denote the exchange matrix of $Q$ by $\epsilon$. By following the cluster ensemble construction \cite[Section 1.2.9]{FockGoncharov_ensemble}, we obtain two pairs of cluster ensembles $(\mathcal{A}_Q,\mathcal{X}_Q)$ and $(\mathcal{A}_{Q^\uf}, \mathcal{X}_{Q^\uf})$ that fit into the following commutative diagram:
\[
\xymatrix{ \mathcal{A}_{Q^\uf} \ar@{^(->}[r]\ar[d] & \mathcal{A}_Q \ar[d] \\
\mathcal{X}_Q\ar@{->>}[r] & \mathcal{X}_{Q^\uf}}
\]
In particular, the two cluster schemes at the bottom row carry canonical Poisson structures defined in terms of the initial cluster $\mathcal{X}$-coordinates by $\{X_i,X_j\}=\epsilon_{ij}X_iX_j$. Let $I$, resp.~$I^\uf$ be the set of quiver vertices in $Q$, resp.~$Q^\uf$. Let $I^\fr:=I\setminus I^\uf$. Let $\mu$ be a mutation sequence and let $Q'=\mu(Q)$ be the quiver obtained by applying the mutation sequence $\mu$ to $Q$. Denote the cluster coordinates associated with $Q$, resp.~$Q'$, by $A_i$ and $X_i$, resp.~$A'_i$ and $X'_i$.

\begin{defn}[{\cite[Definitions A.31, A.32]{ShenWeng}}] A mutation sequence $\mu$ and an $I\times I$ matrix $N=(n_{ij})$ with $\mathbb{Z}$-entries are said to define a \emph{quasi-cluster transformation} if
\begin{enumerate}
    \item $\det N=\pm 1$;
    \item the submatrix $N|_{I^\uf\times I^\uf}$ is a permutation matrix;
    \item the submatrix $N|_{I^\uf \times I^\fr}=0$;
    \item $\epsilon_{ij}=\sum_{k,l} n_{ik}n_{jl}\epsilon'_{kl}$, where $\epsilon'$ is the exchange matrix of $Q'$.
\end{enumerate}
When the above conditions are satisfied, we define $M:=(N^t)^{-1}$. A quasi-cluster transformation $\sigma$ automatically acts biregularly on $\mathcal{A}_Q$ and $\mathcal{X}_Q$ by
\[
\sigma^*(A_i):=\prod_j(A'_j)^{m_{ij}} \quad \text{and} \quad \sigma^*(X_i):=\prod_j(X'_j)^{n_{ij}}. 
\]
In particular, the biregular morphism $\sigma:\mathcal{X}_Q\longrightarrow \mathcal{X}_Q$ is a Poisson morphism.\hfill$\Box$ 
\end{defn}

Note that if $Q=Q^\uf$, then a quasi-cluster transformation is the same as a cluster transformation, i.e.~a sequence of mutations. If the mutation sequence $\mu$ is empty, then the matrix $M$ encodes a permutation of the mutable $A$-variables and a rescaling of each $A$-variable by Laurent monomials in frozens. These rescalings have the property that the two monomials appearing on the right hand side of an exchange relation are scaled by the same Laurent monomial. Such scalings are said to \emph{preserve exchange ratios} in \cite{FSB}.

\begin{defn}\label{defn: DT} A quasi-cluster transformation $\sigma$ is said to be a \emph{quasi-cluster Donaldson-Thomas (DT) transformation} if 
\begin{itemize}
    \item[(i)] the mutation sequence $\mu$ of $\sigma$ is a reddening sequence, cf.~\cite[Section 3]{KellerDT};
    \item[(ii)] the matrix $M$ satisfies $M|_{I^\fr\times I^\fr}=-\mathrm{Id}$.
\end{itemize}
Each of these two conditions are equivalent to the following two conditions:
\begin{itemize}
    \item[(i')] the descent of $\sigma$ onto $\mathcal{X}_{Q^\uf}$ is the cluster DT transformation on $\mathcal{X}_{Q^\uf}$, cf.~\cite[Definition A.35]{ShenWeng};
    \item[(ii')] $\sigma^*(A_i)=A_i^{-1}$ for all frozen cluster $\mathcal{A}$-coordinates $A_i$.\hfill$\Box$
\end{itemize}
\end{defn}

Conditions (i') and (ii') above are used in Section \ref{sec:twist}.

\begin{rmk} If $Q=Q^\uf$, a quasi-cluster DT transformation is unique if it exists and it is central in the cluster modular group. If frozen vertices exist, quasi-cluster DT transformations need not be unique.\hfill$\Box$
\end{rmk}

\vspace{5cm}

%\RC{Decide on sign curves appendix.}

\bibliographystyle{biblio}

\bibliography{main}

\newcommand{\etalchar}[1]{$^{#1}$}
\begin{thebibliography}{GHKK18}

\bibitem[CG22]{CG22}
Roger Casals and Honghao Gao.
\newblock Infinitely many {L}agrangian fillings.
\newblock {\em Ann. of Math. (2)}, 195(1):207--249, 2022.
\newblock \href {https://doi.org/10.4007/annals.2022.195.1.3}
  {\path{doi:10.4007/annals.2022.195.1.3}}.

\bibitem[CG23]{CG23}
Roger Casals and Honghao Gao.
\newblock A {L}agrangian filling for every cluster seed.
\newblock Preprint, 2023.
\newblock \href {http://arxiv.org/abs/arXiv:2308.00043}
  {\path{arXiv:arXiv:2308.00043}}.

\bibitem[CGG{\etalchar{+}}22]{CGGLSS}
Roger Casals, Eugene Gorsky, Mikhail Gorsky, Ian Le, Linhui Shen, and Jos\'{e}
  Simental.
\newblock Cluster structures on braid varieties.
\newblock Preprint, 2022.
\newblock \href {http://arxiv.org/abs/2207.11607} {\path{arXiv:2207.11607}}.

\bibitem[CGGS20]{CGSS20}
Roger Casals, Eugene Gorsky, Mikhail Gorsky, and Jos\'e Simental.
\newblock Algebraic weaves and braid varieties.
\newblock Preprint, 2020.
\newblock \href {http://arxiv.org/abs/arXiv:2012.06931}
  {\path{arXiv:arXiv:2012.06931}}.

\bibitem[CL22]{CL22}
Roger Casals and Wenyuan Li.
\newblock Conjugate fillings and {L}egendrian weaves.
\newblock Preprint, 2022.
\newblock \href {http://arxiv.org/abs/2210.02039} {\path{arXiv:2210.02039}}.

\bibitem[CN22]{CasalsNg}
Roger Casals and Lenhard Ng.
\newblock Braid loops with infinite monodromy on the {L}egendrian contact
  {DGA}.
\newblock {\em J. Topol.}, 15(4):1927--2016, 2022.

\bibitem[CW23]{CW}
Roger Casals and Daping Weng.
\newblock Microlocal theory of {L}egendrian links and cluster algebras.
\newblock {\em Geometry \& Topology}, pages 1--119, 2023.
\newblock \href {http://arxiv.org/abs/2204.13244} {\path{arXiv:2204.13244}}.

\bibitem[CZ22]{CZ}
Roger Casals and Eric Zaslow.
\newblock Legendrian weaves: N-graph calculus, flag moduli and applications.
\newblock {\em Geometry \& Topology}, pages 1--116, 2022.
\newblock \href {http://arxiv.org/abs/2007.04943} {\path{arXiv:2007.04943}}.

\bibitem[FG09]{FockGoncharov_ensemble}
Vladimir~V. Fock and Alexander~B. Goncharov.
\newblock Cluster ensembles, quantization and the dilogarithm.
\newblock {\em Ann. Sci. \'{E}c. Norm. Sup\'{e}r. (4)}, 42(6):865--930, 2009.
\newblock \href {https://doi.org/10.24033/asens.2112}
  {\path{doi:10.24033/asens.2112}}.

\bibitem[Fra16]{Fra}
Chris Fraser.
\newblock Quasi-homomorphisms of cluster algebras.
\newblock {\em Adv. in Appl. Math.}, 81:40--77, 2016.
\newblock \href {https://doi.org/10.1016/j.aam.2016.06.005}
  {\path{doi:10.1016/j.aam.2016.06.005}}.

\bibitem[FS18]{FordSerhiyenko18}
Nicolas Ford and Khrystyna Serhiyenko.
\newblock Green-to-red sequences for positroids.
\newblock {\em J. Combin. Theory Ser. A}, 159:164--182, 2018.
\newblock \href {https://doi.org/10.1016/j.jcta.2018.06.001}
  {\path{doi:10.1016/j.jcta.2018.06.001}}.

\bibitem[FSB22]{FSB}
Chris Fraser and Melissa Sherman-Bennett.
\newblock Positroid cluster structures from relabeled plabic graphs.
\newblock {\em Algebr. Comb.}, 5(3):469--513, 2022.
\newblock \href {http://arxiv.org/abs/2006.10247} {\path{arXiv:2006.10247}},
  \href {https://doi.org/10.5802/alco.220} {\path{doi:10.5802/alco.220}}.

\bibitem[FWZ21]{FWZ}
Sergey Fomin, Lauren Williams, and Andrei Zelevinsky.
\newblock {I}ntroduction to {C}luster {A}lgebras.
\newblock {\em arXiv preprint arXiv:2106.02160}, 2021.

\bibitem[FZ02]{FZI}
Sergey Fomin and Andrei Zelevinsky.
\newblock Cluster algebras. {I}. {F}oundations.
\newblock {\em J. Amer. Math. Soc.}, 15(2):497--529, 2002.
\newblock \href {https://doi.org/10.1090/S0894-0347-01-00385-X}
  {\path{doi:10.1090/S0894-0347-01-00385-X}}.

\bibitem[Gal21]{Galashin21_Critical}
Pavel Galashin.
\newblock Critical varieties in the {G}rassmannian.
\newblock {\em S\'{e}m. Lothar. Combin.}, 85B:Art. 34, 12, 2021.

\bibitem[GHKK18]{GHKK}
Mark Gross, Paul Hacking, Sean Keel, and Maxim Kontsevich.
\newblock Canonical bases for cluster algebras.
\newblock {\em J. Amer. Math. Soc.}, 31(2):497--608, 2018.
\newblock \href {http://arxiv.org/abs/1411.1394} {\path{arXiv:1411.1394}},
  \href {https://doi.org/10.1090/jams/890} {\path{doi:10.1090/jams/890}}.

\bibitem[GK13]{GonKen}
Alexander~B. Goncharov and Richard Kenyon.
\newblock Dimers and cluster integrable systems.
\newblock {\em Ann. Sci. \'{E}c. Norm. Sup\'{e}r. (4)}, 46(5):747--813, 2013.
\newblock \href {https://doi.org/10.24033/asens.2201}
  {\path{doi:10.24033/asens.2201}}.

\bibitem[GKS12]{Sheaves1}
St\'{e}phane Guillermou, Masaki Kashiwara, and Pierre Schapira.
\newblock Sheaf quantization of {H}amiltonian isotopies and applications to
  nondisplaceability problems.
\newblock {\em Duke Math. J.}, 161(2):201--245, 2012.
\newblock \href {https://doi.org/10.1215/00127094-1507367}
  {\path{doi:10.1215/00127094-1507367}}.

\bibitem[GL19]{GL}
Pavel Galashin and Thomas Lam.
\newblock Positroid varieties and cluster algebras.
\newblock Preprint, 2019.
\newblock \href {http://arxiv.org/abs/1906.03501} {\path{arXiv:1906.03501}}.

\bibitem[Gon17]{Goncharov_IdealWebs}
A.~B. Goncharov.
\newblock Ideal webs, moduli spaces of local systems, and 3d {C}alabi-{Y}au
  categories.
\newblock In {\em Algebra, geometry, and physics in the 21st century}, volume
  324 of {\em Progr. Math.}, pages 31--97. Birkh\"{a}user/Springer, Cham, 2017.

\bibitem[GT95]{garg1995upward}
Ashim Garg and Roberto Tamassia.
\newblock Upward planarity testing.
\newblock {\em Order}, 12(2):109--133, 1995.

\bibitem[Kel17]{KellerDT}
Bernhard Keller.
\newblock Quiver mutation and combinatorial dt-invariants.
\newblock Preprint, 2017.
\newblock \href {http://arxiv.org/abs/1709.03143} {\path{arXiv:1709.03143}}.

\bibitem[KLS13]{KLS}
Allen Knutson, Thomas Lam, and David~E. Speyer.
\newblock Positroid varieties: juggling and geometry.
\newblock {\em Compos. Math.}, 149(10):1710--1752, 2013.
\newblock \href {http://arxiv.org/abs/1111.3660} {\path{arXiv:1111.3660}},
  \href {https://doi.org/10.1112/S0010437X13007240}
  {\path{doi:10.1112/S0010437X13007240}}.

\bibitem[Lec16]{Lec}
B.~Leclerc.
\newblock Cluster structures on strata of flag varieties.
\newblock {\em Adv. Math.}, 300:190--228, 2016.
\newblock \href {http://arxiv.org/abs/1402.4435} {\path{arXiv:1402.4435}},
  \href {https://doi.org/10.1016/j.aim.2016.03.018}
  {\path{doi:10.1016/j.aim.2016.03.018}}.

\bibitem[Lus98]{Lusztig98}
G.~Lusztig.
\newblock Total positivity in partial flag manifolds.
\newblock {\em Represent. Theory}, 2:70--78, 1998.
\newblock \href {https://doi.org/10.1090/S1088-4165-98-00046-6}
  {\path{doi:10.1090/S1088-4165-98-00046-6}}.

\bibitem[MS16]{MarshScott16}
R.~J. Marsh and J.~S. Scott.
\newblock Twists of {P}l\"{u}cker coordinates as dimer partition functions.
\newblock {\em Comm. Math. Phys.}, 341(3):821--884, 2016.
\newblock \href {https://doi.org/10.1007/s00220-015-2493-7}
  {\path{doi:10.1007/s00220-015-2493-7}}.

\bibitem[MS17]{MullerSpeyertwist}
Greg Muller and David~E. Speyer.
\newblock The twist for positroid varieties.
\newblock {\em Proc. Lond. Math. Soc. (3)}, 115(5):1014--1071, 2017.
\newblock \href {http://arxiv.org/abs/1606.08383} {\path{arXiv:1606.08383}},
  \href {https://doi.org/10.1112/plms.12056} {\path{doi:10.1112/plms.12056}}.

\bibitem[Oh11]{Oh}
Suho Oh.
\newblock Positroids and {S}chubert matroids.
\newblock {\em J. Combin. Theory Ser. A}, 118(8):2426--2435, 2011.
\newblock \href {http://arxiv.org/abs/0803.1018} {\path{arXiv:0803.1018}},
  \href {https://doi.org/10.1016/j.jcta.2011.06.006}
  {\path{doi:10.1016/j.jcta.2011.06.006}}.

\bibitem[Pos06]{Pos}
Alexander Postnikov.
\newblock Total positivity, {Grassmannians}, and networks.
\newblock Preprint, 2006.
\newblock \href {http://arxiv.org/abs/math/0609764}
  {\path{arXiv:math/0609764}}.

\bibitem[Pre23]{Pressland23}
Matthew Pressland.
\newblock Quasi-coincidence of cluster structures on positroid varieties.
\newblock Preprint, 2023.
\newblock \href {http://arxiv.org/abs/arXiv:2307.13369}
  {\path{arXiv:arXiv:2307.13369}}.

\bibitem[PSBW21]{PSBW}
Matteo Parisi, Melissa Sherman-Bennett, and Lauren Williams.
\newblock The m=2 amplituhedron and the hypersimplex: signs, clusters,
  triangulations, {E}ulerian numbers.
\newblock Preprint, 2021.
\newblock \href {http://arxiv.org/abs/2104.08254} {\path{arXiv:2104.08254}}.

\bibitem[Rie06]{Rietsch06}
K.~Rietsch.
\newblock Closure relations for totally nonnegative cells in {$G/P$}.
\newblock {\em Math. Res. Lett.}, 13(5-6):775--786, 2006.
\newblock \href {https://doi.org/10.4310/MRL.2006.v13.n5.a8}
  {\path{doi:10.4310/MRL.2006.v13.n5.a8}}.

\bibitem[Sco06]{Scott06}
Joshua~S. Scott.
\newblock Grassmannians and cluster algebras.
\newblock {\em Proc. London Math. Soc. (3)}, 92(2):345--380, 2006.
\newblock \href {https://doi.org/10.1112/S0024611505015571}
  {\path{doi:10.1112/S0024611505015571}}.

\bibitem[SSBW19]{SSBW}
K.~Serhiyenko, M.~Sherman-Bennett, and L.~Williams.
\newblock Cluster structures in {S}chubert varieties in the {G}rassmannian.
\newblock {\em Proc. Lond. Math. Soc. (3)}, 119(6):1694--1744, 2019.
\newblock \href {http://arxiv.org/abs/1902.0080} {\path{arXiv:1902.0080}},
  \href {https://doi.org/10.1112/plms.12281} {\path{doi:10.1112/plms.12281}}.

\bibitem[STWZ19]{STWZ}
Vivek Shende, David Treumann, Harold Williams, and Eric Zaslow.
\newblock Cluster varieties from {L}egendrian knots.
\newblock {\em Duke Math. J.}, 168(15):2801--2871, 2019.
\newblock \href {http://arxiv.org/abs/1512.08942} {\path{arXiv:1512.08942}},
  \href {https://doi.org/10.1215/00127094-2019-0027}
  {\path{doi:10.1215/00127094-2019-0027}}.

\bibitem[SW21]{ShenWeng}
Linhui Shen and Daping Weng.
\newblock Cluster structures on double {B}ott-{S}amelson cells.
\newblock {\em Forum Math. Sigma}, 9:Paper No. e66, 89, 2021.
\newblock \href {https://doi.org/10.1017/fms.2021.59}
  {\path{doi:10.1017/fms.2021.59}}.

\end{thebibliography}

\end{document}